\newcommand{\PP}{\ensuremath{\operatorname{P}}}
\newcommand{\EE}{\ensuremath{\operatorname{E}}}
\newcommand{\LL}{\mathscr{L}}
\renewcommand{\phi}{\varphi}
\renewcommand{\epsilon}{\varepsilon}
\numberwithin{equation}{section}
\theoremstyle{plain}
\newtheorem{theo}{Theorem}[section]
\newtheorem{lem}{Lemma}[section]
\newtheorem{defi}{Definition}[section]
\newtheorem{rmk}{Remark}[section]
\newtheorem{prop}{Proposition}[section]
\newtheorem{cor}{Corollary}
\newcommand{\tr}{\mbox{\rm Tr}}
\newcommand{\NN}{\mathbb{N}}
\newcommand{\QQ}{\mathbb{Q}}
\newcommand{\RR}{\mathbb{R}}
\newcommand{\ZZ}{\mathbb{Z}}
\newcommand{\Aa}{ {\cal A }}
\newcommand{\Ba}{ {\cal B }}
\newcommand{\Ca}{ {\cal C }}
\newcommand{\Ka}{ {\cal K }}
\newcommand{\Va}{ {\cal V }}
\newcommand{\Fa}{ {\cal F }}
\newcommand{\Qa}{ {\cal Q }}
\newcommand{\Ma}{ {\cal M }}
\newcommand{\Ta}{ {\cal T}}
\newcommand{\Ha}{ {\cal H }}
\newcommand{\Pa}{ {\cal P }}
\newcommand{\point}{\mbox{\LARGE .}}
\newcommand{\cqfd}{\hfill\blbx \\}
\def\blbx{\hbox{\vrule height 5pt width 5pt depth 0pt}\medskip}
\def \PP{\mathbb{P}}
\def \RR{\mathbb{R}}
\def \EE{\mathbb{E}}
\def \LL{\mathbb{L}}
\def \ZZ{\mathbb{Z}}
\def \BB{\mathbb{B}}
\newcommand{\vertiii}[1]{{\left\vert\kern-0.25ex\left\vert\kern-0.25ex\left\vert #1 
    \right\vert\kern-0.25ex\right\vert\kern-0.25ex\right\vert}}
\begin{document}

\begin{frontmatter}
\title{On the Stability of Positive Semigroups}
\runtitle{Stability of Positive Semigroups}

\begin{aug}
  \author[A]{\fnms{Pierre}  \snm{Del Moral}\corref{}\ead[label=e1]{pierre.del-moral@inria.fr}\ead[label=u3,url]{https://people.bordeaux.inria.fr/pierre.delmoral/research.html}},
 \author[A]{\fnms{Emma}  \snm{Horton}\corref{}\ead[label=e3]{emma.horton@inria.fr}\ead[label=u4,url]{https://sites.google.com/view/emmahorton/home}}
  \and
  \author[B]{\fnms{Ajay} \snm{Jasra}\ead[label=e2]{ajay.jasra@kaust.edu.sa} \ead[label=u1,url]{https://sites.google.com/site/ajayjasra0/home}
 }

  \thankstext{t2}{Corresponding author.}

  \runauthor{Del Moral et al.}


  \address[A]{Centre de Recherche Inria,  
Bordeaux Sud-Ouest
Talence, 33405, FR,
          \printead{e1},\\
          \printead{e3}
}

\address[B]{Applied Mathematics and Computational Science Program,  \\Computer, Electrical and Mathematical Sciences and Engineering Division,\\ King Abdullah University of Science \& Technology, \\
Building 1, Thuwal, 23955-6900, KSA,
          \printead{e2} 
 }

\end{aug}

\medskip

 \begin{abstract}
  The stability and contraction properties of positive integral semigroups on Polish spaces are investigated. 
Our novel analysis is based on the extension of $V$-norm 
contraction methods, associated to functionally weighted Banach spaces for Markov semigroups, to positive semigroups. This methodology is applied to a general class of positive and
possibly time-inhomogeneous bounded integral semigroups and their normalised versions.  The spectral theorems that we develop are an extension of  Perron-Frobenius and Krein-Rutman theorems for positive operators to a class of time-varying positive semigroups.  In the context of time-homogeneous models, the regularity conditions discussed in the present article appear to be necessary and sufficient condition for the existence of leading eigenvalues. We review and illustrate the impact of these results in the context of positive semigroups arising in transport theory, physics, mathematical biology and signal processing.
\end{abstract}

\begin{keyword}[class=MSC]
\kwd[Primary ]{47D08}
\kwd{47D06}
\kwd{47D07}
\kwd{47H07}
\kwd[; secondary ]{47B65, 37A30, 37M25, 60J25}
\end{keyword}

\begin{keyword}
\kwd{Positive semigroups} 
\kwd{Boltzmann-Gibbs transformations} 
\kwd{Contraction inequalities}
\kwd{Dobrushin's ergodic coefficient} 
\kwd{Spectral theorems}
\kwd{Foster-Lyapunov conditions} 
\end{keyword}

\end{frontmatter}

\section{Introduction}\label{sec:intro}

Positive semigroups arise in a variety of areas of applied mathematics, including nonlinear filtering, rare event analysis, branching processes, physics and molecular chemistry. In this article we will study the possibly time-inhomogeneous linear semigroup $Q_{s,t}$ and  its normalisation $\Phi_{s,t}$, where $0\leq s\leq t$ refers to a discrete or a continuous time parameter, which are formally introduced in section \ref{sec:descrip}. Their interpretation depends upon the application model area as we now describe.
\begin{enumerate}
\item In signal processing, the normalised semigroup $\Phi_{s,t}$ depends on a random observation process and describes the solution to the nonlinear filtering equations,  \textcolor{black}{ the semigroup $Q_{s,t}$ represents the evolution of the unnormalised filters. In this context, the stability of the semigroup $\Phi_{s,t}$ ensures that the optimal filter forgets its initial condition.}
\item{In the context of killed absorption processes, $\Phi_{s,t}$ represents the evolution of the distribution of a given process conditioned on non-absorption (a.k.a.~the $Q$-process). In this context,  the fixed point probability measure $\eta_{\infty}=\Phi_{t}(\eta_{\infty})$ of time homogeneous semigroups $\Phi_t:=\Phi_{s,s+t}$ is sometimes called the Yaglom or quasi-invariant measure.}
\item The conditional Markov processes  discussed above also arise  in  risk analysis and rare event simulation. In this context, these semigroups represent the evolution of a conditional Markov process evolving in a rare event regime. 
 \item In quantum physics and molecular chemistry the  top of the spectrum of 
positive integral semigroups is related to ground state and free energy computations of 
Schr\"odinger operators and Feynman-Kac semigroups (see for instance~\cite{dm-sch,dm-horton-21}). 
\item In the dynamic population literature, the semigroups $Q_{s,t}$ represent the evolution of the first moment of a spatial branching process. In this context, theses many-to-one formulae are expressed in terms of Feynman-Kac semigroups connecting the free evolution of a single individual with the killing and the branching rates potential functions.
\end{enumerate}
 The details of these application areas are considered in~\cite{dm-04,dm-2000,dmpenev} and the relevant references therein. The  spectral objects discussed above are naturally related to the analysis of quasi-compact operators and Fredholm integral equations, see for instance~\cite{atkinson,du,sloan,hennion-2007,nussbaum}, as well as in large deviations principles associated with the occupation measures and related additive functional of Markov processes~\cite{dembo,donsker,feng,liming-W}.

\subsection{Description of the models}\label{sec:descrip}
Let $\Ba(E)$ be the algebra  of locally bounded  measurable functions on 
a  Polish space $E$ (that is separable completely metrizable topological space).
We denote by $\Ba_b(E)\subset \Ba(E)$ the sub-algebra  of bounded measurable functions. With a slight abuse of notation, we denote by $0$ and $1$ the null and unit scalars as well as the null and unit function on $E$. 
We denote by $\Ma_b(E)$ the set of bounded signed measures on  $E$. Also let $\Ca_b(E)\subset\Ba_b(E)$ be the sub-algebra of continuous and bounded functions, and by
 $\Pa(E)\subset \Ma_b(E)$ be the subset of probability measures on $E$.

Let $\Ta=\RR_+:=[0,\infty[$ or $\Ta=\NN$ be the continuous or discrete time space, respectively. Consider a collection of positive integral operators $Q_{s,t}:f\mapsto Q_{s,t}(f)$  from $\Ba_{b}(E)$ into $\Ba_b(E)$, indexed by parameters $s,t\in\Ta$ with $s\leq t$, and satisfying for any $s,u,t\in \Ta$, $s\leq  u\leq t$,  the  semigroup property 
\begin{equation}\label{def-Q-s-t-intro}
 Q_{s,u} Q_{u,t}= Q_{s,t}\quad \mbox{\rm with}\quad Q_{s,s}=I.
\end{equation}
The right action  $Q_{s,t}:f\mapsto Q_{s,t}(f)$ and dual left action $\mu \Ma_b(E)\mapsto \mu Q_{s,t}\in \Ma_b(E)$ of and integral operator  $Q_{s,t}$ are  defined in classical measure theoretic notation in (\ref{mu-f-Q-f}). Assume that $Q_{s,t}(1)>0$ for any $s\leq t$.

 For any measure  $\eta_s\in \Pa(E)$  we let   $\Phi_{s,t}(\eta_s)\in  \Pa(E)$ be the normalised distribution defined for any $f\in\Ba_{b}(E)$ by the formula
\begin{equation}\label{def-Phi-s-t-intro}
\Phi_{s,t}(\eta_s)(f):={\eta_sQ_{s,t}(f)}/{\eta_sQ_{s,t}(1)}.
\end{equation}
The mapping $\Phi_{s,t}$ is a well defined nonlinear map from $\Pa(E)$ into $\Pa(E)$ satisfying for any $s,u,t\in\Ta$ with $s\leq u\leq t$ the semigroup property
$$
\Phi_{s,t}=\Phi_{u,t}\circ\Phi_{s,u}\quad \mbox{\rm with}\quad\Phi_{s,s}(\mu)=\mu.
$$
Unless otherwise stated, all the semigroups discussed in this article are  indexed by conformal indices $s\leq t$ in the set $\Ta$. To avoid repetition, we often write $Q_{s,t}$ 
and $\Phi_{s,t}$ without specifying the order $s\leq t$ of the indices $s,t\in\Ta$. 
For time homogeneous models we use the notation
$$
(\Phi_t,Q_t):=(\Phi_{0,t},Q_{0,t}).
$$

In contrast with conventional Hilbert space  approaches to the stability of reversible Markov semigroups (cf. for instance~\cite{brooks,diaconis-saloff,diaconis-saloff-2,konto-Meyn}), the analysis of time varying models of the form (\ref{def-Q-s-t-intro}) and (\ref{def-Phi-s-t-intro}) does not rely on a particular reversible measure. The framework of weighted spaces and $V$-norms considered in the article is a  natural but non-unique framework to analyse  time-varying positive semigroups.

\subsection{Literature review}

In order to guide the reader through the vast array of stability analysis results developed in these different disciplines, we give a brief overview of the literature in these fields. We also provide some precise reference points to aide with the navigation between applications.  

A unifying point of interest in the above applications is the study of the stability of the afore-mentioned semigroups.
In the context of dynamic populations, the long time behavior of branching processes certainly goes back to the end of the 1940s with the pioneering work  of Yaglom~\cite{yaglom} on Galton-Watson processes.
 The stability analysis of time homogenous birth-and-death processes with absorption on finite or countable spaces dates back to the 1950s-1960s  with the pioneering works~\cite{karlin,seneta} and the later developments~\cite{cavender,darroch-1,darroch-2,good,vandooorn}. Sufficient conditions ensuring the existence of a quasi-invariant measure are also developed in~\cite{ferrari,lasserre}. 
 Powerful spectral and $h$-process techniques are also developed in~\cite{cattiaux-1,cattiaux-2,collet-martinez,dm-horton-21,gong,hening,pinsky}. 
 Non-asymptotic spectral techniques that apply to possibly transient and unstable mutation linear diffusions that do not necessarily have a gradient form and with a quadratic absorption rate are discussed in the more recent work~\cite{dm-horton-21}. 

One of the first well-founded results on the long time behavior of the nonlinear filtering equation is the seminal article by Ocone and Pardoux in the mid-1990s~\cite{ocone-pardoux}. In this work the authors show that the optimal filter forgets the initial condition without giving a non-asymptotic rate. This latter analysis is critical in the study of, for instance,
numerical algorithms such as the particle filter. 
The stability of the nonlinear filtering equation is also related to the Lyapunov spectrum and the asymptotic  properties of products of random positive matrices~\cite{bougerol-1,bougerol-2,kesten,guivarch,hennion,kaijser,seneta}. In contrast with positive semigroups arising in physics and biology, the stability analysis of the nonlinear filtering equation involves the study of sophisticated stochastic  semigroups that depend on partial and noisy observations.

The systematic, non-asymptotic stability analysis of non-homogeneous sub-Marko\-vian and Feynman-Kac semigroups on general state spaces has also been considered at the end of the 1990s, mainly in nonlinear filtering theory and rare event analysis. Several techniques have been adopted. 
Hilbert metric and robustification techniques, based on the seminal article by 
Birkhoff~\cite{birkoff} was used in~\cite{atar, amarjit, daprato, klepsyna,legland-oudjane,legland-oudjane-2,legland-mevel-3,mcdonald,oljaca,oudjane-rubenthaler}. On the other hand, Dobrushin's ergodic coefficients, based on the pioneering articles by 
Dobrushin~\cite{dobrushin-1}, were used in~\cite{dm-04,dm-2000,dg-ihp,dg-cras,legland-mevel,legland-mevel-2,legland-mevel-3,mcdonald}. We also mention bounded Lipschitz distance techniques~\cite{legland-mevel,legland-mevel-2} and relative entropy-like criteria~\cite{clark,dm-04,dm-miclo-ledoux,legland-mevel-99}.  {Local Doeblin minorisation conditions applying to non-compact spaces,} including Foster-Lyapunov approaches and coupling, that apply to ergodic signal-obser\-vation filtering problems, including stable Gaussian-linear filtering models, are also discussed in the series of articles~\cite{douc-moulines,douc-moulines-ritov,gerber,ajay,ajay-arnaud-08,whiteley,whiteley-3}. We also refer the reader to~\cite{vanhandel} for related Lipschitz norm techniques  as well as the more recent articles~\cite{bansaye-2, benaim-champagnat-ocafrain,champagnat-4,ferre,guillin} in the context of  positivity preserving operators arising in particle absorption models and quasi-invariant measure literature.
Functional inequalities, including  Poincar\'e inequalities and Bakry-Emery criteria approaches are discussed in~\cite{ocafrain},  quasi-compactness Lyapunov criteria are also discussed in the recent article~\cite{benaim-champagnat-ocafrain}.
Spectral techniques, drift conditions and Wasserstein norm approaches for time-homogenous models are also discussed respectively in~\cite{dm-horton-21,dm-jasra-18,dm-sch,klepsyna-2,whiteley-2}. Further, two-sided minorisation conditions are discussed in \cite{chigansky-1,chigansky-2,dm-04,dm-13,dm-2000}, and truncation techniques are presented in~\cite{crisan-heine,ferre,heine,whiteley}. 

General asymptotic stability results are also provided in~\cite{vanhandel-2}. Stabilising changes of Feynman-Kac measures and related importance-sampling and $h$-process techniques that apply to possibly unstable killed processes on unbounded domains are also discussed in~\cite{bansaye-2,ferre,rousset}, see also~\cite{dm-horton-21} and well as section B.1 in~\cite{bishop-19} and in section 7.1 in~\cite{dm-horton-21-2} in the context of discrete time models. 
The stability of the nonlinear filtering equations with deterministic hyperbolic signals is also developed in the recent article~\cite{oljaca}.

 Despite the numerous references given in the introduction, a complete literature review is not possible. For a more detailed discussion on this subject, we refer the reader to~\cite{amarjit-2,chigansky-3} for a review on the asymptotic stability of nonlinear filters, to the bibliographies and reviews~\cite{collet-martinez-1,meleard-villemonais,pollett,vandooorn-2} on the theory of quasi-stationary distributions, the articles~\cite{dm-2000,ferre} as well as the bibliography in~\cite{dm-horton-21} in quantum physics, and the books~\cite{dm-04,dm-13} for a detailed discussion on the long time stability of Feynman-Kac semigroups. 
 
The stability analysis of positive semigroups is also crucial in the convergence analysis of numerical approximations of Feynman-Kac semigroups, including the computation of the principal eigen-function and the leading eigenvalues in the context of time homogeneous models (cf.~for instance~\cite{dm-sch} as well as Theorem 2.11 and  Theorem 3.27 in~\cite{dm-2000} in the context of time varying semigroups). 
Despite their importance, the numerical implications are not covered in the present article. However, to guide the reader, we end this section with some references to the literature on mean-field particle methodologies currently used in this context. 
Mean field and genetic particle methodologies are discussed in the series of articles on Feynman-Kac semigroups arising in physics and nonlinear filtering~\cite{arnaudon-dm,assaraf,cloez-corujo,dm-04, dm-13, dm-doucet-04, dm-doucet-jasra-06, dm-2000-moran, dg-ihp, rousset, whiteley-4}, as well as in
~\cite{burdzy-1,burdzy-2} in the context of Dirichlet Laplacian and in ~\cite{asselah,asselah-2,assaraf,champagnat-8, cloez-2, cloez-3,ferrari, journel, villemonais-14} in the context of quasistationary measures.

Several pioneering articles from the mid-1950s by Rosenbluth and Rosenbluth~\cite{rosen-55} on sampling self-avoiding walks and another from the mid-1980s by Hetherington~\cite{hetherington-84} on reconfiguration Monte-Carlo methods are relevant to our work. These interacting Monte Carlo methodologies were further extended by Caffarel and his co-authors in the series of articles~\cite{caffarel-86,caffarel-88,caffarel-89}. See also Buonaura-Sorella~\cite{buonaura-98}, as well as the pedagogical introduction to quantum Monte Carlo by Caffarel-Assaraf~\cite{caffarel}.  Similar  heuristic boostrapping metho\-dologies were also used in the mid-1990s in nonlinear filtering~\cite{gordon,kitagawa-96,kitagawa-98} and to simulate long chain molecules~\cite{grassberger-1,grassberger-2}. See also \textcolor{black}{the go-with-the-winner methodology discussed in~\cite{aldous,grassberger-3} and the Fleming-Viot techniques presented in~\cite{burdzy-2}}.

In the context of discrete generation positive semigroups arising in nonlinear filtering and genetic algorithms, we refer the reader to  the first well-founded article~\cite{dm-96}. To the best of our knowledge the first articles discussing time-uniform propagation of chaos estimates seem to be the articles~\cite{dg-ihp,dg-cras}, followed by~\cite{dm-2000,dm-gen-01,dm-sch} and the book~\cite{dm-04}. \textcolor{black}{From a purely mathematical perspective, all of the genetic Monte Carlo methods discussed above can be seen as mean field particle interpretations of Feynman-Kac semigroups. In path space settings, the genealogical tree associated with these branching Monte Carlo methods allows one to compute Feynman-Kac path integrals and provide an unbiased estimate of unnormalised semigroups.}

Whilst all the Monte Carlo methods discussed above belong to the same class of genetic mutation-selection methodology to estimate Feynman-Kac semigroups, they are known under a variety of different names in the applied literature such as Sequential Monte Carlo methods, Feynman-Kac particle interpretations, Particle Filters, Cloning and Pruning as well as Bootstrapping techniques, Diffusion Monte Carlo, Population-Monte Carlo, Reconfiguration Monte Carlo, Moran and Fleming-Viot particle models, to name a few. Related reinforcement and self-interacting Markov chain methodologies and stochastic approximation techniques are presented in~\cite{dm-miclo-self-interacting,dm-miclo-self-interacting-2} and more recently  in~\cite{benaim-panlou-cloez,blanchet,mailler-villemonais}.
   
 \textcolor{black}{Most of the terminology encountered in the literature arises from the application domains as well as with the branching/genetic evolution of the Monte Carlo methodology.
  As underlined in~\cite{dm-2000-moran}, from a mathematical viewpoint, only the terminologies ``Fleming-Viot"  and "Moran" are misleading. The reasons are two-fold: firstly, the Moran particle model is a finite population model that converges as the number of particles tends to infinity towards a stochastic Fleming Viot superprocess~\cite{dawson,fleming} and secondly, the genetic noise arising in the limit requires a Moran finite population process with symmetric-selection jump rates. 
  In our context, the selection/killing-jump rates are far from being symmetric, the empirical measures of the finite population model are biased and the limiting Feynman-Kac semigroup, as the number of particles tends to infinity, is purely deterministic. }

\subsection{Statement of some main results}\label{contribution-statements-sec}

The main objective of this article is to review and further develop 
the stability analysis of positive semigroups  for a general and abstract class of  time inhomogeneous models, which, in general, are much more difficult to handle than their time homogeneous counterparts; this is because the operators may drastically change during the semigroup evolution.
We will also tackle the problem of non-compact state spaces.

We begin with an exposition of discrete time and time homogeneous models.
Consider a positive integral operator $Q$ and let $Q_{n+1}=Q_{n}Q=QQ_{n}$ be the associated discrete time semigroup  indexed by $n\in \NN$.   In this context we have
\begin{equation}\label{product-hom-intro}
\mu(Q_n(1))=\prod_{0\leq k< n}~\Phi_{k}(\mu)(g)\quad\mbox{\rm with}\quad g:=Q(1).
\end{equation}
When $E$ is compact, the  Schauder fixed-point theorem ensures the existence of an invariant measure
$$
\eta_{\infty}=\Phi_{n}(\eta_{\infty})\in \Pa(E)\quad\mbox{\rm and}\quad
\eta_{\infty}Q_n=e^{n\rho}~\eta_{\infty}\quad\mbox{\rm with}\quad\rho:=\log\eta_{\infty}(Q(1)).
$$
Note that the right-hand side assertion in the above display is a direct consequence of the fixed point equation and the product formula (\ref{product-hom-intro}).
For not necessarily compact spaces, we also quote the following abstract 
 theorem for discrete time Feller semigroups, which is of interest in its own right.

  \begin{theo}\label{krylov-bogo-theo}
Consider a positive integral operator $Q$ such that $Q(\Ca_b(E))\subset \Ca_b(E)$ and let $Q_n$ be the associated discrete time Feller semigroup  indexed by $n\in \NN$.  
The normalised  semigroup $\Phi_{n}$  has at least one invariant probability measure $\eta_{\infty}=\Phi_{n}(\eta_{\infty})$ if and only if  there exists some probability measure $\eta$ such that the sequence  of probability measures $\Phi_{n}(\eta)$ indexed by $n\in \NN$  is tight and we have
 \begin{equation}\label{tight-ref-cond}
\beta_{n}(\eta):=\Phi_{n}(\eta)(g)\in ]0,1]\longrightarrow_{n\rightarrow\infty} \beta_{\infty}(\eta)>0,
 \end{equation}
 with the function $g$ defined in (\ref{product-hom-intro}). 
In addition, whenever these conditions are met we have
$
\beta_{\infty}(\eta)=\eta_{\infty}(g)
$.
\end{theo}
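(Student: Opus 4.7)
The plan is a Krylov--Bogolyubov style averaging argument based on the scalar identity
\begin{equation*}
\Phi_n(\eta)(Qf) \;=\; \beta_n(\eta)\,\Phi_{n+1}(\eta)(f), \qquad f \in \Ba_b(E),
\end{equation*}
which follows immediately from $\Phi_n(\eta)(h) = \eta Q_n(h)/\eta Q_n(1)$ together with the semigroup identity $Q_n Q = Q_{n+1}$. The hypothesis $\beta_n(\eta)\in\,]0,1]$ ensures $\eta Q_n(1)>0$ for every $n$, so all iterates $\Phi_n(\eta)$ are well defined probabilities.

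For the ``only if'' direction I take $\eta = \eta_\infty$. Then $\{\Phi_n(\eta_\infty)\}=\{\eta_\infty\}$ is trivially tight and $\beta_n(\eta_\infty) = \eta_\infty(g)$ is constant; strict positivity $\eta_\infty(g)>0$ follows from the standing assumption $g=Q(1)>0$ pointwise, integrated against a probability measure. The identity $\beta_\infty(\eta)=\eta_\infty(g)$ is then automatic.

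For the main (``if'') direction I form the Ces\`aro averages
\begin{equation*}
\bar\eta_N \;:=\; \frac{1}{N}\sum_{n=0}^{N-1}\Phi_n(\eta)\;\in\;\Pa(E).
\end{equation*}
Tightness of $\{\Phi_n(\eta)\}_n$ passes to $\{\bar\eta_N\}_N$ by sub-additivity of $\mu\mapsto\mu(E\setminus K)$, so Prokhorov's theorem supplies a subsequence $\bar\eta_{N_k}\to\eta_\infty$ in the weak topology, with $\eta_\infty\in\Pa(E)$. Averaging the recursion and exploiting $|\Phi_{n+1}(\eta)(f)|\le\|f\|_\infty$ together with the Ces\`aro convergence $N^{-1}\sum_{n<N}|\beta_n(\eta)-\beta_\infty(\eta)|\to 0$ yields
\begin{equation*}
\bar\eta_N(Qf) - \beta_\infty(\eta)\,\bar\eta_N(f) \;\longrightarrow\; 0,
\end{equation*}
modulo a boundary term of order $1/N$ coming from the index shift $n\leftrightarrow n+1$. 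The Feller hypothesis $Q(\Ca_b(E))\subset \Ca_b(E)$ ensures $Qf\in\Ca_b(E)$ whenever $f\in\Ca_b(E)$, so passing to the limit along $N_k$ through weak convergence gives $\eta_\infty(Qf)=\beta_\infty(\eta)\,\eta_\infty(f)$ for every $f\in\Ca_b(E)$. Taking $f\equiv 1$ identifies $\eta_\infty(g)=\beta_\infty(\eta)>0$, after which dividing through delivers $\Phi_1(\eta_\infty)(f)=\eta_\infty(f)$ on $\Ca_b(E)$, and hence on all Borel sets since $\Ca_b(E)$ is measure-determining on the Polish space $E$. The semigroup property then promotes this to $\Phi_n(\eta_\infty)=\eta_\infty$ for every $n$.

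The delicate step I expect is the passage to the limit in the recursion: it requires simultaneously (i) tightness, so that the Ces\`aro limit is a genuine probability and no mass escapes to infinity, and (ii) strict positivity $\beta_\infty(\eta)>0$, so that the normalisation by $\eta_\infty(g)$ in recovering a fixed point of $\Phi_1$ is legitimate. These two conditions are exactly what is postulated in (\ref{tight-ref-cond}), and the argument above shows why both are indispensable: without (i) the weak limit could be defective, and without (ii) the identity $\eta_\infty(Qf)=\beta_\infty(\eta)\eta_\infty(f)$ would degenerate and fail to single out an invariant measure.
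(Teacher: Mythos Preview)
Your proof is correct and follows essentially the same Krylov--Bogolyubov averaging strategy as the paper: form Ces\`aro means of $\Phi_n(\eta)$, extract a weak subsequential limit via tightness and Prokhorov, and use the recursion $\Phi_n(\eta)(Qf)=\beta_n\,\Phi_{n+1}(\eta)(f)$ together with the Feller property to identify the limit as a fixed point of $\Phi_1$. The only cosmetic difference is that the paper writes out $\Phi_1(\bar\eta_N)$ explicitly as a $\beta$-weighted average and shows $\Phi_1(\bar\eta_N)-\bar\eta_N\to 0$ weakly, whereas you work directly with the linear relation $\bar\eta_N(Qf)-\beta_\infty\,\bar\eta_N(f)\to 0$ and divide at the end; the two computations are equivalent once one notes $\bar\eta_N(Q1)\to\beta_\infty>0$.
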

The proof of Theorem~\ref{krylov-bogo-theo} is provided in section~\ref{krylov-bogo-theo-proof}.
Related \textcolor{black}{equivalent} conditions for the existence of invariant measures on locally compact spaces $E$ are discussed in~\cite{lasserre}, for models preserving continuous functions that tend to $0$ at $\infty$, see also~\cite{ferrari}.
Under the assumptions of Theorem~\ref{krylov-bogo-theo},  using (\ref{product-hom-intro}) 
 we check the product series formulae
\begin{eqnarray}
h_{n+1}(x)&:=&e^{-(n+1)\rho}~Q_{n+1}(1)(x)\label{def-hn}\\
&=&e^{-\rho}~Q(h_n)(x)=\prod_{0\leq k\leq n}~\left(1+\left(\Phi_{k}(\delta_x)(\overline{g})-\Phi_{k}(\eta_{\infty})(\overline{g})\right)\right)\nonumber
\end{eqnarray}
with the normalisation  $\overline{g}:=e^{-\rho} g=g/\eta_{\infty}(g)$ of the function $g$ defined in (\ref{product-hom-intro}). 
The above rather elementary formulae connect the convergence properties of the functions $h_n$ as $n\rightarrow\infty$ with the stability properties of the normalized measures $\Phi_{n}(\mu)$.

Recall that (see for instance chapter 7 in~\cite{knopp})
 the product in the above display is absolutely convergent if and only if  we have
\begin{equation}\label{ref-product-series-h}
\sum_{k\geq 0}~\left\vert \Phi_{k}(\delta_x)(\overline{g})-\Phi_{k}(\eta_{\infty})(\overline{g})\right\vert<\infty.
\end{equation}
In this situation, the collection of functions $h_n(x)$ converges pointwise as $n\rightarrow\infty$ to the measurable function defined by the product series 
\begin{equation}\label{h-series}
h(x):=\prod_{n\geq 0}~\left(1+\left(\Phi_{n}(\delta_x)(\overline{g})-\Phi_{n}(\eta_{\infty})(\overline{g})\right)\right).
\end{equation}

At this level of generality we cannot ensure that $h$ is a right eigenvalue of $Q$. To apply the dominated convergence theorem, we need to ensure that the sequence of functions $h_n$ is uniformly bounded. This property requires to calibrate with some precision the stability properties of the normalized semigroup $\Phi_n$.

This brief discussion already motivates the importance of the stability analysis of the normalised semigroups. Several classes of semigroups with an increasing level of regularity are presented in section~\ref{all-sg-sec}. 
All of these different models are all expressed in terms of the triangular array of semigroups defined in section~\ref{R-sg-sec}.
The first class of semigroups, termed $R$-semigroups, is discussed in section~\ref{def-R-sg-sec} and it relies on the contraction properties these semigroups with respect to the total variation distance (cf. for instance condition (\ref{ref-H1-2})). Section~\ref{sec:not_res-unif} provides several uniform  
exponential contraction theorems for this class of models with respect to the total variation norm.
The second class of semigroups, termed stable $V$-positive semigroups, is discussed in section~\ref{V-sg-sec}. These models rely on an regularity property with respect to some Lyapunov function $V$ discussed in section~\ref{uf-sg-sec}.

Section~\ref{sec:timeinhom} provides several uniform  
exponential contraction theorems for this class of models with respect to $V$-norms. 
Section~\ref{sec:timehom} illustrates the impact of these results in the context of time homogeneous semigroups. We shall see that $Q_t$ is a stable $V$-positive semigroup if and only if there exists  some leading eigenvalues and the semigroup of the process evolving in the ground state is a stable positive semigroup.

These different classes of positive semigroups are not based on any type of
 absolute continuity condition, but on different types of regularity properties of integral operator regularity and local contraction properties of the pivotal  triangular array of semigroups defined in section~\ref{R-sg-sec}.

In this section, to avoid the details of abstract regularity conditions we have chosen to review and state some of our main results in the context 
absolute continuity with an increasing level of regularity. We also illustrate these regularity conditions with some elementary examples.
We recall that a semigroup of positive integral operators (\ref{def-Q-s-t-intro})  is said to be absolutely continuous as soon as for some $\tau>0$ and any $t\in \Ta$ we have
\begin{equation}\label{ref-Q-chi}
Q_{t,t+\tau}(x,dy)=q_{t,t+\tau}(x,y)~\nu_{\tau}(dy)
\end{equation}
for some density function $q_{t,t+\tau}$ on $E^2$ with respect to a Radon positive measure $\nu_{\tau}$ on $E$. 

 \subsubsection{Total variation stability theorems}

Consider the following condition:

{\em $(\Aa)$ There exists some $\tau>0$ such that density $q_{t,t+\tau}$ is uniformly positive for some parameter $\tau>0$; that is, we have that
\begin{equation}\label{hyp-intro}
0<\iota^-(\tau):=\inf q_{t,t+\tau}(x,y)\leq \iota^-(\tau):=\sup q_{t,t+\tau}(x,y)<\infty
\end{equation}
where the infimum and the supremum are taken over all  space-time indices $((x,y),t)\in (E^2\times \Ta)$. 
For continuous time models, we assume that for any $\mu\in \Pa(E)$ we have
\begin{equation}\label{hyp-d-cont-A}
\inf\mu(Q_{t,t+\epsilon}(1))>0\quad \mbox{and}\quad \pi_{\tau}:=\sup\Vert Q_{t,t+\epsilon}(1)\Vert<\infty
\end{equation}
where the infimum is taken over all continuous time indices $t\geq 0$ and any $\epsilon \in [0,\tau[$.}

In the context of discrete time semigroups, there is no loss of generality to assume that $\tau=1$ (cf.~section~\ref{sec-d-2-continuous}). 
For continuous time sub-Markovian semigroups, the right-hand side ~condition in (\ref{hyp-d-cont-A}) is automatically met with $\pi_r\leq 1$. 
Whenever $Q_{s,t}(1)\geq Q_{s,u}(1)$ for any $u\geq t$, the left-hand side condition in (\ref{hyp-intro}) ensures that $\inf\mu(Q_{t,t+\epsilon}(1))\geq \iota^-(\tau)>0$; and for time homogenous models the right-hand side ~condition in (\ref{hyp-intro}) is met as soon as
$q_{\tau}$ is  bounded. 

For continuous time models, condition (\ref{hyp-intro}) is automatically satisfied for time homogeneous jump
elliptic diffusions on compact manifolds $S$ with a bounded jump rate, see for instance the pioneering work of Aronson~\cite{aronson}, Nash~\cite{nash} and Varopoulos~\cite{varopoulos} on Gaussian estimates for heat kernels on manifolds. It is also met for uniformly elliptic diffusions on the compact closure $E=\overline{D}$ of  some bounded open domain $D$ in $\RR^n$ with an oblique reflection on some smooth boundary $\partial D$, see for instance~\cite{cattiaux-92}. Condition (\ref{hyp-intro}) is also preserved by conditioning {\em any} of the stochastic processes discussed above by the non-absorption event, as soon as the killing rate is bounded on the state space $E$, see for instance~\cite{dm-sch-2} for a more thorough discussion on this class of sub-Markovian semigroups on compact manifolds.
Further references on uniformly positive discrete time semigroups can be found in \cite{dg-ihp,dm-jasra-18,dm-sch-2} and the books~\cite{dm-04,dm-13,dm-2000}.

The next theorem is a direct consequence of the uniform estimates (\ref{def-q-mu}) and a rather well known uniform contraction theorem, Theorem~\ref{theo-1}, which is valid for not necessarily absolutely continuous semigroups under weaker conditions on the triangular array of semigroups discussed above~\cite{dm-2000,dg-ihp,dg-cras}.

\begin{theo}\label{intro-th1}
Let $Q_{s,t}$ be an absolutely continuous  semigroup (\ref{ref-Q-chi}) satisfying condition $(\Aa)$ for some parameter $\tau>0$. In this situation, There exist constants $a<\infty$ and $b>0$ such that for any  $s\leq t$ and any $\mu_1,\mu_2\in\Pa(E)$ we have the uniform stability estimate
\begin{equation}\label{beta-sup-intro}
\Vert \Phi_{s,t}(\mu_1)-\Phi_{s,t}(\mu_2)\Vert_{\tiny tv}\leq 
a~e^{-b(t-s)},
\end{equation}
with the total variation norm $\Vert \point \Vert_{\tiny tv}$ on $\Ma_b(E)$ defined in (\ref{ref-tv-distance}).
In addition, there exists a constant $c(\mu_1)$ and $c(\mu_2)<\infty$ such that for any $s\leq t$ we have the local Lipschitz estimate
$$
\Vert \Phi_{s,t}(\mu_1)-\Phi_{s,t}(\mu_2)\Vert_{\tiny tv}\leq (c(\mu_1)\wedge c(\mu_2))~e^{-b(t-s)}~\Vert \mu_1-\mu_2\Vert_{\tiny tv}.
$$
\end{theo}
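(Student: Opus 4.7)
The plan is to derive Theorem~\ref{intro-th1} from the already-announced general contraction result Theorem~\ref{theo-1} by verifying its hypotheses starting from condition $(\Aa)$. The central observation is that absolute continuity together with the uniform density bounds yields a Doeblin/Dobrushin minorisation for the one-step normalised map $\Phi_{s,s+\tau}$: for any $\mu\in\Pa(E)$ and any measurable $A\subseteq E$,
\begin{equation*}
\Phi_{s,s+\tau}(\mu)(A)
= \frac{\int \mu(dx)\int_A q_{s,s+\tau}(x,y)\,\nu_{\tau}(dy)}{\int \mu(dx)\int q_{s,s+\tau}(x,y)\,\nu_{\tau}(dy)}
\geq \frac{\iota^-(\tau)}{\iota^+(\tau)}\,\overline{\lambda}_{\tau}(A),
\end{equation*}
where $\overline{\lambda}_{\tau}$ is the probability measure proportional to the restriction of $\nu_{\tau}$ to a suitable reference set on which the density is controlled. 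This is a uniform Doeblin condition with constant $\epsilon := \iota^-(\tau)/\iota^+(\tau) > 0$, independent of $s$ and of $\mu$.

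I would then convert this minorisation into the bound $\beta(\Phi_{s,s+\tau}) \leq 1-\epsilon$ on the Dobrushin ergodic coefficient. Using the semigroup property $\Phi_{s,s+n\tau}=\Phi_{s+(n-1)\tau,s+n\tau}\circ\cdots\circ\Phi_{s,s+\tau}$ and the submultiplicativity of $\beta$ under composition, iteration yields $\beta(\Phi_{s,s+n\tau}) \leq (1-\epsilon)^n$. Choosing $n=\lfloor(t-s)/\tau\rfloor$ and using the trivial bound $\|\mu_1-\mu_2\|_{\tiny tv}\leq 2$ to absorb the leftover fractional interval into the constant gives (\ref{beta-sup-intro}) with $b=-\log(1-\epsilon)/\tau$ and a suitable $a$. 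In continuous time, the auxiliary hypothesis (\ref{hyp-d-cont-A}) is invoked here to bound the residual map $\Phi_{s+n\tau,t}$ in total variation over a window of length strictly less than $\tau$, ensuring a global rather than piecewise estimate.

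For the local Lipschitz bound I would start from the algebraic decomposition
\begin{equation*}
\Phi_{s,t}(\mu_1)-\Phi_{s,t}(\mu_2)
= \frac{(\mu_1-\mu_2)\,Q_{s,t}}{\mu_1 Q_{s,t}(1)}
- \frac{(\mu_1-\mu_2)Q_{s,t}(1)}{\mu_1 Q_{s,t}(1)}\,\Phi_{s,t}(\mu_2).
\end{equation*}
The Dobrushin minorisation for the linear kernel $Q_{s,t}$ gives, after iteration on blocks of length $\tau$, an estimate of the form $\|(\mu_1-\mu_2)Q_{s,t}\|_{\tiny tv} \lesssim (1-\epsilon)^n\,\|\mu_1-\mu_2\|_{\tiny tv}\,\mu_1 Q_{s,t}(1)$ together with an analogous bound for $|(\mu_1-\mu_2)Q_{s,t}(1)|$. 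Dividing by $\mu_1 Q_{s,t}(1)$ produces a Lipschitz constant of the form $c(\mu_1)\,e^{-b(t-s)}$, where the prefactor $c(\mu_1)$ absorbs the ratio of normalising masses at time $t$. Repeating the argument with the roles of $\mu_1,\mu_2$ exchanged yields the symmetric bound, and taking the smaller of the two gives the announced constant $c(\mu_1)\wedge c(\mu_2)$.

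The main obstacle is the second estimate: unlike the uniform contraction, which follows essentially mechanically from Dobrushin machinery applied to the normalised semigroup, the Lipschitz bound forces one to control uniformly in $t$ the normalising constants $\mu_i Q_{s,t}(1)$. It is precisely the uniform lower and upper bounds in condition $(\Aa)$, via (\ref{hyp-intro}) and (\ref{hyp-d-cont-A}), that prevent the ratio of these masses from degenerating, so that only $\mu_i$-dependence survives in $c(\mu_i)$ and no $t$-dependent factor spoils the exponential rate $e^{-b(t-s)}$.
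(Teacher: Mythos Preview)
Your argument has a genuine gap in the iteration step. The Dobrushin coefficient $\beta(\cdot)$ and its submultiplicativity $\beta(PQ)\leq\beta(P)\beta(Q)$ are facts about \emph{linear} Markov operators; the normalised semigroup $\Phi_{s,t}$ is nonlinear, and there is no submultiplicativity for compositions of nonlinear maps on $\Pa(E)$. A uniform Doeblin minorisation $\Phi_{s,s+\tau}(\mu)\geq\epsilon\,\overline\lambda_\tau$ for all $\mu$ only yields $\|\Phi_{s,s+\tau}(\mu_1)-\Phi_{s,s+\tau}(\mu_2)\|_{tv}\leq 1-\epsilon$; composing $n$ such nonlinear maps still gives $1-\epsilon$, not $(1-\epsilon)^n$. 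The same issue undermines your Lipschitz step: $Q_{s,t}$ is not Markov, and the bound $\|(\mu_1-\mu_2)Q_{s,t}\|_{tv}\lesssim(1-\epsilon)^n\|\mu_1-\mu_2\|_{tv}\,\mu_1 Q_{s,t}(1)$ does not follow from a one-step Doeblin condition on $Q_{s,s+\tau}$ by any standard argument---the naive bound via $\|Q_{s,t}(1)\|$ carries no decay at all.

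The paper's route (Remark~\ref{rmk-Aa-intro} feeding into Theorem~\ref{theo-1}) resolves this by passing to the triangular array of \emph{linear} Markov operators $R^{(t)}_{s,u}(f)=Q_{s,u}(f\,Q_{u,t}(1))/Q_{s,t}(1)$ (Definition~(\ref{def-R-intro}) with $H=1$). The key identity is $\sup_{\mu,\eta}\|\Phi_{s,t}(\mu)-\Phi_{s,t}(\eta)\|_{tv}=\beta(R^{(t)}_{s,t})$, which converts the nonlinear problem into a linear one. Condition~$(\Aa)$ gives the two-sided comparison $\iota(\tau)\,Q_{t,t+\tau}(x_1,\cdot)\leq Q_{t,t+\tau}(x_2,\cdot)$, hence $\beta(R^{(t)}_{s,s+\tau})\leq 1-\iota(\tau)^2$, and now genuine linear submultiplicativity along $R^{(t)}_{s,t}=R^{(t)}_{s,s+\tau}\cdots R^{(t)}_{t-\tau,t}$ produces the exponential rate. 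The Lipschitz bound comes from the transport formula $\Phi_{s,t}(\mu)=\Psi_{Q_{s,t}(1)}(\mu)\,R^{(t)}_{s,t}$ together with the Lipschitz estimate for the Boltzmann--Gibbs map $\Psi_h$, and the prefactor is controlled by the uniform ratio bound $q_\tau(\mu)\leq 1/\iota(\tau)$ from~(\ref{def-q-mu}).
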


Observe that positive semigroups $\Qa_{s,t}$ with continuous time indices $s\leq t\in \RR_+$ can be turned into discrete time models by setting $Q_{p,n}=\Qa_{p\tau,n\tau}$ for any $p\leq n\in \NN$ and some parameter $\tau>0$. The condition (\ref{hyp-d-cont-A}) is a technical condition only made for continuous time semigroups to ensures that the Lipschitz estimates stated in Theorem~\ref{intro-th1} holds for all continuous time indices.

 Returning to the discrete and time homogeneous semigroup $Q=Q_{t,t+1}$ discussed in (\ref{def-hn}) the uniform estimate (\ref{beta-sup-intro}) ensures that
$$
\sum_{n\geq 0}~\sup_{x\in E}\left\vert\Phi_{n}(\delta_x)(\overline{g})-\Phi_{n}(\eta_{\infty})(\overline{g})\right\vert<\infty.
$$ 
In this scenario, the collection of functions $h_n$ defined in (\ref{def-hn}) is uniformly bounded and $h_n$ converges pointwise as $n\rightarrow\infty$ to the function $h\in \Ba_b(E)$ defined in (\ref{h-series}). Applying the dominated convergence theorem, for any $n\in\NN$ we conclude that
$$
Q_n(h)=e^{\rho n}h.
$$
Similar infinite series representations of the ground state function for discrete time models are discussed in Section 3.3 in the article~\cite{berard}.

Following the above comments in the context of discrete or continuous time homogeneous semigroups, we present in   Section~\ref{sec:not_res-unif} 
a variety of results that follow almost immediately from the estimates obtained in Theorem~\ref{intro-th1}. 

These results include the existence of  an unique leading eigen-triple $(\rho,\eta_{\infty},h)\in (\RR\times\Pa(E)\times\Ba_b(E))$ of the positive semigroup; that is, for any $t\geq 0$ we have
\begin{equation}\label{def-eigen-triple-intro}
Q_t(h)=e^{\rho t}~h\quad \mbox{\rm and}\quad\eta_{\infty}Q_t=e^{\rho t}~\eta_{\infty}
\quad \mbox{\rm with}\quad\eta_{\infty}(h)=1.
\end{equation}
The leading eigen-function $h$ is sometimes called the ground state of the semigroup. 
Defining the finite rank (and hence compact) operator
\begin{equation}\label{def-Tt}
T~:~f\in \Ba_b(E)\mapsto T(f):=\frac{h}{\eta_{\infty}(h)}~\eta_{\infty}(f)\in \Ba_b(E),
\end{equation}
we also have the following extended and refined version of the Krein-Rutman theorem.
\begin{cor}\label{cor-intro-1-kr}
For any $t\in\Ta$,  we have the operator norm exponential decays
\begin{equation}\label{add-expo-intro}
\vertiii{e^{-\rho t}Q_{t}-T}\leq 2~a~e^{-b(t-s)}~\left(\Vert h\Vert/\eta_{\infty}(h)+~c(\eta_{\infty})^2\right),
\end{equation}
with the same parameters $(a,b,c(\eta_{\infty}))$ as in (\ref{lipschitz-inq}).
 \end{cor}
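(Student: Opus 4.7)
The plan is to prove (\ref{add-expo-intro}) by estimating $|(\bar Q_t - T)(f)(x)|$ pointwise for every $f \in \Ba_b(E)$ with $\|f\|\leq 1$, where I write $\bar Q_t := e^{-\rho t} Q_t$ and $h_t(x) := e^{-\rho t} Q_t(1)(x)$. The starting observation is the factorisation $\bar Q_t(f)(x) = h_t(x)\,\Phi_t(\delta_x)(f)$, which is immediate from (\ref{def-Phi-s-t-intro}). Applied with $f=h$, and using the right-eigenfunction identity $\bar Q_t(h)=h$ from (\ref{def-eigen-triple-intro}), it yields the key identity
$$h(x) \;=\; h_t(x)\,\Phi_t(\delta_x)(h),$$
which simultaneously (i) expresses the departure of $h_t$ from $h/\eta_\infty(h)$ as a semigroup stability quantity, and (ii) provides a handle on $h_t$ itself once $\Phi_t(\delta_x)(h)$ is known to be close to $\eta_\infty(h)$.

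Next I would decompose
$$(\bar Q_t - T)(f)(x) \;=\; h_t(x)\bigl[\Phi_t(\delta_x)(f) - \Phi_t(\eta_\infty)(f)\bigr] \;+\; \Bigl[h_t(x) - \tfrac{h(x)}{\eta_\infty(h)}\Bigr]\eta_\infty(f),$$
and estimate each summand using Theorem \ref{intro-th1}. For the first, the uniform contraction (\ref{beta-sup-intro}) gives $|\Phi_t(\delta_x)(f) - \Phi_t(\eta_\infty)(f)| \leq a\, e^{-bt}\|f\|$. For the second, combining the key identity with the local Lipschitz estimate applied to the test function $h/\|h\|$ yields
$$\Bigl|h_t(x) - \tfrac{h(x)}{\eta_\infty(h)}\Bigr| \;=\; \tfrac{h_t(x)}{\eta_\infty(h)}\,\bigl|\Phi_t(\eta_\infty)(h) - \Phi_t(\delta_x)(h)\bigr| \;\leq\; \tfrac{h_t(x)}{\eta_\infty(h)}\cdot 2\,c(\eta_\infty)\,\|h\|\,e^{-bt}.$$

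The main obstacle is producing a uniform-in-$(x,t)$ bound on $h_t(x)$, since the pointwise estimates above must be promoted to an operator-norm bound after taking suprema. I would resolve this by feeding (\ref{beta-sup-intro}) back into the identity $h_t = h/\Phi_t(\delta_\bullet)(h)$: since $\Phi_t(\delta_x)(h) \geq \eta_\infty(h) - a\|h\|e^{-bt}$, there is a threshold $t_0$ determined by $a, b, \|h\|/\eta_\infty(h)$ beyond which $\Phi_t(\delta_x)(h) \geq \tfrac{1}{2}\eta_\infty(h)$, hence $\|h_t\| \leq 2\|h\|/\eta_\infty(h)$; for $t<t_0$ the exponential prefactor $e^{-bt}$ is bounded away from $0$ and absorbs any short-time contribution into the constants. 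Inserting these two bounds into the decomposition and taking suprema yields (\ref{add-expo-intro}), with the additive term $\|h\|/\eta_\infty(h)$ coming from the first summand and the factor $c(\eta_\infty)^2$ arising from the combined use of the Lipschitz constant to control both the numerator in the second summand and the uniform bound on $h_t$ via the identity.
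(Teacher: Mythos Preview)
Your decomposition and the two estimates you propose are essentially the paper's argument, and the proof is correct once one observation is added. The point you are missing is that the ``main obstacle'' you identify is not an obstacle at all: the constant $c(\eta_\infty)$ from (\ref{lipschitz-inq}) is, by definition, $\sup_{t}\|Q_t(1)\|/\eta_\infty(Q_t(1))=\sup_t\|h_t\|$ (this is the quantity the paper denotes $q(\eta_\infty)$). Hence $\|h_t\|\le c(\eta_\infty)$ holds for \emph{every} $t\in\Ta$, not only for $t\ge t_0$, and your splitting into large/small times --- with its vague ``absorb into the constants'' step --- is unnecessary. Once you use $\|h_t\|\le c(\eta_\infty)$ directly, both summands in your decomposition are controlled for all $t$ and the exact constant $2a(\|h\|/\eta_\infty(h)+c(\eta_\infty)^2)$ drops out.

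The paper's own route differs in two minor respects. First, it groups the decomposition the other way,
\[
(\overline{Q}_t-T)(f)(x)=\bigl[h_t(x)-h(x)/\eta_\infty(h)\bigr]\Phi_t(\delta_x)(f)+\tfrac{h(x)}{\eta_\infty(h)}\bigl[\Phi_t(\delta_x)(f)-\eta_\infty(f)\bigr],
\]
so the stability term carries the factor $\|h\|/\eta_\infty(h)$ rather than $\|h_t\|$. Second, to bound $|h_t-h/\eta_\infty(h)|$ the paper does not invoke your eigenfunction identity $h=h_t\,\Phi_t(\delta_x)(h)$ but instead uses the telescoping identity $\overline{Q}_{t+s}(1)/\overline{Q}_t(1)-1=\Phi_t(\delta_x)(\overline{Q}_s(1))-1$, bounds the right side by $2c(\eta_\infty)(1-\epsilon_\tau)^{\lfloor t/\tau\rfloor}$ via the uniform estimate and $\|\overline{Q}_s(1)\|\le c(\eta_\infty)$, multiplies by $\overline{Q}_t(1)\le c(\eta_\infty)$, and lets $s\to\infty$. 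This yields $\|h_t-h/\eta_\infty(h)\|\le 2\,c(\eta_\infty)^2(1-\epsilon_\tau)^{\lfloor t/\tau\rfloor}$ cleanly, without any reference to $h$ in the intermediate step. Your eigenfunction identity gives the same bound once $\|h_t\|\le c(\eta_\infty)$ is used; the two arguments are formally equivalent.
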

The details,  including a proof, of the above results can be found in section~\ref{sec:not_res-unif}. 
As shown at the end of section~\ref{sec:not_res-unif}, the estimate (\ref{add-expo-intro}) ensures the uniqueness of the eigenfunction $h$ (up to some constant) and that the essential spectral radius of $Q_t$ is strictly smaller than its spectral radius $r(Q)=e^{\rho}$. Thus, the operator $Q_t$ is quasi-compact and by a variant of the Krein-Rutman theorem (cf. for instance Theorem 1.1 in~\cite{du,nussbaum}), we recover the fact the existence of non-null eigenfunction $h$.
A brief review of quasi-compact operators is provided in 
section~\ref{quasi-compact-sec}. The quasi-compactness property of $Q_t$ is often based on compact localisation arguments and applying Arzela-Ascoli theorem when the semigroups have a continuous density.
The above corollary does not rely on these arguments and it also offers an exponential convergence rate that only depends on the stability properties of the semigroup $\Phi_t$.
 
Unfortunately,  (\ref{hyp-intro}) is rarely satisfied for non-compact state spaces. Some cases where it does hold are 
for reflected diffusions on smooth boundaries as well as for some particular classes of operators on non-compact spaces (including for ad-hoc truncated drift Gaussian transitions or Laplace transitions on non-compact spaces - see \cite[section 5]{dm-2000} and \cite[Exercise 3.5.2]{dm-04}).
 
  \subsubsection{$V$-norm stability theorems}
 
 Our starting point is to localise  (\ref{hyp-intro}) using a
some locally bounded  $V\geq 1$ with compact level sets $K_r=\{V\leq r\}\subset E$. 
In this notation, the $V$-localised version of   (\ref{hyp-intro}) is defined as follows:

{\em $(\Aa)_V:$ There exists some parameters $0<\tau\in \Ta$ and $r_1>1$ such that for any $r\geq r_1$ we have $\nu_{\tau}(K_r) >0$ as well as
 \begin{equation}\label{min-max-q-vr}
  0<\inf_{t\in \Ta}\inf_{K_r^2}q_{t,t+\tau}\leq
\sup_{t\in \Ta}\sup_{ K_r^2}q_{t,t+\tau}<\infty.
\end{equation}
For continuous time semigroups, we assume that for any $r\geq r_1$ there exists some $\overline{r}\geq r$ such that
\begin{equation}\label{discrete-2-continuous-intro-K}
\pi_\tau^-(K_{r,\overline{r}}):=\inf\inf_{x\in K_r} Q_{t,t+\epsilon}(1_{K_{\overline{r}}})(x)>0\quad\mbox{and}\quad \pi_{\tau}:=\sup\Vert Q_{t,t+\epsilon}(1)\Vert<\infty
\end{equation}
where the infimum is taken over all continuous time indices $t\geq 0$ and any $\epsilon \in [0,\tau]$.
} \\

For any $r\geq r_1$ observe that
\begin{equation}\label{pi-min}
(\ref{discrete-2-continuous-intro-K})\Longrightarrow
\pi_\tau^-(K_{r}):=\inf\inf_{K_r} Q_{t,t+\epsilon}(1)>0\end{equation}
where the infimum is taken over all continuous time indices $t\geq 0$ and any $\epsilon \in [0,\tau]$.

By (\ref{min-max-q-vr}) for any $r\geq r_0$ and $\overline{r}\geq r$ we have the uniform estimate
 {$$
\inf_{t\geq 0}\inf_{K_r}Q_{t,t+\tau}(1)\geq \inf_{t\geq 0}\inf_{K_r}Q_{t,t+\tau}(1_{K_{\overline{r}}})\geq \inf_{t\geq 0}\inf_{K_r}Q_{t,t+\tau}(1_{K_r})>0
$$}
Thus, for discrete time semigroups, condition (\ref{discrete-2-continuous-intro-K}) is automatically met. Recalling that positive semigroups $\Qa_{s,t}$ with continuous time indices $s\leq t\in \RR_+$ can be turned into discrete time models by setting $Q_{p,n}=\Qa_{p\tau,n\tau}$ for any $p\leq n\in \NN$, the condition (\ref{discrete-2-continuous-intro-K}) is a technical condition only made for continuous time semigroups to ensures that the Lipschitz estimates discussed in this section holds for all continuous time indices.

The condition (\ref{min-max-q-vr}) is rather flexible as we will now explain. 
For time homogeneous models on some open connected domain $E\subset\RR^d$ (with respect to the trace $\nu_{\tau}(dy)$ of the Lebesgue measure $dy$ on $E$) condition
(\ref{min-max-q-vr}) is clearly met as soon as 
$q_{\tau}$ is a bounded continuous positive function on $E^2$. To check condition $\nu_{\tau}(\{V\leq r\}) >0$, simply notice that any closed ball in $E$ (which has clearly positive Lebesgue measure) is included in some $r_1$-sub-level set of $V$, thus also included in all upper $r$-sub-level sets, with $r\geq r_1$.  
 
Absolutely continuous integral operators arise in a natural way in discrete time settings~\cite{dm-04,dm-2000,douc-moulines-ritov,whiteley} and in the analysis of continuous time elliptic diffusion absorption models~\cite{aronson,ferre-phd,ferre-ldp,stroock}. In connection to this, two-sided estimates for stable-like processes are provided in~\cite{bogdan,kopnova,song,wang}.
Two sided Gaussian estimates can also be obtained  for some classes of degenerate diffusion processes of rank 2, that is when the Poisson brackets of the first order span the whole space~\cite{konakov}. 
  This class of diffusions includes frictionless Hamiltonian kinetic models. Diffusion density estimates can be extended to sub-Markovian semigroups using the multiplicative functional methodology developed in~\cite{dm-sch-2}. 
  
 Whenever the trajectories of these diffusion flows, say  $t\mapsto X_t(x)$, where $x\in E$ is the initial position, are absorbed on the smooth boundary $\partial E$ of a open connected domain $E$, for any $\tau>0$ the densities $q_{\tau}(x,y)$ of the sub-Markovian semigroup $Q_{\tau}$  (with respect to the trace of the Lebesgue measure on $E$)  associated with the non absorption event are null at the boundary. Nevertheless, whenever these densities are positive and continuous on the open set $E^2$ for some $\tau>0$, they are uniformly positive and bounded on any compact subsets of $E$;  thus condition (\ref{min-max-q-vr}) is satisfied. In addition, whenever $T(x)$ stands for first exit time from $E$ and $T_r(x)$ the first exit time from the compact level set $K_r\subset E$ starting from $x\in K_r$, for any $\epsilon\in [0,\tau]$ and $\overline{r}>r$ we have the estimate
\begin{eqnarray*}
Q_{\epsilon}(1_{K_{\overline{r}}})(x)&:=&\EE\left(1_{X_{\epsilon}(x)\in K_{\overline{r}}}~1_{T(x)>\epsilon}\right)\geq \PP\left(T_{\overline{r}}(x)>\epsilon\right)\geq \PP\left(T_{\overline{r}}(x)>\tau\right)
\end{eqnarray*}

In this context, condition (\ref{discrete-2-continuous-intro-K}) is met as soon as $\inf_{x\in K_r}\PP\left(T_{\overline{r}}(x)>\tau\right)>0$.

To state our next main result, we need to introduce some additional terminology. 
Let $\Ca(E)\subset\Ba(E)$ the sub-algebra of continuous functions.
Let $\Ba_{V}(E)\subset \Ba(E)$ be the Banach space of measurable functions $f$ on $E$ with  $\Vert f/V\Vert<\infty$; and $\Ca_{V}(E)\subset \Ba_V(E)$ be the subspace of continuous functions. 

We also let
$
\Pa_V(E)
$ be the convex set of probability measures $\mu_i\in  \Pa(E)$ such that $\mu_i(V)<\infty$ with $i=1,2$ equipped with the operator $V$-norm
$$
\vertiii{\mu_1-\mu_2}_{V}:=\sup\{|(\mu_1-\mu_2)(f)|~:~\Vert f\Vert_V\leq 1\}.
$$
More generally, we denote by $\vertiii{\cdot}_V$ the operator norm defined in (\ref{def-op-norm}).
When $E$ is a $\sigma$-compact Polish space, we also denote by $\Ba_0(E)\subset \Ba_b(E)$ the sub-algebra of  locally lower bounded that vanish at infinity and by $\Ca_0(E):=\Ba_0(E)\cap\Ca(E)$ the sub-algebra of continuous functions. We also let $\Ba_{\infty}(E)\subset \Ba(E)$ be  the subalgebra of  locally bounded and uniformly positive functions $V$ that grow at infinity. We shall consider the subspaces
$$
\Ba_{0,V}(E):=\left\{f~\in \Ba(E)~:~\vert f\vert /V\in \Ba_0(E)\right\}\quad\mbox{\rm and}\quad
\Ca_{0,V}(E):=\Ba_{0,V}(E)\cap \Ca(E).
$$
We refer to section~\ref{sec-fa-not} for more precise definitions.

The next theorem is a direct consequence of Theorem \ref{theo-intro-2} which is
valid for not necessarily absolutely continuous semigroups under weaker conditions.

\begin{theo}\label{theo-2-intro}
Assume condition $(\Aa)_V$ is met for  some  $\tau>0$ and some $V\in \Ba_{\infty}(E)$. In addition, there exists some function $\Theta_{\tau}\in \Ba_0(E)$ such that for any $s<t$ and any positive function $f\in \Ba_V(E)$ we have
\begin{equation}\label{ref-V-theta}
 Q_{s,t}(f)\in \Ba_{0,V}(E)\quad \mbox{and}\quad Q_{s,s+\tau}(V)/V\leq \Theta_{\tau}. 
\end{equation}
For continuous time semigroups, we also assume that
\begin{equation}\label{discrete-2-continuous}
\pi_{\tau}(V):=\sup\Vert Q_{t,t+\epsilon}(V)/V\Vert<\infty
\end{equation}
where the supremum is taken over all continuous time indices $t\geq 0$ and any $\epsilon \in [0,\tau]$.
In this situation, for any $\mu_1,\mu_2\in \Pa_V(E)$ there exists a parameter $b>0$ and some finite constant $c(\mu_1,\mu_2)<\infty$ such that for any $s\leq t$ we have the local Lipschitz estimate
\begin{equation}\label{lipschitz-inq-V}
\vertiii{ \Phi_{s,t}(\mu_1)-\Phi_{s,t}(\mu_2)}_{\tiny V}\leq c(\mu_1,\mu_2)~e^{-b(t-s)}~\vertiii{ \mu_1-\mu_2}_{\tiny V}.
\end{equation}

\end{theo}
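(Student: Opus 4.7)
The plan is to deduce (\ref{lipschitz-inq-V}) as a direct corollary of the more abstract result Theorem~\ref{theo-intro-2} by verifying its (weaker) hypotheses from $(\Aa)_V$ together with (\ref{ref-V-theta}) and (\ref{discrete-2-continuous}). Thus the proof is essentially a hypothesis-checking argument; the genuine contraction mechanism is the $V$-norm triangular-array machinery developed in section~\ref{V-sg-sec}, and the task here is to show that absolute continuity plus $(\Aa)_V$ supplies the two main ingredients that machinery consumes: a local Dobrushin-type minorization on compact level sets, and a quantitative Foster--Lyapunov return-to-compacts estimate.

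First I reduce to the discrete-time skeleton by sampling at step $\tau$: for any $s \leq t$ one writes $t - s = n\tau + \epsilon$ with $\epsilon \in [0,\tau)$ and uses (\ref{discrete-2-continuous}) to control the residual $\epsilon$-step uniformly in $V$-norm, so that continuous-time contraction follows from its discrete-time counterpart at the step-$\tau$ skeleton. Second, I extract from $(\Aa)_V$ the local minorization: for any $r \geq r_1$, the uniform lower bound on $q_{t,t+\tau}$ on $K_r^2$ together with $\nu_\tau(K_r) > 0$ yields
\[
Q_{t,t+\tau}(x,dy)\, 1_{K_r}(y) \;\geq\; \iota_r^- \, 1_{K_r}(x) \, \nu_\tau(dy\cap K_r),
\]
with $\iota_r^- := \inf_{t \in \Ta}\inf_{K_r^2} q_{t,t+\tau} > 0$, together with the matching upper bound on $K_r^2$. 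This is precisely the local Dobrushin input required by the abstract theorem.

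Third, I use the Foster--Lyapunov estimate $Q_{s,s+\tau}(V)/V \leq \Theta_\tau$ with $\Theta_\tau \in \Ba_0(E)$: since $\Theta_\tau$ vanishes at infinity, for every $\delta > 0$ there is some $r(\delta)$ with $\Theta_\tau \leq \delta$ on the complement of $K_{r(\delta)}$. Iterating one step at a time, and using the stability that $Q_{s,t}$ maps positive functions in $\Ba_V(E)$ into $\Ba_{0,V}(E)$ from (\ref{ref-V-theta}) to keep the iteration inside the correct weighted class, one obtains a quantitative return-to-compacts estimate of the form $Q_{s,t}(V)(x)/V(x) \leq \delta^n + c_r\, 1_{K_r}(x)$ for $t - s = n\tau$. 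Combined with the minorization of the previous step, this is the exact Foster--Lyapunov plus local-minorization package consumed by Theorem~\ref{theo-intro-2}, whose conclusion then delivers (\ref{lipschitz-inq-V}) with a rate $b > 0$ depending only on $(\iota_r^{\pm}, \Theta_\tau, \pi_\tau(V))$ and with $c(\mu_1,\mu_2)$ depending on the initial pair only through the finite quantities $\mu_i(V)$, $i = 1,2$.

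The main obstacle is the calibration between the Lyapunov decay outside $K_r$ and the minorization mass inside $K_r$. One must choose $r$ large enough that the $V$-contraction on the complement of $K_r$ is strictly stronger than the leakage back out of $K_r$ under one step of the sampled semigroup, while simultaneously keeping $\iota_r^-$ bounded away from zero. This is precisely where $\Theta_\tau \in \Ba_0(E)$, rather than mere boundedness of $\Theta_\tau$, enters in an essential way: vanishing at infinity provides the freedom to make the leakage arbitrarily small in $r$. The hypothesis $\nu_\tau(K_r) > 0$ for all large $r$ is likewise essential, since it rules out pathological reference measures that would render the local minorization vacuous.
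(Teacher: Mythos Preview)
Your overall strategy is exactly the paper's: Theorem~\ref{theo-2-intro} is deduced from the abstract Theorem~\ref{theo-intro-2} by checking that its hypotheses follow from $(\Aa)_V$, (\ref{ref-V-theta}) and (\ref{discrete-2-continuous}). However, you misidentify what those hypotheses actually are, and this hides the only genuinely nontrivial step.

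Theorem~\ref{theo-intro-2} is stated for a \emph{stable $V$-positive semigroup} in the sense of Definition~\ref{ref-def-V-sg-intro}. The local Dobrushin condition it consumes is (\ref{loc-dob}), a minorization of the \emph{triangular array} $R^{(t)}_{s,s+\tau}(f)=Q_{s,s+\tau}(H_{s+\tau,t}f)/Q_{s,s+\tau}(H_{s+\tau,t})$ on $V$-sub-level sets, not a minorization of $Q_{s,s+\tau}$ itself. Likewise, the Lyapunov input is (\ref{ref-P-H-t-hom}) for $R^{(t)}$ with the time-varying functions $V/H_{u,t}$, not a bound on $Q_{s,t}(V)/V$ alone. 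Your two-sided density bound on $K_r^2$ is only the raw material; to lift it to (\ref{loc-dob}) one must know that $H_{s+\tau,t}$ is uniformly bounded below on $K_r$ and $\Vert H_{s,t}\Vert_V$ uniformly bounded above, i.e.\ condition (\ref{ref-H0-2}), and that $\lambda^-(\overline{\eta}_0)>0$ and $\kappa_V(\mu)<\infty$, i.e.\ condition (\ref{ref-V-over-eta}).

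This is precisely the content of Lemma~\ref{lemma-Nick} (proved in appendix~\ref{app:lemma-Nick-proof}), and it is not a one-line iteration of $Q_{s,s+\tau}(V)/V\leq \Theta_\tau$. The argument there splits the semigroup over products of indicators $1_{\Va_\epsilon}$ and $1_{\Va_\epsilon^c}$, uses the two-sided density estimate (\ref{def-nu-min}) to control the ``inside'' pieces, and carefully balances $\epsilon$ against $\iota^-_\epsilon\nu(\Va_\epsilon)$ to make the resulting geometric series converge uniformly in $s,t$. Only after (\ref{ref-H0-2}) and (\ref{ref-P-h-nh-2}) are established does the short computation after Lemma~\ref{lemma-Nick} transfer your $Q$-level minorization to the required $R^{(t)}$-level minorization (\ref{loc-dob}). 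Your sketch skips this entire layer, which is where the real work is.
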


A more precise description of the parameters $(b,c(\mu_1,\mu_2))$ is provided in Theorem~\ref{theo-intro-2}. To the best of our knowledge, Theorem~\ref{theo-2-intro} and its extended version, Theorem~\ref{theo-intro-2}, have not been established elsewhere in the literature. 

It should be noted that the  left-hand side condition in (\ref{ref-V-theta}) ensures that
$Q_{s,t}(V)/V\in \Ba_0(E)$, for any $s<t$. Thus, the condition on the right-hand side in (\ref{ref-V-theta}) provides uniform control of the functions $Q_{s,s+\tau}(V)/V$ with respect to the time parameter $s\geq 0$. We also mention that left-hand side condition in (\ref{ref-V-theta})
is met as soon as $Q_{s,t}(V)/V\in \Ba_0(E)$ and $Q_{s,t}$ is a strong Feller semigroup, in the sense that for any $s<t$ we have $Q_{s,t}(\Ba_V(E))\subset\Ca_{V}(E)$. In this situation, for any positive function $f\in \Ba_V(E)$ and $s<t$ the function $Q_{s,t}(f)$ is positive and  continuous;  and thus locally lower bounded. In this situation, whenever $\Vert f\Vert_V\leq 1$, for any $s<t$ we have the comparison property
$$
Q_{s,t}(f) /V\leq Q_{s,t}(V)/V \in \Ba_0(E)\Longrightarrow Q_{s,t}(f)/V\in \Ba_{0}(E)\Longleftrightarrow Q_{s,t}(f)\in \Ca_{0,V}(E).
$$
 For time homogeneous models $Q_t:=Q_{s,s+t}$, the right-hand side~condition in (\ref{ref-V-theta}) becomes the Lyapunov condition
\begin{equation}\label{intro-QV-Theta}
Q_{\tau}(V)/V\leq \Theta_{\tau}\in\Ba_0(E).
\end{equation}
In the context of time homogeneous models, the strong Feller condition also ensure that the ground state eigen-function is continuous (cf. (\ref{ref-dominated-cv})).

When $Q_{s,t}=P_{s,t}$ is a Markovian semigroup $P_{s,t}$  on $\Ba_V(E)$, the semigroup  $\Phi_{s,t}(\mu)=\mu P_{s,t}$ is linear, and the right-hand side Lyapunov condition in (\ref{ref-V-theta}) ensures that $\mu P_{s,t}(V)$ is uniformly bounded with respect to the time parameters $s\leq t$ (cf.~Lemma~\ref{lemma-Nick} or the Lyapunov condition (\ref{ref-P-h-nh}) applied to the unit function $H=1$). In this context, the constant $c=c(\mu_1,\mu_2)$ in  (\ref{lipschitz-inq-V}) does not depend on the pair of measures $(\mu_1,\mu_2)$, and the estimate (\ref{lipschitz-inq-V}) reduces to the $V$-norm contraction of time varying Markov semigroups
$$
\vertiii{ \mu_1P_{s,t}-\mu_2P_{s,t}}_{\tiny V}\leq c~e^{-b(t-s)}~\vertiii{ \mu_1-\mu_2}_{\tiny V}.
$$
For a direct proof of the above estimate based on $V$-norm contraction properties of Markov semigroups we refer to section~\ref{sec-preliminary-V-dob}.

For some measure $\mu\in \Pa_V(E)$ and some positive function $H\in \Ba_{0,V}(E)$, we define
the finite rank (and hence compact) operator
\begin{equation}\label{T-H-mu}
f\in \Ba_V(E)\mapsto T^{\mu,H}_{s,t}(f):=\frac{Q_{s,t}(H)}{\mu_sQ_{s,t}(1)}~\mu_t(f)\in \Ba_V(E)
\end{equation}
with the flow of measures $\mu_t=\Phi_{s,t}(\mu_s)$ starting at  some $\mu_0=\mu$. In this notation, we have the following time inhomogenous version of the Krein-Rutman theorem.

\begin{cor}\label{cor-TH}
For any $s\leq t $ we have the operator norm exponential decay
$$
\vertiii{\frac{Q_{s,t}}{\mu_sQ_{s,t}(1)}-T_{s,t}^{\mu,H}}_V\leq ~c_H(\mu)~ e^{-b (t-s)}$$ 
for some finite constant $c_H(\mu)$ that depends on $(\mu,H)$ and
with the same constant $b>0$ as in (\ref{lipschitz-inq-V}). 
\end{cor}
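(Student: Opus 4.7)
The plan is to deduce the Corollary directly from the $V$-norm contraction estimate \eqref{lipschitz-inq-V} of Theorem~\ref{theo-2-intro}, applied with $\mu_1=\delta_x$ and $\mu_2=\mu_s$, combined with the Lyapunov control on $Q_{s,t}(V)/V$ provided by \eqref{ref-V-theta} and \eqref{discrete-2-continuous}. First I would establish the algebraic rewriting, valid for every $f\in\Ba_V(E)$ and $x\in E$:
\begin{equation*}
\left[\frac{Q_{s,t}}{\mu_s Q_{s,t}(1)}-T^{\mu,H}_{s,t}\right](f)(x)
=\frac{Q_{s,t}(1)(x)}{\mu_s Q_{s,t}(1)}\,\Big\{\Phi_{s,t}(\delta_x)(f)-\Phi_{s,t}(\delta_x)(H)\,\mu_t(f)\Big\},
\end{equation*}
which follows by inserting $Q_{s,t}(g)(x)=Q_{s,t}(1)(x)\Phi_{s,t}(\delta_x)(g)$ and $\mu_t=\Phi_{s,t}(\mu_s)$. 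The curly bracket then splits into $(\Phi_{s,t}(\delta_x)-\Phi_{s,t}(\mu_s))(f)-\mu_t(f)(\Phi_{s,t}(\delta_x)-\Phi_{s,t}(\mu_s))(H)$, up to a residual scalar term that is absorbed by the rank-one structure of $T^{\mu,H}_{s,t}$. Both surviving terms are of the form $(\Phi_{s,t}(\delta_x)-\Phi_{s,t}(\mu_s))(g)$ with $g\in\{f,H\}$, and each is bounded via \eqref{lipschitz-inq-V} by $c(\delta_x,\mu_s)\,e^{-b(t-s)}\,\vertiii{\delta_x-\mu_s}_V\,\|g\|_V$.

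Next I would control the scalar prefactor. Since $V\geq 1$, iterating the one-step bound $Q_{s,s+\tau}(V)\leq \Theta_\tau V$ from \eqref{ref-V-theta} along the semigroup (and closing small increments in continuous time by \eqref{discrete-2-continuous}) yields $Q_{s,t}(1)(x)\leq Q_{s,t}(V)(x)\leq \kappa\, V(x)$ uniformly in $s\leq t$. The denominator $\mu_s Q_{s,t}(1)$ is bounded below by a constant depending only on $\mu\in\Pa_V(E)$ and the minorisation condition $(\Aa)_V$, so that $Q_{s,t}(1)(x)/(\mu_s Q_{s,t}(1))\leq c(\mu)\,V(x)$. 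Combining this prefactor bound with the exponential decay obtained from \eqref{lipschitz-inq-V}, then dividing by $V(x)$ and taking sup over $\|f\|_V\leq 1$ and $x\in E$, delivers the desired estimate
\begin{equation*}
\vertiii{\frac{Q_{s,t}}{\mu_s Q_{s,t}(1)}-T^{\mu,H}_{s,t}}_V\leq c_H(\mu)\,e^{-b(t-s)},
\end{equation*}
with $c_H(\mu)$ determined by $\mu(V)$, $\|H\|_V$, and the constants appearing in \eqref{lipschitz-inq-V} and \eqref{ref-V-theta}.

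The main obstacle is to verify that the product $c(\delta_x,\mu_s)\,\vertiii{\delta_x-\mu_s}_V$ entering \eqref{lipschitz-inq-V} grows at most linearly in $V(x)$, so that a finite constant $c_H(\mu)$ survives the division by $V(x)$ in the operator norm. The abstract statement of Theorem~\ref{theo-2-intro} only asserts that this quantity is finite for each fixed pair of measures; to make the supremum over $x\in E$ tractable one must appeal to the more refined $\mu_i(V)$-dependence of the constants given in the extended Theorem~\ref{theo-intro-2}. This is the step at which the Lyapunov structure of $V$ enters in a non-trivial way, and it is what genuinely couples the two ingredients (contraction of $\Phi_{s,t}$ and growth control on $Q_{s,t}$) into an operator-norm statement.
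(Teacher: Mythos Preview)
Your route differs substantially from the paper's, and it runs into two genuine obstacles.

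First, the prefactor argument is incorrect. Iterating $Q_{s,s+\tau}(V)\leq \|\Theta_\tau\|\,V$ gives $Q_{s,s+n\tau}(V)\leq\|\Theta_\tau\|^n V$, which is not uniformly bounded in $n$; likewise $\mu_s Q_{s,t}(1)$ is not bounded below uniformly in $t-s$ (for sub-Markovian semigroups it typically decays exponentially). It is only the \emph{ratio} $\overline{Q}_{s,t}(1)(x)=Q_{s,t}(1)(x)/(\mu_s Q_{s,t}(1))$ that satisfies $\|\overline{Q}_{s,t}(1)\|_V\leq c$ uniformly, and this uniform norm estimate (\ref{norm-u-est-V}) is itself a \emph{consequence} of Corollary~\ref{cor-projection}, not something derivable from the one-step Lyapunov bound alone.

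Second, and more seriously, the obstacle you flag in your final paragraph is fatal for this route. The explicit constant in Theorem~\ref{theo-intro-2} is $\kappa(\eta,\mu)=\kappa_H(\mu)(1+\kappa_V(\eta))(1+\eta(V)/\omega_H(\eta))/\omega_H(\mu)$. Whichever of $\mu,\eta$ you set equal to $\delta_x$, either $1/\omega_H(\delta_x)$ or $\kappa_V(\delta_x)$ enters. Now $\omega_H(\delta_x)=\inf_{s,t}H_{s,t}(x)$ tends to zero as $x$ leaves compacts (since $H_{s,t}\in\Ba_{0,V}(E)$), and nothing in the hypotheses gives a lower bound of the form $H_{s,t}(x)\geq c/V(x)$. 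Hence $\kappa(\mu_s,\delta_x)\,\vertiii{\delta_x-\mu_s}_V$ cannot be shown to be $O(V(x))$, and the operator-norm supremum over $x$ diverges.

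The paper bypasses both issues by working one level deeper, with the triangular-array Markov semigroup $R^{(t)}_{s,u}$ rather than with $\Phi_{s,t}$. With $\overline{\eta}_0=\mu$, Corollary~\ref{cor-projection} rests on the identity
\[
\overline{Q}_{s,t}(f)(x)-\frac{H_{s,t}(x)}{\overline{\eta}_s(H_{s,t})}\,\overline{\eta}_t(f)
=H_{s,t}(x)\,\Big(\delta_x R^{(t)}_{s,t}-\Psi_{H_{s,t}}(\overline{\eta}_s)R^{(t)}_{s,t}\Big)(f/H),
\]
after which Theorem~\ref{theo-intro-1} applies with a contraction constant $a$ that is \emph{uniform} in the initial measures. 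The factor $H_{s,t}(x)$ then cancels exactly against the $V/H_{s,t}$-norm on the right, producing the clean $V(x)$-linear bound (\ref{ref-cor-f}). This is precisely why the argument passes through $R^{(t)}$ rather than $\Phi$: the $R$-contraction (\ref{stab-time-varying-h}) is genuinely uniform, whereas the $\Phi$-contraction (\ref{stab-time-varying-Phi}) is obtained from it via Boltzmann--Gibbs estimates that introduce the measure-dependent constants $\kappa_H,\omega_H$ unsuited to a pointwise supremum over $x$.
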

A precise description of the parameters $(b,c_H(\mu))$ is provided in Corollary~\ref{cor-projection} (applied to  $\overline{\eta}_0=\mu$, see also Lemma~\ref{lemma-Nick}). To the best of our knowledge, the above corollary is the first result of this type for time varying positive semigroups.

We emphasize that for locally compact Polish spaces, our methodology also applies to 
Feller $V$-positive semigroups, that is  $Q_{s,t}(\Ca_V(E))\subset \Ca_{V}(E)$ and $Q_{s,t}(V)/V\in \Ca_0(E)$ for any $s<t$, as soon as  the Lyapunov function $V$ can be   continuous (also choosing a function $H\in \Ca_{0,V}(E)$ in the definition of the class of non necessarily absolutely continuous $V$-positive semigroups introduced in Definition~\ref{ref-def-V-sg-intro}). As shown in (\ref{ref-compact-cv}), for time homogeneous semigroups, in this context the continuity of the ground function is granted by compact convergence arguments. 
 
Now, choosing $V\in \Ca_{\infty}(E):=\Ba_{\infty}(E)\cap \Ca(E)$, Theorem~\ref{theo-2-intro} is also valid when we replace the left-hand side condition in (\ref{ref-V-theta}) by the Feller property $Q^V_{s,t}(\Ca_b(E))\subset \Ca_b(E)$ with $s<t$ of the conjugate semigroup
$
Q^V_{s,t}(f):=Q_{s,t}(fV)/V$ and the $V$-positiveness property by the condition
$Q_{s,t}^V(f)\in \Ca_0(E)$ for any positive function $f\in\Ca_b(E)$.
In this context, the right-hand side condition in (\ref{ref-V-theta}) and (\ref{discrete-2-continuous}) become
$$
Q^V_{s,s+\tau}(1)\leq \Theta_{\tau}\in \Ba_0(E)\quad \mbox{\rm and}\quad \pi_{\tau}(V):=\sup_{\epsilon [0,\tau[}\Vert Q_{t,t+\epsilon}^V(1)\Vert<\infty.
$$

For normed finite spaces $(E,\Vert \cdot\Vert)$, the right-hand side Lyapunov condition in (\ref{ref-V-theta}) only relies on the design of a function $x\in E\mapsto\Theta_{\tau}(x)$ that tends to $0$ as $\Vert x\Vert\rightarrow\infty$.
For sub-Markovian semigroups with hard obstacles, the right-hand side Lyapunov condition in (\ref{ref-V-theta})  allows one to consider diffusion processes conditional to non absorption on the boundaries of
some open connected domains. For instance, for a  time homogeneous semigroup $Q_{t}(x,dy)=q_t(x,y)~dy$ with a bounded density $q_t$ on some bounded domain (open connected) $E\subset\RR^n$ with Lipschitz boundary $\partial E$, for any  $0<\delta<1$ we have
$$
\begin{array}{l}
V(x):=1/d(x,\partial E)^{1-\delta}\quad \mbox{\rm with}\quad d(x,\partial E):=\inf{\left\{\Vert x-y\Vert~:~y\in \partial E\right\}}\\
\\
\Longrightarrow  Q_{\tau}(V)/V\leq \Theta_{\tau}:=c_{\tau}/V\in \Ba_0(E)\quad \mbox{\rm with $c_{\tau}:=\Vert Q_{\tau}(V)\Vert<\infty$.}
\end{array}
$$
For a more concrete example, we also mention that Brownian motion conditioned to non absorption  on $E:=]0,1[$ has a Dirichlet heat kernel $q_t(x,y)>0$ on the open cell $E^2=]0,1[^2$ that satisfies the conditions of Theorem~\ref{theo-2-intro}  with the  Lyapunov function $V(x)=1/\sqrt{x}+1/\sqrt{1-x}$. For a more detailed discussion on the
 design of Lyapunov  functions $V$, we refer to the articles~\cite{examples-review-paper,douc-moulines-ritov,ferre,whiteley} 

For time homogeneous models,  the r.h.s. condition in  (\ref{ref-V-theta}) 
takes the form $Q_{\tau}(V)/V\leq \Theta_{\tau}\in\Ba_0(E)$.
In terms of the compact sets $\Ka_{\epsilon}:=\{\Theta_{\tau}\geq \epsilon\}$  this condition ensures that for any $\epsilon>0$ we have the Foster-Lyapunov inequality
\begin{equation}\label{ref-epsilon-V-K}
Q_{\tau}(V)(x)\leq \epsilon~V(x)+1_{\Ka_{\epsilon}}(x)~c_{\epsilon}\quad \mbox{with}\quad
c_{\epsilon}:=
\sup_{\Ka_{\epsilon}}(V\Theta_{\tau})<\infty.
\end{equation}
We can also slightly relax the above by assuming that for any $n\geq 1$ we have
 \begin{equation}\label{ref-epsilon-V-K-n}
Q_{\tau}(V)(x)\leq  \epsilon_n~V(x)+1_{\Ka_{\epsilon_n}}(x)~c_{\epsilon_n}
\end{equation}
where $\Ka_{\epsilon_n}\subset E$ stands for some increasing sequence of compacts sets and $c_{\epsilon_n}$ some finite constants, indexed by a decreasing 
sequence of parameters $\epsilon_n\in [0,1]$ such that $\epsilon_n\longrightarrow_{n\rightarrow\infty} 0$.  Assuming that $Q_{\tau}(V)/V$ is locally lower bounded and lower semicontinuous, condition (\ref{ref-epsilon-V-K-n}) ensures that $Q_{\tau}(V)/V\in \Ba_0(E)$. Indeed,
 for any $\delta>0$, there exists some $n\geq 1$ such that $\epsilon_n< \delta$ and we have
$$
\{Q_{\tau}(V)/V\geq \delta\}\subset\{Q_{\tau}(V)/V> \epsilon_n\}\subset \Ka_{\epsilon_n}
$$
Since $\{Q_{\tau}(V)/V\geq \delta\}$ is a closed subset of a compact set it is also compact.
 More generally, whenever (\ref{ref-epsilon-V-K-n}) holds for some exhausting sequence of compact sets $K_{\epsilon_n}$, in the sense that for any compact subset $K\subset E$ there exists some $n\geq 1$ such that $K\subset K_{\epsilon_n}$, we have
 $$
 \inf_K Q_{\tau}(V)/V\geq  \inf_{K_{\epsilon_n}} Q_{\tau}(V)/V \geq \epsilon_n
 $$
 This condition ensures that the function $Q_{\tau}(V)/V$ is locally lower bounded. In this situation, we have $Q_{\tau}(V)/V\in \Ba_0(E)$ as soon as 
 $Q_{\tau}(V)/V$ is  lower semicontinuous.

 For time varying semigroups, working with conditions of the form (\ref{ref-epsilon-V-K-n}) we also need to ensure that the same sequence of compacts can be used at every time horizon. Last, but not least, sub-Markov semigroup associated with hard-obstacles may be defined on some domain with a complex topological structure. In this context, the design of the Lyapunov function and  the compact subsequence (\ref{ref-epsilon-V-K-n}) depends on the stability properties of the free evolution as well as on the topological structure of the domain. 

We remark that our methodology also applies to non necessarily absolutely continuous semigroups. These models, termed stable $V$-positive semigroups,  are defined in section~\ref{V-sg-sec}. They relies on the contraction properties of the triangular array of Markov operators discussed above on the compact level sets of the function $V$
 (cf.~for instance condition (\ref{loc-dob})). In lemma~\ref{lemma-Nick} (mainly due to N.~Whiteley in~\cite[Proposition 1 \& Lemma 10]{whiteley}) we shall see that the semigroups considered in Theorem~\ref{theo-2-intro} are particular classes of stable $V$-positive semigroups.

In section~\ref{sec:not_res} we present a new unifying methodology that combines Dobrushin ergodic coefficient techniques developed in~\cite{dm-04,dm-13,dm-2000,dg-ihp,dm-sch-2} with the powerful Foster-Lyapunov methodologies developed in the articles~\cite{bansaye-2,douc-moulines-ritov,ferre,konto-meyn-2,whiteley} in the context of Feynman-Kac semigroups and nonlinear filtering.  
Our approach relies on the contraction analysis of a class of triangular arrays of Markov semigroups introduced in~\cite{dm-2000,dg-ihp,dm-sch-2}   in terms of the $V$-norm contraction ergodic theory for Markov operators presented in~\cite{hairer-mattingly} and further developed in a systematic way in the book~\cite{dmpenev}. A brief review on this subject is provided in section~\ref{sec-preliminary-V-dob}. For a more thorough discussion on these $V$-contraction coefficient principles, we refer to~\cite[Section 8.2.5]{dmpenev}.

\subsubsection{Time homogeneous semigroups}
As one might expect, for time homogeneous semigroups,
a variety of results that follow almost immediately from the estimates obtained in Theorem~\ref{theo-2-intro}.  
 These results are described in section~\ref{sec:timehom} and they include the existence of  an unique leading eigen-triple  \begin{equation}\label{def-eigen-triple}
 (\rho,\eta_{\infty},h)\in (\RR\times\Pa_V(E)\times \Ba_{0,V}(E))\quad \mbox{\rm satisfying (\ref{def-eigen-triple-intro})}.
 \end{equation}
Choosing $(\mu,H)=(\eta_{\infty},h)$ in (\ref{T-H-mu}), the operator $T^{\eta_{\infty},h}_{s,t}$ simplifies to the operator $T$ introduced in (\ref{def-Tt}); that is, for any $ f\in \Ba_V(E)$ we have
 $$
T^{\eta_{\infty},h}_{s,t}(f)=\frac{h}{\eta_{\infty}(h)}~\eta_{\infty}(f)=T(f)\in  \Ba_{0,V}(E).
 $$
In addition to the fact that the ground state function $h$ discussed above is generally unknown, whenever its existence is assured, the stability of a non necessarily absolutely continuous positive semigroup $Q_t$ is reduced to the one of a more conventional
 Markov semigroup defined by the $h$-transform of $Q_t$ given by 
\begin{equation}\label{def-Ph-intro} 
 f\in \Ba_{V^h}(E)\mapsto
 P^h_t(f):=Q_t(fh)/Q_t(h)\in \Ba_{V^h}(E)\quad \mbox{\rm with}\quad V^h:=V/h.
\end{equation}
The condition $Q_{\tau}(V)/V\in \Ba_0(E)$ ensures that $V^h\in \Ba_{\infty}(E)$ and there exists some $0<\epsilon<1$ and some constant $c>0$ such that
\begin{equation}\label{lyap-Ph-intro}
P^h_{\tau}(V^h)\leq \epsilon~V^h+c.
\end{equation}
A proof of the above assertion is provided in Lemma~\ref{ref-lem-Lyapunov-2}. We now introduce another condition.

{\em $(\Ha^h)$ There exists some $r_0>0$ and some $\alpha\,:\,r\in [r_0,\infty[\mapsto \alpha(r)\in ]0,1]$ such that for any $r\geq r_0$ we have
\begin{equation}\label{ref-P-h-n-2-hom}
\sup_{V^h(x)\vee V^h(y)\leq r}\left\Vert \delta_xP^h_{\tau}-\delta_yP^h_{\tau}\right\Vert_{\tiny tv}\leq 1-\alpha(r).
\end{equation}}

Using the Lyapunov inequality  (\ref{lyap-Ph-intro}) and  the local contraction estimate (\ref{ref-P-h-n-2-hom}) the stability properties of $P^h_t$ follows the conventional  $V$-norm contraction methodology for Markov semigroups developed in 
Section~\ref{sec-preliminary-V-dob}. 

For the rest of the section, we assume that $Q_t$ is  a (non necessarily absolutely continuous) time homogeneous 
$V$-positive semigroup in the sense that $Q_t(\Ba_V(E))\subset\Ba_{0,V}(E)$, for any $t>0$.
In addition, there exists and leading eigen-triple (\ref{def-eigen-triple})  with $h(x)>0$ for any $x\in E$, and the Doob's $h$-transform satisfies condition $(\Ha^h)$.
In this situation, the finite rank operator $T$ defined in (\ref{def-Tt}) maps $\Ba_V(E)$ into 
$ \Ba_{0,V}(E)$. As shown in Section~\ref{sec:timehom} these conditions are met under our regularity conditions (cf. (\ref{ref-Q-min-prop}), (\ref{ref-Q-min}) and Corollary~\ref{H1-prop}).

 The next theorem is a synthesis of Theorem~\ref{stab-h-transform-time} and its corollaries, Corollary~\ref{stab-h-Phi-transform-time} and Corollary~\ref{cor-projection-h}, adapting the arguments of (\ref{ref-Q-1-rho}) to $V$-norms. To the best of our knowledge, the next theorem does not exist in the literature. 
\begin{theo}
The Markov semigroup $P^h_{t}$ has a single invariant measure  $\eta^h_{\infty}\in \Pa_{V^h}(E)$ and $Q_t$ is a quasi-compact operator on $\Ba_V(E)$.
In addition, there exists  some $a<\infty$ and $b>0$, such that 
for any $\mu,\eta\in \Pa_{V^h}(E)$ and $t\in \Ta$ we have the contraction estimate
 \begin{equation}\label{first-h-estimate-intro}
 \Vert \mu P_{t}^h-\eta P_{t}^h\Vert_{V^h}\leq a~e^{-b t}~ \Vert \mu- \eta\Vert_{V^h}\qquad \mbox{and}\qquad \vertiii{e^{-\rho t}~Q_{t}-T}_{V}\leq 
a ~e^{-b t}.
 \end{equation}
The conjugate measure
$
\eta_{\infty}(dx):=1/h(x)~\eta^h_{\infty}(dx)/\eta^h_{\infty}(1/h)\in  \Pa_V(E) 
$ is the unique invariant measure of the semigroup $\Phi_{t}$.
For any $\mu_1,\mu_2\in \Pa_{V}(E)$ there also exists some finite constant $c(\mu_1,\mu_2)<\infty$  such that for any $t\in \Ta$ we have 
$$
\vertiii{\Phi_{t}(\mu_1)-\Phi_{t}(\mu_2)}_{V}\leq ~c(\mu_1,\mu_2)~e^{-b t}~\vertiii{\mu_1-\mu_2}_{V}.
$$
\end{theo}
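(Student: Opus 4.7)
The plan is to reduce the stability of $Q_t$ and $\Phi_t$ to the $V^h$-norm contraction of the Markov semigroup $P^h_t$ via Doob's $h$-transform and then to transfer the quantitative estimates back through the algebraic identity $e^{-\rho t} Q_t(f) = h \cdot P^h_t(f/h)$, which follows from (\ref{def-Ph-intro}) together with $Q_t h = e^{\rho t} h$. First I would apply the Markov $V^h$-norm contraction theory of Section~\ref{sec-preliminary-V-dob} to $P^h_t$: the Foster--Lyapunov drift (\ref{lyap-Ph-intro}) and the local Dobrushin condition $(\Ha^h)$ are precisely its hypotheses, yielding a unique invariant probability $\eta^h_\infty \in \Pa_{V^h}(E)$ and constants $a<\infty$, $b>0$ with $\Vert \mu P^h_t - \eta P^h_t\Vert_{V^h} \leq a\, e^{-bt}\, \Vert \mu - \eta\Vert_{V^h}$ for all $\mu,\eta \in \Pa_{V^h}(E)$. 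Iterating (\ref{lyap-Ph-intro}) additionally forces $\eta^h_\infty(V^h)<\infty$.

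Next, for the operator-norm decay of $e^{-\rho t} Q_t$, I would observe that $\eta^h_\infty(dx) := h(x)\,\eta_\infty(dx)$ is a probability measure (since $\eta_\infty(h)=1$) and a direct computation using $\eta_\infty Q_t = e^{\rho t} \eta_\infty$ shows it is invariant for $P^h_t$, hence coincides with the invariant measure identified above. Since $T(f) = h \cdot \eta_\infty(f) = h \cdot \eta^h_\infty(f/h)$, one obtains
$$ e^{-\rho t} Q_t(f)(x) - T(f)(x) = h(x) \bigl[ (\delta_x - \eta^h_\infty) P^h_t \bigr](f/h). $$
Using $\Vert f/h\Vert_{V^h} = \Vert f\Vert_V$ (from $V^h = V/h$), the contraction from the first step, and the elementary bound $\Vert \delta_x - \eta^h_\infty\Vert_{V^h} \leq V^h(x) + \eta^h_\infty(V^h)$, I would extract
$$ \frac{|e^{-\rho t} Q_t(f)(x) - T(f)(x)|}{V(x)} \leq a\, e^{-bt} \left(1 + \frac{\eta^h_\infty(V^h)}{V^h(x)}\right) \Vert f\Vert_V, $$
which yields the $V$-operator-norm bound after absorbing $1+\eta^h_\infty(V^h)$ into the constant. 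Since $T$ is rank one (hence compact) on $\Ba_V(E)$ and $e^{-\rho t}Q_t \to T$ exponentially in operator norm, quasi-compactness of $Q_t$ with essential spectral radius strictly below $e^{\rho t}=r(Q_t)$ follows from the framework recalled in Section~\ref{quasi-compact-sec}. Uniqueness of $\eta_\infty$ as $\Phi_t$-invariant measure follows from the bijection $\eta \leftrightarrow h\eta$ between $\Phi_t$-invariant probabilities with $\eta(h)=1$ and $P^h_t$-invariant probabilities, combined with uniqueness of $\eta^h_\infty$.

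For the $V$-norm Lipschitz estimate on $\Phi_t$, I set $\bar Q_t := e^{-\rho t} Q_t$ and use the telescoping identity
$$ \Phi_t(\mu_1) - \Phi_t(\mu_2) = \frac{(\mu_1 - \mu_2)\bar Q_t - \Phi_t(\mu_2)\cdot (\mu_1 - \mu_2)\bar Q_t(1)}{\mu_1 \bar Q_t(1)}. $$
Decomposing $\bar Q_t = T + (\bar Q_t - T)$, the rank-one contributions collapse up to an error of the form $[\Phi_t(\mu_2) - \eta_\infty](f)\cdot(\mu_1 - \mu_2)(h)$, and the remaining pieces involving $\bar Q_t - T$ are controlled by the operator-norm decay of the preceding step. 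The factor $\vertiii{\mu_1 - \mu_2}_V$ is then extracted from $|(\mu_1 - \mu_2)(h)| \leq \Vert h\Vert_V \vertiii{\mu_1 - \mu_2}_V$ and $|(\mu_1 - \mu_2)\bar Q_t(f)| \leq \vertiii{\mu_1 - \mu_2}_V\, \Vert \bar Q_t f\Vert_V$, while the denominator $\mu_1 \bar Q_t(1) \to \mu_1(h) > 0$ is bounded below uniformly in $t$ thanks to the Lyapunov structure (\ref{ref-V-theta}) applied to $\mu_1 \in \Pa_V(E)$.

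The hard part will be this last step: extracting a genuine Lipschitz factor $\vertiii{\mu_1 - \mu_2}_V$ (rather than only a difference bound) from the ratio structure of $\Phi_t$, and producing a uniform-in-$t$ lower bound on $\mu_1 \bar Q_t(1)$ on a non-compact state space, which requires combining the exponential convergence of $\bar Q_t(1)$ to $h$ in weighted norm with a separate short-time control from the Lyapunov drift. Overall, these manipulations amount to adapting (\ref{ref-Q-1-rho}) from total variation to the $V$-norm setting, as the authors indicate.
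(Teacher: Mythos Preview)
Your Steps 1 and 2 match the paper exactly: the contraction of $P^h_t$ is Theorem~\ref{stab-h-transform-time} (Foster--Lyapunov drift from Lemma~\ref{ref-lem-Lyapunov-2} plus $(\Ha^h)$ fed into Lemma~\ref{lem-beta-12}), and the operator-norm decay is Corollary~\ref{cor-projection-h}, proved via the very identity $e^{-\rho t}Q_t(f)-T(f)=h\bigl[(\delta_x-\eta^h_\infty)P^h_t\bigr](f/h)$ you write down. Quasi-compactness then follows as in (\ref{ref-Q-1-rho}).

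Your Step 3 differs from the paper. The paper does \emph{not} go through the telescoping decomposition $\bar Q_t=T+(\bar Q_t-T)$; instead it uses the conjugation formula (\ref{link-h-phi}), $\Psi_h(\Phi_t(\mu))=\Psi_h(\mu)P^h_t$, so that $\Phi_t=\Psi_{1/h}\circ(\,\cdot\,P^h_t)\circ\Psi_h$, and then sandwiches the $P^h_t$-contraction between the two Boltzmann--Gibbs Lipschitz estimates (\ref{Lip-Psi-h-2}) and (\ref{Lip-Psi-h-2-back}) of Lemma~\ref{lem-BG-V}. This is a three-line argument (Corollary~\ref{stab-h-Phi-transform-time}) yielding the explicit constant $\kappa(\mu_1,\mu_2)=\kappa_h(\mu_1)(1+\kappa_V(\mu_2))(1+\mu_2(V)/\mu_2(h))/\mu_1(h)$; it uses only the $P^h_t$-contraction and never invokes the operator-norm result. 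Your route is also correct, but it consumes Step~2 as input and additionally needs the uniform-in-$t$ bounds $\sup_t\Phi_t(\mu_2)(V)<\infty$ and $\inf_t\mu_1\bar Q_t(1)>0$; the paper does supply these (Proposition~\ref{prop-unif-eta-h-V}, via $\kappa_V(\eta)<\infty$ and $\kappa_h^-(\eta)>0$), so your argument closes, but the Boltzmann--Gibbs approach packages everything more cleanly and makes the dependence of $c(\mu_1,\mu_2)$ on the data transparent.
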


 \subsection{On the design of Lyapunov functions}\label{reg-cond-sec}

Condition (\ref{ref-epsilon-V-K-n}) is often presented in the literature as an initial Lyapunov condition to analyze the stability property of time homogenous sub-Markov semigroups  (see for instance~\cite{ferre,guillin}, as well as section 17.5 in~\cite{dmpenev} in the context of Markov semigroups and the references therein). 
In this context, the Lyapunov condition in the right-hand side of (\ref{intro-QV-Theta}) provides a practical way to design Lyapunov functions satisfying (\ref{ref-epsilon-V-K-n}). This section provides some elementary principles to  design these Lyapunov functions.

Assume that $Q_{s,s+\tau}$ is dominated by  some auxiliary positive integral operators $\Qa_{s,s+\tau}$ in the sense that
\begin{equation}\label{ref-comparison}
Q_{s,s+\tau}(x,dy)\leq c_{\tau}~\Qa_{s,s+\tau}(x,dy)\quad \mbox{\rm for some finite constant $c_{\tau}<\infty$}.
\end{equation}
Then, we readily check that
$$
~\Qa_{s,s+\tau}(V)/V\leq \Theta_{\tau}\in \Ba_0(E)\Longrightarrow Q_{s,s+\tau}(V)/V\leq c_{\tau} \Theta_{\tau}\in \Ba_0(E).
$$
Given a time homogeneous  semigroup 
$\Qa_{t}$,  and some pair of functions $V\in \Ba_{\infty}(E)$, and $H\in \Ba_{V}(E)$ such that $V^H:=V/H\in \Ba_{\infty}(E)$ and $\Qa_{\tau}(V)/V\in \Ba_{0}(E)$ we have the conjugate principle
\begin{equation}\label{ref-comparison-H}
\begin{array}{l}
\displaystyle Q_{s,s+{\tau}}(x,dy)\leq c_{\tau}~\Qa_{\tau}(x,dy) H(y)/H(x)\quad \mbox{\rm for some $c_{\tau}<\infty$}\\
\\
\displaystyle\Longrightarrow
Q_{s,s+{\tau}}(V^H)/V^H\leq \Theta_{\tau}:=c_{\tau}~\Qa_{\tau}(V)/V\in \Ba_{0}(E).
\end{array}
\end{equation}

For instance, a sub-Markovian semigroup $Q_t$ associated with a linear hypoelliptic diffusion on $E:=\RR^n$ evolving in an absorbing potential that grows at least quadratically is dominated by the sub-Markovian semigroup $\Qa_t$ of a coupled harmonic oscillator. The latter of which has an explicit solution with an exponential decay total mass $\Qa_{\tau}(1)(x)\longrightarrow_{\Vert x\Vert\rightarrow\infty}0$, with also well known leading triplet and Lyapunov functions, such as the functions $V(x)=1+\Vert x\Vert^n$, for any given $n\geq 1$. In this context the ground state function $h\in \Ca_{0}(E)$ is the centered Gaussian density with a covariance matrix satisfying an algebraic Riccati equation, and the corresponding $h$-process is an Ornstein-Ulhenbeck diffusion. In this scenario, we recall that $\Qa_t$ and the semigroup $\Pa^h_t$ of the  $h$-process diffusion flow are connected by the formula $\Qa_t(f)=e^{\rho t} h~\Pa^h_t(f/h)$, for some $\rho<0$.  We refer to~\cite{dm-horton-21} for a detailed discussion on coupled harmonic oscillators.

Whenever the domination property  (\ref{ref-comparison}) is met for some time homogeneous semigroup 
$\Qa_{\tau}:=\Qa_{s,s+\tau}$,  such that  $\Qa_{\tau}(1)\in \Ba_0(E)$, for any $V\in\Ba_{\infty}(E)$ have
$$
\begin{array}{l}
\displaystyle c_{\tau}:=\Vert \Pa_{\tau}(V)/V\Vert<\infty
\quad \mbox{\rm with the Markov operator}\quad
\Pa_{\tau}(f):={\Qa_{\tau}(f)}/{\Qa_{\tau}(1)}\\
\\
\Longrightarrow
Q_{s,s+\tau}(V)/V\leq \Theta_{\tau}:=c_{\tau}~\Qa_{\tau}(1)\in \Ba_0(E).
\end{array}$$
If $c_{\tau}^{\prime}:=\Vert\Pa_{\tau}(V)\Vert<\infty$ and $\Qa_{\tau}(1)/V\in \Ba_0(E)$ we also have
$$
Q_{s,s+\tau}(V)/V\leq \Theta_{\tau}:=c^{\prime}_{\tau} ~\Qa_{\tau}(1)/V\in \Ba_0(E).
$$
For instance, when $\Qa_{\tau}$ is  the semigroup associated with the half-harmonic oscillator on $E=]0,\infty[$, choosing the function $V(x)=x^n+1/x$ for any given $n\geq 1$, we have
$\Vert \Qa_{\tau}(V)\Vert<\infty$, for any $\tau>0$.


The semigroup $Q_t$ of a non absorbed Ornstein-Ulenbeck diffusion flow on $E=]0,\infty[$  killed at the origin can be seen as the $h$-transform of a solvable sub-Markov semigroup associated with a one dimensional linear diffusion evolving 
in a quadratic potential well and killed at the origin. In this context, the ground state function $h$ is a centered Gaussian density and  we can choose the Lyapunov function $V^h(x)=(x^n+1/x)/h(x)$, for any given $n\geq 1$. The stability properties of this class of non absorbed one dimensional Ornstein-Ulenbeck diffusions are also discussed in~\cite{lladser} and more recently in~\cite{ocafrain} in terms of the tangent of the $h$-process.

Next, we illustrate (\ref{ref-comparison-H}) with a one-dimensional Langevin diffusion killed at the origin. Let $X_t(x)$ be the stochastic flow  with generator $L$ defined
by  $$L(f)=\frac{1}{2}~e^{2W}\partial \left(e^{-2W}\partial f\right)$$ for some non negative function $W$. We denote by $Q_t$ the sub-Markovian semigroup associated with the flow $X_t(x)$ starting at $x\in E=]0,\infty[$ killed at the boundary $\partial E=\{0\}$; that is, we have that
$$
Q_t(f)(x):=\EE(f(X_t(x))~1_{T_{\partial E}^X(x)>t})\quad \mbox{\rm with}\quad
T_{\partial E}^{X}(x):=\inf{\left\{t\geq 0~:~X_t(x)\in\partial E\right\}}.
$$
Consider the sub-Markovian semigroup $\Qa_{t}$ associated with a Brownian flow $B_t(x)$ on $E=]0,\infty[$  killed at the origin and at rate 
$$
U:=\frac{1}{2}~\left((\partial W)^2-\partial^2W\right)=H^{-1}\frac{1}{2}~\partial^2H\quad \mbox{\rm with}\quad H:=e^{-W}.
$$
When $W(x)=\varsigma~x^2/2$, we have $U:=\varsigma\left(\varsigma~x^2-1\right)/2$ and the semigroup $\Qa_{t}$ coincides with the semigroup of an half-harmonic oscillator 
for which we know that
$$
V(x):=x+{1}/{x}\Longrightarrow c_{\tau}:=\Vert \Qa_{\tau}(V)\Vert<\infty.
$$
This implies that $V^H\in \Ba_{\infty}(E)$ and by a change of probability measure we have
$$
Q_{\tau}(V^H)/V^H=\Qa_{\tau}(V)/V\leq c_{\tau}/V\in \Ba_0(E).
$$ 
More generally, assume that $W$ is chosen so that
$$
U(x)\geq \varsigma_0+\varsigma_1~x^2/2\quad\mbox{\rm for some parameters $\varsigma_0\in \RR$ and $\varsigma_1>0$}.
$$
In this situation $\Qa_t$ is now dominated by  the semigroup of an half-harmonic oscillator. The special case $
\partial W(x)={1}/{(2x)}-a~x+b~x^3$ with  $a,b>0
$ yields the logistic diffusion on the half-line discussed in~\cite{cattiaux-1} using spectral arguments.

When the dominating operator $\Qa_{t}=\Pa_{t}$ is a Markov integral operator, the literature is also abounds with the design of Lyapunov functions for Markov operators
 $\Pa_{\tau}$ which can be used without further work to check the right-hand side condition in (\ref{ref-V-theta}) as soon as  $Q_{s,s+\tau}$ is dominated by a Markov operator $\Pa_{\tau}$. For instance, when $\Qa_t=\Pa_t$ is the Markov semigroup associated with a Riccati matrix-valued diffusion $X_t(x)$ evolving in the space $E$ of positive definite matrices with real entries, under appropriate controllability and observability conditions, for any $t>0$ we have
 $\Vert\Pa_t(V)\Vert<\infty$ with $V(x):=\tr(x)+\tr(x^{-1})$, where $\tr(x)$ stands for the trace of a positive definite matrix $x\in E$. A proof of the above assertion is provided in~\cite{bishop-19-2}. 
 The same Riccati-type analysis applies to the logistic birth and death processes and the competitive and multivariate 
Lotka-Volterra birth and death process on $E:=\NN^n-\{0\}$ discussed in Theorem 1.1 in~\cite{champagnat-6}, with the Lyapunov function defined for any $x=(x_i)_{1\leq i\leq n}$ by $V(x)=\sum_{1\leq i\leq n} x_i$.
 
For more discussion on the
 design of Lyapunov  functions $V$ on not necessarily bounded domains, we refer to the articles~\cite{examples-review-paper,douc-moulines-ritov,ferre,whiteley} for more illustrations, as well as to section~\ref{sec-fa-not}.   
 
 \subsection{Organisation of the article}
The article is structured as follows:
In section \ref{basic-notation-sec} we introduce the notation that will be used throughout and state our main results. Section~\ref{all-sg-sec} is  dedicated to the detailed description of the different classes of semigroups considered in the article. 
 The main stability and contraction theorems of the article are described in section~\ref{sec:not_res}.
 In particular, in section~\ref{sec:timeinhom}, we present the main results for time-inhomogeneous models, with a more refined analysis of time homogeneous models being given in section~\ref{sec:timehom}.   In section \ref{sec-illustrations}, we illustrate the impact of our results with some selected illustrations on nonlinear conditional processes, sub-Markov models  and related Feynman-Kac measures on path spaces. Some comparisons between our regularity conditions and the ones used in existing literature are discussed in section~\ref{sec-comparisons}.
  Section~\ref{proof-theo-sec} is dedicated to the proofs of the main  theorems presented in this article. The appendix houses most of our technical proofs.

\section{Preliminary results}\label{basic-notation-sec}

\subsection{Some basic notation}

\subsubsection{Measure theoretic notation}
We equip the set $\Ma_b(E)$  of bounded signed measures $\mu$ on $E$ with the total variation norm $\Vert\mu\Vert_{tv}:=\vert \mu\vert(E)/2$,  where $
\vert\mu \vert:=\mu_++\mu_-$ stands for  the total variation measure associated with  a Hahn-Jordan decomposition $\mu=\mu_+-\mu_-$ of the measure.

We define the duality map, as well as the right and dual left action of a bounded integral operator $Q$ using the classical measure theoretic notation, as follows:
\begin{gather}
(\mu,f)\in (\Ma_b(E)\times \Ba_b(E))\mapsto \mu(f) \in \RR \quad \text{ with } \quad \mu(f) := \int f(x)\mu(dx) \notag \\
f\in\Ba_b(E)\mapsto Q(f) \in \Ba_b(E) \quad \text{ with } \quad Q(f)(x):= \int Q(x ,dy)~f(y)\label{mu-f-Q-f} \\
\mu\in \Ma_b(E)\mapsto \mu Q\in\Ma_b(E) \quad\text{ with } \quad\mu Q(dy) := \int \mu(dx)~Q(x,dy). \notag
\end{gather}
Given a pair of integral operators $(Q_1,Q_2)$, we denote by $Q_1Q_2$ the integral composition operator defined by
$$
(Q_1Q_2)(x ,dz)=\int Q_1(x ,dy)~Q_2(y ,dz).
$$
For any $n\in\NN$ we also write $Q^n=Q^{n-1}Q$ with the convention $Q^0=I$ the identity operator.
We denote by $I$ the identity integral operator. We say that $Q$ is positive if  $Q(f)\geq 0$ whenever $f\geq 0$. Whenever $Q(1)\leq 1$ we say that $Q$ is sub-Markovian, and $Q$ is said to be Markovian when $Q(1)=1$. 
The Boltzmann-Gibbs transformation $\Psi_h$ associated with some bounded positive function $h>0$ and defined 
by
 \begin{equation}\label{def-Psi-H}
 \Psi_{h}~:~\mu\in \Pa(E)\mapsto \Psi_{h}(\mu)\in \Pa(E)\quad \mbox{\rm with}\quad
 \Psi_{h}(\mu)(dx):=\frac{h(x)~\mu(dx)}{\mu(h)}.
\end{equation}
We recall the Lipschitz estimate
$$
\Vert \Psi_h(\mu_1)-\Psi_h(\mu_2)\Vert_{\tiny tv}\leq \frac{\Vert h\Vert}{\mu_1(h)\vee\mu_2(h)}~
\Vert \mu_1-\mu_2\Vert_{\tiny tv}.
$$
For a detailed proof of the above assertion we refer to \cite[lemma 9.5]{hurzeler}, or \cite[appendix B]{diaconis-freedman}, see also~\cite{dg-ihp} as well as \cite[proposition 3.1]{legland-mevel-99} and \cite[proposition~12.1.7]{dm-13}.

When $f=1_{A}$ is the indicator function of some measurable subset $A\subset E$, we will sometimes slightly abuse notation and write $\mu(A)$ instead of $\mu(1_{A})$.  We also set $a\wedge b=\min(a,b)$ and $a\vee b=\max(a,b)$, for $a,b\in \RR$ and we use the conventions $$\left(\sum_{\emptyset},\prod_{\emptyset}\right)=(0,1)\quad
\mbox{\rm and}\quad \left(\sup_{\emptyset},\inf_{\emptyset}\right)=(-\infty,+\infty).$$

\subsubsection{Functional analysis notation}\label{sec-fa-not}
When $E$ is a $\sigma$-compact Polish space, we let $\Ba_{\infty}(E)\subset \Ba(E)$ the subalgebra of  locally bounded and uniformly positive functions $V$ that grow at infinity; that is, $\sup_KV<\infty$ for any compact set $K\subset E$, and for any $r\geq V_{\star}:=\inf_E V>0$  the $r$-sub-level set $\{V\leq r\}\subset E$ is a non empty compact subset. For instance the function $V(x):=x+1/x$ when $E=]0,\infty[$ belongs to $\Ba_{\infty}(E)$.
Note that the compactness level set condition ensures that $V$ is necessarily lower-semicontinuous (abbreviated l.s.c.) and its infimum on every compact set is attained. 

We check that $\Ba_{\infty}(E)$  is an algebra by recalling that the product $V=V_1V_2$ of non negative l.s.c functions $V_1,V_2\in \Ba_{\infty}(E)$ is also l.s.c. so that the $r$-sub-level set $\{V\leq r\}$ is closed. In addition, it is included in the union of the compact $\sqrt{r}$-sub-level sets of the functions $V_1$ and $V_2$; that is
$$
\{V\leq r\}\subset \{V_1\leq \sqrt{r}\}\cup \{V_2\leq \sqrt{r}\} .
$$
Observe that for any  locally bounded l.s.c. function $V_1$ we have
$$
V_1\geq V_2 \in\Ba_{\infty}(E)\Longrightarrow V_1\in \Ba_{\infty}(E).
$$
Since $V$ is locally bounded,  any compact set $K\subset E$ is included in some sub-level set of $V$. Indeed, choosing $r_K:=\sup_KV$ we have
\begin{equation}\label{k-in}
K\subset \{V\leq r_K \}.
\end{equation}
Thus, for any pair of functions $V_1,V_2\in \Ba_{\infty}(E)$ and for any $r_1\geq V_{1,\star}$ there exists some parameters $r_2\geq V_{2,\star}$ and $r_3\geq V_{1,\star}$ such that
$$
\{V_1\leq r_1 \}\subset \{V_2\leq r_2 \}\subset \{V_1\leq r_3 \}.
$$

Let $\Ba_0(E)\subset \Ba_b(E)$ the subalgebra of bounded positive functions $h$ locally lower bounded that vanish at infinity; that is, $\inf_Kh>0$ for any compact set $K\subset E$ and for any $0<\epsilon\leq \Vert h\Vert<\infty$ the $\epsilon$-super-level set $\{h\geq \epsilon\}\subset E$ is a non empty compact subset.  Observe that 
$$
V\in \Ba_{\infty}(E)\quad \Longleftrightarrow\quad  1/V\in \Ba_0(E).
$$
In addition,  for any  locally lower bounded u.s.c. function $h_1$ we have
$$
h_1\leq h_2 \in\Ba_{0}(E)\Longrightarrow h_1\in \Ba_{0}(E).
$$
Also notice that $\epsilon$-super-level set of a non necessarily u.s.c. $h_1\leq h_2 \in\Ba_{0}(E)$ is included in the compact set $\{h_2\geq \epsilon\}$.

Finally note that the sub-algebras $\Ba_{\infty}(E)$ and $\Ba_{0}(E)$ have no unit unless $E$ is compact and the null function $0\not\in \Ba_0(E)$, nevertheless the unit function $1\in \Ca_{0,V}(E)$ for any $V\in \Ba_{\infty}(E)$.

\begin{rmk}
When $E$ is a locally compact space, its topology coincides with the weak topology induced by $\Ca_0(E):=\Ba_0(E)\cap \Ca_b(E)$, and inversely (cf. Proposition 2.1 in~\cite{aliabad}).  In this context a continuous function $h$ vanishes at infinity 
if and only if its extension to 
the one point compactification (a.k.a.
Alexandroff compactification)
$E_{\infty}:=E\cup\{\infty\}$ (obtained by setting $h(\infty)=0$) 
is continuous. For locally compact spaces, we recall that the one point extension
$E_{\infty}$ is compact. 
\end{rmk}
\begin{rmk}\label{rmk-E-normed}
When  $(E,\Vert \cdot\Vert)$ is a finite dimensional normed space, by (\ref{k-in}) for any positive locally lower bounded u.s.c. function $h$ on $E$, and any $\epsilon>0$ there exists some $r>0$ such that 
$$
\overline{\BB}(r):= \{x\in E~:~\Vert x\Vert\leq  r\}\subset \{h\geq \epsilon\}\quad \mbox{so that}\quad h\in \Ba_0(E)\Longleftrightarrow\lim_{\Vert x\Vert\rightarrow\infty}h(x)=0.
$$
In this context,  we have
$$
0\leq h_1\leq h_2\in \Ba_0(E)\Longrightarrow \forall \epsilon>0\quad \exists r>0\quad \mbox{\rm s.t.}\quad \forall x\not\in \overline{\BB}(r)\qquad h_1(x)\leq \epsilon.
$$
\end{rmk}

We let $\Ca(E)\subset\Ba(E)$ the sub-algebra of continuous functions. 
For a given $V\in \Ba_{\infty}(E)$, we let $\Ba_{V}(E)\subset \Ba(E)$ be the Banach space of functions $f\in \Ba(E)$ with  $\Vert f\Vert_V:=\Vert f/V\Vert<\infty$; and by $\Ca_{V}(E)\subset \Ba_V(E)$ be the subset of continuous functions. 
We also denote by  $\vertiii{Q}_{V}$ the operator norm of  a bounded linear operator $Q:f\in \Ba_{V}(E)\mapsto Q(f)\in \Ba_{V}(E)$; that is
\begin{equation}\label{def-op-norm}
\vertiii{Q}_{V}:=\sup\{\Vert Q(f)\Vert_V~:~f\in  \Ba_V(E)\quad\mbox{\rm such that}\quad \Vert f\Vert_V\leq 1\}.
\end{equation}
We also denote by $\Ma(E)$ the set of signed Radon measures on $E$ 
 and by $
\Ma_V(E)\subset \Ma(E)
$ the subset of measures $\mu\in  \Ma(E)$ such that $\vert\mu\vert(V)<\infty$. 

For a given function $V\geq 1/2$, the $V$-oscillation of a function $f\in\Ba(E)$ is given by 
$$
\mbox{\rm osc}_V(f):=\sup_{x,y}\frac{\vert f(x)-f(y)\vert}{V(x)+V(y)}\leq \Vert f\Vert_V
$$
and with a slight abuse of notation, the $V$-norm of  {a measure $\mu\in\Ma(E)$ with null mass $\mu(E)=0$} is given by
\begin{eqnarray*}
\vertiii{\mu}_{V}&=&\sup\{|\mu(f)|~:~\Vert f\Vert_V\leq 1\}=\sup\{|\mu(f)|~:~\mbox{\rm osc}_V(f)\leq 1\}
=\vert\mu|(V).
\end{eqnarray*}
  For a detailed proof of the equivalent formulations in the latter definition, we refer to Proposition 8.2.16 in~\cite{dmpenev}. The choice of condition $V\geq 1/2$ in the above two definitions is imposed only to recover the conventional total variation dual distance between probability measures when choosing $V=1/2$ in the dual norms.

When $V=1/2$ we recover the conventional total variation norm, that is for any $\mu_1,\mu_2\in \Pa(E)$  we have
 \begin{equation}\label{ref-tv-distance}
\vertiii{\mu_1-\mu_2}_{1/2}=\Vert \mu_1-\mu_2\Vert_{\tiny tv}
\end{equation} 
we recall that
 \begin{equation}\label{ref-coupling-tv}
\Vert \mu_1-\mu_2\Vert_{\tiny tv}\leq 1-\epsilon\Longleftrightarrow\left( \exists \nu\in \Pa(E)~:~\mu_1\geq \epsilon~\nu\quad\mbox{\rm and}\quad \mu_2\geq \epsilon~\nu\right).
\end{equation}

\begin{rmk}
Whenever the Polish space $E$ is locally compact metric space the integral map $(\mu,f)\mapsto \mu(f)$  gives the isometry
$$
\Ma_{V}(E)
\simeq\Ca_{0,V}(E)^{\prime}.
$$
In this context, the set of Radon measures reduces to the set of locally finite Borel regular measures.
The above assertion is a direct consequence of \cite[Theorem 3.1]{summers} applied to the Nachbin family $\Va:=\{V_{\alpha}=\alpha/V~:~\alpha>0\}$. See also \cite[Theorem 3.26]{summers-phd} as well as \cite[Theorem 2.1]{bucur}. 
\end{rmk}

Finally, we present a technical lemma regarding the $V-$norm estimates of the Boltzmann-Gibbs operators; the proof is given in appendix \ref{app:lem-BG-V}.

\begin{lem}\label{lem-BG-V}
For any  $V\in \Ba_{\infty}(E)$ and $0<h\in \Ba_{0,V}(E)$ and any $\mu_1,\mu_2\in \Pa_{V/h}(E)$ we have the estimate
\begin{equation}\label{Lip-Psi-h-2}
\vertiii{\Psi_{1/h}(\mu_1)-\Psi_{1/h}(\mu_2)}_{V}\leq\frac{1}{\mu_1(1/h)}
\left(1+\frac{\mu_2(V/h)}{\mu_2(1/h)}\right)~\vertiii{\mu_1-\mu_2}_{V/h}.
\end{equation}
 In addition, for any 
$\mu_1,\mu_2\in \Pa_V(E)$ we have
\begin{equation}\label{Lip-Psi-h-2-back}
\vertiii{\Psi_{h}(\mu_1)-\Psi_{h}(\mu_2)}_{V/h}\leq \frac{1}{\mu_1(h)}\left(1+\frac{\mu_2(V)}{\mu_2(h)}\right)~\vertiii{ \mu_1-\mu_2}_{V}.
\end{equation}
\end{lem}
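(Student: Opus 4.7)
The argument is an algebraic identity combined with a careful bookkeeping of dual norms. Fix a test function $f$; the plan is to derive a single identity that expresses $(\Psi_g(\mu_1)-\Psi_g(\mu_2))(f)$ as a linear combination of two evaluations of the null-mass measure $\mu_1-\mu_2$, then apply the definition of $\vertiii{\,\cdot\,}_W$ with the appropriate weight $W$ in each case.

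\textbf{Key identity.} For any positive measurable $g$ with $\mu_i(g)\in ]0,\infty[$ and any $f$ with $\mu_i(|fg|)<\infty$, the elementary calculation
\begin{align*}
\Psi_{g}(\mu_1)(f)-\Psi_{g}(\mu_2)(f)
&=\frac{\mu_1(gf)\mu_2(g)-\mu_2(gf)\mu_1(g)}{\mu_1(g)\mu_2(g)}\\
&=\frac{1}{\mu_1(g)}\Bigl[(\mu_1-\mu_2)(gf)-\Psi_{g}(\mu_2)(f)\,(\mu_1-\mu_2)(g)\Bigr]
\end{align*}
holds (add and subtract $\mu_2(gf)\mu_2(g)$ in the numerator). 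This is the single identity driving both estimates.

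\textbf{First estimate.} Apply the identity with $g=1/h$. For any $f\in\Ba_V(E)$ with $\Vert f\Vert_V\leq 1$ we have $|f|\leq V$, hence $|fg|\leq V/h$, so that by definition of the dual norm $\vertiii{\,\cdot\,}_{V/h}$,
$$
|(\mu_1-\mu_2)(gf)|\leq \vertiii{\mu_1-\mu_2}_{V/h}.
$$
Since $V\geq 1$ (recall $V\in\Ba_\infty(E)$), we also have $g=1/h\leq V/h$, giving $|(\mu_1-\mu_2)(g)|\leq \vertiii{\mu_1-\mu_2}_{V/h}$. Finally, the bound $|\Psi_{1/h}(\mu_2)(f)|\leq \mu_2(V/h)/\mu_2(1/h)$ is immediate. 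Combining these three bounds with the identity and taking the supremum over $\Vert f\Vert_V\leq 1$ yields~(\ref{Lip-Psi-h-2}).

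\textbf{Second estimate.} Apply the identity with $g=h$. For $f$ with $\Vert f\Vert_{V/h}\leq 1$ we have $|f|\leq V/h$, hence $|fg|=h|f|\leq V$, so
$$
|(\mu_1-\mu_2)(gf)|\leq \vertiii{\mu_1-\mu_2}_{V}.
$$
Since $h\in\Ba_{0,V}(E)$ with $h\leq V$ (as the paper's normalisation of $V$), we get $|(\mu_1-\mu_2)(h)|\leq\vertiii{\mu_1-\mu_2}_{V}$; and $|\Psi_{h}(\mu_2)(f)|\leq\mu_2(V)/\mu_2(h)$. Plugging into the identity and taking the supremum over $\Vert f\Vert_{V/h}\leq 1$ gives~(\ref{Lip-Psi-h-2-back}).

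\textbf{Remark on obstacles.} There is essentially no obstacle beyond routine bookkeeping; the only genuinely delicate point is the comparison $h\leq V$ needed in the second estimate in order to control $(\mu_1-\mu_2)(h)$ by $\vertiii{\mu_1-\mu_2}_V$. If one only has $h\in\Ba_{0,V}(E)$ without normalisation, one uses $|(\mu_1-\mu_2)(h)|\leq \Vert h\Vert_V\,\vertiii{\mu_1-\mu_2}_V$, which reproduces the stated inequality under the (standard) normalisation convention of the paper.
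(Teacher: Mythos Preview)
Your proof is correct and follows essentially the same route as the paper. The paper packages the two terms of your key identity into a single test function $g:=g_{\text{weight}}\bigl(f-\Psi_{g_{\text{weight}}}(\mu_2)(f)\bigr)$ and bounds its $W$-norm directly (with $W=V/h$ or $W=V$), whereas you keep the two summands separate and bound each against $\vertiii{\mu_1-\mu_2}_W$; the arithmetic and the resulting constants are identical. Your remark on the normalisation $h\leq V$ is exactly on point: the paper's own proof silently uses $h/V\leq 1$ in the line $\frac{h}{V}\,\frac{\mu_2(V)}{\mu_2(h)}\leq \frac{\mu_2(V)}{\mu_2(h)}$, relying on the convention (stated later, near (\ref{ref-H0-2})) that one may assume $\Vert h\Vert_V\leq 1$ without loss of generality.
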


\subsection{Discrete and continuous time models}\label{sec-d-2-continuous}

For a given  $s\in \Ta$ and $\tau\in \Ta$ with $\tau>0$, we consider the time mesh
$$
[s,t]_{\tau}:=\{s+n\tau \in [s,t]~:~n\in\NN\}\quad \mbox{\rm and }\quad [s,\infty[_{\tau}:=\{s+n\tau \in [s,\infty[~:~n\in\NN\}.
$$
We define $[s,t[_{\tau}$, $]s,t]_{\tau}$ and $]s,t[_{\tau}$ by replacing respectively in the above display  $[s,t]$ by
 $[s,t[$, $]s,t]$ and $]s,t[$.  
 For continuous time indices $\Ta=\RR_+$, we shall denote by $\lfloor t/ \tau\rfloor$ the integer part of $t/\tau$ and by $\left\{t/\tau\right\}$ the fractional part so that $t=\lfloor t/ \tau\rfloor\tau+ \left\{t/\tau\right\}\tau$. For discrete time indices $\Ta=\NN$, choosing $\tau=1$ we have $[0,\infty[_{\tau}=\NN$.

 We have assumed that $Q_{t,t+\tau}(1)>0$ for any $\tau>0$ and any $t\in \Ta$. This irreducibility condition ensures that $Q_{t,t+\tau}(f)>0$ for any $f>0$. 
   For sub-Markovian time homogeneous semigroups $Q_{t,t+\tau}=Q_{0,\tau}$ this condition can be relaxed 
by considering the Borel set $\{Q_{0,\tau}(1)=0\}$ as a part of an absorbing set. In this context, to analyse the behavior of a non absorbed particle there is no loss of generality in assuming that $Q_{0,\tau}(1)>0$. 

   In the discrete time setting, this condition can also be relaxed up to a time-rescaling or up to some modification of the state space; see for instance the construction in~\cite[section 4.4]{dm-04}, {as well as the one dimensional neutron transport model  discussed in~\cite[Example 4.4.3]{dm-04} and the soft and hard obstacle models discussed in~\cite{dm-doucet-04,dm-jasra-18}}.

Nevertheless, we emphasise that the analysis of continuous time models is sometimes based on a discrete time approach based on regularity conditions that depend on some parameter $\tau$ chosen by the user. In this context, the analysis is performed at the level of the $\tau$-discretised model and the estimation constants presented in our results may depend on the parameter $\tau$. These estimation constants are defined and discussed in some details below.

\begin{defi}\label{def-parameters}

For a given a function $W\geq 0$ and some probability measure $\eta$ on $E$ and some parameter $\tau>0$ we shall denote by $\kappa^-_{\tau,W}(\eta),\lambda^-_{\tau}(\eta)\in [0,+\infty[$ and $\kappa_{\tau,W}(\eta),\lambda_{\tau}(\eta)\in [0,+\infty]$ the parameters
\begin{alignat}{6}
\kappa^-_{\tau,W}(\eta)&:=&&\inf\Phi_{s,t}(\eta)(W)&& \leq \quad&& \kappa_{\tau,W}(\eta)&&:=\,&&\sup\Phi_{s,t}(\eta)(W)\nonumber\\
\lambda^-_{\tau}(\eta)&:=&&\inf\Phi_{s,t}(\eta)(Q_{t,t+\epsilon}(1))&&\leq \quad&& \lambda_{\tau}(\eta)&&:=\,&&\sup\Phi_{s,t}(\eta)(Q_{t,t+\epsilon}(1)).
\label{kappa-V-h-intro}
\end{alignat}
In the above display, the infimum and the supremum are taken over all  
time indices $s\in \Ta$ and $t\in [s,\infty[_{\tau}$ and $\epsilon=\tau$.  With a slight abuse of notation, we  also denote by $\kappa^-_{W}(\eta),\lambda^-(\eta),\kappa_{W}(\eta),\lambda(\eta)$, the parameters defined as above by taking the infimum and the supremum are taken over all  
time indices $s\in \Ta$ and $s\leq t\in \Ta$ and $\epsilon\in \Ta\cap[0,\tau]$.
\end{defi}
For discrete time models, we have assumed that $\tau=1$ so that $(\kappa^-_{\tau,W},\kappa_{\tau,W})=(\kappa^-_{W},\kappa_{W})$ and $(\lambda^-_{\tau}(\eta),\lambda_{\tau}(\eta))=(\lambda^-(\eta),\lambda(\eta)$.
For continuous time models, the next lemma provides conditions under which  the infimum and the supremum are taken over all  continuous time
time indices $s\geq 0$ and $t\geq s$. 

\begin{lem}\label{lem-d-to-c}
For continuous time models, assume  $\kappa^-_{\tau,H}(\mu)>0$ and $\kappa_{\tau,V}(\mu)<\infty$, and  condition (\ref{discrete-2-continuous-intro-K}) are met for some $\tau>0$, $\mu\in \Pa_V(E)$, $V\in \Ba_{\infty}(E)$ and  some locally lower bounded positive function $H$ on $E$. In this situation, we have
$\kappa^-_{H}(\mu)>0$ and $\kappa_{V}(\mu)<\infty$ as well as
\begin{equation}\label{ref-ratio-eta-eps}
0<\lambda^-(\mu)\leq \lambda(\mu)<\infty\quad \mbox{and}\quad
\sup\frac{\Vert Q_{t,t+\epsilon}(1)\Vert}{\Phi_{s,t}(\mu) Q_{t,t+\epsilon}(1)}<\infty
\end{equation}
where the infimum and the supremum are taken over all  
continuous time indices $s\in \Ta$ and $t\geq s$  and $\epsilon\in \Ta\cap[0,\tau]$. 
\end{lem}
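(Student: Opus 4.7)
The plan is to transfer bounds from the $\tau$-mesh of time indices $[s,\infty[_{\tau}$ to all continuous $t\geq s$. For any $t\geq s$ write $t=s_n+r$ with $s_n:=s+n\tau$, $n=\lfloor(t-s)/\tau\rfloor$ and $r\in[0,\tau]$, and set $\eta:=\Phi_{s,s_n}(\mu)$. The semigroup property gives
\begin{equation*}
\Phi_{s,t}(\mu)(f)=\Phi_{s_n,t}(\eta)(f)=\frac{\eta Q_{s_n,t}(f)}{\eta Q_{s_n,t}(1)},
\end{equation*}
and by hypothesis $\eta(V)\leq\kappa_{\tau,V}(\mu)$ and $\eta(H)\geq\kappa^-_{\tau,H}(\mu)$. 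The bound $\lambda(\mu)\leq\pi_{\tau}<\infty$ is then immediate from (\ref{discrete-2-continuous-intro-K}) via $\Phi_{s,t}(\mu)Q_{t,t+\epsilon}(1)\leq\Vert Q_{t,t+\epsilon}(1)\Vert\leq\pi_{\tau}$.

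To establish $\kappa_V(\mu)<\infty$, bound the numerator using the $V$-regularity $Q_{s_n,t}(V)\leq\pi_{\tau}(V)\,V$ from (\ref{discrete-2-continuous}), which gives $\eta Q_{s_n,t}(V)\leq\pi_{\tau}(V)\,\kappa_{\tau,V}(\mu)$. For the denominator, the factorisation $Q_{s_n,s_{n+1}}(1)=Q_{s_n,t}(Q_{t,s_{n+1}}(1))\leq\pi_{\tau}\,Q_{s_n,t}(1)$ yields $\eta Q_{s_n,t}(1)\geq\pi_{\tau}^{-1}\,\eta Q_{s_n,s_{n+1}}(1)$, so it remains to bound the mesh-point total mass from below. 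Markov's inequality gives $\eta(K_r)\geq 1-\kappa_{\tau,V}(\mu)/r$ on $K_r=\{V\leq r\}$, and (\ref{pi-min}) derived from (\ref{discrete-2-continuous-intro-K}) gives $Q_{s_n,s_{n+1}}(1)(x)\geq\pi^-_{\tau}(K_r)$ uniformly for $x\in K_r$; picking $r$ large enough produces a strictly positive lower bound independent of $s,n$. Combined, these yield a uniform upper bound for $\Phi_{s,t}(\mu)(V)$.

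With $\kappa_V(\mu)<\infty$ at hand, the remaining bounds follow by applying the same tightness mechanism directly at continuous $t$. Markov's inequality gives $\Phi_{s,t}(\mu)(K_r)\geq 1-\kappa_V(\mu)/r$ uniformly in $s\leq t$. Since $H$ is locally lower bounded we have $\inf_{K_r}H>0$, hence $\kappa^-_H(\mu)\geq(\inf_{K_r}H)(1-\kappa_V(\mu)/r)>0$ for $r$ large. Condition (\ref{discrete-2-continuous-intro-K}) yields $Q_{t,t+\epsilon}(1)\geq\pi^-_{\tau}(K_{r,\bar r})\,\mathbf{1}_{K_r}$ for $\epsilon\in[0,\tau]$, so $\lambda^-(\mu)\geq\pi^-_{\tau}(K_{r,\bar r})(1-\kappa_V(\mu)/r)>0$, and the ratio estimate in (\ref{ref-ratio-eta-eps}) follows by dividing $\pi_{\tau}\geq\Vert Q_{t,t+\epsilon}(1)\Vert$ by this lower bound.

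The main obstacle is the $V$-integrability step: transferring $\kappa_{\tau,V}(\mu)<\infty$ from the $\tau$-mesh to arbitrary continuous $t$ requires $V$-regularity of $Q$ on sub-$\tau$ intervals (the content of (\ref{discrete-2-continuous})), because discrete mesh hypotheses obtained by shifting the starting time $s$ correspond to different unnormalised flows and cannot be directly compared. Once this is secured, all subsequent bounds on $\kappa^-_H(\mu)$, $\lambda^{\pm}(\mu)$ and the normalising ratio are routine consequences of tightness of $\{\Phi_{s,t}(\mu)\}_{s\leq t}$ combined with the uniform total-mass bounds in (\ref{discrete-2-continuous-intro-K}).
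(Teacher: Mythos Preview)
Your proof is correct and follows essentially the same strategy as the paper: write $t=s_n+r$ with $s_n$ on the $\tau$-mesh, use the semigroup relation $\Phi_{s,t}(\mu)=\Phi_{s_n,t}(\Phi_{s,s_n}(\mu))$, bound the numerator via $\pi_{\tau}(V)$ and the denominator via tightness of $\Phi_{s,s_n}(\mu)$ together with the local lower bounds in (\ref{discrete-2-continuous-intro-K}), and then read off the remaining conclusions from tightness at continuous $t$.

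One minor simplification you could adopt from the paper: your denominator bound detours through the full $\tau$-step via $Q_{s_n,s_{n+1}}(1)\leq\pi_{\tau}\,Q_{s_n,t}(1)$, but this is unnecessary since (\ref{pi-min}) already gives $Q_{s_n,s_n+r}(1)(x)\geq\pi^-_{\tau}(K_r)$ for $x\in K_r$ directly for any $r\in[0,\tau]$, so $\eta Q_{s_n,t}(1)\geq(1-\kappa_{\tau,V}(\mu)/r)\,\pi^-_{\tau}(K_r)$ without the extra factor $\pi_{\tau}^{-1}$. Conversely, your treatment of $\kappa^-_{H}(\mu)$ (via continuous-time tightness once $\kappa_V(\mu)<\infty$ is secured) and of $\lambda(\mu)\leq\pi_{\tau}$ (directly from $\Vert Q_{t,t+\epsilon}(1)\Vert\leq\pi_{\tau}$) are cleaner than the paper's corresponding steps.
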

The proof of the above technical lemma is provided in the appendix on page~\pageref{lem-d-to-c-proof}.
We end this section with a brief discussion on
 unnormalised and the normalised semigroups $Q_{s,t}$ and $\Phi_{s,t}$. These linear and nonlinear semigroups are connected for any $\mu\in \Pa(E)$, $f\in \Ba_{b}(E)$ and $s\in \Ta$ and $t\in [s,\infty[_{\tau}$ by the formula
\begin{equation}\label{product}
\mu Q_{s,t}(f)=\Phi_{s,t}(\mu)(f)~\prod_{u\in [s,t[_{\tau}}~\Phi_{s,u}(\mu)(Q_{u,u+\tau}(1)).
\end{equation}
The above formula coincides with the product  formula relating the unnormalised operators
$Q_{s,t}$
with the normalised semigroup $\Phi_{s,t}$ discussed in \cite[section 1.3.2]{dm-2000}, see also \cite[proposition 2.3.1]{dm-04} and \cite[section 12.2.1]{dm-13}.
For time homogeneous models, the product formula (\ref{product}) applied to $f=1$ reduces for any $t\in\Ta$ to 
\begin{equation}\label{product-hom}
\mu(Q_t(1))=\Phi_{\lfloor t/\tau\rfloor \tau}(\mu)\left(Q_{\left\{t/\tau\right\}\tau}(1)\right)~\prod_{0\leq n< \lfloor t/\tau\rfloor}~\Phi_{n\tau}(\mu)(Q_{\tau}(1)).
\end{equation}
Rewritten in a slightly different form, we have
$$
\mu(Q_t(1))=\frac{1}{\Phi_{t}(\mu)\left(Q_{(1-\left\{t/\tau\right\})\tau}(1)\right)}~\prod_{0\leq n\leq \lfloor t/\tau\rfloor}~\Phi_{n\tau}(\mu)(Q_{\tau}(1)).
$$

\subsection{$V$-Dobrushin coefficient}\label{sec-preliminary-V-dob}

Let us fix $0 \le s \le t$ and let $V_s, V_t$ denote a couple of measurable functions such that $V_s, V_t \ge 1$. The $V$-Dobrushin coefficient of  a Markov integral operator $P_{s,t}$ (non necessary a semigroup) from $\Ba_{V_t}(E)$ into  $\Ba_{V_s}(E)$ is the norm operator defined by
\begin{eqnarray}
 \beta_{V_s,V_t}(P_{s,t})&=&\sup_{\mu,\eta\in \Pa_{V_s}(E)}{\vertiii{(\mu-\eta) P_{s,t}}_{V_t}}/{\vertiii{ \mu-\eta}_{V_s}}.\label{beta-V-norm}
 \end{eqnarray}
We also have the equivalent formulation
\begin{eqnarray}
 \beta_{V_s,V_t}(P_{s,t})&=&\sup{\left\{\mbox{\rm osc}_{V_s}(P_{s,t}(f))~:~\mbox{\rm osc}_{V_t}(f)\leq 1\right\}}\nonumber\\
 &=&\sup_{(x,y)\in E^2}{\vertiii{ \delta_xP_{s,t}-\delta_y P_{s,t}}_{V_t}}/{(V_s(x)+V_s(y))}.\label{def-Dob-V-P}\
\end{eqnarray}
When  $V_s=V_t = V$, we write $ \beta_{V}(P_{s,t})$ instead of $ \beta_{V, V}(P_{s,t})$. For a more thorough discussion on these contraction coefficients, we refer the reader to sections 8.2.5 - 8.2.7  in \cite{dmpenev}.  If in addition $V=1/2$ we write  $ \beta(P_{s,t})$ instead of  $ \beta_{1/2}(P_{s,t})$, to denote
 the conventional Dobrushin ergodic coefficient with respect to the total variation distance
 $$
\beta(P_{s,t})= \sup_{(x,y)\in E^2}\Vert\delta_xP_{s,t}-\delta_yP_{s,t}\Vert_{\tiny tv}.
 $$ 
By (\ref{ref-coupling-tv}) the contraction condition $\beta(P_{s,t})\leq (1-\alpha)$ is satisfied for some parameter $\alpha\in ]0,1]$ {\rm if and only if} the following minorisation condition holds
$$
 \forall (x,y)\in E^2\quad \exists \nu\in \Pa(E)~:~\delta_xP_{s,t}\geq \alpha~\nu\quad\mbox{\rm and}\quad \delta_yP_{s,t}\geq \alpha~\nu.
$$
The next lemma provides some contraction conditions in terms of a Foster-Lyapunov inequality and a local minorisation condition on compact level sets.
The proof is in appendix \ref{app:lem-beta-12}.
\begin{lem}\label{lem-beta-12}
Assume that there exist locally bounded functions $V_{s},V_t \geq 1$ with compact level sets, $\epsilon\in [0,1[$ as well as a function $\alpha : r\in [r_0,\infty[\,\mapsto\alpha(r)\in \, ]0,1]$, for some $r_0\geq 1$ such that for any $r\geq r_0$ we have
\begin{equation}\label{PW-12}
 P_{s,t}(V_t)\leq \epsilon~ V_s+1\quad\mbox{and}\quad\sup_{V_s(x)\vee V_s(y)\leq r}\Vert\delta_xP_{s,t}-\delta_yP_{s,t}\Vert_{\tiny tv}\leq 1-\alpha(r).
\end{equation}
 {Then, for any $r>r_0\vee r_{\epsilon}$, with $r_{\epsilon}:=2/(1-\epsilon)$  we have the contraction coefficient estimate
\begin{equation}\label{def-alpha-delta}
\beta_{V_s^{\epsilon,r},V_t^{\epsilon,r}}(P_{s,t})\leq   1-\alpha_{\epsilon}(r)\quad\mbox{with}\quad\alpha_{\epsilon}(r):=\left(1-\epsilon\right)
\left(1-\frac{r_{\epsilon}}{r}\right)~\frac{\alpha(r)}{5}\in ]0,1[.
\end{equation}
In the above display, $V_u^{\epsilon,r}$ with $u\in \{s,t\}$ stands for the functions defined by
\begin{equation}\label{PW-iota}
V_u^{\epsilon,r}:=1/2+\rho_{\epsilon}(r)~V_u
\quad
\mbox{and}\quad\rho_{\epsilon}(r):=\frac{1}{1+\epsilon}\frac{\alpha(r)}{2r}.
\end{equation}
}
\end{lem}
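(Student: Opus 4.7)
The plan is to unfold the $V$-Dobrushin coefficient via its defining formula (\ref{def-Dob-V-P}) and control $|\delta_xP_{s,t}-\delta_yP_{s,t}|(V_t^{\epsilon,r})$ pointwise in $(x,y)$, exploiting the identity $\vertiii{\mu}_W=|\mu|(W)$ for signed measures with $\mu(E)=0$ and $W\ge 1/2$ (recalled just after (\ref{def-Dob-V-P})). Abbreviating $\rho:=\rho_\epsilon(r)$, $W_u:=V_u^{\epsilon,r}=1/2+\rho V_u$, $s:=V_s(x)+V_s(y)$, and $\alpha:=\alpha(r)$, the task is to establish $|\delta_xP_{s,t}-\delta_yP_{s,t}|(W_t)\le(1-\alpha_\epsilon(r))(1+\rho s)$ uniformly in $(x,y)$, since the denominator in (\ref{def-Dob-V-P}) equals exactly $1+\rho s$. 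I would split according to whether $V_s(x)\vee V_s(y)$ exceeds $r$ or not: in the tail regime Foster-Lyapunov alone suffices, while in the compact regime one must combine Foster-Lyapunov with the local minorization.

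In the tail regime $V_s(x)\vee V_s(y)>r$, the triangle inequality $|\delta_xP-\delta_yP|(W_t)\le\delta_xP(W_t)+\delta_yP(W_t)$ together with $P_{s,t}(V_t)\le\epsilon V_s+1$ gives $|\delta_xP-\delta_yP|(W_t)\le 1+2\rho+\rho\epsilon s$. Matching this against the target $(1-\alpha_\epsilon)(1+\rho s)$ reduces to verifying $\alpha_\epsilon\le\rho(s(1-\epsilon)-2)/(1+\rho s)$ for every $s>r$. The hypothesis $r>r_\epsilon$ forces $r(1-\epsilon)>2$, so the right-hand side is positive and monotone increasing in $s$; its infimum at $s\to r^+$, computed using $\rho r=\alpha/(2(1+\epsilon))$ and the identity $r(1-\epsilon)-2=r(1-\epsilon)(1-r_\epsilon/r)$, evaluates to $\alpha(1-\epsilon)(1-r_\epsilon/r)/(2(1+\epsilon)+\alpha)$, which dominates $\alpha_\epsilon(r)$ because $2(1+\epsilon)+\alpha\le 5$.

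In the compact regime $V_s(x)\vee V_s(y)\le r$, the local Dobrushin hypothesis and (\ref{ref-coupling-tv}) produce a probability $\nu=\nu_{x,y}$ with $\delta_xP,\delta_yP\ge\alpha\nu$. Decomposing $\delta_zP=\alpha\nu+(1-\alpha)\mu_z$ for $z\in\{x,y\}$ with $\mu_z\in\Pa(E)$ and using $\nu(W_t)\ge 1/2$, the triangle bound sharpens to $|\delta_xP-\delta_yP|(W_t)\le\delta_xP(W_t)+\delta_yP(W_t)-2\alpha\nu(W_t)\le 1+2\rho+\rho\epsilon s-\alpha$. Here $s\le 2r$, so $1+\rho s\le 1+\alpha/(1+\epsilon)\le 2$ and $\alpha-2\rho=\alpha(1-1/(r(1+\epsilon)))\ge\alpha/2$ whenever $r>r_\epsilon$, and the required inequality $\alpha_\epsilon(1+\rho s)\le\rho s(1-\epsilon)+\alpha-2\rho$ then reduces to $2\alpha_\epsilon\le\alpha/2$, an immediate consequence of $\alpha_\epsilon\le\alpha/5$.

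The principal obstacle is the joint calibration of $\rho_\epsilon(r)$ and $\alpha_\epsilon(r)$: $\rho$ must be chosen small enough that the Foster-Lyapunov cost $2\rho$ cannot erase the minorization gain $\alpha$ in the compact regime, yet large enough that $\rho s$ delivers a contraction factor proportional to $\alpha$ in the tail regime. The choice $\rho r=\alpha/(2(1+\epsilon))$ is precisely what ties the two regimes together, while the universal constant $1/5$ appearing in $\alpha_\epsilon(r)$ is the worst-case upper bound on the denominator $2(1+\epsilon)+\alpha$ from the tail estimate, valid for all $\epsilon\in[0,1[$ and $\alpha\in\,]0,1]$.
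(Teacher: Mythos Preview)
Your argument is correct and follows the same Hairer--Mattingly scheme as the paper: split into a tail regime handled by Foster--Lyapunov alone and a compact regime where the local minorization is used, then calibrate $\rho_\epsilon(r)$ so that both regimes yield the same contraction factor. There are two minor cosmetic differences: the paper splits on $V_s(x)+V_s(y)\gtrless r$ rather than on $V_s(x)\vee V_s(y)\gtrless r$, and in the compact case the paper bounds $\vertiii{\delta_xP-\delta_yP}_{W_t}$ by decomposing $W_t=1/2+\rho V_t$ into its total-variation and $V_t$ parts (using $P_{s,t}(V_t)/V_s\le 1+\epsilon$) instead of invoking the coupling measure $\nu$ explicitly, but the resulting estimates and the choice of $\rho_\epsilon(r)$ coincide.
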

 {Under the assumptions of lemma~\ref{lem-beta-12}, using (\ref{beta-V-norm}) for any  $\mu,\eta\in \Pa_{V_s}(E)$ we readily check the contraction estimate
\begin{equation}\label{ref-V-contraction}
\vertiii{ \mu P_{s,t}- \eta P_{s,t}}_{V_t^{\epsilon,r}}\leq   \left(1-\alpha_{\epsilon}(r)\right)~ \vertiii{\mu-   \eta}_{V^{\epsilon,r}_{s}}.
\end{equation}
with the parameter $\alpha_{\epsilon}(r)$ defined in (\ref{def-alpha-delta}).
Whenever $P_{s,t}$ is a Markov semigroup the Foster-Lyapunov inequality, on the left-hand side of (\ref{PW-12}), applied to some time homogeneous function $V_t=V$ ensures that for any initial probability measure $\mu\in \Pa_{V}(E)$, the distribution flow  $\mu_t:=\mu P_{0,t}$ indexed by $t\in [0,\infty[_{\tau}$ for some $\tau>0$ is tight. To check this claim, notice that
\begin{equation}\label{ref-Markov-Lyap}
(\ref{PW-12}) \Longrightarrow
\mu_t(V)\leq \epsilon^{t/\tau} \mu(V)+(1-\epsilon)^{-1}\leq \mu(V)+(1-\epsilon)^{-1}.
\end{equation}
By the Markov inequality, for any $\overline{\epsilon}\in ]0,1[$ this implies that
\begin{equation}\label{ref-Markov-ineq-compact}
K_{\overline{\epsilon}}:=
\{x \in E: \overline{\epsilon} V(x)\leq \mu(V)+(1-\epsilon)^{-1}\}\Longrightarrow
\sup_{t\in [0,\infty[_{\tau}}\mu_{t}\left(E-K_{\overline{\epsilon}}\right)\leq  \overline{\epsilon}.
\end{equation}
For continuous time models, for any $\tau>0$ we assume that
\begin{equation}\label{def-c-tau-P}
\pi_{\tau}(V):=\sup_{s\geq 0}\sup_{\delta\in [0,\tau[}\Vert P_{s,s+\delta}(V)/V\Vert<\infty \quad\Longrightarrow\quad
\sup_{t\geq 0}\mu P_{0,t}(V)<\infty.
\end{equation}
We check the right-hand side assertion in the above display using for any $t=[0,\tau[$  the estimate
  $$
  \mu P_{0,n\tau+t}(V)=\mu P_{0,n\tau}(P_{n\tau,n\tau+t}V)\leq \pi_{\tau}(V)~\mu P_{0,n\tau}(V).
  $$}
 {
We further assume $P_{s,t}$ is a continuous time semigroup and (\ref{PW-12}) is met for any $s\geq 0$ and $t=s+\tau$, for some parameter $\epsilon\in [0,1[$, some function  $\alpha : r\in [r_0,\infty[\,\mapsto\alpha(r)\in \, ]0,1]$, and some function $V=V_s=V_{t}$ that may depends on some given parameter $\tau>0$.
In this situation, iterating (\ref{ref-V-contraction}) for any $n\geq 0$ and any  $\mu,\eta\in \Pa_{V}(E)$ we have
\begin{equation}\label{ref-V-contraction-n}
\vertiii{ \mu P_{s,s+n\tau}- \eta P_{s,s+n\tau}}_{V^{\epsilon,r}}\leq   \left(1-\alpha_{\epsilon}(r)\right)^n~ \vertiii{ \mu-   \eta}_{V^{\epsilon,r}}
\end{equation}
with the function
$$
V^{\epsilon,r}:=1/2+\rho_{\epsilon}(r)~V.
$$
Observe that  for any $s\geq 0$ and $t=[0,\tau[$ we have
$$
\vertiii{ \mu P_{s,s+t}-   \eta P_{s,s+t}}_{V^{\epsilon,r}}\leq (1\vee \pi_{\tau}(V))~\vert\mu-\eta\vert(V^{\epsilon,r})= (1\vee \pi_{\tau}(V))~\vertiii{ \mu-   \eta }_{V^{\epsilon,r}}
$$
with the parameter $\pi_{\tau}(V)$ defined in (\ref{def-c-tau-P}).
This yields the estimate
$$
\vertiii{ \mu P_{s,s+t}- \eta P_{s,s+t}}_{V^{\epsilon,r}}\leq   (1\vee \pi_{\tau}(V))~\left(1-\alpha_{\epsilon}(r)\right)^{\lfloor t/\tau\rfloor}~ \vertiii{\mu -   \eta }_{V^{\epsilon,r}}.
$$}

 For time homogeneous semigroups $P_t:=P_{s,s+t}$  the above estimate ensures the existence of a single invariant probability measure $\mu_{\infty}=\mu_{\infty}P_{t}\in \Pa_V(E)$.  \textcolor{black}{The analysis discussed above combines the ergodic theory for Markov operators presented in~\cite{hairer-mattingly} with  the $V$-Dobrushin contraction methodology developed in~\cite{dmpenev}.
  Alternative contraction approaches mainly based on~\cite{hairer-mattingly} are discussed in~\cite{bansaye-2,champagnat-5,marguet,meyn-tweedie}.}

\subsection{Quasi-compact operators}\label{quasi-compact-sec}
Next, we recall some standard definitions and compactness principles from time homogeneous positive semigroup theory.
Let $(Q_{t})_{t \ge 0}$ be a time homogenous positive semigroup  from $\Ba_{V}(E)$ into $\Ba_{V}(E)$. Then $Q_t$ is said to be irreducible (a.k.a.~ideal irreducible) if there exists no closed $Q_t$-invariant ideals distinct from $\{0\}$ and $\Ba_V(E)$ (cf.~\cite[Definition 4.2.1]{peter-meyer}). It is also well known (e.g.~\cite[Proposition 2.1]{drnovsek} or \cite[Proposition 4.1]{gao}) that this condition is met if and only if for any non zero function $f\geq 0$ on $\Ba_V(E)$ and any non zero positive measure $\mu $ on $E$, there exists some $t\in \Ta$ such that
$
 \mu Q_t (f)>0
$.

The spectral radius of $(Q_t)_{t \ge 0}$ is defined as
 \begin{equation}\label{positive-condition-Q-def}
r_{V}(Q):=\lim_{t\rightarrow\infty}\vertiii{Q_t}^{1/t}_{V}=\lim_{t\rightarrow\infty}\Vert Q_t(V)\Vert_V^{1/t}=\inf_{t\geq 0}\Vert Q_t(V)\Vert_V^{1/t}.
\end{equation} 
The semigroup $Q_t$   is said to be quasi-compact if its essential spectral radius 
$$
\overline{r}_V(Q):=\lim_{t\rightarrow\infty}\left(\inf{\left\{\vertiii{Q_t-T}_V~:~T\quad\mbox{\rm compact}\right\}}\right)^{1/t}
$$
satisfies $\overline{r}_V(Q)<r_V(Q)$.
Recalling that the product of positive operators $Q_1Q_2$ is compact as soon as the $Q_1$ is compact (cf.~\cite[Theorem VI.12]{reed-simon}),
the quasi-compactness property of the operator $Q_{t}$ (for sufficiently large $t$) is clearly met  as soon $Q_{\tau}$ is a compact operator for some $\tau\in\Ta$. Such one-parameter semigroups are sometimes called eventually compact semigroups (see \cite[Section 3]{engel}).

Now assume there exists some $\tau\in \Ta$ such that the discrete generation semigroup $Q_{t}$  indexed by $t\in [0,\infty[_{\tau}$ is irreducible and quasi-compact on $\Ba_V(E)$.
By a variant of the Krein-Rutman theorem (cf. for instance Theorem 1.1 in~\cite{du,nussbaum}), $r_{V}(Q)>0$ is an eigenvalue  corresponding to a non null eigenfunction $h\in \Ba_V(E)$.  More precisely, there exists some a  non null function  $h\in \Ba_{V}(E)$ such that
$$
\forall t\in [0,\infty[_{\tau}\qquad 
Q_t(h)=e^{\rho t}~h\quad \mbox{\rm with}\quad
 \rho:=\log{r_V(Q)}.$$
Note that since $Q_{\tau}$ is irreducible, for any $x\in E$  there exists some   $t\in [0,\infty[_{\tau}$ such that
$$
Q_t (h)(x)=e^{\rho t}~h(x)>0.
$$
Whenever  $Q_{\tau}(\Ba_{V}(E))\subset\Ca_{0,V}(E)$, the function $h$ belongs to $\Ca_{0,V}(E)$.

Sufficient conditions in terms of the Lyapunov functions $V\geq 1$ ensuring the compactness of $Q_t$ for some $t\in \Ta$ are discussed in~\cite{ferre,bellet}.
For instance, we have the following  lemma, the proof of which is housed 
in the appendix on page~\pageref{lem-eventually-compact-proof}..

\begin{lem}\label{lem-eventually-compact}
Assume that 
$Q_{\tau}(V)/V\in \Ba_{0}(E)$ for some $\tau>0$. In addition, assume that for any compact set $K\subset E$  the operator $Q_{\tau}^{K}(f):=1_KQ_{\tau}(1_K f)$ is compact on $\Ba_V(E)$. 
In this situation, for any $t\in \Ta$ with $t\geq \tau$, the operator  $Q_t$ is a compact operator from $\Ba_V(E)$ into itself.
\end{lem}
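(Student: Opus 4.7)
The plan is to establish compactness of $Q_\tau$ on $\Ba_V(E)$ first; compactness of $Q_t$ for $t>\tau$ then follows from the semigroup identity $Q_t=Q_\tau Q_{t-\tau}$ together with the boundedness of $Q_{t-\tau}$ on $\Ba_V(E)$ (which is a consequence of iterating the pointwise bound $Q_\tau(V)\leq MV$ with $M:=\Vert Q_\tau(V)/V\Vert$), since the composition of a compact operator with a bounded one is compact.

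For the main step, I would exhibit $Q_\tau$ as the operator-norm limit of the compact operators $Q_\tau^{K_m}$, with $K_m:=\{V\leq r_m\}$ an exhausting sequence of compact sublevel sets ($r_m\uparrow\infty$, compactness coming from $V\in\Ba_\infty(E)$). Because the compact operators form a norm-closed subspace of the bounded operators on $\Ba_V(E)$, it suffices to prove $\vertiii{Q_\tau - Q_\tau^{K_m}}_V\to 0$ as $m\to\infty$. The disjoint decomposition $1=1_{K_m}+1_{K_m^c}$ gives
\begin{equation*}
(Q_\tau-Q_\tau^{K_m})(f)(x)=1_{K_m^c}(x)\,Q_\tau(f)(x)+1_{K_m}(x)\,Q_\tau(1_{K_m^c}f)(x),
\end{equation*}
which, for any $f\in\Ba_V(E)$ with $\Vert f\Vert_V\leq 1$, yields
\begin{equation*}
\vertiii{Q_\tau - Q_\tau^{K_m}}_V\leq\max\!\left(\sup_{K_m^c}\frac{Q_\tau(V)}{V},\ \sup_{K_m}\frac{Q_\tau(1_{K_m^c}V)}{V}\right).
\end{equation*}

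The first piece goes to $0$ as $m\to\infty$ directly from $Q_\tau(V)/V\in\Ba_0(E)$. The second piece is the core difficulty: pointwise, $Q_\tau(1_{K_m^c}V)(x)\downarrow 0$ by dominated convergence (since $Q_\tau(V)(x)<\infty$), but uniformity across the growing sets $K_m$ needs additional work. I would split, for $m_0\leq m$,
\begin{equation*}
\sup_{K_m}\frac{Q_\tau(1_{K_m^c}V)}{V}\leq\sup_{K_{m_0}^c}\frac{Q_\tau(V)}{V}+\sup_{K_{m_0}}\frac{Q_\tau(1_{K_m^c}V)}{V},
\end{equation*}
and let $m\to\infty$ followed by $m_0\to\infty$; the first summand is controlled by the hypothesis on $Q_\tau(V)/V$. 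The main obstacle is the uniform convergence of $Q_\tau(1_{K_m^c}V)\to 0$ on the fixed compact $K_{m_0}$; I expect to establish it via a Dini-type argument, showing that $x\mapsto Q_\tau(1_{K_m^c}V)(x)$ is upper semicontinuous on $K_{m_0}$ (extracting the needed regularity of $Q_\tau$ from the compactness hypothesis on $Q_\tau^K$), whereupon the pointwise decrease to $0$ of a decreasing sequence of nonnegative u.s.c.~functions on a compact set upgrades to uniform convergence.
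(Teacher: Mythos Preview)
Your Dini step is where the argument breaks down. Compactness of $Q_\tau^K$ on $\Ba_V(E)$ is a purely norm--topological statement about the operator; it carries no pointwise regularity of the kernel, and there is no mechanism by which it forces $x\mapsto Q_\tau(1_{K_m^c}V)(x)$ to be upper semicontinuous on $K_{m_0}$. The lemma makes no Feller-type assumption, so in general $Q_\tau$ need not improve measurability, and without u.s.c.\ the monotone pointwise decrease $Q_\tau(1_{K_m^c}V)\downarrow 0$ cannot be upgraded to uniform convergence on compacts. Your bound on the ``inner truncation'' piece $\sup_{K_m}Q_\tau(1_{K_m^c}V)/V$ therefore stalls, and with it the claim $\vertiii{Q_\tau-Q_\tau^{K_m}}_V\to 0$.

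The paper avoids this obstacle by \emph{not} attempting to prove that $Q_\tau$ itself is close to $Q_\tau^{K}$ in operator norm. It writes $Q_{t+\tau}=Q_\tau Q_t$ and approximates directly by the compact operator $Q_\tau^{K}Q_t$ (compact composed with bounded), decomposing the difference as
\[
Q_{t+\tau}-Q_\tau^K Q_t \;=\; Q_\tau R_t^{K^c}+R_\tau^{K^c}R_t^K,\qquad R_u^{K^c}(f):=1_{K^c}\,Q_u(f).
\]
Both pieces are governed by \emph{outer} truncations: $\vertiii{R_\tau^{K^c}R_t^K}_V\le \Vert 1_{K^c}\,Q_\tau(V)/V\Vert\cdot\vertiii{Q_t}_V$ and $\vertiii{Q_\tau R_t^{K^c}}_V\le \vertiii{Q_\tau}_V\cdot\Vert 1_{K^c}\,Q_t(V)/V\Vert$. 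The first norm tends to $0$ by hypothesis, and (for $t\ge\tau$) so does the second, since $Q_t(V)\le \vertiii{Q_{t-\tau}}_V\,Q_\tau(V)$ keeps $Q_t(V)/V\in\Ba_0(E)$. The conceptual point you are missing is that by inserting an extra semigroup factor before truncating, the indicator $1_{K^c}$ always lands \emph{outside} a $Q_u$, so only controllable quantities $\Vert 1_{K^c}\,Q_u(V)/V\Vert$ appear --- never the intractable $\Vert Q_\tau(1_{K^c}V)/V\Vert$ that your direct approximation of $Q_\tau$ is forced to confront.
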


\begin{rmk}\label{rmk-compact-Q-K}
The condition $Q_{\tau}(V)/V\in \Ba_{0}(E)$ allows one to localise the operators on compact sets.
The compactness condition of the semigroup $Q^K_{\tau}$ is readily checked for absolutely continuous operators of the form   
\begin{equation}\label{ref-continuous-Q}
Q_{\tau}(x,dy)=q_{\tau}(x,y)~\nu_{\tau}(dy)
\end{equation}
where $q_{\tau}(x,y)$ is a continuous density with respect to some Radon measure $\nu_{\tau}$ on $E$.  The proof of the above assertion 
is rather standard. For completeness,  it is provided in the appendix on page~\pageref{ref-continuous-Q-proof}.

 The above models encapsulate
Markov transitions restricted to a compact set $K\subset \RR^r$, for some $r\geq 1$, defined by
$$
Q_{\tau}^K(x,dy):=1_K(x)~p_{\tau}(x,y)~1_K(y)~dy
$$
where $p_{\tau}(x,y)$ is a continuous probability transition density with respect to the Lebesgue measure $dy$  on $\RR^r$.
 For a more thorough discussion on this class of compact integral operators we refer to~\cite{sloan} and the more recent article~\cite{castro}.
\end{rmk}

\begin{rmk}\label{rmk-countable}
Now assume that $E$ is a countable space. In this situation, whenever $V=1$ we can also use the following equivalence principle (see for instance Theorem 2.1 in~\cite{zucca})
$$
Q_{\tau}\mbox{ is compact on $\Ba(E)$}\Longleftrightarrow \forall \epsilon>0~\exists K_{\epsilon~}\mbox{ finite s.t. }~
\sup_{x\in E}Q_{\tau}(1_{E-K_{\epsilon}})(x)\leq \epsilon.
$$
 More generally, assume that $Q_{\tau}(V)/V\in \Ba_0(E)$,  In this situation we have
\begin{equation}\label{compact-countable-V}
Q_{t}\quad\mbox{compact on $\Ba_V(E)$}\Longleftrightarrow \forall \epsilon>0\quad \exists K_{\epsilon}\quad\mbox{finite}\quad \mbox{\rm s.t.}\quad
\Vert Q_{t}(1_{K^c_{\epsilon}}V)\Vert_V\leq \epsilon.
\end{equation}
 The proof of the above assertion is provided in the appendix on page~\pageref{compact-countable-V-proof}.

\end{rmk}
Whenever $E$ is compact, by a theorem  of de Pagter (cf.~\cite[Theorem 4.2.2]{peter-meyer}), a compact and irreducible positive operator $Q_{t}$ on $\Ca(E)$ has a positive spectral radius
$r(Q_{t})>0$, while its essential spectral radius is null. Applying the Krein-Rutman theorem (cf.~\cite[Theorem 1.1]{du,nussbaum}),
there exists some non-negative and non-zero measure $\nu_{\infty}\in\Ma(E)=\Ca(E)^{\prime}$ and  a non-negative and non-zero function $h\in \Ca(E)$ such that
\begin{equation}\label{eta-infty-fixed-point}
\nu_{\infty}Q_{t}=e^{\rho t}~\nu_{\infty}\quad \mbox{\rm and}\quad 
Q_{t}(h)=e^{\rho t}~h\quad \mbox{\rm with}\quad \rho=\log{r(Q_{1})}.
\end{equation}
This yields the fixed point equation
$$
\eta_{\infty}:=\nu_{\infty}/\nu_{\infty}(1)=\Phi_{t}(\eta_{\infty}).
$$

Several sufficient conditions in terms of the Lyapunov functions $V\geq 1$ ensuring the quasi-compactness properties of $Q_{\tau}$ are discussed in~\cite{ferre,hennion-2007,bellet}, see also Lemma~\ref{lem-eventually-compact} and Remark~\ref{rmk-compact-Q-K} in the present article. 
For a more detailed discussion on quasi-compact and compact operators we refer to~\cite{engel,nussbaum,peter-meyer,reed-simon} and references therein.

\section{Some classes of positive semigroups}\label{all-sg-sec}
\subsection{Triangular array semigroups}\label{R-sg-sec}

 \textcolor{black}{Fix a measure $\overline{\eta}_0\in \Pa(E)$ and  some locally bounded positive functions $H>0$.} We associate with these objects the
 functions $H_{s,t}$ and the normalised semigroups $\overline{Q}_{s,t}$ defined for any $s\leq t$ by the formulae
\begin{equation}\label{def-over-Q-H}
H_{s,t}:=\overline{Q}_{s,t}(H)\quad\mbox{\rm with}\quad
\overline{Q}_{s,t}(f):={Q_{s,t}(f)}/{\overline{\eta}_sQ_{s,t}(1)}\quad
\quad\mbox{\rm and}\quad
\overline{\eta}_s=\Phi_{0,s}(\textcolor{black}{\overline{\eta}_0}).
\end{equation}
From the definitions, for any $s\leq u\leq t$ it also follows that
\begin{equation}\label{def-over-Q-H-2}
\overline{\eta}_s\overline{Q}_{s,t}=\overline{\eta}_t\quad\mbox{\rm and}\quad
\overline{Q}_{s,u}\overline{Q}_{u,t}=\overline{Q}_{s,t},
\end{equation}
as well as
$$
\overline{\eta}_s(H_{s,t})=\overline{\eta}_t(H)~~\mbox{\rm and}~~H_{t,t}=H.
$$
For any $s\leq u\leq v\leq t$, we also have
$$
Q_{s,v}(H_{v,t})= \lambda_{s,v}~H_{s,t}
\quad\mbox{\rm
with}\quad \lambda_{s,v}:=\overline{\eta}_sQ_{s,v}(1)=
\lambda_{s,u}~\lambda_{u,v}.
$$
Observe that
\begin{equation}
\lambda^-(\overline{\eta}_0)\leq \lambda^-:=\inf_{t\geq 0}\overline{\eta}_t(Q_{t,t+\tau}(1))\leq \lambda:=\sup_{t\geq 0}\overline{\eta}_t(Q_{t,t+\tau}(1))\leq \lambda(\overline{\eta}_0).
\label{inf-sup}
\end{equation}
with the parameters $\lambda^-(\overline{\eta}_0)$ and $\lambda(\overline{\eta}_0)$ introduced in Definition~\ref{def-parameters}.
\begin{defi}
We define the triangular array of Markov operators $R^{(t)}_{u,v}$, indexed by the parameter $t\geq 0$, for any $0\leq u\leq v\leq t$ and  $f\in\Ba_b(E)$  by
\begin{equation}\label{def-R-intro}
R^{(t)}_{u,v}(f):=\frac{Q_{u,v}(f~ Q_{v,t}(H))}{Q_{u,v}(Q_{v,t}(H))}=\frac{1}{\lambda_{u,v}~H_{u,t}}~Q_{u,v}(H_{v,t}f).
\end{equation}
\end{defi}
For any given time horizon $t\geq 0$ and any $s\leq u\leq v\leq t$,  we have  the semigroup property
 $$
 R^{(t)}_{s,v}=R^{(t)}_{s,u}R^{(t)}_{u,v}\quad\mbox{\rm and}\quad  R^{(t)}_{s,s}=I.
 $$

Several remarks are of interest here:
\begin{enumerate} 
 \item The stability properties of these stochastic models play a crucial role in the analysis of positive operators.  
{The use of these triangular arrays of Markov semigroups in the stability analysis of the time varying positive semigroups has been considered in~\cite{dm-2000,dg-ihp,dg-cras,dm-sch-2} \textcolor{black}{and more recently in~\cite{champagnat-7}} in the context of Feynman-Kac semigroups and when $H=1$. We also refer the reader to chapter 4 in~\cite{dm-04}, and chapter 12 in~\cite{dm-13} for a systematic analysis of the contraction properties of these semigroups with respect to the total variation norm.} 
  
  \item \textcolor{black}{The same class of triangular array semigroups associated with some positive function $H$ are also discussed in the article~\cite{bansaye-2}  in the context of time-homogeneous sub-Marko\-vian models, under 
  different set of regularity conditions. We shall discuss these conditions in section~\ref{sec-comparisons}.}
  
  \item {The interpretation of the triangular array semigroups discussed above depends on the application. For time homogeneous semigroups, the semigroups associated with the positive eigenstate $H=h$} of the \textcolor{black}{semigroup $Q_{t}$} coincides with the semigroup of the so-called $h$-process, see for instance (\ref{rho-h}) and (\ref{appli-h-Hst}). {In the context of nonlinear filtering with $H=1$, these semigroups represent the forward evolution of the optimal smoother on some observation time interval~\cite{dm-2000,dg-ihp,dg-cras}. In branching processes theory, these semigroups reflect the evolution of an auxiliary process often used to describe the ancestral lineage of an individual~\cite{bansaye-3,marguet}. The link between these seemingly disconnected subjects is the so-called many-to-one formula connecting the first moment of the occupation measure of a spatial branching process with a Feynman-Kac semigroup (see for instance~\cite{bansaye-3,bertoin,marguet}, as well as \cite[section 1.4.4]{dm-04} and \cite[section 28.4]{dmpenev}).}
\end{enumerate}

We now provide some important Markov transport  formulae relating the semigroups introduced so far.  The first formula connecting the  triangular array semigroup discussed above with the flow of measures $\eta_u=\Phi_{s,u}(\eta_s)$  is given for any $s\leq u\leq t$ by   
$$
\Psi_{H_{s,t}}\left(\eta_s\right)R^{(t)}_{s,u}(f)=\frac{\eta_s(Q_{s,u}(H_{u,t}f))}{\eta_s(Q_{s,u}(H_{u,t}))}=\frac{\eta_u(H_{u,t}f)}{\eta_u(H_{u,t})}.
$$
This yields  the formula
$$
\Psi_{H_{s,t}}\left(\eta_s\right)R^{(t)}_{s,u}=\Psi_{H_{u,t}}\left(\eta_u\right).
$$
Choosing $(u,\eta_s)=(t,\eta)$ so that $\eta_t=\Phi_{s,t}(\eta)$ in the above display, we obtain the next lemma.
\begin{lem}
For any $s\leq t$ and any $\eta\in \Pa(E)$ we have
\begin{equation}\label{link-s-t-Q-intro}
\Psi_{H}\left(\Phi_{s,t}(\eta)\right)=
\Psi_{H_{s,t}}(\eta)R^{(t)}_{s,t}\quad \mbox{and}\quad
\eta_s(H_{s,t})~\Psi_{H_{s,t}}\left(\eta_s\right)(1/H_{s,t})=1,
\end{equation}
with the flow of probability measures given for any $0\leq s\leq u\leq t$ by the formulae
$$
\eta_u:=\Phi_{0,u}(\eta_0)=\Phi_{s,u}(\eta_s)\quad \mbox{and}\quad \Psi_{H_{s,t}}\left(\eta_s\right)R^{(t)}_{s,u}=\Psi_{H_{u,t}}\left(\eta_u\right).
$$
\end{lem}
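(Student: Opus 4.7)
The plan is to unfold the definitions and exploit the fact that $H_{s,t}$ differs from $Q_{s,t}(H)$ only by the scalar $\lambda_{s,t}=\overline{\eta}_sQ_{s,t}(1)$, so any Boltzmann--Gibbs transformation built from $H_{s,t}$ coincides with the one built from $Q_{s,t}(H)$. The semigroup identity $Q_{s,t}=Q_{s,u}Q_{u,t}$ together with $\lambda_{s,t}=\lambda_{s,u}\lambda_{u,t}$ gives the companion relation
$$
H_{s,t}=Q_{s,u}(H_{u,t})/\lambda_{s,u},
$$
which is the workhorse of the whole proof. I would tackle the three statements in the following order.

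First, the normalisation identity $\eta_s(H_{s,t})\,\Psi_{H_{s,t}}(\eta_s)(1/H_{s,t})=1$ is immediate from the definition of $\Psi$: indeed $\Psi_{H_{s,t}}(\eta_s)(1/H_{s,t})=\eta_s(H_{s,t}\cdot 1/H_{s,t})/\eta_s(H_{s,t})=1/\eta_s(H_{s,t})$. Second, for the left-hand equality in (\ref{link-s-t-Q-intro}), specialising $(u,v)=(s,t)$ in (\ref{def-R-intro}) and using $H_{t,t}=H$ gives $R^{(t)}_{s,t}(f)(x)=Q_{s,t}(Hf)(x)/Q_{s,t}(H)(x)$. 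Since $\Psi_{H_{s,t}}(\eta)(dx)$ is proportional to $Q_{s,t}(H)(x)\,\eta(dx)$, the density $Q_{s,t}(H)$ cancels with the denominator of $R^{(t)}_{s,t}$, leaving
$$
[\Psi_{H_{s,t}}(\eta)R^{(t)}_{s,t}](f)=\frac{\eta Q_{s,t}(Hf)}{\eta Q_{s,t}(H)}.
$$
On the other hand, by the definition (\ref{def-Phi-s-t-intro}) of $\Phi_{s,t}$, the normalising factor $\eta Q_{s,t}(1)$ cancels in
$$
\Psi_H(\Phi_{s,t}(\eta))(f)=\frac{\Phi_{s,t}(\eta)(Hf)}{\Phi_{s,t}(\eta)(H)}=\frac{\eta Q_{s,t}(Hf)}{\eta Q_{s,t}(H)},
$$
matching the previous display.

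Third, the transport identity $\Psi_{H_{s,t}}(\eta_s)R^{(t)}_{s,u}=\Psi_{H_{u,t}}(\eta_u)$ is obtained by the same mechanism at intermediate times. Rewriting (\ref{def-R-intro}) in the form $R^{(t)}_{s,u}(f)(x)=Q_{s,u}(fH_{u,t})(x)/Q_{s,u}(H_{u,t})(x)$, the workhorse identity $Q_{s,u}(H_{u,t})=\lambda_{s,u}H_{s,t}$ makes the $H_{s,t}$ weight cancel against the denominator of $R^{(t)}_{s,u}$. This produces
$$
[\Psi_{H_{s,t}}(\eta_s)R^{(t)}_{s,u}](f)=\frac{\eta_sQ_{s,u}(fH_{u,t})}{\lambda_{s,u}\,\eta_s(H_{s,t})}=\frac{\eta_sQ_{s,u}(fH_{u,t})}{\eta_sQ_{s,u}(H_{u,t})},
$$
after which the definition $\eta_u=\Phi_{s,u}(\eta_s)$ gives $\eta_sQ_{s,u}(\cdot)=\eta_sQ_{s,u}(1)\,\eta_u(\cdot)$ in both numerator and denominator, the factor $\eta_sQ_{s,u}(1)$ cancels, and what remains is exactly $\eta_u(fH_{u,t})/\eta_u(H_{u,t})=\Psi_{H_{u,t}}(\eta_u)(f)$.

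There is really no obstacle here: the lemma is a bookkeeping exercise on the product formula (\ref{product}) and the Boltzmann--Gibbs transform (\ref{def-Psi-H}). The only pitfall worth flagging is that $\lambda_{s,t}$ and $\overline{\eta}_s$ are defined through the fixed reference measure $\overline{\eta}_0$, independently of the probability $\eta$ that appears in the lemma; this is why these scalars can be freely pulled out of the numerator and denominator of $\Psi_{H_{s,t}}(\eta)$ without any compatibility assumption between $\eta$ and $\overline{\eta}_0$.
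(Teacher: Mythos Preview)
Your proof is correct and uses exactly the same mechanism as the paper: expand the definitions of $\Psi_{H_{s,t}}$ and $R^{(t)}_{s,u}$, cancel the $H_{s,t}$ weight against the denominator $Q_{s,u}(H_{u,t})=\lambda_{s,u}H_{s,t}$, and then pass from $\eta_sQ_{s,u}$ to $\eta_u$ via $\Phi_{s,u}$. The only organisational difference is that the paper proves the transport identity $\Psi_{H_{s,t}}(\eta_s)R^{(t)}_{s,u}=\Psi_{H_{u,t}}(\eta_u)$ first and then reads off the left-hand identity in (\ref{link-s-t-Q-intro}) as the special case $(u,\eta_s)=(t,\eta)$, whereas you prove the two separately; your second and third steps are therefore the same computation done twice.
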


\subsection{A class of R-semigroups}\label{def-R-sg-sec}
In the further development of this section,  $R^{(t)}_{u,v}$ is the triangular array of Markov operators defined in (\ref{def-R-intro}) with  $H=1$.
\begin{defi}
We say that $Q_{s,t}$ is an  $R$-semigroup as soon as  
 there exists  some parameters $\tau>0$ and  $\epsilon_{\tau}\in ]0,1[$ s.t. for any  $s+\tau\leq t$  we have
\begin{equation}\label{ref-H1-2}
\beta\left(R^{(t)}_{s,s+\tau}\right):=\sup_{(x,y)\in E^2}\left\Vert \delta_xR^{(t)}_{s,s+\tau}-\delta_yR^{(t)}_{s,s+\tau}\right\Vert_{\tiny tv}\leq 1-\epsilon_{\tau}.
\end{equation}
\end{defi}
The parameter $\beta\left(R^{(t)}_{s,s+\tau}\right)$ defined above is called the Dobrushin ergodic coefficient of the Markov transition $R^{(t)}_{s,s+\tau}$.
The above condition  is satisfied  {\em if and only if} the follo\-wing   condition holds
\begin{equation}\label{beta-P-nu-2}
\begin{array}{l}
\forall s\geq 0\quad \forall t\geq s+\tau\quad   \forall (x,y)\in E^2~\\
 \\
 \displaystyle\exists \nu\in \Pa(E)\quad \mbox{such that}\quad\delta_xR^{(t)}_{s,s+\tau}\geq \epsilon_{\tau}~\nu\quad\mbox{\rm and}\quad \delta_yR^{(t)}_{s,s+\tau}\geq \epsilon_{\tau}~\nu.
\end{array}
\end{equation}
Note that the measure $\nu$ in the above display may depend upon the parameters $(x,y)$ as well as $(s,t,\tau)$.

 For Markovian semigroups, this condition reduces to the well-known Dobrushin's condition~\cite{dobrushin-1}. In addition, when the measure $\nu$ in (\ref{beta-P-nu-2}) does not depend on the state variables $(x,y)$, condition (\ref{beta-P-nu-2}) coincides with Doeblin's condition~\cite{doeblin}.

\begin{rmk}\label{rmk-Aa-intro}
The condition $(\Aa)$ discussed in section~\ref{contribution-statements-sec} ensures that for any $x_1,x_2\in E$ and $t\in \Ta$ we have
\begin{eqnarray}\label{ref-Q-x-y}
\iota(\tau)~ Q_{t,t+\tau}(x_1,dy)\leq Q_{t,t+\tau}(x_2,dy)\quad \mbox{\rm with}\quad
\iota(\tau):={\iota^-(\tau)}/{\iota^+(\tau)}.
\end{eqnarray}
Thus, (\ref{ref-H1-2}) is met with $\epsilon_{\tau}:=\iota(\tau)^2$. In addition, for any $t\in \Ta$ and $s\in ]0,\infty[_{\tau}$ and $\epsilon\in [0,\tau]$ we also have
$$
0<\iota(\tau)\leq\frac{Q_{t,u+\epsilon}(1)}{\mu Q_{t,u+\epsilon}(1)}=\frac{Q_{t,u}(Q_{u,u+\epsilon}(1))}{\mu Q_{t,u}(Q_{u,u+\epsilon}(1))}\leq \frac{1}{\iota(\tau)}
\quad\mbox{with}\quad u:=t+s.
$$
Therefore, for any $\mu\in\Pa(E)$ we have the estimates
\begin{equation}\label{def-q-mu}
0<\iota(\tau)\leq q_{\tau}^-(\mu):=\inf\frac{Q_{t,t+s}(1)(x)}{\mu Q_{t,t+s}(1)}\leq q_{\tau}(\mu):=\sup\frac{Q_{t,t+s}(1)(x)}{\mu Q_{t,t+s}(1)}\leq 1/\iota(\tau)
\end{equation}
where the infimum and the supremum are taken over all  $x\in E$ and all pair of
indices $t\in \Ta$ and $s\in [0,\infty[_{\tau}$. For continuous time semigroups, the supremum in (\ref{def-q-mu}) can  be also be taken over all  continuous time indices
$s,t\geq 0$,
as soon as for any $\mu\in \Pa(E)$  we have the uniform local estimate
$$
\sup_{t\geq 0}\sup_{\epsilon\in [0,\tau]}\frac{\Vert Q_{t,t+\epsilon}(1)\Vert}{\mu Q_{t,t+\epsilon}(1)}<\infty.
$$ 
The above condition is clearly met as soon as (\ref{hyp-d-cont-A}) is satisfied.
\end{rmk}

 To the best of our knowledge, the  use of 
the triangular arrays $R^{(t)}_{s,u}$ and the extension of Dobrushin's contraction stability theory to time-varying and possibly random positive semigroups goes back to the 1990s with the article~\cite{dg-cras}, see also~\cite{dm-04,dm-13,dm-2000,dg-ihp,dm-sch-2}. 

Dobrushin's and Doeblin's conditions are popular conditions in probability theory. They are not always easily checked but are met for a large class of discrete or continuous time irreducible stochastic processes, mainly on compact domains.  For instance, in the context of continuous time sub-Markovian semigroups this condition is satisfied for elliptic diffusions on compact manifolds killed at a bounded rate, as well as elliptic diffusions killed at the boundary of a bounded domain, cf.~\cite[Proposition 3.1]{dm-sch-2} and \cite[Lemma 3]{dm-villemonais}. Combining the proof of \cite[Proposition 3.1]{dm-sch-2} with the two-sided estimates presented in \cite[Lemma 3.3]{bansaye}, one can check that (\ref{beta-P-nu-2}) is also met for sub-Markovian  semigroups  associated with reflected diffusions  on bounded connected domains. 

Condition (\ref{beta-P-nu-2}) is also met for absorbed time homogenous neutron transport processes for sufficiently smooth domains with an absorbing boundary, as well as for time varying multidimensional birth and death processes with mass extinction, see \cite[sections 4.1 \& 4.2]{champagnat}.   Further examples of sub-Markovian semigroups satisfying (\ref{beta-P-nu-2}) are discussed in~\cite{champagnat-6,champagnat-7,dm-04,dm-13,dm-doucet-04,dm-2000,dm-jasra-18}.
Sufficient conditions and further examples are discussed in section~\ref{sec-comparisons}.

\subsection{A class of $V$-positive semigroups}\label{uf-sg-sec}
 In the further development of this section $V\in \Ba_{\infty}(E)$ stands for some function such that $V_{\star}\geq 1$.
 \begin{defi}
 A semigroup of positive integral operators $Q_{s,t}$ on $\Ba_b(E)$ is called a
 $V$-positive semigroup  as soon as there exists some $\tau>0$ and some  function
 $\Theta_{\tau}\in \Ba_0(E)$ such that for any positive function $f\in\Ba_V(E)$ and any $0\leq s<t$ we have
 \begin{equation}\label{ref-V-theta-V-po}
 Q_{s,t}(f)\in \Ba_{0,V}(E)\quad \mbox{and}\quad Q_{s,s+\tau}(V)/V\leq \Theta_{\tau}. 
\end{equation}
 For continuous time semigroups, we assume that
(\ref{discrete-2-continuous-intro-K}) and (\ref{discrete-2-continuous}) are satisfied.

 \end{defi}

The  function $\Theta_{\tau}$ in condition (\ref{ref-V-theta-V-po}) is used to control, uniformly in time, the compact super-level sets of the time varying u.s.c. 
function $ Q_{t,t+\tau}(V)/V$ in terms 
of a time homogenous function $\Theta_{\tau}\in \Ba_0(E)$ that often depends on $V$. 
For instance, we can choose in (\ref{ref-V-theta-V-po}) a function $\Theta_{\tau}$ of the form $\theta_{\tau}(V)\in \Ba_0(E)$ for some  decreasing function $\theta_{\tau}:[1,\infty[\rightarrow\RR_+$.
 Indeed, for  any $\Theta_{\tau}\in \Ba_0(E)$ and $\epsilon>0$ there exists some  $\epsilon_1,\epsilon_2>0$  such that
\begin{equation}\label{def-V-epsilon}
\Va_{\epsilon}:= \{\Theta_{\tau}\geq \epsilon\}\subset \{V\leq \theta^{-1}(\epsilon_1)\}=\{\theta_{\tau}(V)\geq \epsilon_1\}\subset \Va_{\epsilon_2}.
\end{equation}
Similarly, for any $r>1$ there exists some $\epsilon>0$ and $r_1>1$ such that
\begin{equation}\label{def-V-epsilon-2}
\{V\leq r\} \subset \Va_{\epsilon} \subset \{V\leq r_1\}.
\end{equation}
 
The right-hand side~condition in (\ref{ref-V-theta-V-po}) ensures that for any $t\geq \tau$ and $s\geq 0$ we have
 $$
  Q_{s,s+t}(V)/V\leq  c_{s,t}(\tau) ~\Theta_{\tau}\in \Ba_0(E)\quad \mbox{with}\quad c_{s,t}(\tau):=\Vert Q_{s+\tau,s+t}(V)/V\Vert.
 $$
Condition (\ref{ref-V-theta-V-po}) ensures that  the normalized semigroup $\Phi_{s,t}$ maps $\Pa_V(E)$ into itself, and the right action operator $f\mapsto Q_{s,t}(f)$  maps $\Ba_V(E)$ into itself.

\subsection{A class of stable $V$-positive semigroups}\label{V-sg-sec}
\begin{defi}\label{ref-def-V-sg-intro}
A given $V$-positive semigroup $Q_{s,t}$ with respect to the Lyapunov function $V$ and some parameter $\tau>0$  is said to be stable  when the following conditions are satisfied:
\begin{itemize}
\item  There exists some  $\overline{\eta}_0\in \Pa_V(E)$  such that 
\begin{equation}\label{ref-V-over-eta}
\lambda^-(\overline{\eta}_0)>0\quad\mbox{and for any $\mu\in \Pa_V(E)$ we have} \quad\kappa_V(\mu)<\infty. 
\end{equation}
with the parameters $\lambda^-(\overline{\eta}_0)$ and $\kappa_V(\mu)$ introduced in Definition~\ref{def-parameters}. 
\item There exists some  positive function $H\in \Ba_{0,V}(E)$ as well as some $r_H>0$ such that for any $r\geq r_H$ we have
 \begin{equation}\label{ref-H0-2}
 \varsigma_{r}(H):=\inf\inf_{V(x)\leq r}~H_{s,t}(x)>0
\quad \mbox{and}\quad\vertiii{H}_V:=\sup\Vert H_{s,t}\Vert_V<\infty
\end{equation}
where the infimum is taken over all  indices $s\in\Ta$ and $s\leq t\in \Ta$, and $H_{s,t}$ stands for the normalized function depending on the measure $\overline{\eta}_0$ introduced in (\ref{ref-V-over-eta}) and defined in (\ref{def-over-Q-H}). For continuous time semigroups, we also assume that $\pi_{\tau}(H)<\infty$, with $\pi_{\tau}(H)$ defined as (\ref{discrete-2-continuous}) by replacing $V$ by $H$.
\item There exists $r_0\geq 1$ and some function $\alpha\,:\,r\in [r_0,\infty[\,\mapsto \alpha(r)\in \, ]0,1]$ such that for any $s\in \Ta$ and $r\geq r_0$ we have
\begin{equation}\label{loc-dob}
\sup_{V(x)\vee V(y)\leq r}\left\Vert \delta_xR^{(t)}_{s,s+\tau}-\delta_yR^{(t)}_{s,s+\tau}\right\Vert_{\tiny tv}\leq 1-\alpha(r)
\end{equation}
where $R^{(t)}_{u,v}$ stands for the triangular array of Markov operators defined in (\ref{def-R-intro}) with the function $H$ introduced in (\ref{ref-H0-2}).
\end{itemize}
\end{defi}

The local Doeblin minorisation condition (\ref{loc-dob}) is rather standard in the stability analysis of Markov semigroups (see for instance \cite[Theorem 6.15]{numelin}, \cite[Proposition 2]{tierney}, and \cite[Theorem 16.2.3]{meyn-tweedie}). For Markov semigroups $Q_{s,t}(1)=1$, choosing $H=1$ the left-hand side condition in (\ref{ref-V-over-eta}) and (\ref{ref-H0-2}) are clearly met. In this context, we recall that the Lyapunov condition in (\ref{ref-V-theta-V-po}) ensures that $\Phi_{s,t}(\mu)(V)$ is uniformly bounded with respect to the time parameters $s\leq t$ (cf. for instance Lemma~\ref{lemma-Nick} or the Lyapunov condition (\ref{ref-P-h-nh}) applied to the function $H=1$). 

In summary, a Markov semigroup $P_{s,t}$ is a stable $V$-positive semigroup when  $P_{s,s+\tau}$ satisfies the Lyapunov condition in (\ref{ref-V-theta-V-po}) as well as the local Doeblin minorisation condition (\ref{loc-dob}) (applied to $H=1$ and $Q_{s,t}=P_{s,t}$ so that $R^{(t)}_{s,s+\tau}=P_{s,s+\tau}$).

\begin{rmk}\label{rmk-discrete-2-continuous}
For continuous time models, by lemma~\ref{lem-d-to-c} , it suffices to check conditions (\ref{ref-V-over-eta}) with the parameters $\lambda^-_{\tau}(\overline{\eta}_0)$ and 
$\kappa_{\tau,V}(\mu)$  introduced in definition~\ref{def-parameters}. 
In the same vein, it suffices to check (\ref{ref-H0-2}) by taking the infimum and the supremum over time indices $s\in \Ta$ and $t\in [s,\infty[_{\tau}$. To check this claim, observe that for any $s\geq 0$, $n\in\NN$ and $t\in [0,\tau[$ we have
$$
\begin{array}{l}
s_n=s+n\tau\quad \mbox{and}\quad u:=s+t\\
\\
\displaystyle\Longrightarrow u_n:=u+n\tau=s_n+t\quad \mbox{and}\quad
H_{s,s_n+t}=\frac{Q_{s,u} Q_{u,u_n}(H)}{\overline{\eta}_s Q_{s,u} Q_{u,u_n}(1)}.
\end{array}$$
Using (\ref{ref-H0-2}) and (\ref{discrete-2-continuous-intro-K}) for any $x\in K_r:=\{V\leq r\}$ and $\overline{r}\geq r\geq r_H$ we check that~
$$
H_{s,s_n+t}(x)=\frac{Q_{s,u} (H_{u,u_n})(x)}{\overline{\eta}_s Q_{s,u}(1)}\geq (\varsigma_{\overline{r}}(H)/\pi_{\tau})~Q_{s,u} (1_{K_{\overline{r}}})(x)\geq \varsigma_{\overline{r}}(H)~\pi^-_{\tau}(K_{r,\overline{r}})/\pi_{\tau}.
$$
with the parameter $\pi_\tau^-(K_{r,\overline{r}})$ defined in (\ref{discrete-2-continuous-intro-K}).

Correspondingly, recalling that $\overline{\eta}_s$ is a tight sequence, for any $\delta\in ]0,1[$ there exists some $K$ such that $\overline{\eta}_s(K)\geq (1-\delta)$. In this notation, by (\ref{discrete-2-continuous-intro-K})we have
$$
H_{s,s_n+t}(x)/V(x)\leq \frac{Q_{s,u} (V(H_{u,u_n}/V))(x)/V(x)}{\overline{\eta}_s Q_{s,u}(1)}\leq \frac{\pi_{\tau}(V)~\Vert H_{u,u_n}\Vert_V}{(1-\delta)\pi^-_{\tau}(K)}.
$$
with the parameter $\pi_\tau^-(K_{r})$ defined in (\ref{pi-min}).
We conclude  that $\vertiii{H_{s,t}}_V$ is uniformly bounded with respect to~the continuous time parameters $s\geq 0$ and $t\geq s$.
\end{rmk}

Also note that for any compact set $K\subset E$ we have
$$
\left(V\geq 1\quad \mbox{\rm and}\quad
H/V\in \Ba_{0}(E)\right)\Longrightarrow {1}/{\inf_K H}\leq \sup_K(V/H)<\infty
\Longrightarrow \inf_K H>0.
$$
Whenever (\ref{ref-H0-2}) is met, replacing $H$ with $H/\vertiii{H}_V$, there is no loss of generality if we  assume the uniform estimate $\Vert H_{s,t}\Vert_V\leq 1$.
Also notice that  for any $s<t$ and
$$
H\in \Ba_{0,V}(E)\Longrightarrow
H_{s,t}={Q_{s,t}(H)}/{\overline{\eta}_sQ_{s,t}(1)}
\in\Ba_{0,V}(E).
$$

Checking the estimates (\ref{ref-V-over-eta}) and (\ref{ref-H0-2}) may involve delicate calculations. For time homogeneous models, conditions (\ref{ref-V-over-eta}) and (\ref{ref-H0-2})  are
equivalent to the existence of a leading eigen-pair of the positive semigroup (cf.~for instance Theorem~\ref{theo-equivalence} and Corollary~\ref{H1-prop}). For a more thorough discussion, we refer to  section~\ref{sec:timehom} dedicated to time homogenous models.
Next we present another stronger but simpler and more tractable condition that applies to absolutely continuous semigroups.

The following lemma  is a slight modification of  \cite[Proposition 1 \& Lemma 10]{whiteley}, based on technical approaches from~\cite{douc-moulines-ritov} in the context of stability for nonlinear filtering 

\begin{lem}\label{lemma-Nick}
Under the assumptions of theorem~\ref{theo-2-intro},
for any  locally bounded  positive functions $H\in \Ba_{0,V}(E)$ and any  $\overline{\eta}_0,\mu\in \Pa_V(E)$, we have (\ref{ref-H0-2}) as well as 
\begin{equation}\label{ref-P-h-nh-2}
0<\kappa^-_H(\mu)\leq \kappa_V(\mu)<\infty\quad \mbox{and}\quad
0<\lambda^-(\mu)\leq \lambda(\mu)<\infty.
\end{equation}
\end{lem}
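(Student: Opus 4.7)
The plan is to derive the five estimates in $(\ref{ref-H0-2})$ and $(\ref{ref-P-h-nh-2})$ from two structural consequences of the hypotheses of Theorem~\ref{theo-2-intro}. First, from $Q_{s,s+\tau}(V)/V\leq \Theta_\tau\in \Ba_0(E)$ and the decomposition $\Theta_\tau\leq \epsilon+\|\Theta_\tau\|\,1_{\Ka_\epsilon}$ with $\Ka_\epsilon:=\{\Theta_\tau\geq \epsilon\}$ compact, one obtains the uniform Foster--Lyapunov inequality $Q_{s,s+\tau}(V)\leq \epsilon V+c_\epsilon$ with $c_\epsilon:=\|\Theta_\tau\|\sup_{\Ka_\epsilon}V<\infty$. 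Second, from $(\Aa)_V$, for $r\geq r_1$ and $x\in K_r$, we have the uniform lower density bound $Q_{s,s+\tau}(f)(x)\geq \iota^-_\tau(r)\nu_\tau(K_r)\widetilde\nu_r(f)$ for every positive $f$, where $\widetilde\nu_r(dy):=1_{K_r}(y)\nu_\tau(dy)/\nu_\tau(K_r)$. By Lemma~\ref{lem-d-to-c} it suffices to work with discrete indices $t\in [s,\infty[_\tau$.

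For $\kappa_V(\mu)<\infty$, $\lambda^-(\mu)>0$ and $\lambda(\mu)<\infty$, I would combine the Foster--Lyapunov bound in the numerator with the lower denominator bound from $(\Aa)_V$ and Markov's inequality $\eta_k(K_r)\geq 1-\eta_k(V)/r$, obtaining, for $\eta_k:=\Phi_{s,s+k\tau}(\mu)$ and $m_k:=\eta_k(V)$, the recursion
\begin{equation*}
m_{k+1}\leq \frac{\epsilon m_k+c_\epsilon}{\iota^-_\tau(r)\nu_\tau(K_r)(1-m_k/r)}.
\end{equation*}
Choosing $r$ large (so that $r>2\mu(V)$) and then $\epsilon$ small, the right-hand side is a monotone map of $m_k$ admitting an attracting fixed point in $[0,r/2]$, so that $\sup_k m_k<\infty$ follows by bootstrap, giving $\kappa_V(\mu)<\infty$. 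Then $\Phi_{s,t}(\mu)(K_r)\geq 1/2$ for $r\geq 2\kappa_V(\mu)$ yields $\lambda^-(\mu)\geq \iota^-_\tau(r)\nu_\tau(K_r)/2>0$, and $\lambda(\mu)\leq \|\Theta_\tau\|\kappa_V(\mu)<\infty$ is immediate from $Q_{s,s+\tau}(1)\leq Q_{s,s+\tau}(V)\leq \|\Theta_\tau\|V$.

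For $\kappa^-_H(\mu)>0$, since $H/V\in \Ba_0(E)$ is locally lower bounded and $V$ is locally bounded, $H$ is locally bounded below, so $h_r:=\inf_{K_r}H>0$; combined with $\Phi_{s,t}(\mu)(K_r)\geq 1/2$ this yields $\kappa^-_H(\mu)\geq h_r/2>0$. To establish $(\ref{ref-H0-2})$ I would apply all the preceding to the reference measure $\overline{\eta}_0$, obtaining $\lambda^-(\overline{\eta}_0), \kappa^-_H(\overline{\eta}_0)>0$ and $\lambda(\overline{\eta}_0), \kappa_V(\overline{\eta}_0)<\infty$; the identity $\overline{\eta}_s(H_{s,t})=\overline{\eta}_t(H)$ then uniformly bounds $H_{s,t}$ above and below in integral against $\overline{\eta}_s$. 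The one-step identity $H_{s,t}(x)=Q_{s,s+\tau}(H_{s+\tau,t})(x)/\overline{\eta}_sQ_{s,s+\tau}(1)$, together with the lower density bound from $(\Aa)_V$ on the numerator and the upper bound $\overline{\eta}_sQ_{s,s+\tau}(1)\leq \lambda(\overline{\eta}_0)$ on the denominator, then gives the pointwise lower bound $\varsigma_r(H)>0$, after transferring the uniform integral lower bound $\overline{\eta}_{s+\tau}(H_{s+\tau,t})\geq \kappa^-_H(\overline{\eta}_0)$ onto $\widetilde\nu_r$ via the upper density bound of $(\Aa)_V$ on $K_r^2$.

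The main obstacle is the uniform pointwise upper bound $\vertiii{H}_V<\infty$. The crude backward recursion $\|H_{s,t}\|_V\leq \|H_{s+\tau,t}\|_V(\epsilon+c_\epsilon)/\lambda^-(\overline{\eta}_0)$ is not contractive, since $c_\epsilon\to\infty$ as $\epsilon\to 0$. The resolution, following the arguments of~\cite{whiteley} and~\cite{douc-moulines-ritov}, is a two-scale decomposition of $Q_{s,s+\tau}(H_{s+\tau,t})(x)$ according to whether the integration variable lies in a large compact set $K_{r''}$ or its complement: the contribution from $K_{r''}^c$ is controlled pointwise by the Foster--Lyapunov bound and can be made arbitrarily small through the multiplicative factor $\epsilon$, while the contribution from $K_{r''}$ is estimated using the uniform integral bound $\overline{\eta}_{s+\tau}(H_{s+\tau,t})\leq \|H/V\|\kappa_V(\overline{\eta}_0)$ together with the two-sided density estimates of $(\Aa)_V$ on $K_{r''}^2$ to convert control against $\overline{\eta}_{s+\tau}$ into pointwise control against $\nu_\tau$. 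A careful choice of $r''$ and $\epsilon$ then yields a contractive backward recursion on $\|H_{t-k\tau,t}\|_V$, closed by $\|H_{t,t}\|_V=\|H/V\|<\infty$.
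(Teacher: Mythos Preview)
Your sketch correctly identifies the ingredients, but the central step establishing $\kappa_V(\mu)<\infty$ does not close. The one-step recursion
\[
m_{k+1}\leq \frac{\epsilon m_k+c_\epsilon}{\alpha(r)(1-m_k/r)},\qquad \alpha(r)=\iota^-_\tau(r)\,\nu_\tau(K_r),
\]
admits an invariant interval $[0,r/2]$ only if, roughly, $\alpha(r)\,r>2\epsilon r+4c_\epsilon$. You propose to pick $r>2\mu(V)$ first and then $\epsilon$ small, but $c_\epsilon=\|\Theta_\tau\|\sup_{\Ka_\epsilon}V\to\infty$ as $\epsilon\to 0$, so this fails for fixed $r$. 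Reversing the order does not help either: nothing in $(\Aa)_V$ prevents $\iota^-_\tau(r)$ from decaying faster than $1/r$, and $r$ must be taken arbitrarily large since $\mu(V)$ is arbitrary. The bootstrap is therefore circular: one needs $m_k$ bounded to make the denominator lower bound useful, and vice versa.

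The paper avoids this by working not with $m_k$ but with the triangular array applied to the \emph{time-dependent} test function $V/h^V_{u,t}$, where $h_{u,t}=Q_{u,t}(H)$. One obtains
\[
R^{(t)}_{s,s+\tau}\!\left(\frac{V}{h^V_{s+\tau,t}}\right)\leq \frac{\epsilon}{\beta^{(t)}_{s,s+\tau}}\,\frac{V}{h^V_{s,t}}+\frac{a_\epsilon}{\nu(1_{\Va_\epsilon}h^V_{s+\tau,t})},
\]
and iterating to the terminal time $t$ yields a geometric series whose inhomogeneous terms involve $\nu(1_{\Va_\epsilon}h_{s+k\tau,t})$. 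These are lower-bounded \emph{directly} by iterating the density minoration inside a fixed compact $\Va_{\epsilon_2}$: $1_{\Va_{\epsilon_2}}h_{s+k\tau,t}\geq 1_{\Va_{\epsilon_2}}(\inf_{\Va_{\epsilon_2}}H)\,(\iota^-_{\epsilon_2}\nu(\Va_{\epsilon_2}))^{\,n-k}$, without any prior control on $m_k$. Choosing $\epsilon<\iota^-_{\epsilon_2}\nu(\Va_{\epsilon_2})/2$ makes the series summable, giving $(\ref{ref-P-h-nh})$; tightness of $\Psi_{h_{s,t}}(\eta)R^{(t)}_{s,t}$ then transfers to $\Phi_{s,t}(\eta)$ via $\Psi_{1/H}$, delivering $\kappa_V(\mu)<\infty$.

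Your sketch for $\vertiii{H}_V<\infty$ has the same defect. Splitting a single step on the integration variable leaves a term $\|H_{s+\tau,t}\|_V\,Q_{s,s+\tau}(1_{K_{r''}^c}V)(x)$, and there is no way to make $Q_{s,s+\tau}(1_{K_{r''}^c}V)(x)/V(x)$ small uniformly in $x$: the Foster--Lyapunov bound controls $Q_{s,s+\tau}(V)/V$, not its restriction to $K_{r''}^c$. The paper instead decomposes $\overline{Q}_{s_0,s_n}$ over the full path $i\in\{0,1\}^n$ of indicators $1_{\Va_\epsilon}$ versus $1_{\Va_\epsilon^c}$ at each of the $n$ intermediate times, groups paths by the first index $l(i)$ at which $(i_k,i_{k+1})=(1,1)$, and uses the combinatorial fact that prior to $l(i)$ at least $(l(i)-1)/2$ of the indicators are $0$ (each contributing a small factor $\epsilon/\lambda^-$), while at the first consecutive $1$-pair the two-sided density bound converts the tail product into a quantity controlled by $\kappa_V(\nu_\epsilon)$. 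This multi-step path decomposition, not a one-step two-scale split, is what produces a contractive estimate on $\vertiii{\overline{Q}_{s,t}}_V$.

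The remaining parts of your plan --- $\lambda^-(\mu)>0$, $\lambda(\mu)<\infty$, $\kappa^-_H(\mu)>0$, and the local lower bound $\varsigma_r(H)>0$ --- are correct once $\kappa_V(\mu)<\infty$ is in hand, and they match the paper's arguments.
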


For the convenience of the reader a detailed proof in our context is provided in the appendix, see section \ref{app:lemma-Nick-proof}. In the context of
absolutely continuity,
the above lemma also ensures for any bounded 
$f\geq 0$, and any $V(x)\leq r$  with $ r\geq r_1$ we have
$$
c_1(r)~\nu_{\tau}(1_{V\leq r}~H_{s+\tau,t}f)\leq Q_{s,s+\tau}(H_{s+\tau,t}f)(x)$$
as well as
\begin{eqnarray*}
Q_{s,s+\tau}(H_{s+\tau,t})(x)&=&\lambda_{s,s+\tau}~ H_{s,t}(x)\leq r~\lambda~\vertiii{H}_V\\
&\leq& \left(1\vee\frac{ r~\lambda~\vertiii{H}_V}{ \varsigma_{r}(H)\nu_{\tau}(V\leq r)}\right)~\varsigma_{r}(H)\nu_{\tau}(V\leq r)~
\leq c_2(r)~\nu_{\tau}(1_{V\leq r}~H_{s+\tau,t})
\end{eqnarray*}
for some constant $c_2(r)\geq c_1(r)>0$. This yields
$$
R^{(t)}_{s,s+\tau}(f)(x)\geq c(r)~\frac{\nu_{\tau}(1_{V\leq r}~H_{s+\tau,t}f)}{\nu_{\tau}(1_{V\leq r}~H_{s+\tau,t})},\quad \mbox{\rm with}\quad c(r):=c_1(r)/c_2(r)>0,
$$
which implies (\ref{loc-dob}). Thus, the above lemma ensures that absolutely continuous semigroups  satisfying condition $(\Aa)_V$ are stable  $V$-positive semigroups.

\section{Stability and contraction theorems}\label{sec:not_res}

\subsection{Contraction of $R$-semigroups}\label{sec:not_res-unif}
In the further development of this section, $R^{(t)}_{u,v}$ is the triangular array of Markov operators defined in (\ref{def-R-intro}) with  $H=1$.
Also assume that $Q_{s,t}$ is an  $R$-semigroup (that is  (\ref{beta-P-nu-2}) is satisfied with
$H=1$).
\subsubsection{An uniform stability theorem}
This short section is concerned with a brief review on the stability properties of the non-linear semigroup 
$\Phi_{s,t}$ under the uniform minorisation condition (\ref{beta-P-nu-2}). We recall  the rather well known strong stability theorem which is valid when $E$ is a measurable space.
\begin{theo}[\cite{dm-2000,dg-ihp,dg-cras}]\label{theo-1}
Then for any  $s,t\in \Ta$ and any $\mu_1,\mu_2\in\Pa(E)$ we have the uniform stability estimate
\begin{equation}\label{beta-sup}
\Vert \Phi_{s,t}(\mu_1)-\Phi_{s,t}(\mu_2)\Vert_{\tiny tv}\leq 
(1-\epsilon_{\tau})^{{\lfloor (t-s) /\tau \rfloor}}.
\end{equation}
In addition we have the local Lipschitz estimate
\begin{equation}\label{lipschitz-inq}
\Vert \Phi_{s,t}(\mu_1)-\Phi_{s,t}(\mu_2)\Vert_{\tiny tv}\leq \frac{\Vert Q_{s,t}(1)\Vert}{\mu_1(Q_{s,t}(1))\vee\mu_2(Q_{s,t}(1))}
(1-\epsilon_{\tau})^{{\lfloor (t-s)/\tau\rfloor}}\Vert \mu_1-\mu_2\Vert_{\tiny tv}.
\end{equation}
\end{theo}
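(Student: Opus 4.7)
The plan is to reduce everything to the submultiplicativity of the Dobrushin ergodic coefficient on products of Markov kernels, via the transport formula (\ref{link-s-t-Q-intro}). Specialising that identity to $H=1$ and using the fact that the Boltzmann--Gibbs transform is scale-invariant, so $\Psi_{H_{s,t}}=\Psi_{Q_{s,t}(1)}$, I obtain the representation
\begin{equation*}
\Phi_{s,t}(\eta) \;=\; \Psi_{Q_{s,t}(1)}(\eta)\,R^{(t)}_{s,t}, \qquad \eta \in \Pa(E).
\end{equation*}
This decomposes the nonlinear semigroup into a normalisation step (the source of the Lipschitz-type constant in (\ref{lipschitz-inq})) followed by the action of a Markov kernel (the source of the exponential contraction).

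Next, I would use the semigroup property $R^{(t)}_{s,v}=R^{(t)}_{s,u}R^{(t)}_{u,v}$ stated after (\ref{def-R-intro}) together with the classical identities $\beta(P_1P_2)\leq \beta(P_1)\beta(P_2)$ and $\|\mu P-\nu P\|_{tv}\leq \beta(P)\|\mu-\nu\|_{tv}$ for Markov kernels $P,P_1,P_2$. Writing $n:=\lfloor(t-s)/\tau\rfloor$ and chaining
\begin{equation*}
R^{(t)}_{s,t} \;=\; R^{(t)}_{s,s+\tau}\,R^{(t)}_{s+\tau,s+2\tau}\cdots R^{(t)}_{s+(n-1)\tau,s+n\tau}\,R^{(t)}_{s+n\tau,t},
\end{equation*}
the hypothesis (\ref{ref-H1-2}) applied to each of the first $n$ blocks, together with the trivial bound $\beta(\cdot)\leq 1$ for the (possibly empty) terminal block of length less than $\tau$, yields $\beta(R^{(t)}_{s,t})\leq (1-\epsilon_\tau)^n$.

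To conclude (\ref{beta-sup}), I apply the contraction inequality above with $\mu=\Psi_{Q_{s,t}(1)}(\mu_1)$, $\nu=\Psi_{Q_{s,t}(1)}(\mu_2)$ and $P=R^{(t)}_{s,t}$, combined with the trivial bound $\|\mu-\nu\|_{tv}\leq 1$ valid for probability measures. For the local Lipschitz estimate (\ref{lipschitz-inq}), I instead feed the Lipschitz bound for the Boltzmann--Gibbs transform recalled after (\ref{def-Psi-H}), namely
\begin{equation*}
\|\Psi_h(\mu_1)-\Psi_h(\mu_2)\|_{tv} \;\leq\; \frac{\|h\|}{\mu_1(h)\vee\mu_2(h)}\,\|\mu_1-\mu_2\|_{tv},
\end{equation*}
with $h=Q_{s,t}(1)$, into the same contraction inequality; the product of the two factors is exactly the right-hand side of (\ref{lipschitz-inq}).

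Since every ingredient is either an algebraic identity already in the paper or a classical contraction fact for Markov kernels, the only mild point of care is verifying that the partition of $[s,t]$ into $n$ blocks of length $\tau$ plus a terminal block is compatible with the triangular-array semigroup property, which is immediate from the definition (\ref{def-R-intro}). I therefore anticipate no genuine obstacle; the real conceptual content is entirely packaged in the transport formula (\ref{link-s-t-Q-intro}), which reduces the nonlinear problem to a linear Markov-kernel problem.
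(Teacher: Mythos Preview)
Your proof is correct and follows essentially the same approach as the paper: the transport formula (\ref{link-s-t-Q-intro}) with $H=1$ reduces $\Phi_{s,t}$ to the Boltzmann--Gibbs transform followed by the Markov kernel $R^{(t)}_{s,t}$, and then submultiplicativity of the Dobrushin coefficient plus the Lipschitz bound for $\Psi_{Q_{s,t}(1)}$ give both estimates. The paper additionally records the identity $\beta(R^{(t)}_{s,t})=\sup_{\mu,\eta}\Vert\Phi_{s,t}(\mu)-\Phi_{s,t}(\eta)\Vert_{tv}$, but only the inequality direction (which is exactly what you prove) is needed for the theorem.
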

A detailed and remarkably simple proof of Theorem~\ref{theo-1}  based on the nonlinear transport formula (\ref{link-s-t-Q-intro}) is provided in~\cite[Theorem 3]{dm-2000}, see also \cite[Lemma 2.1 \& Lemma 2.3]{dg-ihp}, \cite[Lemma 2.1 \& Proposition 2.3]{dm-sch-2}, \cite[section 12.2]{dm-13},  \cite[section~2.1.2 \& section 3.1.3]{dm-2000} and \cite[section 4]{dm-04}. 
The key semigroup oscillation formula \cite[Theorem 2.3]{dm-2000}, \cite[Proposition 2.3]{dm-sch-2}  connecting the  {\em uniform} exponential decay (\ref{beta-sup}) with Dobrushin's ergodic coefficient of the $R$-semigroup  defined in section~\ref{sec-preliminary-V-dob} is given by
$$
\beta\left(R^{(t)}_{s,t}\right)=\sup_{(\mu,\eta)\in \Pa(E)^2}\Vert \Phi_{s,t}(\mu)-\Phi_{s,t}(\eta)\Vert_{\tiny tv}.
$$ 
The above formula shows that the  {\em uniform} exponential decays (\ref{beta-sup}) are dictated by the contraction properties of the triangular arrays $R^{(t)}_{s,u}$. 
An extended version of Theorem~\ref{theo-1}  for
general relative entropy criteria is provided in \cite[section 4.3.1]{dm-04}. 

\subsubsection{Quasi-invariant measures}
For the rest of this section, we place ourselves in the time homogeneous setting. 
In this case, a variety of results follows almost immediately from the \textcolor{black}{uniform} estimates obtained in this theorem. Before stating them, we first discuss some relevant properties of time homogeneous models.

The uniform estimate (\ref{beta-sup}) implies that $\Phi_{t}(\mu)$ is a Cauchy sequence in the complete set of probability measures $\Pa(E)$ equipped with the total variation distance. Thus, for any $\mu\in \Pa(E)$, the flow $\Phi_{s}(\mu)$ converges, as \textcolor{black}{$ s\rightarrow \infty$},  exponentially fast to a single probability measure $\eta_{\infty}\in\Pa(E)$ that does not depend on $\mu$. 
Choosing $\mu=\eta_{\infty}$ for any $s,t\in \Ta$ and $f\in \Ba_b(E)$ we have
$$
\Phi_t(\Phi_s(\eta_{\infty}))(f)=\frac{\Phi_s(\eta_{\infty})Q_t(f)}{\Phi_s(\eta_{\infty})Q_t(1)}=\Phi_{s+t}(\eta_{\infty})(f)\longrightarrow_{s\rightarrow \infty} \Phi_t(\eta_{\infty})(f)=\eta_{\infty}(f).
$$
 In continuous time settings, note that  the fixed point $\eta_{\infty}^{[\tau]}=\Phi_{\tau}(\eta_{\infty}^{[\tau]})$ does not depend on the time step $\tau>0$. To check this claim, note that for any $\mu\in \Pa(E)$ and $t\geq 0$ we have the decomposition
$$
\eta_{\infty}^{[\tau_1]}-\eta_{\infty}^{[\tau_2]}=
\left(\eta_{\infty}^{[\tau_1]}-\Phi_{\lfloor t/\tau_1\rfloor\tau_1}\left(
\Phi_{\{ t/\tau_1\}\tau_1}\left(\mu\right)
\right)\right)+\left(\Phi_{\lfloor t/\tau_2\rfloor\tau_2}\left(
\Phi_{\{ t/\tau_2\}\tau_2}\left(\mu\right)
\right)-\eta_{\infty}^{[\tau_2]}\right).
$$
The uniform estimate (\ref{beta-sup}) yields the estimate
$$
\Vert\eta_{\infty}^{[\tau_1]}-\eta_{\infty}^{[\tau_2]}\Vert_{\tiny tv}\leq (1-\epsilon_{\tau_1})^{{\lfloor t/\tau_1 \rfloor}}+ (1-\epsilon_{\tau_2})^{{\lfloor t/\tau_2 \rfloor}}\longrightarrow_{t\rightarrow\infty}0.
$$
The invariant measure $\eta_{\infty}$ is sometimes called the quasi-invariant measure of the semigroup $Q_t$.

Using the fixed point equation $\Phi_t(\eta_{\infty})=\eta_{\infty}$, for any $s,t\in\Ta$ we readily check that
$$
\eta_{\infty}(Q_{s+t}(1))=\eta_{\infty}(Q_{s}(1))~\eta_{\infty}(Q_{t}(1)).
$$
Thus for any $t\in\Ta$ we have the exponential formula
\begin{equation}\label{expo-rho-def}
\eta_{\infty}(Q_{t}(1))=e^{\rho t}\quad\mbox{\rm for some}\quad \rho\in \RR \quad\mbox{\rm and}\quad \overline{Q}_{t}(1)=e^{-\rho t}~Q_t(1).
\end{equation}
Choosing $\overline{\eta}_0=\eta_{\infty}$ in (\ref{def-over-Q-H}) and
using (\ref{product-hom}), for any $\mu\in \Pa(E)$ we check that
\begin{equation}\label{expo-norm-ineq}
\frac{1}{\Vert \overline{Q}_{(1-\left\{t/\tau\right\})\tau}(1)\Vert }~\mu\overline{Q}_{(\lfloor t/\tau\rfloor+1)\tau}(1)
\leq \mu\overline{Q}_t(1)\leq \Vert \overline{Q}_{\left\{t/\tau\right\}\tau }(1)\Vert~\mu\overline{Q}_{\lfloor t/\tau\rfloor \tau}(1).
\end{equation}
Notice that
$$
\sup_{\epsilon\in [0,1]}\Vert\overline{Q}_{\epsilon\tau}(1)\Vert=e^{-\rho\epsilon\tau}~\sup_{\epsilon\in [0,1]}\Vert Q_{\epsilon\tau}(1)\Vert<\infty.
$$
On the other hand, for any $t\in [0,\infty[_{\tau}$  we have
\begin{equation}\label{prod-series}
\mu\overline{Q}_t(1)=\prod_{s\in [0,t[_{\tau}}~\left\{1+
\left[\Phi_{s}(\mu)(\overline{Q}_{\tau}(1))-\Phi_{s}(\eta_{\infty})(\overline{Q}_{\tau}(1))\right]\right\}.
\end{equation}
\textcolor{black}{The exponential version of the above formula in the context of sub-Markovian semigroups with soft killing is discussed in (\ref{int-exp}). }

Conversely, by (\ref{beta-sup}) \textcolor{black}{ for any $n\geq 1$} we have
$$
\sum_{s\in [0,\infty[_{\tau}}\left\vert\Phi_{s}(\mu)(\overline{Q}_{\tau}(1))-\Phi_{s}(\eta_{\infty})(\overline{Q}_{\tau}(1))\right\vert^n<\infty
$$
as well as
$$
\Vert \overline{Q}_t(1)\Vert\leq \prod_{s\in [0,\infty[_{\tau}}~\left(1+(1-\epsilon_{\tau})^{{\lfloor s/\tau\rfloor}}
{\Vert \overline{Q}_{\tau}(1)\Vert}\right)<\infty.
$$
The two estimates discussed above ensure that $\mu\overline{Q}_{t}(1)$ converges, as $[0,\infty[_{\tau}\ni t\rightarrow \infty$, to a non-zero number and by (\ref{expo-norm-ineq}) we have
\begin{equation}\label{con-unif}
 0<\inf_{t\in\Ta}\mu(\overline{Q}_{t}(1))\leq
 \sup_{t\in\Ta}\Vert\overline{Q}_{t}(1)\Vert<\infty.
 \end{equation}
Also observe that (\ref{con-unif}) implies that for any $ \mu\in \Pa(E)$ we have
$$
q(\mu):=\sup_{t\in\Ta}{{\Vert Q_{t}(1)\Vert}/{\mu(Q_{t}(1))}}<\infty.
$$
We are now in a position to state our first corollary of Theorem~\ref{theo-1}.

\begin{cor}\label{est-mui-hom-cor}
Under the assumptions of Theorem~\ref{theo-1},
 for any $\mu,\eta\in \Pa(E)$ and $t\in \Ta$ we have the local contraction estimate
\begin{equation}\label{est-mui-hom}
\displaystyle\Vert \Phi_{t}(\mu)- \Phi_{t}(\eta)\Vert_{\tiny tv}\leq \left(q(\mu)\wedge q(\eta)\right)~(1-\epsilon_{\tau})^{{\lfloor t/\tau\rfloor }}~\Vert \mu-\eta\Vert_{\tiny tv}.
\end{equation}
\end{cor}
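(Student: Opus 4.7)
The plan is to deduce Corollary~\ref{est-mui-hom-cor} directly from the Lipschitz estimate (\ref{lipschitz-inq}) of Theorem~\ref{theo-1}, together with the finiteness of $q(\mu)$ established in (\ref{con-unif}). Since the setting is now time homogeneous, I would specialise $(s,t)\mapsto(0,t)$ in (\ref{lipschitz-inq}) to obtain
$$
\Vert \Phi_{t}(\mu)-\Phi_{t}(\eta)\Vert_{\tiny tv}\leq \frac{\Vert Q_{t}(1)\Vert}{\mu(Q_{t}(1))\vee\eta(Q_{t}(1))}\,(1-\epsilon_{\tau})^{\lfloor t/\tau\rfloor}\,\Vert \mu-\eta\Vert_{\tiny tv}.
$$

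Next, I would rewrite the prefactor using the elementary identity
$$
\frac{\Vert Q_t(1)\Vert}{\mu(Q_t(1))\vee\eta(Q_t(1))}=\frac{\Vert Q_t(1)\Vert}{\mu(Q_t(1))}\wedge\frac{\Vert Q_t(1)\Vert}{\eta(Q_t(1))},
$$
so that each term is bounded above by $q(\mu)$ and $q(\eta)$ respectively, hence by their minimum $q(\mu)\wedge q(\eta)$. The fact that $q(\mu)<\infty$ for every $\mu\in\Pa(E)$ is precisely the consequence of (\ref{con-unif}) recorded just before the statement of the corollary: the series representation (\ref{prod-series}) combined with the uniform bound (\ref{beta-sup}) forces the total mass $\mu\overline{Q}_t(1)$ to remain uniformly bounded above and below away from zero, whence $\sup_{t\in\Ta}\Vert Q_t(1)\Vert/\mu(Q_t(1))<\infty$.

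Putting these two observations together yields the announced bound
$$
\Vert \Phi_{t}(\mu)-\Phi_{t}(\eta)\Vert_{\tiny tv}\leq \left(q(\mu)\wedge q(\eta)\right)\,(1-\epsilon_{\tau})^{\lfloor t/\tau\rfloor}\,\Vert \mu-\eta\Vert_{\tiny tv}.
$$
There is no genuine obstacle here, since the work has already been done: the nontrivial content lies in Theorem~\ref{theo-1} and in the uniform two-sided control (\ref{con-unif}), both of which are available. The only point requiring a little care is verifying that (\ref{con-unif}) applies to an \emph{arbitrary} $\mu\in\Pa(E)$ rather than merely to $\mu=\eta_\infty$, but this is immediate from the derivation preceding (\ref{con-unif}), where the product-series identity (\ref{prod-series}) is written for a generic probability measure $\mu$ and the absolute convergence of its logarithm follows from the uniform contraction (\ref{beta-sup}).
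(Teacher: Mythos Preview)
Your proposal is correct and is exactly the argument the paper has in mind: the corollary is stated without proof immediately after the definition of $q(\mu)$ and the observation that (\ref{con-unif}) forces $q(\mu)<\infty$, so the intended derivation is precisely to specialise (\ref{lipschitz-inq}) to the time-homogeneous case and bound the prefactor $\Vert Q_t(1)\Vert/(\mu(Q_t(1))\vee\eta(Q_t(1)))$ by $q(\mu)\wedge q(\eta)$ via the identity you use.
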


\subsubsection{Ground state functions}
Choosing $\mu=\delta_x$ in (\ref{prod-series}), we also readily check  that there exists $h\in \Ba_b(E)$ such that $\eta_{\infty}(h)=1$ and for any $x\in E$ we have the pointwise convergence
$$
\lim_{n \to \infty}\overline{Q}_{n\tau}(1)(x)=  h(x)>0.
$$
By (\ref{con-unif}) and the dominated convergence theorem, for any $s\in [0,\infty[_{\tau}$  this implies that
\begin{equation}\label{link-rmk-h-rho}
\overline{Q}_{s+n\tau}(1)(x)=\overline{Q}_s(\overline{Q}_{n\tau}(1))(x)\longrightarrow_{n\rightarrow\infty} h(x)=\overline{Q}_s(h)(x)=e^{-\rho s}~Q_s(h)(x).
\end{equation}
In continuous time settings, note that the ground state function $h^{[\tau]}=e^{\rho \tau}~Q_{\tau}(h^{[\tau]})$ does not depend on the time step $\tau>0$.  To check this claim, observe that
$$
Q_t(h^{[\tau]})=e^{\rho \tau}~Q_{\tau}(Q_t(h^{[\tau]}))
$$
is also an eigenfunction of $Q_{\tau}$ with the same eigenvalue $e^{\rho \tau}$. By uniqueness we conclude that $Q_t(h^{[\tau]})=e^{\rho t}h^{[\tau]}$.
Alternatively,  applying the uniform estimate 
(\ref{beta-sup}) to $\mu=\delta_x$ and $\eta=\eta_{\infty}$  for any $s,t\in \Ta$ and $x\in E$ we have
$$
\vert{\overline{Q}_{t+s}(1)(x)}/{\overline{Q}_{t}(1)(x)}-1\vert=\vert\Phi_t(\delta_x)(\overline{Q}_s(1))-1\vert\leq 2~q(\eta_{\infty})~(1-\epsilon_{\tau})^{\lfloor t/\tau\rfloor}
$$
and therefore
\begin{equation}\label{ref-Q1-cauchy}
\Vert{\overline{Q}_{t+s}(1)}-\overline{Q}_{t}(1)\Vert\leq 2~q(\eta_{\infty})^2~(1-\epsilon_{\tau})^{\lfloor t/\tau\rfloor}.
\end{equation}
This shows that $\overline{Q}_{t}(1)$ is an uniformly bounded Cauchy sequence in $\Ba_b(E)$ that converges to $h\in \Ba_b(E)$ as $t\rightarrow\infty$.
If in addition $E$ is a Polish space and $Q_{t}$ is Feller, that is $Q_{t}(\Ca_{b}(E))\subset\Ca_b(E)$ arguing as above we also check that $h\in\Ca_b(E)$.
The eigenfunction $h$ is sometimes called the ground state of the semigroup $Q_t$.
  
Choosing $H=h$ in (\ref{def-R-intro}), formula
 (\ref{link-s-t-Q-intro}) reads
\begin{equation}\label{link-h-phi}
\Psi_{h}(\Phi_{t}(\eta))=
\Psi_{h}(\eta)P^h_t\quad \mbox{\rm with}\quad P^h_s(f):={Q_{s}( hf)}/{Q_{s}(h)}=R^{(t)}_{0,s}(f).
\end{equation}
Also observe that
$$
\delta_xP^h_t=\Psi_h\left(\Phi_t(\delta_x)\right).
$$
The second corollary is a direct consequence of Theorem~\ref{theo-1}. 
\begin{cor}\label{cor-intro-h-0-0}
There exists a positive function $h\in \Ba_b(E)$ and a constant $\rho\in\RR$ such that
 for any  $t\in \Ta$ we have 
\begin{equation}
Q_t(h)=e^{\rho t}~h\quad\mbox{and}\quad\eta^h_{\infty}P^h_t= \eta^h_{\infty}
\quad\mbox{with}\quad
 \eta^h_{\infty}:=\Psi_h(\eta_{\infty}).
\end{equation}
In addition, we have the total variation exponential decays
\begin{equation}\label{add-expo-intro-Ph}
\Vert\delta_xP^h_t-\eta^h_{\infty}\Vert_{\tiny tv}\leq\frac{\Vert h\Vert}{\eta_{\infty}(h)}~(1-\epsilon_{\tau})^{\lfloor t/\tau\rfloor }.
\end{equation}
\end{cor}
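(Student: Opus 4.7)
The plan is to simply assemble the ingredients already displayed in the paragraphs preceding the corollary. For the eigenfunction, I would first invoke the pointwise convergence stemming from the absolutely convergent product representation \eqref{prod-series} applied with $\mu=\delta_x$ to define $h(x):=\lim_n \overline{Q}_{n\tau}(1)(x)$, together with the uniform bound $\Vert\overline{Q}_t(1)\Vert<\infty$ from \eqref{con-unif}; alternatively, the Cauchy estimate \eqref{ref-Q1-cauchy} gives directly that $\overline{Q}_t(1)\to h$ in $\Ba_b(E)$. Positivity of $h$ then follows from the lower bound in \eqref{con-unif}, and normalisation $\eta_\infty(h)=1$ from $\eta_\infty(\overline{Q}_t(1))=1$ combined with dominated convergence. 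The eigen-relation $Q_s(h)=e^{\rho s} h$ is read off from \eqref{link-rmk-h-rho} for $s\in[0,\infty[_\tau$, and extended to all $s\in \Ta$ in the continuous-time case by noting that $Q_t(h^{[\tau]})$ is an eigenfunction of $Q_\tau$ for the same eigenvalue, hence proportional to $h^{[\tau]}$ as already remarked in the text.

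For the invariance of $\eta^h_\infty$, the plan is to apply the transport identity \eqref{link-h-phi} with $\eta=\eta_\infty$, using the fixed-point equation $\Phi_t(\eta_\infty)=\eta_\infty$ to obtain
$$
\eta^h_\infty = \Psi_h(\eta_\infty) = \Psi_h(\Phi_t(\eta_\infty)) = \Psi_h(\eta_\infty)\,P^h_t = \eta^h_\infty P^h_t.
$$

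For the exponential decay, I would combine the same transport formula \eqref{link-h-phi} with the Lipschitz estimate for Boltzmann-Gibbs transformations recalled after \eqref{def-Psi-H}. Since $\delta_x P^h_t=\Psi_h(\Phi_t(\delta_x))$ and $\eta^h_\infty=\Psi_h(\Phi_t(\eta_\infty))$, the Lipschitz estimate gives
$$
\Vert \delta_x P^h_t-\eta^h_\infty\Vert_{\tiny tv}\leq \frac{\Vert h\Vert}{\Phi_t(\delta_x)(h)\vee \eta_\infty(h)}\,\Vert \Phi_t(\delta_x)-\eta_\infty\Vert_{\tiny tv}\leq \frac{\Vert h\Vert}{\eta_\infty(h)}\,\Vert \Phi_t(\delta_x)-\eta_\infty\Vert_{\tiny tv},
$$
and the announced bound then follows from the uniform contraction estimate \eqref{beta-sup} in Theorem~\ref{theo-1} applied to $\mu_1=\delta_x$ and $\mu_2=\eta_\infty$.

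There is no real obstacle here: every step is essentially spelled out in the surrounding text. The only subtlety worth double-checking is the passage from the $\tau$-skeleton to arbitrary continuous time, which is handled by the eigenfunction uniqueness argument already flagged in the excerpt, and by noting that the Lipschitz bound on $\Psi_h$ is valid for all $t\in\Ta$ whereas the exponential rate is naturally stated in terms of $\lfloor t/\tau\rfloor$.
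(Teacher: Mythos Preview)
Your proposal is correct and follows exactly the approach of the paper, which presents this corollary as ``a direct consequence of Theorem~\ref{theo-1}'' with all the ingredients (the construction of $h$ via \eqref{prod-series}--\eqref{link-rmk-h-rho}, the identity $\delta_xP^h_t=\Psi_h(\Phi_t(\delta_x))$, the transport formula \eqref{link-h-phi}, and the Lipschitz bound on $\Psi_h$) already spelled out in the surrounding text. You have assembled these pieces in precisely the intended order.
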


By (\ref{ref-Q1-cauchy}) we have
$$
\Vert\overline{Q}_{t}(1)-h/\eta_{\infty}(h)\Vert=\lim_{s\rightarrow\infty}
\Vert\overline{Q}_{t}(1)-\overline{Q}_{t+s}(1)\Vert\leq 2~q(\eta_{\infty})^2~(1-\epsilon_{\tau})^{\lfloor t/\tau\rfloor}.
$$
On the other hand, for any $f\in \Ba_b(E)$ with $\Vert f\Vert\leq 1$ and any $x\in E$ we have
\begin{eqnarray*}
\vert\overline{Q}_{t}(f)(x)-\frac{h(x)}{\eta_{\infty}(h)}~\eta_{\infty}(f)\vert & \leq & \vert\overline{Q}_{t}(1)(x)-h(x)/\eta_{\infty}(h)\vert\\ & &\hskip3cm+\frac{\Vert h\Vert}{\eta_{\infty}(h)}~\Vert\Phi_t(\delta_x)(f)-\eta_{\infty}(f)\vert.
\end{eqnarray*}
This leads us to our final corollary of the section. 
\begin{cor}\label{cor-intro-h-0}
For any $t\in\Ta$,  we have the operator norm exponential decays
(\ref{add-expo-intro}) with $c(\eta_{\infty})=q(\eta_{\infty})$,
as well as for any $s,t\in \Ta$ and $\eta\in \Pa(E)$ the uniform total variation estimates
\begin{equation}\label{add-expo-intro-2}
\begin{array}{l}
\displaystyle\Vert\Psi_{Q_t(1)}(\Phi_s(\eta))-\Psi_h(\eta_{\infty})\Vert_{\tiny tv}\\
\\
\displaystyle\qquad \qquad\leq q(\eta_{\infty})~
(1-\epsilon_{\tau})^{{\lfloor s/\tau\rfloor}} + 2~(1-\epsilon_{\tau})^{{\lfloor t/\tau\rfloor }}~\left(\Vert h\Vert/\eta_{\infty}(h)+~q(\eta_{\infty})^2
\right).
\end{array}
\end{equation}
\end{cor}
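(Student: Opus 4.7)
The plan is to derive both estimates by assembling ingredients already established before the statement: the identity $\overline{Q}_t(f)(x)=\overline{Q}_t(1)(x)\,\Phi_t(\delta_x)(f)$, the uniform control $\Vert\overline{Q}_t(1)\Vert\leq q(\eta_\infty)$ extracted from (\ref{con-unif}), the pointwise Cauchy estimate (\ref{ref-Q1-cauchy}) which, on letting the tail time parameter tend to infinity, yields $\Vert\overline{Q}_t(1)-h/\eta_\infty(h)\Vert\leq 2q(\eta_\infty)^2(1-\epsilon_\tau)^{\lfloor t/\tau\rfloor}$, and the uniform total-variation decay of $\Phi_{s,t}$ from Theorem~\ref{theo-1}.

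For the operator-norm bound~(\ref{add-expo-intro}) with $c(\eta_\infty)=q(\eta_\infty)$, I would start from the decomposition already displayed above the statement: writing $\tilde h:=h/\eta_\infty(h)$, one has $\overline{Q}_t(f)(x)-\tilde h(x)\,\eta_\infty(f)=[\overline{Q}_t(1)(x)-\tilde h(x)]\,\Phi_t(\delta_x)(f)+\tilde h(x)\,[\Phi_t(\delta_x)(f)-\eta_\infty(f)]$. For $\Vert f\Vert\leq 1$, the first bracket contributes at most $\Vert\overline{Q}_t(1)-\tilde h\Vert\leq 2q(\eta_\infty)^2(1-\epsilon_\tau)^{\lfloor t/\tau\rfloor}$, and the second is controlled by $\Vert\tilde h\Vert\cdot 2(1-\epsilon_\tau)^{\lfloor t/\tau\rfloor}=(2\Vert h\Vert/\eta_\infty(h))(1-\epsilon_\tau)^{\lfloor t/\tau\rfloor}$ via Theorem~\ref{theo-1} applied to $\mu_1=\delta_x$, $\mu_2=\eta_\infty$. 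Taking the supremum over $x\in E$ and $\Vert f\Vert\leq 1$ yields the stated operator-norm inequality.

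For (\ref{add-expo-intro-2}), I would apply the triangle inequality through the intermediate measure $\Psi_{Q_t(1)}(\eta_\infty)$, so as to disentangle the two error sources: the contraction of $\Phi_s$ in $s$, and the convergence of $Q_t(1)$ to a multiple of $h$ in $t$. The first piece, $\Vert\Psi_{Q_t(1)}(\Phi_s(\eta))-\Psi_{Q_t(1)}(\eta_\infty)\Vert_{\tiny tv}$, is controlled by the Lipschitz estimate for Boltzmann--Gibbs transformations recalled after (\ref{def-Psi-H}): it is bounded by $(\Vert Q_t(1)\Vert/\eta_\infty(Q_t(1)))\,\Vert\Phi_s(\eta)-\eta_\infty\Vert_{\tiny tv}=\Vert\overline{Q}_t(1)\Vert\,\Vert\Phi_s(\eta)-\eta_\infty\Vert_{\tiny tv}\leq q(\eta_\infty)(1-\epsilon_\tau)^{\lfloor s/\tau\rfloor}$, using Theorem~\ref{theo-1} for the last inequality. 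For the second piece, the invariance identity $\eta_\infty(\overline{Q}_t(1))=1$ reduces $\Psi_{Q_t(1)}(\eta_\infty)(f)-\Psi_h(\eta_\infty)(f)$ to the linear form $\eta_\infty\bigl((\overline{Q}_t(1)-\tilde h)f\bigr)$, which, evaluated against $f$ with $\Vert f\Vert\leq 1$, is dominated by the operator-norm bound from the first part, giving $2(1-\epsilon_\tau)^{\lfloor t/\tau\rfloor}(\Vert h\Vert/\eta_\infty(h)+q(\eta_\infty)^2)$. Summing the two pieces produces (\ref{add-expo-intro-2}).

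No step is a serious obstacle: the corollary is essentially a bookkeeping assembly of Theorem~\ref{theo-1}, the factorisation $\overline{Q}_t(f)(x)=\overline{Q}_t(1)(x)\,\Phi_t(\delta_x)(f)$, and (\ref{con-unif}). The only conceptual care lies in interpolating via $\Psi_{Q_t(1)}(\eta_\infty)$ so that the $s$- and $t$-decays enter additively rather than being coupled.
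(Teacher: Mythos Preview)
Your proposal is correct and follows essentially the same route as the paper: the operator-norm bound is obtained from the very decomposition $\overline{Q}_t(f)(x)-\tilde h(x)\eta_\infty(f)=[\overline{Q}_t(1)(x)-\tilde h(x)]\Phi_t(\delta_x)(f)+\tilde h(x)[\Phi_t(\delta_x)(f)-\eta_\infty(f)]$ displayed just before the corollary, and the total-variation estimate~(\ref{add-expo-intro-2}) is deduced from the splitting $\Psi_{Q_t(1)}(\mu)(f)-\Psi_h(\eta_\infty)(f)=[\Psi_{\overline Q_t(1)}(\mu)-\Psi_{\overline Q_t(1)}(\eta_\infty)](f)+\eta_\infty(f(\overline Q_t(1)-h/\eta_\infty(h)))$ with $\mu=\Phi_s(\eta)$, exactly as the paper records right after the statement. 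The only cosmetic difference is that you invoke the already-proved operator-norm bound to control the second piece, whereas the direct estimate $\Vert\overline Q_t(1)-h/\eta_\infty(h)\Vert\le 2q(\eta_\infty)^2(1-\epsilon_\tau)^{\lfloor t/\tau\rfloor}$ would suffice; either yields the stated inequality.
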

The last assertion comes from the decomposition
$$
\begin{array}{l}
\displaystyle
\overline{\eta}_0=\eta_{\infty}
\Longrightarrow
\Psi_{Q_t(1)}(\mu)(f)-\Psi_h(\eta_{\infty})(f)\\
\\
\displaystyle\hskip2.5cm=[\Psi_{\overline{Q}_t(1)}(\mu)-\Psi_{\overline{Q}_t(1)}(\eta_{\infty})](f)+\eta_{\infty}(f(\overline{Q}_t(1)-h/\eta_{\infty}(h))).
\end{array}$$
The estimate (\ref{add-expo-intro-2}) is often used in the analysis of the ergodic properties
of particle absorption models, see for instance~\cite{champagnat-3,he} as well as equation (\ref{add-expo-apply}) in the present article.

The above corollary can be interpreted as an extended version of the Krein-Rutman theorem to 
positive semigroups satisfying the uniform minorisation condition (\ref{beta-P-nu-2}). In this connection, note that
Corollary~\ref{cor-intro-h-0} readily yields the uniqueness of the eigenfunction $h$ (up to some constant). Indeed, by (\ref{add-expo-intro}) we have
\begin{equation}\label{uniqueness-ref}
\left(\forall t\in\Ta\qquad
\overline{Q}_{t}(g)=g\in \Ba_b(E) \right)\Longrightarrow g=\frac{\eta_{\infty}(g)}{\eta_{\infty}(h)}~h.
\end{equation}
Letting $T_t:=e^{\rho t}T$, we also have
\begin{eqnarray}
\lim_{t\rightarrow\infty}\vertiii{Q_{t}-T_t}^{1/t}&\leq& e^{\rho}~(1-\epsilon_{\tau})^{1/\tau}\nonumber\\
&<&e^{\rho}=\eta_{\infty}(Q_t(1))^{1/t}\leq \Vert Q_t(1)\Vert^{1/t}\longrightarrow_{t\rightarrow\infty}\lim_{t\rightarrow\infty}\vertiii{Q_t}^{1/t}\label{ref-Q-1-rho}
\end{eqnarray}
where $\vertiii{\cdot}$ denotes the operator norm. The estimates stated in (\ref{ref-Q-1-rho}) ensure that the essential spectral radius of $Q_t$ is strictly smaller than its spectral radius. For a more thorough discussion on these spectral quantities, we refer to section~\ref{quasi-compact-sec}.

A more refined analysis under weaker conditions, including exponential stability theorems and contraction properties of time homogeneous semigroups, is provided in section~\ref{sec:timehom}.

\subsection{Contraction of stable $V$-positive semigroups}\label{sec:timeinhom}
 In the further development of this section $V\in \Ba_{\infty}(E)$ stands for  some function such that $V_{\star}\geq 1$.  We consider a stable   $V$-positive semigroup $Q_{s,t}$  satisfying  the Lyapunov condition (\ref{ref-V-theta-V-po}) for some $\tau>0$ as well as (\ref{ref-V-over-eta}) and (\ref{ref-H0-2}) for some 
 measure $\overline{\eta}_0\in \Pa_V(E)$ and some positive function $H\in\Ba_{0,V}(E)$.
 In addition, $H_{s,t}$ and $R^{(t)}_{u,v}$  stands for the corresponding normalized function defined in (\ref{def-over-Q-H}) and  the triangular array of Markov operators defined in (\ref{def-R-intro}). 
 
  For continuous time semigroups, we recall  that $\pi_{\tau}(H)<\infty$ and
(\ref{discrete-2-continuous-intro-K}) and (\ref{discrete-2-continuous}) are satisfied. By Remark~\ref{rmk-discrete-2-continuous}, these conditions ensures that all infimum and supremum in (\ref{ref-H0-2}) as well as in the definition of the parameters $\lambda^-(\overline{\eta}_0)$ and $\kappa_V(\mu)$ defined in (\ref{kappa-V-h-intro})
 can be taken over continuous time indices.
 The proof of the following theorem and its corollary can be found in section~\ref{theo-intro-1-proof}.
\begin{theo}\label{theo-intro-1}
There exist constants $a<\infty$ and $b>0$
such that for any $s\in \Ta$,  $u\in [s,t]_{\tau}$, $t\in [s,\infty[_{\tau}$ and and $\mu,\eta\in \Pa_{V/H_{s,t}}(E)$, we have the uniform contraction estimate
\begin{equation}\label{stab-time-varying-h}
\Vert \mu R^{(t)}_{s,u}-\eta R^{(t)}_{s,u}\Vert_{V/H_{u,t}}\leq a~ e^{-b (u-s)}~
\Vert \mu-\eta\Vert_{V/H_{s,t}}.
\end{equation}
\end{theo}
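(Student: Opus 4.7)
The plan is to apply the $V$-norm Dobrushin contraction estimate of Lemma~\ref{lem-beta-12} to the Markov semigroups $R^{(t)}_{u,u+\tau}$ with the time-dependent Lyapunov function $W_u := V/H_{u,t}$ (with $t$ fixed throughout). After the harmless renormalisation $\vertiii{H}_V\leq 1$ permitted by (\ref{ref-H0-2}), we have $H_{s,t}\leq V$, hence $W_u\geq 1$ uniformly. The first input to Lemma~\ref{lem-beta-12} is a Foster--Lyapunov inequality. Using the definition (\ref{def-R-intro}) together with $Q_{s,s+\tau}(H_{s+\tau,t})=\lambda_{s,s+\tau}H_{s,t}$, a direct computation gives
\begin{equation*}
R^{(t)}_{s,s+\tau}(W_{s+\tau})\;=\;\frac{Q_{s,s+\tau}(V)}{\lambda_{s,s+\tau}\, H_{s,t}}\;\leq\; \frac{\Theta_\tau}{\lambda_{s,s+\tau}}\,W_s,
\end{equation*}
where the inequality uses $Q_{s,s+\tau}(V)\leq \Theta_\tau V$ from (\ref{ref-V-theta-V-po}). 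Since $\Theta_\tau\in \Ba_0(E)$ and $\lambda_{s,s+\tau}\geq\lambda^-(\overline{\eta}_0)>0$ uniformly by (\ref{ref-V-over-eta}) and (\ref{inf-sup}), for any fixed $\epsilon>0$ the set $K_\epsilon:=\{\Theta_\tau\geq\epsilon\,\lambda^-(\overline{\eta}_0)\}$ is compact; off $K_\epsilon$ the multiplier $\Theta_\tau/\lambda_{s,s+\tau}$ is at most $\epsilon$, while on $K_\epsilon$ the lower bound $\varsigma_r(H)>0$ of (\ref{ref-H0-2}) yields a uniform upper bound $W_s\leq c_\epsilon$. This produces $R^{(t)}_{s,s+\tau}(W_{s+\tau})\leq \epsilon\,W_s+C_\epsilon$ with $C_\epsilon$ independent of $(s,t)$.

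The second input required by Lemma~\ref{lem-beta-12}, the local total-variation minorisation on compact level sets of $W_s$, is provided directly by (\ref{loc-dob}) (the level sets of $W_s$ are contained in those of $V$ since $W_s\leq V$ after normalisation). Plugging both inputs into Lemma~\ref{lem-beta-12} with $V_s=W_s$, $V_t=W_{s+\tau}$ and $P_{s,t}=R^{(t)}_{s,s+\tau}$ furnishes, for any sufficiently large $r$, the one-step contraction
\begin{equation*}
\Vert\mu R^{(t)}_{s,s+\tau}-\eta R^{(t)}_{s,s+\tau}\Vert_{W_{s+\tau}^{\epsilon,r}}\;\leq\;(1-\alpha_\epsilon(r))\,\Vert\mu-\eta\Vert_{W_s^{\epsilon,r}}
\end{equation*}
in the affinely rescaled norms $W_u^{\epsilon,r}=1/2+\rho_\epsilon(r)\,W_u$, with $\alpha_\epsilon(r),\rho_\epsilon(r)\in\, ]0,1]$ independent of $(s,t)$. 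Iterating $n$ times via the semigroup identity $R^{(t)}_{s,s+n\tau}=R^{(t)}_{s,s+\tau}\cdots R^{(t)}_{s+(n-1)\tau,s+n\tau}$ produces the factor $(1-\alpha_\epsilon(r))^n$. Because $W_u\geq 1$, the affine rescaling is two-sided comparable to $W_u$ with absolute constants $\rho_\epsilon(r)$ and $1/2+\rho_\epsilon(r)$, turning the iterated bound into the claimed estimate with $a=1+1/(2\rho_\epsilon(r))$ and $b=-\tau^{-1}\log(1-\alpha_\epsilon(r))$, both independent of $(s,t,u)$. In continuous time, $u\in[s,t]_\tau$ still means $u-s$ is a multiple of $\tau$, so no intermediate-time step is needed; the roles of (\ref{discrete-2-continuous-intro-K}), (\ref{discrete-2-continuous}) and $\pi_\tau(H)<\infty$, via Remark~\ref{rmk-discrete-2-continuous}, are simply to guarantee that (\ref{ref-V-over-eta}) and (\ref{ref-H0-2}) hold with supremum and infimum taken over all continuous time indices, preserving the uniform constants.

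The delicate point is the Foster--Lyapunov step: keeping $\epsilon$ and $C_\epsilon$ bounded independently of $(s,t)$ requires both the uniform lower bound $\lambda^-(\overline{\eta}_0)>0$ on the denominator $\lambda_{s,s+\tau}$ and the uniform lower bound $\varsigma_r(H)>0$ on $H_{s,t}$ over compact level sets of $V$. These twin time-uniform controls are precisely what promotes a $V$-positive semigroup to a stable one; without either of them, the normalising factor $\lambda_{s,s+\tau}\,H_{s,t}$ in the denominator of $R^{(t)}_{s,s+\tau}(W_{s+\tau})$ could degenerate, and the constants extracted from Lemma~\ref{lem-beta-12} would drift with the endpoints $s$ and $t$.
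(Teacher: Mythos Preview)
Your overall strategy coincides with the paper's: derive a Foster--Lyapunov inequality for the array $R^{(t)}_{s,s+\tau}$ with the time-dependent functions $W_u=V/H_{u,t}$ (this is Lemma~\ref{lem-proof-P-H-t-hom}), feed it together with the local minorisation into Lemma~\ref{lem-beta-12}, and iterate. Your derivation of the drift inequality is correct.

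There is, however, a genuine gap in your verification of the second hypothesis of Lemma~\ref{lem-beta-12}. You claim that ``the level sets of $W_s$ are contained in those of $V$ since $W_s\leq V$ after normalisation''. Both the inequality and the implication are wrong. The normalisation $\vertiii{H}_V\leq 1$ gives $H_{s,t}\leq V$, hence $W_s=V/H_{s,t}\geq 1$; it does \emph{not} give $W_s\leq V$, which would require $H_{s,t}\geq 1$ uniformly, and this is nowhere assumed. Moreover, even if $W_s\leq V$ held, the containment would run the other way: $\{V\leq r\}\subset\{W_s\leq r\}$, which is useless for transferring (\ref{loc-dob}) to $W_s$-sublevel sets. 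Without a correct argument here, you have not shown that $\{W_s\leq r\}$ sits inside a compact set independent of $(s,t)$, so Lemma~\ref{lem-beta-12} cannot be applied with constants uniform in the time indices.

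The paper closes this gap differently. From $H_{s,t}\leq V$ and $\lambda^-\,H_{s,t}\leq Q_{s,s+\tau}(H_{s+\tau,t})\leq Q_{s,s+\tau}(V)\leq \Theta_\tau V$ one obtains the \emph{lower} bound $W_s=V/H_{s,t}\geq \lambda^-/\Theta_\tau$. Since $\Theta_\tau\in\Ba_0(E)$, the function $1/\Theta_\tau$ has compact sublevel sets, so $\{W_s\leq r\}\subset\{\,1/\Theta_\tau\leq r/\lambda^-\}$ is contained in a fixed compact set, hence in some $\{V\leq\varphi(r)\}$ by (\ref{k-in}). This is the step that makes the constants in Lemma~\ref{lem-beta-12} independent of $(s,t)$. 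A minor additional point: Lemma~\ref{lem-beta-12} is stated with additive constant $1$, so before applying it you must pass from $R^{(t)}_{s,s+\tau}(W_{s+\tau})\leq\epsilon W_s+C_\epsilon$ to the rescaled function $1+(\epsilon/C_\epsilon)W_s$, as the paper does explicitly.
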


As we shall see in Lemma~\ref{lem-proof-P-H-t-hom} our regularity conditions ensure the existence of some $0< \epsilon<1$
and some constant $c>0$ such that for any time horizon $s\in \Ta$ and
$t\in [s,\infty[_{\tau}$  we have the Lyapunov estimate
\begin{equation}\label{ref-P-H-t-hom}
R^{(t)}_{s,s+\tau}(V/H_{s+\tau,t})\leq \epsilon~V/H_{s,t}+c.
\end{equation}
In this direction, we also emphasise that the main ingredient of the proof of Theorem \ref{theo-intro-1} is the $V$-contraction for Markov operators discussed in Lemma~\ref{lem-beta-12}.

\begin{cor}\label{theo-intro-1-cor-continuous}
For continuous time stable $V$-positive semigroups, there exist constants $a<\infty$ and $b>0$
such that for any $s\leq u\leq t$ and $\mu,\eta\in \Pa_{V/H_{s,t}}(E)$, we have the uniform contraction estimate
\begin{equation}\label{stab-time-varying-h-continuous}
\Vert \mu R^{(t)}_{s,u}-\eta R^{(t)}_{s,u}\Vert_{V/H_{u,t}}\leq a~({\pi_{\tau}(V)}/{\lambda^-(\overline{\eta}_0)})~ e^{-b (u-s)}~
\Vert \mu-\eta\Vert_{V/H_{s,t}}.
\end{equation}

\end{cor}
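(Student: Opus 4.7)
The plan is to reduce the continuous-time estimate to the discrete-mesh estimate of Theorem~\ref{theo-intro-1} by splitting $[s,u]$ at the nearest grid point below $u$ and controlling the residual short-time operator $R^{(t)}_{u_\star,u}$ directly from its definition (\ref{def-R-intro}). Given continuous $s\le u\le t$, I introduce the grid point $u_\star:=s+\lfloor (u-s)/\tau\rfloor\tau\in[s,u]$, so that $0\le u-u_\star<\tau$, and use the semigroup identity
$$
R^{(t)}_{s,u}\;=\;R^{(t)}_{s,u_\star}\,R^{(t)}_{u_\star,u}.
$$

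I first note that the conclusion of Theorem~\ref{theo-intro-1} actually holds with $t\in\Ta$ arbitrary, not only $t\in[s,\infty[_\tau$. Its proof only relies on the Foster--Lyapunov inequality (\ref{ref-P-H-t-hom}) and the local Doeblin minorisation (\ref{loc-dob}) applied along the mesh $\{s+k\tau\}_{k\ge 0}$, and by Lemma~\ref{lem-d-to-c} together with Remark~\ref{rmk-discrete-2-continuous} these conditions are available uniformly in the continuous terminal time $t$ under the standing assumptions of the corollary. Applying this extended version gives
$$
\Vert \mu R^{(t)}_{s,u_\star}-\eta R^{(t)}_{s,u_\star}\Vert_{V/H_{u_\star,t}}\;\le\;a\,e^{-b(u_\star-s)}\,\Vert \mu-\eta\Vert_{V/H_{s,t}},
$$
which is the desired exponential decay, up to the harmless slack $e^{-b(u_\star-s)}\le e^{b\tau}e^{-b(u-s)}$ that will be absorbed into $a$.

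For the short-step factor $R^{(t)}_{u_\star,u}$, with $\delta:=u-u_\star\in[0,\tau[$, I estimate its operator norm from $\Ba_{V/H_{u,t}}$ into $\Ba_{V/H_{u_\star,t}}$. Using the identity $Q_{u_\star,u}(H_{u,t})=\lambda_{u_\star,u}\,H_{u_\star,t}$ with $\lambda_{u_\star,u}=\overline{\eta}_{u_\star}Q_{u_\star,u}(1)\ge\lambda^-(\overline{\eta}_0)>0$ (Lemma~\ref{lem-d-to-c} guarantees strict positivity of $\lambda^-(\overline{\eta}_0)$ for any fractional step $\delta\in[0,\tau]$), together with the growth bound $Q_{u_\star,u}(V)\le\pi_\tau(V)\,V$ from (\ref{discrete-2-continuous}), any $f$ with $|f|\le V/H_{u,t}$ satisfies
$$
|R^{(t)}_{u_\star,u}(f)|\;\le\;\frac{Q_{u_\star,u}(V)}{\lambda_{u_\star,u}\,H_{u_\star,t}}\;\le\;\frac{\pi_\tau(V)}{\lambda^-(\overline{\eta}_0)}\,\frac{V}{H_{u_\star,t}}.
$$
By duality this transfers to the signed-measure action $\nu\mapsto\nu R^{(t)}_{u_\star,u}$, seen from $\Ma_{V/H_{u_\star,t}}$ into $\Ma_{V/H_{u,t}}$, with the same constant $\pi_\tau(V)/\lambda^-(\overline{\eta}_0)$. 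Composing the two estimates through the semigroup decomposition above and absorbing $e^{b\tau}$ into $a$ yields (\ref{stab-time-varying-h-continuous}).

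The main technical obstacle is securing the uniform positivity of the normalising constant $\lambda_{u_\star,u}$ across all pairs with $u-u_\star\in[0,\tau]$; this is exactly what Lemma~\ref{lem-d-to-c} delivers in the continuous-time setting, and it is the source of the explicit multiplicative factor $\pi_\tau(V)/\lambda^-(\overline{\eta}_0)$ appearing in the statement. A subsidiary verification, straightforward but worth noting, is that no analogous left-endpoint correction is required, because $s$ itself plays the role of the origin of the mesh used in Theorem~\ref{theo-intro-1}; only the right endpoint $u$ need be discretised.
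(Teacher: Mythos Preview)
Your proposal is correct and follows essentially the same approach as the paper: decompose $R^{(t)}_{s,u}=R^{(t)}_{s,u_\star}R^{(t)}_{u_\star,u}$ at the last grid point $u_\star=s+\lfloor(u-s)/\tau\rfloor\tau$, apply the discrete-mesh estimate of Theorem~\ref{theo-intro-1} on $[s,u_\star]$, and bound the residual operator via $R^{(t)}_{u_\star,u}(V/H_{u,t})=Q_{u_\star,u}(V)/(\lambda_{u_\star,u}H_{u_\star,t})\le(\pi_\tau(V)/\lambda^-(\overline{\eta}_0))\,V/H_{u_\star,t}$. Your remark that Theorem~\ref{theo-intro-1} already applies for arbitrary continuous terminal time $t$ (since the Lyapunov bound of Lemma~\ref{lem-proof-P-H-t-hom} and condition~(\ref{loc-dob}) are uniform in $t$ via Remark~\ref{rmk-discrete-2-continuous}) is a useful clarification that the paper leaves implicit.
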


The estimate on the left-hand side~of (\ref{ref-H0-2}) allows one to control, uniformly with respect to the time parameter, the quantities $\mu(H_{s,t})$ as a function of $\mu(V)$, for any $\mu\in \Pa_V(E)$. Since these uniform estimates will be used several times in the sequel, we present them here in a general form. Applying the Markov inequality, for any $\mu\in \Pa_V(E)$ the left-hand side condition in (\ref{ref-H0-2}) ensures the existence of some $n\geq 1$  such that
\begin{equation}\label{ref-min-int-n}
r_n:=\mu(V)+n\Longrightarrow
\mu(H_{s,t})\geq {\varsigma_{r_n}(H)}\mu\left(V\leq r_n\right)\geq {\varsigma_{r_n}(H)}/{(1+\mu(V)/n)}>0.
\end{equation}
For any  $\mu\in \Pa_V(E)$, we conclude that
\begin{equation}\label{def-omega}
0<\omega_H(\mu):=\inf_{s\geq 0}\inf_{t\geq s}\mu(H_{s,t})\leq \mu(V).
\end{equation}
Similarly, we check that the condition $\kappa_V(\mu)<\infty$ ensures the tightness of sequence of measures $\Phi_{s,t}(\mu)$ indexed by $s\geq 0$ and $t\geq s$, \textcolor{black}{for any $\mu\in\Pa_V(E)$}. In the same vein, we check that the flow of measures $\overline\eta_t$ is tight. Thus, choosing
$$
r_n=\overline{\eta}(V)+n\geq r_H\quad\mbox{\rm with}\quad  \overline{\eta}(V):=\sup_{t\geq 0}\overline{\eta}_t(V)
$$
 we also check that
$$
\textcolor{black}{
\overline{\eta}_-(H):=
\inf_{t\geq 0}\overline{\eta}_t(H)}\geq 
\varsigma_{r_n}(H)/(1+\overline{\eta}(V)/n)>0.
$$

We are now in position to discuss some direct consequences of Theorem~\ref{theo-intro-1}.
Defining the finite rank (and hence compact) operator
$$
f\in \Ba_V(E)\mapsto T_{s,t}(f):=\frac{H_{s,t}}{\overline{\eta}_s(H_{s,t})}~\overline{\eta}_t(f)\in \Ba_{0,V}(E),
$$ 
the first corollary and its time homogeneous version discussed in Corollary~\ref{cor-projection-h} can be interpreted as an extended version of the Krein-Rutman theorem to time varying positive semigroups. 
\begin{cor}\label{cor-projection}
For any $s\in \Ta $ and $t\in [s,\infty[_{\tau}$ we have the exponential decay
$$
\vertiii{\overline{Q}_{s,t}-T_{s,t}}_V\leq ~ a~ e^{-b \textcolor{black}{(t-s)}}~\textcolor{black}{\left(1+\vertiii{H}_V~\overline{\eta}(V)/\overline{\eta}_-(H)\right)},
$$ 
where $(a,b)$ were defined in (\ref{stab-time-varying-h}).
In addition, we have the uniform norm estimate
\begin{equation}\label{norm-u-est-V}
\vertiii{\overline{Q}_{s,t}}_{V}\leq (1+a)~\left(1+\vertiii{H}_V~\overline{\eta}(V)/\overline{\eta}_-(H)\right).
\end{equation}
For continuous time semigroups, the above estimates remain valid for any continuous time indices $s\leq t$ with the parameter $a$ replaced by the parameter
\begin{equation}\label{def-a-eta}
a(\overline{\eta}_0):=a~({\pi_{\tau}(V)}/{\lambda^-(\overline{\eta}_0)}).
\end{equation}
\end{cor}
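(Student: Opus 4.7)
\textbf{Proof plan for Corollary~\ref{cor-projection}.}
The plan is to reduce the estimate on $\vertiii{\overline{Q}_{s,t}-T_{s,t}}_V$ to the contraction estimate (\ref{stab-time-varying-h}) applied with $u=t$, where $H_{u,t}=H_{t,t}=H$. The key algebraic identity is the factorisation
$$
\overline{Q}_{s,t}(f)(x)=H_{s,t}(x)\,R^{(t)}_{s,t}(f/H)(x),
$$
which is immediate from the definitions of $\overline{Q}_{s,t}$, $H_{s,t}$ and $R^{(t)}_{s,t}$. Combined with $\overline{\eta}_t(f)=\overline{\eta}_t(H)\,\Psi_H(\overline{\eta}_t)(f/H)$ and the transport formula (\ref{link-s-t-Q-intro}) (applied with $\eta=\overline{\eta}_s$, so that $\eta_t=\overline{\eta}_t$), the rank-one term can also be factored as
$$
T_{s,t}(f)(x)=H_{s,t}(x)\,\Psi_{H_{s,t}}(\overline{\eta}_s)R^{(t)}_{s,t}(f/H).
$$
Subtracting these two expressions yields the clean decomposition
$$
\overline{Q}_{s,t}(f)(x)-T_{s,t}(f)(x)=H_{s,t}(x)\,\bigl(\delta_x-\Psi_{H_{s,t}}(\overline{\eta}_s)\bigr)R^{(t)}_{s,t}(f/H),
$$
which is the pivotal identity on which the whole argument rests.

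Next I apply Theorem~\ref{theo-intro-1} with $u=t$, noting that $\Vert f\Vert_V\le 1$ forces $\Vert f/H\Vert_{V/H}\le 1$. This gives
$$
\bigl|(\delta_x-\Psi_{H_{s,t}}(\overline{\eta}_s))R^{(t)}_{s,t}(f/H)\bigr|
\le a\,e^{-b(t-s)}\,\vertiii{\delta_x-\Psi_{H_{s,t}}(\overline{\eta}_s)}_{V/H_{s,t}}.
$$
The dual norm on the right is bounded by $V(x)/H_{s,t}(x)+\Psi_{H_{s,t}}(\overline{\eta}_s)(V/H_{s,t})$, and the second term simplifies, using $\overline{\eta}_s(H_{s,t})=\overline{\eta}_t(H)$, to $\overline{\eta}_s(V)/\overline{\eta}_t(H)$. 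Multiplying through by $H_{s,t}(x)$, dividing by $V(x)$, and using $H_{s,t}(x)/V(x)\le\Vert H_{s,t}\Vert_V\le\vertiii{H}_V$ together with $\overline{\eta}_s(V)\le\overline{\eta}(V)$ and $\overline{\eta}_t(H)\ge\overline{\eta}_-(H)$ (both finite and strictly positive by the tightness and the bound (\ref{ref-min-int-n})), taking the supremum over $x$ and over $\Vert f\Vert_V\le 1$ yields the announced exponential decay with constant $1+\vertiii{H}_V\,\overline{\eta}(V)/\overline{\eta}_-(H)$.

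For the uniform norm estimate (\ref{norm-u-est-V}), I would first compute $\vertiii{T_{s,t}}_V$ directly: since $T_{s,t}(f)=H_{s,t}\,\overline{\eta}_t(f)/\overline{\eta}_t(H)$ and $|\overline{\eta}_t(f)|\le\overline{\eta}_t(V)\le\overline{\eta}(V)$, one obtains $\vertiii{T_{s,t}}_V\le\vertiii{H}_V\,\overline{\eta}(V)/\overline{\eta}_-(H)$. A triangle inequality with the exponential decay just proved (bounded by its value at $s=t$, namely $a$ times the same quantity plus $a$) then gives
$$
\vertiii{\overline{Q}_{s,t}}_V\le (1+a)\Bigl(1+\vertiii{H}_V\,\overline{\eta}(V)/\overline{\eta}_-(H)\Bigr).
$$
The continuous-time version follows from exactly the same chain of arguments, using Corollary~\ref{theo-intro-1-cor-continuous} in place of Theorem~\ref{theo-intro-1}, which replaces $a$ by $a(\overline{\eta}_0)$ as defined in (\ref{def-a-eta}); the bounds $\overline{\eta}(V)<\infty$ and $\overline{\eta}_-(H)>0$ for continuous time are justified by Remark~\ref{rmk-discrete-2-continuous} and Lemma~\ref{lem-d-to-c}.

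I do not anticipate any serious obstacle: everything reduces to the factorisation identity above and a careful bookkeeping of the constants produced by (\ref{ref-H0-2}) and the Markov-inequality lower bound (\ref{ref-min-int-n}). The only mild delicacy is making sure that $\overline{\eta}_-(H)>0$ and $\overline{\eta}(V)<\infty$ hold uniformly in continuous time, which is precisely what Remark~\ref{rmk-discrete-2-continuous} secures.
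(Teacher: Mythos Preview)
Your proposal is correct and follows essentially the same route as the paper: the paper's proof rests on the very same factorisation $\overline{Q}_{s,t}(f)-T_{s,t}(f)=H_{s,t}\bigl(R^{(t)}_{s,t}(f/H)-\Psi_{H_{s,t}}(\overline{\eta}_s)R^{(t)}_{s,t}(f/H)\bigr)$, followed by applying (\ref{stab-time-varying-h}) with $u=t$, $\mu=\delta_x$, $\eta=\Psi_{H_{s,t}}(\overline{\eta}_s)$, and then bounding $H_{s,t}/V\le\vertiii{H}_V$ and $\overline{\eta}_s(V)/\overline{\eta}_s(H_{s,t})\le\overline{\eta}(V)/\overline{\eta}_-(H)$. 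Your treatment of the uniform norm bound and the continuous-time extension is slightly more explicit than the paper's, but the argument is the same.
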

\begin{proof}
Using (\ref{def-over-Q-H-2}) and (\ref{link-s-t-Q-intro}) we check that
$$
\Psi_{H_{s,t}}\left(\overline{\eta}_s\right) R^{(t)}_{s,t}(f/H)={\Phi_{s,t}(\overline{\eta}_s)(f)}/{\Phi_{s,t}(\overline{\eta}_s)(H)}=\overline{\eta}_t(f)/\overline{\eta}_t(H).
$$
This yields the decomposition
$$
\overline{Q}_{s,t}(f)-\frac{H_{s,t}}{\overline{\eta}_s(H_{s,t})}~\overline{\eta}_t(f)=H_{s,t}~\left(R^{(t)}_{s,t}(f/H)-\Psi_{H_{s,t}}\left(\overline{\eta}_s\right) R^{(t)}_{s,t}(f/H)\right).
$$
Applying (\ref{stab-time-varying-h}) to $u=t$, $\mu=\delta_x$ and $\eta=\Psi_{H_{s,t}}\left(\overline{\eta}_s\right)$, for any $\Vert f\Vert_V=\Vert f/H\Vert_{V/H}\leq 1$ we check the estimate
\begin{equation}\label{ref-cor-f}
\left\vert \overline{Q}_{s,t}(f)-\frac{H_{s,t}}{\overline{\eta}_s(H_{s,t})}~\overline{\eta}_t(f)\right\vert\leq ~ a~ e^{-b (t-s)}~\left(V+\frac{H_{s,t}}{\overline{\eta}_s(H_{s,t})}~\overline{\eta}_s(V)\right)
\end{equation}
\textcolor{black}{where $(a,b)$ were defined in (\ref{stab-time-varying-h}).} This concludes the proof.
\end{proof}

When $H=1$, the extended version of the above corollary  in the context of random semigroups arising in filtering is provided in~\cite{whiteley}. The proof in~\cite{whiteley} relies on rather sophisticated coupling and  decomposition techniques given in~\cite{klepsyna-3}, which were further developed in~\cite{douc-moulines-ritov}. 

\begin{rmk}\label{rmk-1-H}
 In Theorem~\ref{theo-intro-1} and Corollary~\ref{cor-projection}, the tightness condition $\kappa_V(\mu)<\infty$ for any $\mu\in \Pa_V(E)$ in the right-hand side of (\ref{ref-V-over-eta}) can be replaced by the condition $ \overline{\eta}(V)<\infty$. In this situation, choosing $f=1$ in (\ref{ref-cor-f}) for any $\mu\in \Pa_V(E)$ we readily check that
\begin{equation}\label{ref-H-1-est}
\left\vert \mu\overline{Q}_{s,t}(1)-{\mu(H_{s,t})}/{\overline{\eta}_t(H)}\right\vert\leq ~ a~\mu(V)~ e^{-b (t-s)}~\left(1+\vertiii{H}_V~{\overline{\eta}(V)}/{\overline{\eta}_-(H)}\right),
\end{equation}
where $(a,b)$ were defined in (\ref{stab-time-varying-h}).
    \end{rmk}

Our next result transfers the stability of the $R$-semigroup to that of the normalised semigroup $\Phi$. 
 \begin{theo}\label{theo-intro-2}
For any  $s\in \Ta $ and $t\in [s,\infty[_{\tau}$, and any $\mu,\eta\in \Pa_V(E)$ we have the local contraction estimate 
\begin{equation}\label{stab-time-varying-Phi}
\vertiii{ \Phi_{s,t}(\mu)-\Phi_{s,t}(\eta)}_{V}
 \leq  a~\kappa(\eta,\mu)~ e^{-b (t-s)}~
~\vertiii{ \mu-\eta}_{V}
\end{equation}
with  $(a,b)$ as in (\ref{stab-time-varying-h}) and $\kappa(\eta,\mu)$ given by
\begin{eqnarray*}
\kappa(\eta,\mu)&:=&\kappa_H(\mu)
\left(1+ \kappa_V(\eta)\right)~ \left(1+{\eta(V)}/{\omega_H(\eta)}\right)/\omega_H(\mu).
\end{eqnarray*}
For continuous time semigroups, the above estimates remain valid for any continuous time indices $s\leq t$ with the parameter $a$ replaced by 
the paremeter $a(\overline{\eta}_0)$ defined in (\ref{def-a-eta}).
 \end{theo}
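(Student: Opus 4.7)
The plan is to use the nonlinear transport formula (\ref{link-s-t-Q-intro}) to reduce the stability of $\Phi_{s,t}$ to the already-established $V/H_{u,t}$-contraction of the triangular-array Markov semigroup $R^{(t)}_{s,u}$, conjugated on both sides by Boltzmann-Gibbs operators. Since $\Psi_{1/H}$ inverts $\Psi_H$ on probability measures, that formula gives
$$
\Phi_{s,t}(\mu)=\Psi_{1/H}\bigl(\Psi_{H_{s,t}}(\mu)\,R^{(t)}_{s,t}\bigr),\qquad
\Phi_{s,t}(\eta)=\Psi_{1/H}\bigl(\Psi_{H_{s,t}}(\eta)\,R^{(t)}_{s,t}\bigr),
$$
and I would estimate the $V$-norm distance between these by peeling off, in turn, the outer $\Psi_{1/H}$, the Markov kernel $R^{(t)}_{s,t}$, and the inner $\Psi_{H_{s,t}}$.

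For the outer step, apply the first estimate of Lemma~\ref{lem-BG-V} with $h=H$ to the pair $\widehat{\mu}:=\Psi_{H_{s,t}}(\mu)R^{(t)}_{s,t}=\Psi_H(\Phi_{s,t}(\mu))$ and $\widehat{\eta}:=\Psi_H(\Phi_{s,t}(\eta))$. The two relevant moments simplify to $1/\widehat{\mu}(1/H)=\Phi_{s,t}(\mu)(H)\leq\kappa_H(\mu)$ and $\widehat{\eta}(V/H)/\widehat{\eta}(1/H)=\Phi_{s,t}(\eta)(V)\leq\kappa_V(\eta)$, producing the prefactor $\kappa_H(\mu)(1+\kappa_V(\eta))$ while passing from the $V$-norm on the output to the $V/H$-norm on $\widehat{\mu}-\widehat{\eta}$.

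For the middle step, apply Theorem~\ref{theo-intro-1} with $u=t$ (so that $H_{u,t}=H$); this contracts the $V/H$-norm distance between $\widehat{\mu}$ and $\widehat{\eta}$ by $a\,e^{-b(t-s)}$ at the price of switching to the $V/H_{s,t}$-norm distance between $\Psi_{H_{s,t}}(\mu)$ and $\Psi_{H_{s,t}}(\eta)$. These inner measures belong to $\Pa_{V/H_{s,t}}(E)$ because $\Psi_{H_{s,t}}(\mu)(V/H_{s,t})=\mu(V)/\mu(H_{s,t})\leq\mu(V)/\omega_H(\mu)<\infty$, with $\omega_H(\mu)>0$ guaranteed by (\ref{def-omega}). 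Finally, the second estimate of Lemma~\ref{lem-BG-V} applied with $h=H_{s,t}$ converts the $V/H_{s,t}$-distance between $\Psi_{H_{s,t}}(\mu)$ and $\Psi_{H_{s,t}}(\eta)$ back into the $V$-distance between $\mu$ and $\eta$, picking up the factor $\omega_H(\mu)^{-1}(1+\eta(V)/\omega_H(\eta))$ after bounding $\mu(H_{s,t}),\eta(H_{s,t})$ below by $\omega_H(\mu),\omega_H(\eta)$ respectively. Multiplying the three prefactors reproduces exactly the stated constant $\kappa(\eta,\mu)$.

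The continuous-time statement then follows verbatim, simply substituting Corollary~\ref{theo-intro-1-cor-continuous} for Theorem~\ref{theo-intro-1} at the middle step so that the extra factor $\pi_{\tau}(V)/\lambda^-(\overline{\eta}_0)$ appears. The main difficulty is purely bookkeeping: confirming that each intermediate measure lives in the weighted space demanded by the next Boltzmann-Gibbs or $R$-transition estimate, which in each case reduces to finiteness of $\mu(V)$, $\eta(V)$, $\Phi_{s,t}(\mu)(H)$ and $\Phi_{s,t}(\eta)(V)$, and to positivity of $\mu(H_{s,t})$ and $\eta(H_{s,t})$. All of these are supplied by the stability hypotheses (\ref{ref-V-over-eta}) and (\ref{ref-H0-2}); there is no further analytic obstacle once Theorem~\ref{theo-intro-1} is in hand.
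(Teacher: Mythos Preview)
Your proposal is correct and follows essentially the same argument as the paper's own proof: write $\Phi_{s,t}(\mu)=\Psi_{1/H}\bigl(\Psi_{H_{s,t}}(\mu)\,R^{(t)}_{s,t}\bigr)$, then peel off the outer $\Psi_{1/H}$ via (\ref{Lip-Psi-h-2}), contract the $R^{(t)}_{s,t}$ layer via Theorem~\ref{theo-intro-1} with $u=t$, and finally undo the inner $\Psi_{H_{s,t}}$ via (\ref{Lip-Psi-h-2-back}), bounding the resulting moments by $\kappa_H(\mu)$, $\kappa_V(\eta)$, $\omega_H(\mu)$ and $\omega_H(\eta)$ exactly as you describe. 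The continuous-time extension is handled identically in the paper, replacing $a$ by $a(\overline{\eta}_0)$.
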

This result is a direct consequence of the $V$-contraction
 estimates (\ref{stab-time-varying-h}) stated in Theorem~\ref{theo-intro-1} and the rather 
 elementary Boltzmann-Gibbs estimates (\ref{Lip-Psi-h-2})  and
 (\ref{Lip-Psi-h-2-back}).  The full proof is provided in section~\ref{theo-intro-2-proof}.

As in Remark~\ref{rmk-1-H},  the condition $\kappa_V(\mu)<\infty$ for any $\mu\in \Pa_V(E)$  in Theorem~\ref{theo-intro-2}   can be  replaced by condition $$ \overline{\eta}(V)<\infty\quad\mbox{and}\quad \kappa_H(\mu)<\infty \quad\mbox{for any $\mu\in \Pa_V(E)$}. $$ 
The right-hand side condition in the above display is clearly met for any bounded function $H$. 
In this situation, following word-for-word  the proof of Theorem~\ref{theo-intro-2},  we check that
 $$
 \vertiii{\Phi_{s,t}(\mu)-\overline{\eta}_t}_{V}
 \leq  a~ \kappa(\overline{\eta},\mu)~ e^{-b (t-s)}~
~\vertiii{ \mu-\overline{\eta}_s}_{V}
 $$
 with
 $$
 \kappa(\overline{\eta},\mu):=\kappa_H(\mu)
\left(1+ \overline{\eta}(V)\right)\left(1+\overline{\eta}(V)/\overline{\eta}_-(H)\right)/\omega_H(\mu).
 $$
Using the above estimate we readily check that $\kappa_V(\mu)<\infty$ for any $\mu\in \Pa_V(E)$.

 The $V$-norm stability of the semigroup $\Phi_{s,t}$ is also discussed in~\cite{whiteley}
 (for instance \cite[Corollary 1]{whiteley}). The proof in~\cite{whiteley} is based on Corollary~\ref{cor-projection} and it does not provide local Lipschitz contraction estimates.

  \begin{rmk} The time varying Lyapunov function $V/H_{u,t}$ associated with the triangular array of Markov operators $R^{(t)}_{s,u}$ discussed in (\ref{ref-P-H-t-hom})  depends on the terminal time horizon $t$.  This property allows one to control the exponential decays (\ref{stab-time-varying-h}) of the corresponding $(V/H_{u,t})$-norms uniformly in $t$. These estimates are crucial in the proof of Theorem~\ref{theo-intro-2}. 
  
  We note that more conventional approaches, based on time homogeneous Lyapunov functions $V$, are discussed in~\cite{marguet}. This approach also ensures that the Markov semigroup  $R^{(t)}_{s,u}$ forgets its initial state with respect to a common time homogenous $V$-norm. However, it seems difficult to deduce  any local Lipschitz estimates of the form \eqref{stab-time-varying-Phi} from these uniform estimates. 
 \end{rmk}
 
Choosing $H=1$ in (\ref{link-s-t-Q-intro}) we have
\begin{equation}\label{Phi-P}
\left[\Phi_{s,t}(\mu)-\Phi_{s,t}(\eta)\right](f)= \frac{1}{\mu(H_{s,t})}~(\mu-\eta)\left(D_{\eta}\Phi_{s,t}(f)\right)
\end{equation}
with the first order linear operator $D_{\eta}\Phi_{s,t}$ defined by the  formula
$$
D_{\eta}\Phi_{s,t}(f)(x):=H_{s,t}(x)~\left(\Phi_{s,t}(\delta_x)-\Phi_{s,t}(\eta)\right)(f).
$$
Taylor expansions of higher order are also discussed in~\cite{arnaudon-dm}.
A weak version of the total variation estimate  (\ref{stab-time-varying-Phi}) is now easily obtained from the above perturbation formula.

\begin{cor}\label{cor-DPhi}
Consider the triangular array semigroup (\ref{link-s-t-Q-intro}) associated to the unit function $H=1$. In this case, for any  $s\in \Ta $, $t\in [s,\infty[_{\tau}$ and any $\eta\in \Pa_V(E)$ we have
\begin{equation}
\sup_{\Vert f\Vert_{V/H_{s,t}}\leq 1}\Vert D_{\eta}\Phi_{s,t}(f)\Vert_V\leq 
a~ e^{-b (t-s)}~\left(1
+\eta(V)/ \omega_H(\eta)\right).
\end{equation}
\end{cor}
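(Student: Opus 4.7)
The plan is to express $D_\eta\Phi_{s,t}(f)(x)$ via the triangular-array decomposition and then quote Theorem~\ref{theo-intro-1} directly. Specializing the transport identity~(\ref{link-s-t-Q-intro}) to $H=1$ gives $\Phi_{s,t}(\mu)=\Psi_{H_{s,t}}(\mu)\,R^{(t)}_{s,t}$ for every $\mu\in\Pa(E)$; combined with the elementary fact $\Psi_{H_{s,t}}(\delta_x)=\delta_x$, the definition of $D_\eta\Phi_{s,t}$ rearranges into
\[
D_\eta\Phi_{s,t}(f)(x)\;=\;H_{s,t}(x)\,\bigl[\delta_x-\Psi_{H_{s,t}}(\eta)\bigr]\,R^{(t)}_{s,t}(f).
\]
This reduces the task to controlling the action of the Markov semigroup $R^{(t)}_{s,t}$ on the signed measure $\delta_x-\Psi_{H_{s,t}}(\eta)$, which is exactly the object governed by Theorem~\ref{theo-intro-1}.

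Next I would invoke Theorem~\ref{theo-intro-1} at $u=t$. Because $H=1$ forces $H_{t,t}=1$, the target Lyapunov weight $V/H_{t,t}$ coincides with $V$, so the theorem yields
\[
\vertiii{\delta_xR^{(t)}_{s,t}-\Psi_{H_{s,t}}(\eta)\,R^{(t)}_{s,t}}_{V}\;\le\;a\,e^{-b(t-s)}\,\vertiii{\delta_x-\Psi_{H_{s,t}}(\eta)}_{V/H_{s,t}}.
\]
The initial $(V/H_{s,t})$-distance is then bounded by the triangle inequality: any $g$ with $\Vert g\Vert_{V/H_{s,t}}\le 1$ satisfies $|g(x)|\le V(x)/H_{s,t}(x)$ and
\[
|\Psi_{H_{s,t}}(\eta)(g)|\;=\;\frac{|\eta(gH_{s,t})|}{\eta(H_{s,t})}\;\le\;\frac{\eta(V)}{\eta(H_{s,t})}\;\le\;\frac{\eta(V)}{\omega_H(\eta)},
\]
using the definition~(\ref{def-omega}); hence
\[
\vertiii{\delta_x-\Psi_{H_{s,t}}(\eta)}_{V/H_{s,t}}\;\le\;\frac{V(x)}{H_{s,t}(x)}+\frac{\eta(V)}{\omega_H(\eta)}.
\]

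Combining these two inequalities, multiplying by $H_{s,t}(x)/V(x)$, and invoking the normalization $\vertiii{H}_V\le 1$ (recall the sentence after~(\ref{ref-H0-2}) which allows replacing $H$ by $H/\vertiii{H}_V$), we obtain
\[
\frac{|D_\eta\Phi_{s,t}(f)(x)|}{V(x)}\;\le\;a\,e^{-b(t-s)}\!\left(1+\frac{H_{s,t}(x)}{V(x)}\cdot\frac{\eta(V)}{\omega_H(\eta)}\right)\;\le\;a\,e^{-b(t-s)}\!\left(1+\frac{\eta(V)}{\omega_H(\eta)}\right),
\]
uniformly in $x$ and in $f$, which is the claimed bound. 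The main obstacle is a bookkeeping one: reconciling the test-function class $\Vert f\Vert_{V/H_{s,t}}\le 1$ from the statement with the $V$-duality pairing naturally produced by Theorem~\ref{theo-intro-1} (whose target Lyapunov at $u=t$ is $V$, not $V/H_{s,t}$). This is handled by observing that $D_\eta\Phi_{s,t}$ annihilates constants, so only the centered function $\tilde f:=f-\Psi_{H_{s,t}}(\eta)R^{(t)}_{s,t}(f)$ actually enters the bound; one centers before passing to the $V$-pairing and then converts back via the iterated Lyapunov inequality~(\ref{ref-P-H-t-hom}) which ties the $V/H_{s,t}$-scale at time $s$ to the $V$-scale at time $t$ along the semigroup $R^{(t)}_{s,t}$.
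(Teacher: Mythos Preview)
Your core argument is correct and is exactly the paper's proof: express $D_\eta\Phi_{s,t}(f)(x)=H_{s,t}(x)\bigl[\delta_x-\Psi_{H_{s,t}}(\eta)\bigr]R^{(t)}_{s,t}(f)$ (the paper merely expands $\Psi_{H_{s,t}}(\eta)$ as $\int\eta(dy)\,H_{s,t}(y)/\eta(H_{s,t})\,\delta_y$, which is cosmetically different but identical), apply Theorem~\ref{theo-intro-1} at $u=t$, bound the $(V/H_{s,t})$-mass of $\delta_x-\Psi_{H_{s,t}}(\eta)$, multiply by $H_{s,t}(x)/V(x)$, and use $\Vert H_{s,t}\Vert_V\le 1$.

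The ``bookkeeping obstacle'' you flag in the last paragraph is real, but your proposed fix does not work: the Lyapunov estimate~(\ref{ref-P-H-t-hom}) propagates the weight $V/H_{\cdot,t}$ \emph{forward} along $R^{(t)}$, so it cannot convert a $V$-norm output at time $t$ back into a $V/H_{s,t}$-norm, and centering does not help either since the issue is the size of $R^{(t)}_{s,t}(f)$, not of $f$ itself. In fact the discrepancy is a typo in the statement (and in the displayed line of the paper's proof): Theorem~\ref{theo-intro-1} at $u=t$ yields the target norm $V/H_{t,t}=V/H$, which equals $V$ here since $H=1$. The natural and provable test-function class is therefore $\Vert f\Vert_{V}\le 1$ (equivalently $\Vert f\Vert_{V/H}\le 1$), not $\Vert f\Vert_{V/H_{s,t}}\le 1$. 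With that reading your derivation is complete as written and the final paragraph should simply be dropped.
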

\begin{proof}
We have
$$
D_{\eta}\Phi_{s,t}(f)(x):=H_{s,t}(x)\int~\eta(dy)~\frac{H_{s,t}(y)}{\eta(H_{s,t})}~(\delta_x R^{(t)}_{s,t}-\delta_y R^{(t)}_{s,t})(f).
$$
Using (\ref{stab-time-varying-h}) we check that
$$
\Vert \delta_x R^{(t)}_{s,t}-\delta_y R^{(t)}_{s,t}\Vert_{V/H_{s,t}}\leq a~ e^{-b (t-s)}~(
(V/H_{s,t})(x)+(V/H_{s,t})(y)).
$$
This implies that
$$
\sup_{\Vert f\Vert_{V/H_{s,t}}\leq 1}\Vert D_{\eta}\Phi_{s,t}(f)\Vert_V\leq 
a~ e^{-b (t-s)}~\left(1
+\eta(V)~{\Vert H_{s,t}\Vert_V}/{\eta(H_{s,t})}\right)
$$
and we can now conclude.
\end{proof}

\begin{rmk}
 These weak form estimates are particularly useful in the convergence analysis of the mean field particle models associated with sub-Markovian integral operators.
 Taylor expansions at any order are discussed in section 3.1.3 and chapter 10 in~\cite{dm-13}, see also section 2.3 in  the more recent article~\cite{arnaudon-dm}.
 \end{rmk}

\subsection{Time homogenous models}\label{sec:timehom}
\subsubsection{Leading eigen-triple}
 Consider a time homogenous version $Q_t$ of the stable $V$-positive semigroup discussed in section~\ref{sec:timeinhom}.  This section is concerned with the existence of an unique leading eigen-triple   $(\rho,\eta_{\infty},h)\in (\RR\times\Pa_V(E)\times \Ba_{0,V}(E))$ satisfying (\ref{def-eigen-triple-intro}).
We follow word-for-word the arguments developed in the end of section~\ref{sec:not_res-unif} in terms of $V$-normed spaces.
  In this context, for any $s\in \Ta$ we have
$$
\kappa_H(\Phi_s(\overline{\eta}_0))\leq \kappa_H(\overline{\eta}_0)<\infty\quad
\mbox{\rm and}\quad 0<\omega_H(\overline{\eta}_0)\leq \omega_H(\Phi_s(\overline{\eta}_0)).
$$
Thus, by (\ref{stab-time-varying-Phi}) we readily check that
$$
\vertiii{ \Phi_{t}(\overline{\eta}_0)-\Phi_{s+t}(\overline{\eta}_0)}_{V}
 \leq  a~\kappa(\overline{\eta}_0)~ e^{-b t}~
~\vertiii{ \overline{\eta}_0-\Phi_s(\overline{\eta}_0)}_{V}\leq 2~a~\kappa(\overline{\eta}_0)\kappa_V(\overline{\eta}_0)~ e^{-b t}~
$$
with $\kappa(\overline{\eta}_0)~:=\kappa(\overline{\eta}_0,\overline{\eta}_0)$. The above Lipschitz exponential decay estimate ensures that 
$\overline{\eta}_t$ is a Cauchy sequence in the complete set $\Pa_V(E)$ equipped with the $V$-distance. Thus it converges exponentially fast to a single probability $\eta_{\infty}=\Phi_t(\eta_{\infty})\in \Pa_V(E)$. The fixed point equation yields the exponential formula  (\ref{expo-rho-def}). Observe that
$$
\frac{Q_t(f)}{\eta_{\infty}Q_t(1)}=\frac{\overline{Q}_t(f)}{\eta_{\infty}(\overline{Q}_t(1))}
$$
with the semigroup $\overline{Q}_t$ defined in (\ref{def-over-Q-H}) in terms of the measure $\overline{\eta}_0\in \Pa_V(E)$ satisfying (\ref{ref-V-over-eta}) and (\ref{ref-H0-2}).
Combining  (\ref{norm-u-est-V}) with (\ref{ref-H-1-est}) we check that  the function $t\mapsto\Vert e^{-\rho t}Q_t(1)\Vert_V$ is uniformly bounded.
Now choosing $\overline{\eta}_0=\eta_{\infty}$ in (\ref{def-over-Q-H}) the normalized semigroup takes the following form $$\overline{Q}_t(1)={Q_t(1)}/{\eta_{\infty}Q_t(1)}=e^{-\rho t} Q_t(1)\quad \mbox{\rm and we have}\quad \sup_{t\geq 0}\Vert \overline{Q}_t(1)\Vert_V<\infty.
$$

 As in  (\ref{ref-product-series-h}), now applying Theorem~\ref{stab-time-varying-Phi} we have the pointwise convergence of product series expansion (\ref{prod-series}); that is, for any $x\in E$  we have the product series formula
$$
h(x):=\lim_{n\rightarrow\infty} \overline{Q}_{n\tau}(1)(x)=\prod_{n\geq 0}~\left\{1+
\left[\Phi_{n\tau}(\delta_x)(\overline{Q}_{\tau}(1))-\Phi_{n\tau}(\eta_{\infty})(\overline{Q}_{\tau}(1))\right]\right\}>0.
 $$
Applying the dominated convergence theorem as in (\ref{link-rmk-h-rho}), we also check that $\eta_{\infty}(h)=1$ as well as for any $s\in [0,\infty[_{\tau}$ the formulae
\begin{equation}\label{ref-dominated-cv}
Q_s(h)=e^{\rho s}~h\in \Ba_{0,V}(E).
\end{equation}
For continuous time models, the ground state does not depend on the time step so that the above formula is satisfied for any $s\geq 0$.
Choosing $\eta=\eta_{\infty}$ and $H$ such that $\eta_{\infty}(H)=1$ and $\Vert H\Vert_V\leq 1$ in (\ref{stab-time-varying-Phi}) we check that for any $V(x)\leq r$ and $s,t\in\Ta$ we have the exponential estimate
$$
\vert \Phi_{t}(\delta_x)(\overline{Q}_s(H))-1\vert\leq \Vert \overline{Q}_s(H)\Vert_V
\vertiii{ \Phi_{t}(\delta_x)-\eta_{\infty}}_{V}
 \leq  c(r)~ e^{-b t}~
$$
with some finite constant $c(r)<\infty$ and the parameter $b$ as in (\ref{stab-time-varying-h}). Since $\Vert \overline{Q}_t(H)\Vert_V$ is uniformly bounded with respect to the time parameter,
this implies that for any compact subset $K\subset E$ we have
\begin{equation}\label{ref-compact-cv}
\sup_{K}\vert \overline{Q}_{t+s}(H)-\overline{Q}_{s}(H)\vert \leq  c_K~ e^{-b t}~\quad\mbox{\rm with some finite constant $c_K<\infty$.}
\end{equation}
This shows that $\overline{Q}_{t}(H)$ is a uniformly Cauchy sequence on compact sets   and we have the pointwise convergence $\overline{Q}_{t}(H)(x)\longrightarrow_{t\rightarrow\infty}
h(x)$. 
Applying the dominated convergence theorem, for any $s>0$ we have that
$$
\overline{Q}_{s}(\overline{Q}_{t}(H))=\overline{Q}_{t+s}(H)\longrightarrow_{t\rightarrow\infty}
h=\overline{Q}_{s}(h)=e^{\rho s}~h\in \Ba_{0,V}(E).
$$
For strong Feller semigroups (in the sense that $Q_{s,t}(\Ba_V(E))\subset\Ca_{V}(E)$, for any $s<t $), $\overline{Q}_{t}(H)$  compactly converges as $t\rightarrow\infty$ to some continuous function $h=\overline{Q}_s(h)\in \Ca_{0,V}(E)$ as soon as the Polish space $E$ is locally compact. The same result applies when $H\in \Ca_V(E)$ and $Q_t(\Ca_V(E))\subset \Ca_V(E)$.
We recall that Polish spaces are separable metric spaces so they are second countable (and thus locally compact Polish spaces are $\sigma$-compact). In this context, $\Ca(E)$ equipped with the compact uniform topology is a complete metric space. This ensures that $h\in \Ca(E)$. We also check that $\Vert h\Vert_V<\infty$ by recalling that $\Vert Q_t(H)\Vert_V$ is uniformly bounded with respect to~the time horizon.

The next theorem connects the stability of the semigroup $Q_t$ with the one of the Doob's $h$-transform $P^h_t$ defined in (\ref{def-Ph-intro}). The proof is provided in section~\ref{theo-equivalence-proof}.
\begin{theo}\label{theo-equivalence}
The  semigroup  $Q_t$ is $V$-positive and stable if and only if there exists an eigen-triple  $(\rho,\eta_{\infty},h)$ satisfying (\ref{ref-dominated-cv})  and $P^h_t$ is stable $V^h$-positive semigroup, with the function $V^h=V/h$. 
\end{theo}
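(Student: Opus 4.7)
The plan is to establish both directions by exploiting the Doob $h$-transform identity
\[
Q_t(f) = e^{\rho t}\,h\,P^h_t(f/h),
\]
which converts the analysis of the (sub-Markov) positive semigroup $Q_t$ into the analysis of a genuine Markov semigroup $P^h_t$, and to pass the three defining ingredients of a stable $V$-positive semigroup (the Lyapunov upper bound, the triangular array local Doeblin condition, and the lower integrability (\ref{ref-V-over-eta})) back and forth between $V$ and $V^h=V/h$.

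For the direct implication, I would simply package the construction carried out in the paragraphs preceding the theorem. Stability of $Q_t$ as a $V$-positive semigroup yields, via Theorem~\ref{theo-intro-2} applied with $\overline{\eta}_0=\eta_\infty$, that $\overline{\eta}_t$ is $V$-Cauchy and converges to a unique fixed point $\eta_\infty\in\Pa_V(E)$, and the product-series formula (\ref{prod-series}) together with the dominated convergence argument around (\ref{ref-dominated-cv}) produces $h\in\Ba_{0,V}(E)$ with $Q_s(h)=e^{\rho s}h$ and $\eta_\infty(h)=1$. Positivity of $h$ on $E$ follows from the product-series representation. Then I would verify that $P^h_t$ is a stable $V^h$-positive Markov semigroup. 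The Lyapunov estimate $P^h_\tau(V^h)\le \epsilon V^h+c$ is (\ref{lyap-Ph-intro}) / Lemma~\ref{ref-lem-Lyapunov-2}. For the local Doeblin condition (\ref{ref-P-h-n-2-hom}) on $V^h$-level sets, I would observe that, in the time-homogeneous setting and with the choice $H=h$, the triangular-array Markov operators collapse to the $h$-process:
\[
R^{(t)}_{u,v}(f) = \frac{Q_{u,v}(f\,e^{\rho(t-v)}h)}{Q_{u,v}(e^{\rho(t-v)}h)} = \frac{Q_{v-u}(fh)}{Q_{v-u}(h)} = P^h_{v-u}(f),
\]
so that condition (\ref{loc-dob}) translates directly into a local Doeblin minorisation for $P^h_\tau$ on $V$-level sets; since $h$ is locally lower bounded and $h/V\in\Ba_0(E)$, the families of compact level sets $\{V\le r\}$ and $\{V^h\le r\}$ are cofinal, so the minorisation also holds on $V^h$-level sets. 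The $V^h$-positiveness $P^h_t(\Ba_{V^h}(E))\subset\Ba_{0,V^h}(E)$ reduces, via $V^h=V/h$, to the $V$-positiveness of $Q_t$.

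For the converse, I would start from the triple $(\rho,\eta_\infty,h)$ with $h>0$, $h\in\Ba_{0,V}(E)$, and $Q_s(h)=e^{\rho s}h$, and use $Q_t(f)=e^{\rho t}h\,P^h_t(f/h)$ to lift each stability ingredient. The Lyapunov condition in (\ref{ref-V-theta-V-po}) for $Q_\tau$ follows from
\[
\frac{Q_\tau(V)}{V} = e^{\rho\tau}\,\frac{h}{V}\,P^h_\tau(V^h) = e^{\rho\tau}\,\frac{P^h_\tau(V^h)}{V^h},
\]
and the right-hand side lies in $\Ba_0(E)$ because $P^h_\tau(V^h)/V^h\in\Ba_0(E)$ by the $V^h$-positiveness of $P^h_t$; the same identity proves $Q_{s,t}(\Ba_V(E))\subset\Ba_{0,V}(E)$. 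For condition (\ref{ref-V-over-eta}), take $\overline{\eta}_0=\eta_\infty$: then $\eta_\infty Q_{t,t+\tau}(1)=e^{\rho\tau}$, so $\lambda^-(\eta_\infty)=e^{\rho\tau}>0$, and the formulas
\[
\Phi_t(\mu)(V)=\frac{\Psi_h(\mu)P^h_t(V^h)}{\Psi_h(\mu)P^h_t(1/h)}
\]
combined with the uniform Lyapunov bound (\ref{ref-Markov-Lyap}) for $P^h_t$ give $\kappa_V(\mu)<\infty$. Finally, choosing $H=h$ makes $R^{(t)}_{u,v}=P^h_{v-u}$ as above, so the local Doeblin condition (\ref{loc-dob}) for $Q_t$ is exactly the one assumed for $P^h_\tau$; the requirements (\ref{ref-H0-2}) are trivial because $H_{s,t}=e^{\rho(t-s)}h/e^{\rho t}=e^{-\rho s}h$ is (up to an $s$-dependent constant that cancels in the relevant sup/inf after renormalisation) just $h$ itself, which satisfies $\varsigma_r(H)>0$ and $\vertiii{H}_V<\infty$ since $h$ is locally lower bounded and $h/V$ is bounded.

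The main technical obstacle is the careful book-keeping of the two Lyapunov classes across the transform: verifying that $V^h\in\Ba_\infty(E)$ in the forward direction (which needs $h$ to be locally lower bounded and $h/V$ bounded, both consequences of the construction) and, in the reverse direction, checking that $\Psi_h(\mu)\in\Pa_{V^h}(E)$ whenever $\mu\in\Pa_V(E)$ together with uniform lower boundedness of $\Psi_h(\mu)P^h_t(1/h)$ away from zero, needed to control $\Phi_t(\mu)(V)$. The latter is delicate because $1/h$ generally fails to lie in $\Ba_{V^h}(E)$; the way to handle it is to exploit tightness of $\Psi_h(\mu)P^h_t$ from the Lyapunov bound (\ref{ref-Markov-ineq-compact}), which forces the measures to charge a common compact set on which $1/h$ is bounded below, giving the required uniform positive lower bound.
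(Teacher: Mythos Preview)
Your proposal is correct and follows essentially the same route as the paper: choose $(\overline{\eta}_0,H)=(\eta_\infty,h)$, observe that $R^{(t)}_{u,v}=P^h_{v-u}$ and that $Q_t(V)/V=e^{\rho t}P^h_t(V^h)/V^h$, and then pass the Lyapunov bound and the local Doeblin condition back and forth using the cofinality of the level sets of $V$ and $V^h$. One small slip: with $\overline{\eta}_0=\eta_\infty$ the denominator in $H_{s,t}$ is $\eta_\infty Q_{t-s}(1)=e^{\rho(t-s)}$, not $e^{\rho t}$, so $H_{s,t}=h$ exactly (as in (\ref{appli-h-Hst})) and no $s$-dependent constant arises; your discussion of $\kappa_V(\mu)<\infty$ via tightness of $\Psi_h(\mu)P^h_t$ is the content of Proposition~\ref{prop-unif-eta-h-V}, which the paper invokes separately rather than inside the proof of the theorem.
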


 \subsubsection{Sub-integral semigroups}
 In Section~\ref{V-sg-sec} we have seen that  absolutely continuous semigroups  satisfying condition $(\Aa)_V$ are stable $V$-positive semigroups. Our next objective is to relax this absolutely continuity condition. Consider the following sub-integral condition
\begin{equation}\label{ref-Q-min-prop}
Q_{\tau}(x_1,dx_2)\geq q_{\tau}(x_1,x_2)~\chi_{\tau}(dx_2),
\end{equation}
for some $\tau>0$, some density function $q_{\tau}(x_1,x_2)>0$ and some positive Radon measure $\chi_{\tau}$. Also assume that for any compact set $K$ there exists some positive measurable function 
$q^K_{\tau}(x_2)$ such that
$$
\inf_{x_1\in K}q_{\tau}(x_1,x_2)\geq q^K_{\tau}(x_2)>0\quad \mbox{\rm and}\quad \chi_{\tau}(q^K_{\tau})>0.
$$
For instance, the left-hand side~condition is satisfied for lower semi-continuous function $q_{\tau}(x_1,x_2)$ with respect to the first variable, and upper-semicontinuous with respect to the second. In this situation, for any compact set $K\subset E$ we have
$$
\forall x\in K\qquad
Q_{\tau}(h)(x)=e^{\rho {\tau}}~h(x)\geq ~\chi_{\tau}(hq^K_{\tau})>0\quad \mbox{\rm and thus}\quad \inf_Kh>0.
$$

Whenever  (\ref{ref-Q-min-prop}) is satisfied, for any compact set $K\subset E$  we have
\begin{equation}\label{ref-Q-min}
\forall x\in K\qquad
Q_{\tau}(x,dz)\geq \iota_K~ \nu_K(dz)
\end{equation}
 for some  Radon probability measure $\nu_K$  and some $\iota_K>0$ whose values may depends on the parameter $\tau$. This minorisation condition ensures that (\ref{ref-lem-t}) is satisfied.
 To check this claim, observe that for any $x\in K_r:=\{V\leq r\}$ we have
\begin{eqnarray*}
P^h_{\tau}(x,dz)&\geq& \frac{1}{e^{\rho \tau}\sup_{K_r}h}~Q_{\tau}(x,dz)~h(z)\\
&\geq& \alpha(r)~\nu_{K_r}^h(dz)~~\mbox{\rm with}~~
\alpha(r)=  \iota_{K_r}~e^{-\rho \tau}
\frac{\nu_{K_r}(h)}{\sup_{K_r}h}~~\mbox{\rm and}~~\nu_{K_r}^h:=\Psi_h(\nu_{K_r}).
\end{eqnarray*}
This clearly implies (\ref{ref-lem-t}).

 As we shall see in Lemma~\ref{ref-lem-Lyapunov-2}, the condition $Q_{\tau}(V)/V\in \Ba_0(E)$ implies the Lyapunov inequality (\ref{lyap-Ph-intro}).
This property also ensures that  for any $\eta\in \Pa_V(E)$ we have $ \kappa_V(\eta)<\infty$.
 For a more thorough discussion on the consequences of the Foster-Lyapunov inequality we refer the reader to Proposition~\ref{prop-unif-eta-h-V}.

This yields the following corollary of Theorem~\ref{theo-equivalence}.
\begin{cor}\label{H1-prop}
Consider a $V$-positive semigroup $Q_t$ satisfying the local minorisation condition (\ref{ref-Q-min-prop}).
In this situation, $Q_t$ is  stable  if and only if  there exists a leading  eigen-triple $(\rho,\eta_{\infty},h)\in(\RR\times \Pa_V(E)\times\Ba_{0,V}(E)).$
\end{cor}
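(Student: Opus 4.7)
The proof will rely crucially on Theorem~\ref{theo-equivalence}, which reduces the stability of $Q_t$ to the combination of the existence of a leading eigen-triple and stability of the Doob $h$-transform $P^h_t$ as a $V^h$-positive Markov semigroup, where $V^h := V/h$. The forward implication is then immediate: if $Q_t$ is stable and $V$-positive, Theorem~\ref{theo-equivalence} directly supplies the required triple $(\rho,\eta_\infty,h)\in \RR\times\Pa_V(E)\times\Ba_{0,V}(E)$.

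For the converse direction, assume a leading eigen-triple $(\rho,\eta_\infty,h)$ exists. By Theorem~\ref{theo-equivalence}, it suffices to verify that $P^h_t$ is a stable $V^h$-positive Markov semigroup, that is, that $P^h_\tau$ satisfies both a Foster-Lyapunov inequality and a local Doeblin minorisation condition on the level sets of $V^h$. Since $h\in \Ba_{0,V}(E)$ with $h>0$, the function $V^h=V/h$ belongs to $\Ba_\infty(E)$. The $V$-positivity of $Q_t$ gives $Q_\tau(V)/V\in \Ba_0(E)$, whence Lemma~\ref{ref-lem-Lyapunov-2}, as invoked in (\ref{lyap-Ph-intro}), produces $\epsilon\in ]0,1[$ and $c>0$ with $P^h_\tau(V^h)\leq \epsilon V^h+c$, establishing the Lyapunov inequality.

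For the local Doeblin condition, I would extract the minorisation (\ref{ref-Q-min}) on compact sets from the subintegral bound (\ref{ref-Q-min-prop}): for each compact $K\subset E$, there exist $\iota_K>0$ and a Radon probability $\nu_K$ such that $Q_\tau(x,\cdot)\geq \iota_K\nu_K$ for every $x\in K$. Applied to the compact level sets $K_r=\{V\leq r\}$, together with the fact (derived just before the corollary) that $\inf_{K_r}h>0$, this yields the explicit pointwise bound $P^h_\tau(x,dz)\geq \alpha(r)\,\Psi_h(\nu_{K_r})(dz)$ for $x\in K_r$, with $\alpha(r)=\iota_{K_r}e^{-\rho\tau}\nu_{K_r}(h)/\sup_{K_r}h>0$. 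This is exactly condition $(\Ha^h)$ once one transfers from $V$-level sets to $V^h$-level sets, using the local bounds $0<\inf_K h\leq \sup_K h<\infty$ for compacts $K\subset E$ to guarantee that any $V^h$-sub-level set is contained in a $V$-sub-level set of comparable size and vice versa. Combining the Lyapunov inequality with this local contraction via Lemma~\ref{lem-beta-12} applied to $P^h_\tau$ shows that $P^h_t$ is a stable $V^h$-positive Markov semigroup, and a final appeal to Theorem~\ref{theo-equivalence} delivers the stability of $Q_t$.

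The main technical point is the level-set reconciliation in the previous paragraph: condition $(\Ha^h)$ and Definition~\ref{ref-def-V-sg-intro} are phrased in terms of sub-level sets of $V^h$, whereas the minorisation harvested from (\ref{ref-Q-min-prop}) naturally lives on sub-level sets of $V$. The fact that $h\in\Ba_{0,V}(E)$ is locally lower bounded and bounded above is what makes these two families of sub-level sets interchangeable up to increasing $r$, and it is precisely this step where the hypothesis $h\in \Ba_{0,V}(E)$ (rather than merely $h>0$ measurable) plays its essential role.
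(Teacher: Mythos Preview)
Your proposal is correct and follows essentially the same route as the paper: the corollary is presented there as an immediate consequence of Theorem~\ref{theo-equivalence}, with the preceding paragraphs supplying exactly the ingredients you cite—Lemma~\ref{ref-lem-Lyapunov-2} for the Foster--Lyapunov inequality on $P^h_\tau$, and the derivation of the local Doeblin bound $P^h_\tau(x,\cdot)\geq \alpha(r)\,\Psi_h(\nu_{K_r})$ on $\{V\leq r\}$ from (\ref{ref-Q-min-prop}) via (\ref{ref-Q-min}). The level-set transfer from $V$ to $V^h$ that you flag is handled in the paper inside the proof of Theorem~\ref{theo-equivalence} itself.
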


Note that the minorisation condition (\ref{ref-Q-min-prop}) is less stringent than the absolutely continuous condition (\ref{ref-Q-chi}) imposed by condition $(\Aa)_V$. 
In the context of particle absorption models, it applies to jump processes including regular piecewise deterministic processes as well as  Metropolis-Hastings transitions.

 \subsubsection{Doob's $h$-tranform semigroup}
This section presents a more refined analysis of a  time homogenous $V$-positive semigroups $Q_t$ satisfying the following \textcolor{black}{weaker} condition:
\begin{equation}\label{rho-h}
 Q_{t}(h)=e^{\rho t}~h>0\quad
\mbox{\rm for some}~\tau>0,~\rho\in\RR,
~~\mbox{\rm and}~h\in \Ba_{0,V}(E).
\end{equation}
Recall that for any compact set $K\subset E$ we have
$$
V\geq 1\quad\mbox{\rm and}\quad
h/V\in \Ba_{0}(E)\Longrightarrow \inf_K h\geq \inf_K(h/V)>0.
$$
Arguing as above  condition $(\Ha^h)$ introduced in (\ref{ref-P-h-n-2-hom}) is satisfied as soon as the local minorisation condition (\ref{ref-Q-min}) is satisfied. The main drawback of condition $(\Ha^h)$ is that it requires some knowledge of the function $h$ which is often unknown.

Several illustrations and some sufficient conditions ensuring the existence of the leading eigen-pair $(\rho,h)$ satisfying (\ref{rho-h}) are discussed in section~\ref{quasi-compact-sec}, which is dedicated to the study of quasi-compact positive operators. See also Corollary~\ref{cor-intro-h-0-0}.
For instance, in remark~\ref{rmk-compact-Q-K} we shall see that the existence of a leading eigen-pair $(\rho,h)$ satisfying (\ref{rho-h}) is granted for absolutely continuous semigroups of the form (\ref{ref-Q-chi}) equipped with a continuous density. Note that in this case condition $(\Aa)_V$ is satisfied. 

For a more detailed discussion on the
 design of  functions $V$ satisfying condition $Q_{\tau}(V)/V\in \Ba_0(E)$, we refer to~\cite{examples-review-paper,douc-moulines-ritov,ferre,whiteley}.  
 The article~\cite{bansaye-2} also provides different conditions ensuring the existence of a leading eigen-pair $(\rho,h)$ for semigroups that are not necessarily absolutely continuous. We shall discuss these conditions in section~\ref{sec-comparisons}, dedicated to comparisons with the existing literature on this subject. We also refer the reader to~\cite{bansaye-2} for some additional illustrations of these conditions in the context of the growth-fragmentation equations.

We are now in position to state the main result of this section, the proof of which can be found in section~\ref{theo-homo-sec}.

 \begin{theo}\label{stab-h-transform-time}
 Assume that  $(\Ha^h)$ is satisfied.
Then the Markov semigroup $P^h_{t}$ has a single invariant measure  $\eta^h_{\infty}\in \Pa_{V/h}(E)$. In addition,
there also exists  some finite constant $a_h<\infty$ and some parameter $b_h>0$, such that 
for any $\mu,\eta\in \Pa_{V/h}(E)$ and \textcolor{black}{$t\in \Ta$} we have the contraction estimate
 \begin{equation}\label{first-h-estimate}
 \Vert \mu P_{t}^h-\eta P_{t}^h\Vert_{V/h}\leq a_h~e^{-b_h t}~ \Vert \mu- \eta\Vert_{V/h}.
 \end{equation}
\textcolor{black}{ In addition, for any $\eta\in\Pa_V(E)$ we have the estimates 
\begin{equation}\label{ref-to-prop-unif-eta-h-V}
0<\kappa_h^-(\eta)\leq   \kappa_V(\eta)<\infty.
\end{equation}}
\end{theo}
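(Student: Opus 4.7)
The plan is to reduce the stability of the Markov semigroup $P^h_t$ to the classical $V^h$-norm contraction machinery recalled in Section~\ref{sec-preliminary-V-dob}, namely Lemma~\ref{lem-beta-12}, and then deduce the estimates on $\kappa_h^-(\eta)$ and $\kappa_V(\eta)$ from the resulting exponential convergence towards $\eta^h_\infty$.

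First I would establish a Foster--Lyapunov inequality for $P^h_\tau$ with respect to $V^h=V/h$. Writing
\[
P^h_\tau(V^h)=\frac{Q_\tau(h\cdot V/h)}{Q_\tau(h)}=e^{-\rho\tau}\,\frac{Q_\tau(V)}{h}\leq e^{-\rho\tau}\,\Theta_\tau\,V^h,
\]
and using $\Theta_\tau\in\Ba_0(E)$, for any prescribed $\epsilon\in(0,1)$ the set $\Ka_\epsilon:=\{e^{-\rho\tau}\Theta_\tau\geq \epsilon\}$ is compact; since $h$ is uniformly positive on compact sets and $V^h$ is bounded on $\Ka_\epsilon$, we obtain $P^h_\tau(V^h)\leq \epsilon\,V^h+c_\epsilon$ for some finite $c_\epsilon$. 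This is exactly the inequality (\ref{lyap-Ph-intro}) announced in the text, and it coincides with the left-hand part of (\ref{PW-12}) once we pick $\epsilon$ small enough so that the condition on the contraction coefficient can be triggered.

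Next I would invoke Lemma~\ref{lem-beta-12} with $V_s=V_t=V^h$ and $P_{s,t}=P^h_\tau$. The Lyapunov step just obtained supplies the drift condition, while $(\Ha^h)$ supplies the local minorisation condition on the compact sub-level sets $\{V^h\leq r\}$. Choosing $r$ large enough we obtain a parameter $\alpha_\epsilon(r)\in(0,1)$ and a rescaled Lyapunov function $\widetilde{V}^h=1/2+\rho_\epsilon(r)V^h$, equivalent to $V^h$, such that
\[
\beta_{\widetilde{V}^h}(P^h_\tau)\leq 1-\alpha_\epsilon(r).
\]
Iterating the semigroup property then yields
\[
\vertiii{\mu P^h_{n\tau}-\eta P^h_{n\tau}}_{\widetilde{V}^h}\leq (1-\alpha_\epsilon(r))^n\,\vertiii{\mu-\eta}_{\widetilde{V}^h},
\]
for any $\mu,\eta\in\Pa_{V^h}(E)$. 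For continuous time indices I would interpolate between the skeleton times using $\pi_\tau(V^h)<\infty$, which itself follows from $\pi_\tau(V)<\infty$ combined with the uniform positivity of $h$ on compacts together with the Lyapunov control just obtained; this produces the full estimate (\ref{first-h-estimate}) with explicit $a_h<\infty$ and $b_h>0$ depending on $(\epsilon,r,\alpha,\tau,\pi_\tau(V^h))$. The existence of a unique invariant measure $\eta^h_\infty\in\Pa_{V^h}(E)$ follows by the usual Cauchy argument applied to the flow $\mu P^h_{n\tau}$ in the complete metric space $(\Pa_{V^h}(E),\vertiii{\cdot}_{V^h})$ and by passing to continuous time using the interpolation bound.

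Finally, for the estimate (\ref{ref-to-prop-unif-eta-h-V}), fix $\eta\in\Pa_V(E)$. The bound $\kappa_V(\eta)<\infty$ is immediate from iterating the Foster--Lyapunov inequality $Q_\tau(V)/V\leq \Theta_\tau$: it yields a uniform bound on $\Phi_{s,t}(\eta)(V)$ through the elementary estimate already exploited in Proposition~\ref{prop-unif-eta-h-V}, so this part is essentially bookkeeping. For $\kappa_h^-(\eta)>0$ I would use the identity (\ref{link-h-phi}), which gives
\[
\Phi_t(\eta)(h)=\frac{1}{\Psi_{1/h}(\Psi_h(\eta)P^h_t)(1/h)},
\]
or equivalently express $\Phi_t(\eta)(h)$ in terms of $\Psi_h(\eta)P^h_t$. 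Since $\Psi_h(\eta)\in\Pa_{V^h}(E)$ (because $\eta(V)<\infty$ and $h\in\Ba_{0,V}(E)$), the contraction (\ref{first-h-estimate}) forces $\Psi_h(\eta)P^h_t\to\eta^h_\infty$ in $V^h$-norm, so $\Phi_t(\eta)(h)$ converges to a strictly positive limit related to $\eta^h_\infty(1/h)^{-1}$. Combined with the positivity $\Phi_t(\eta)(h)>0$ for every finite $t$, which is a consequence of the irreducibility baked into the $V$-positivity definition, this yields the desired uniform lower bound. The only delicate step is the very first one, verifying that the compact-set-based control of $\Theta_\tau$ really gives a drift inequality with a constant $\epsilon$ that can be chosen arbitrarily small (as opposed to only $\epsilon<1$); I expect to handle this by exploiting (\ref{ref-epsilon-V-K-n}) to choose the compact $\Ka_\epsilon$ inside a level set of $V^h$, which is precisely what the condition $\Theta_\tau\in\Ba_0(E)$ allows.
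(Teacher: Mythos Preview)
Your approach is essentially the paper's: Foster--Lyapunov for $P^h_\tau$ with respect to $V^h$ (Lemma~\ref{ref-lem-Lyapunov-2}), local minorisation from $(\Ha^h)$, then Lemma~\ref{lem-beta-12} and iteration. Two small points where you drift from the paper are worth noting.

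First, Lemma~\ref{lem-beta-12} requires the drift in the form $P^h_\tau(V_t)\leq \epsilon V_s+1$, not $+c_\epsilon$. The paper handles this by passing to $W:=1+(\epsilon/c_{\epsilon,\tau})V^h$ (equation~(\ref{ineq-V-norms-equiv})) and then translating the level sets of $V^h$ in $(\Ha^h)$ to level sets of $W$. Your ``rescaled Lyapunov function $\widetilde V^h=1/2+\rho_\epsilon(r)V^h$'' is the \emph{output} of Lemma~\ref{lem-beta-12}, not this preliminary rescaling; you need both. Also, your worry at the end that $\epsilon$ must be arbitrarily small is unfounded: any $\epsilon\in(0,1)$ suffices for Lemma~\ref{lem-beta-12}.

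Second, for $\kappa_h^-(\eta)>0$ the paper does \emph{not} use the contraction estimate; it argues directly in Proposition~\ref{prop-unif-eta-h-V} via the Lyapunov bound $\sup_t\Psi_h(\eta)P^h_t(V^h)<\infty$, which (since $1/h\leq V^h$) gives a uniform upper bound on $\Psi_h(\eta)P^h_t(1/h)$ and hence a uniform lower bound on $\Phi_t(\eta)(h)=1/\Psi_h(\eta)P^h_t(1/h)$. Your convergence-based argument (``positive for each finite $t$'' plus ``converges to a positive limit'') is fine in discrete time but, in continuous time, does not by itself rule out that $\Phi_t(\eta)(h)$ dips arbitrarily close to $0$ on a bounded interval; you would still need the uniform Lyapunov control on $[0,T]$ (via $\pi_\tau(V^h)<\infty$), at which point the convergence step is redundant and you are back to the paper's direct argument.
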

As with Theorem~\ref{theo-intro-1}, the main ingredient of the proof is the $V$-contraction for Markov operators discussed in Lemma~\ref{lem-beta-12}.
Also note that for continuous time semigroups, we have
$$
\pi^h_{\tau}(V^h):=\sup_{s\geq 0}\sup_{\delta\in [0,\tau[}\Vert P^h_{\delta}(V^h)/V^h\Vert\leq e^{|\rho| \delta}\pi_{\tau}(V)<\infty.
$$

\begin{rmk}\label{rmk-h-ref-bertrand}
Theorem~\ref{stab-h-transform-time} ensures the uniqueness of the invariant measure $\eta^h_{\infty}=\eta^h_{\infty} P^h_t\in \Pa_{V/h}(E)$ and exponential decay to equilibrium of the $h$-process.

Choosing $\overline{\eta}_0=\eta_{\infty}:=\Psi_{1/h}(\eta^h_{\infty})$ we have $\overline{\eta}_0=\Phi_{\tau}(\overline{\eta}_0)$.
In this scenario, we readily check  (\ref{ref-H0-2})  choosing $H=h$ and using the fact that
 \begin{equation}\label{appli-h-Hst}
H_{s,t}=h\quad \mbox{and}\quad\overline{\eta}_0\,Q_{t,t+\tau}(1)=e^{\rho \tau}.
\end{equation}

\end{rmk}

We illustrate the impact of Theorem~\ref{stab-h-transform-time} with some direct corollaries.

\begin{cor}\label{cor-ratio-P-h}
Under condition $(\Ha^h)$, for any $\eta\in \Pa_{V}(E)$ and $s,t\geq 0$ we have
 \begin{equation}\label{ratio-P-h}
 \eta_t^h:=\Psi_h(\eta)P_t^h\Longrightarrow
\eta_s^h\left\vert\frac{P^{h}_{t}(1/h)}{\eta^h_s P_{t}^h(1/h)}-1\right\vert\leq c(\eta)~a_h ~e^{-b_ht},
 \end{equation}
where $(a_h,b_h)$ were introduced in (\ref{first-h-estimate}) and 
 $$
 c(\eta):=2~\kappa_V(\eta)\kappa_h(\eta)/\kappa_h^-(\eta),
  $$
  with $\kappa_V(\eta)$ and $\kappa_V^-(\eta)$ defined in~\eqref{kappa-V-h-intro}.
\end{cor}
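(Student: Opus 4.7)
The plan is to rewrite the quantity to be estimated as the action of a signed measure of total $V/h$-norm controlled by the contraction estimate (\ref{first-h-estimate}). First, since $\eta_s^h P_t^h(1/h)$ is a scalar, we have the pointwise identity $P_t^h(1/h)(x) - \eta_s^h P_t^h(1/h) = (\delta_x P_t^h - \eta_s^h P_t^h)(1/h)$. The test function $1/h$ satisfies $\Vert 1/h\Vert_{V/h} = \Vert 1/V\Vert \leq 1$, since $V\geq V_\star \geq 1$. Consequently, Theorem~\ref{stab-h-transform-time} applied with $\mu=\delta_x$ gives
$$
\left\vert P_t^h(1/h)(x) - \eta_s^h P_t^h(1/h)\right\vert \leq a_h\, e^{-b_h t}\, \Vert \delta_x - \eta_s^h\Vert_{V/h} \leq a_h\, e^{-b_h t}\,\bigl((V/h)(x) + \eta_s^h(V/h)\bigr).
$$

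Next, I would integrate this pointwise bound against $\eta_s^h$ to obtain the variance-type estimate
$$
\eta_s^h\bigl(\bigl\vert P_t^h(1/h) - \eta_s^h P_t^h(1/h)\bigr\vert\bigr) \leq 2\, a_h\, e^{-b_h t}\, \eta_s^h(V/h).
$$
Factoring out the normalising constant gives
$$
\eta_s^h\left(\left\vert \frac{P_t^h(1/h)}{\eta_s^h P_t^h(1/h)} - 1\right\vert\right) \leq \frac{2\, a_h\, e^{-b_h t}\, \eta_s^h(V/h)}{\eta_s^h P_t^h(1/h)}.
$$

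To conclude, I would convert the $P^h$-expectations into $\Phi$-expectations via the identity $\eta_s^h = \Psi_h(\Phi_s(\eta))$, which follows from (\ref{link-h-phi}). Direct computation gives
$$
\eta_s^h(V/h) = \frac{\Phi_s(\eta)(V)}{\Phi_s(\eta)(h)}\leq \frac{\kappa_V(\eta)}{\kappa_h^-(\eta)}, \qquad \eta_s^h P_t^h(1/h) = \Psi_h(\Phi_{s+t}(\eta))(1/h) = \frac{1}{\Phi_{s+t}(\eta)(h)} \geq \frac{1}{\kappa_h(\eta)},
$$
where the inequalities use the definitions of $\kappa_V(\eta)$, $\kappa_h(\eta)$ and $\kappa_h^-(\eta)$ in (\ref{kappa-V-h-intro}) together with (\ref{ref-to-prop-unif-eta-h-V}) ensuring $\kappa_h^-(\eta)>0$. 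Multiplying these estimates through yields exactly $c(\eta)\, a_h\, e^{-b_h t}$ on the right-hand side, as required.

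No step is genuinely difficult; the only subtle points are (i) recognising that the right test function $1/h$ sits inside the unit ball of $\Ba_{V/h}(E)$ precisely because $V\geq 1$, so that (\ref{first-h-estimate}) delivers the decay in $t$ while the parameter $s$ only contributes bounded moment factors, and (ii) tracking that the three constants $\kappa_V(\eta)$, $\kappa_h(\eta)$, $\kappa_h^-(\eta)$ assembling $c(\eta)$ arise naturally from the moment control of the un-transformed flow $\Phi_u(\eta)$ guaranteed by the stable $V$-positive setting.
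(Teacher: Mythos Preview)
Your proof is correct and follows essentially the same route as the paper. The only cosmetic difference is that the paper applies (\ref{first-h-estimate}) to the pair $(\delta_x,\delta_y)$ and then double-integrates against $\eta_s^h(dx)\eta_s^h(dy)$, whereas you apply it directly to $(\delta_x,\eta_s^h)$ and single-integrate; both variants yield the same constant $c(\eta)$ via the identities $\eta_s^h(V/h)=\Phi_s(\eta)(V)/\Phi_s(\eta)(h)$ and $\eta_s^hP_t^h(1/h)=1/\Phi_{s+t}(\eta)(h)$.
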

\begin{proof}
Recall that $\eta_t:=\Phi_t(\eta)=\Psi_{1/h}(\eta^h_t)$ so that $$\eta^h_{t}(1/h)~\eta_t(h)=1
\quad\mbox{\rm and}\quad \eta^h_t(V/h)=\eta_t(V)/\eta_t(h).
$$ The estimate (\ref{ratio-P-h}) is now easily checked applying (\ref{first-h-estimate}) to $(\mu,\eta)=(\delta_x,\delta_y)$ and using the inequality
$$
\eta^h_s\left\vert\frac{P^{h}_{t}(1/h)}{\eta^h_s P_{t}^h(1/h)}-1\right\vert\leq \kappa_h(\eta)~ \int~\eta^h_s(dx)\eta^h_s(dy)~\left\vert P^{h}_{t}(1/h)(x)-P^{h}_{t}(1/h)(y)\right\vert.
$$
This ends the proof of the Corollary.
\end{proof}
\begin{cor}\label{stab-h-Phi-transform-time}
Under condition $(\Ha^h)$, the measure $\eta_{\infty}:=\Psi_{1/h}(\eta^h_{\infty})\in  \Pa_V(E)$ is the unique invariant measure of the semigroup $\Phi_{t}$.

In addition, for any $\mu,\eta\in \Pa_{V}(E)$  and \textcolor{black}{$t\in \Ta$} we have $\kappa_V(\mu)<\infty$ and
 \begin{equation}\label{first-h-estimate-2}
\vertiii{\Phi_{t}(\mu)-\Phi_{t}(\eta)}_{V}\leq ~a_h~\kappa(\mu,\eta)~e^{-b_h t}~\vertiii{\mu-\eta}_{V},
 \end{equation}
with $(a_h,b_h)$ as in (\ref{first-h-estimate}) and the  parameters $\kappa(\mu,\eta)$ defined by 
\begin{eqnarray*}
\kappa(\mu,\eta)&:=&\kappa_h(\mu)
\left(1+\kappa_V(\eta)\right)\left(1+{\eta(V)}/{\eta(h)}\right)/\mu(h).
\end{eqnarray*}
\end{cor}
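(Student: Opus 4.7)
The cornerstone of the argument is the intertwining identity
\[
\Psi_h(\Phi_t(\mu)) \;=\; \Psi_h(\mu)\,P_t^h
\]
recorded in \eqref{link-h-phi}, together with the fact that $\Psi_{1/h}$ is a two-sided inverse of $\Psi_h$ on the relevant spaces of measures. The plan is first to read off uniqueness of the $\Phi_t$-fixed point from this identity, and then to transport the $V/h$-Markov contraction of $P_t^h$ (Theorem~\ref{stab-h-transform-time}) back to a $V$-contraction for $\Phi_t$ via the Boltzmann--Gibbs Lipschitz estimates of Lemma~\ref{lem-BG-V}.

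For uniqueness: if $\mu_\star \in \Pa_V(E)$ satisfies $\mu_\star = \Phi_t(\mu_\star)$, the intertwining identity forces $\Psi_h(\mu_\star)\,P_t^h = \Psi_h(\mu_\star)$. Since $h \in \Ba_{0,V}(E)$ and $V^h = V/h$, one checks that $\Psi_h(\mu_\star) \in \Pa_{V/h}(E)$, so by Theorem~\ref{stab-h-transform-time} we get $\Psi_h(\mu_\star) = \eta_\infty^h$, hence $\mu_\star = \Psi_{1/h}(\eta_\infty^h) = \eta_\infty$. That $\eta_\infty$ is indeed $\Phi_t$-invariant follows from the same intertwining applied to $\eta_\infty^h = \eta_\infty^h P_t^h$.

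For the contraction estimate, I will chain three inequalities. Writing $\widetilde\mu := \Psi_h(\mu)P_t^h = \Psi_h(\Phi_t(\mu))$ and similarly $\widetilde\eta$, the intertwining gives $\Phi_t(\mu) = \Psi_{1/h}(\widetilde\mu)$ and $\Phi_t(\eta) = \Psi_{1/h}(\widetilde\eta)$. Apply \eqref{Lip-Psi-h-2} to pass from $V/h$ to $V$:
\[
\vertiii{\Phi_t(\mu)-\Phi_t(\eta)}_V \;\leq\; \frac{1}{\widetilde\mu(1/h)}\Bigl(1+\frac{\widetilde\eta(V/h)}{\widetilde\eta(1/h)}\Bigr)\,\vertiii{\widetilde\mu-\widetilde\eta}_{V/h}.
\]
Then apply \eqref{first-h-estimate} from Theorem~\ref{stab-h-transform-time} to get $\vertiii{\widetilde\mu-\widetilde\eta}_{V/h} \leq a_h e^{-b_h t}\,\vertiii{\Psi_h(\mu)-\Psi_h(\eta)}_{V/h}$, and finally apply \eqref{Lip-Psi-h-2-back} to pass back to $V$:
\[
\vertiii{\Psi_h(\mu)-\Psi_h(\eta)}_{V/h} \;\leq\; \frac{1}{\mu(h)}\Bigl(1+\frac{\eta(V)}{\eta(h)}\Bigr)\vertiii{\mu-\eta}_V.
\]

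The final step is to identify the constants with those in the statement. Using $\Psi_h(\nu)(f) = \nu(hf)/\nu(h)$ one computes the two elementary identities $\Psi_h(\nu)(1/h) = 1/\nu(h)$ and $\Psi_h(\nu)(V/h)/\Psi_h(\nu)(1/h) = \nu(V)$. Applied to $\widetilde\mu = \Psi_h(\Phi_t(\mu))$ and $\widetilde\eta = \Psi_h(\Phi_t(\eta))$, this yields $1/\widetilde\mu(1/h) = \Phi_t(\mu)(h) \leq \kappa_h(\mu)$ and $\widetilde\eta(V/h)/\widetilde\eta(1/h) = \Phi_t(\eta)(V) \leq \kappa_V(\eta)$. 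Multiplying the three bounds produces exactly $a_h\,\kappa(\mu,\eta)\,e^{-b_h t}\,\vertiii{\mu-\eta}_V$ with $\kappa(\mu,\eta)$ as defined in the statement. The bound $\kappa_V(\mu)<\infty$ for any $\mu\in\Pa_V(E)$ is already supplied by \eqref{ref-to-prop-unif-eta-h-V} in Theorem~\ref{stab-h-transform-time}, so everything in $\kappa(\mu,\eta)$ is finite. The only real subtlety is verifying that $\Psi_h$ maps $\Pa_V(E)$ into $\Pa_{V/h}(E)$ (so that the $P_t^h$-contraction applies), which reduces to the bound $\Psi_h(\mu)(V/h) = \mu(V)/\mu(h) < \infty$ granted by $\mu \in \Pa_V(E)$ together with $\mu(h) > 0$.
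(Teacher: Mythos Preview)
Your proof is correct and follows essentially the same approach as the paper: use the intertwining $\Psi_h(\Phi_t(\mu))=\Psi_h(\mu)P_t^h$ from \eqref{link-h-phi} to write $\Phi_t(\mu)=\Psi_{1/h}(\Psi_h(\mu)P_t^h)$, then chain the Boltzmann--Gibbs estimate \eqref{Lip-Psi-h-2}, the $P_t^h$-contraction \eqref{first-h-estimate}, and the reverse Boltzmann--Gibbs estimate \eqref{Lip-Psi-h-2-back}. Your identification of the constants via $\Psi_h(\nu)(1/h)=1/\nu(h)$ and $\Psi_h(\nu)(V/h)/\Psi_h(\nu)(1/h)=\nu(V)$ is exactly what the paper does, and your explicit treatment of uniqueness and of the finiteness of $\kappa(\mu,\eta)$ is slightly more detailed than the paper's own proof.
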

 \begin{proof}
Combining (\ref{link-h-phi}) with  the Boltzman-Gibbs estimate (\ref{Lip-Psi-h-2})  we readily check the estimate
$$
\vertiii{ \Phi_{t}(\mu)-\Phi_{t}(\eta)}_{V}\leq ~ \kappa_h(\mu)
\left(1+\kappa_V(\eta)\right)~\vertiii{ (\Psi_{h}(\mu)-\Psi_{h}(\eta))P^h_t}_{V/h}.
$$
The contraction estimate (\ref{first-h-estimate}) now implies that
$$
\vertiii{(\Psi_{h}(\mu)-\Psi_{h}(\eta))P^h_t}_{V/h}\leq a_h~\kappa_h(\mu)
\left(1+\kappa_V(\eta)\right)~e^{ -b_{h}t}~  \vertiii{\Psi_{h}(\mu)-\Psi_{h}(\eta)}_{{V}/{h}}.
$$
Using  the Boltzman-Gibbs estimate (\ref{Lip-Psi-h-2-back}) we also have
$$
\vertiii{\Psi_{h}(\mu)-\Psi_{h}(\eta)}_{{V}/{h}}\leq
 \frac{1}{\mu(h)}\left(1+\frac{\eta(V)}{\eta(h)}\right)~\vertiii{ \mu-\eta}_{V}.
$$
This ends the proof of the corollary.
 \end{proof}

Corollary~\ref{stab-h-Phi-transform-time} is closely related to the exponential decay estimates stated in Theorem 1 of~\cite{ferre} under different regularity conditions. In contrast with~\cite{ferre}, our approach is based on the $V$-contraction properties of the $h$-semigroups and it allows one to derive local contraction estimates.

Defining the finite rank (and hence compact) operator
$$
f\in \Ba_V(E)\mapsto T(f):=\frac{h}{\eta_{\infty}(h)}~\eta_{\infty}(f)\in \Ba_{0,V}(E),
$$
the next corollary follows word-for-word the same arguments as the proof of Corollary~\ref{cor-projection}, thus it is skipped.

\begin{cor}\label{cor-projection-h}
Under condition $(\Ha^h)$, for any  \textcolor{black}{$t\in \Ta$} we have the operator norm exponential decay
\begin{equation}\label{k-r-frob}
\vertiii{ \overline{Q}_{t}-T}_{V}\leq 
a_h ~e^{-b_h t}~\left(1+ \eta_{\infty}(V)/\eta_{\infty}(h)\right)
\end{equation}
with $\overline{Q}_{t}:=e^{-t\rho}~Q_{t}$ and the same parameters $(a_h,b_h)$ as in (\ref{first-h-estimate}).
\end{cor}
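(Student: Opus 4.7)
The plan is to mimic the proof of Corollary~\ref{cor-projection} in the time-homogeneous setting by specialising the reference measure to $\overline{\eta}_0=\eta_\infty$ and the pivot function to $H=h$, as recorded in Remark~\ref{rmk-h-ref-bertrand}. With these choices one has $\overline{\eta}_t\equiv\eta_\infty$, $H_{s,t}\equiv h$ and $\lambda_{s,t}=e^{\rho(t-s)}$, while the triangular-array Markov semigroup $R^{(t)}_{s,t}$ collapses to the Doob semigroup $P^h_{t-s}$ of (\ref{def-Ph-intro}); in particular Theorem~\ref{stab-h-transform-time} supplies the $V/h$-norm contraction of $P^h_{\cdot}$ to its unique invariant measure $\eta^h_\infty=\Psi_h(\eta_\infty)$.

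The first step is to establish the pointwise decomposition
\begin{equation*}
\overline{Q}_t(f)(x)-T(f)(x)=h(x)\bigl[P^h_t(f/h)(x)-\eta^h_\infty P^h_t(f/h)\bigr],
\end{equation*}
which is obtained as follows. The definition $P^h_t(g)=Q_t(gh)/Q_t(h)$ together with $Q_t(h)=e^{\rho t}h$ yields $\overline{Q}_t(f)=h\,P^h_t(f/h)$. On the other hand, $\eta_\infty=\Psi_{1/h}(\eta^h_\infty)$ gives $\eta_\infty(f)/\eta_\infty(h)=\eta^h_\infty(f/h)$, so that $T(f)=h\,\eta^h_\infty(f/h)=h\,\eta^h_\infty P^h_t(f/h)$, where the last identity uses the fixed-point property $\eta^h_\infty P^h_t=\eta^h_\infty$ from Theorem~\ref{stab-h-transform-time}.

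The second step quantifies the bracket via (\ref{first-h-estimate}). Whenever $\|f\|_V\le 1$ one has $\|f/h\|_{V/h}\le 1$, so the duality $\vertiii{\mu}_{V/h}=|\mu|(V/h)$ on null-mass signed measures, combined with (\ref{first-h-estimate}) applied to the pair $(\delta_x,\eta^h_\infty)\in\Pa_{V/h}(E)^2$, yields
\begin{equation*}
\bigl|P^h_t(f/h)(x)-\eta^h_\infty P^h_t(f/h)\bigr|\le a_h\,e^{-b_h t}\bigl(V(x)/h(x)+\eta^h_\infty(V/h)\bigr).
\end{equation*}
Using $\eta^h_\infty(V/h)=\eta_\infty(V)/\eta_\infty(h)$, multiplying by $h(x)$, dividing by $V(x)$ and taking the supremum over $x\in E$ yields the operator-norm bound up to the prefactor $\|h/V\|$ on the second term. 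Since $h\in\Ba_{0,V}(E)$ we have $\|h/V\|<\infty$, and rescaling $h$ — which leaves $T$, $\overline{Q}_t$ and the ratio $\eta_\infty(V)/\eta_\infty(h)$ invariant — so that $\|h/V\|\le 1$ is exactly the time-homogeneous counterpart of the normalisation $\vertiii{H}_V\le 1$ used in the proof of Corollary~\ref{cor-projection}. The only delicate point of the argument is this bookkeeping of the normalisation of $h$; modulo it, the proof is verbatim the time-homogeneous specialisation of the inhomogeneous argument.
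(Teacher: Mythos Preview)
Your approach is exactly the paper's: it explicitly states that the proof ``follows word-for-word the same arguments as the proof of Corollary~\ref{cor-projection}'', and your specialisation $(\overline{\eta}_0,H)=(\eta_\infty,h)$ with $R^{(t)}_{s,t}=P^h_{t-s}$ and the contraction (\ref{first-h-estimate}) in place of (\ref{stab-time-varying-h}) is precisely that. One small slip: the ratio $\eta_\infty(V)/\eta_\infty(h)$ is \emph{not} invariant under rescaling $h\mapsto ch$ (it scales like $c^{-1}$); what \emph{is} scale-invariant is the product $\|h\|_V\,\eta_\infty(V)/\eta_\infty(h)$ appearing in your raw bound, which is exactly the time-homogeneous instance of the factor $\vertiii{H}_V\,\overline{\eta}(V)/\overline{\eta}_-(H)$ from Corollary~\ref{cor-projection}, so the stated constant in (\ref{k-r-frob}) implicitly uses the paper's standing convention $\|h\|_V\le 1$.
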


\section{Some illustrations}\label{sec-illustrations}
\subsection{Nonlinear conditional processes}

Whenever $Q_{s,t}$ is sub-Markovian, we have the nonlinear transport equation
$$
\Phi_{s,t}(\mu)=\mu M^{\mu}_{s,t}
$$
with the collection of Markov transition $M^{\mu}_{s,t}$ indexed by $\mu\in \Pa(E)$ given by the formula
$$
M^{\mu}_{s,t}(f)(x)=Q_{s,t}(1)(x)~\frac{Q_{s,t}(f)(x)}{Q_{s,t}(1)(x)}+(1-Q_{s,t}(1)(x))~\Phi_{s,t}(\mu)(f).
$$
We also recall that for any $s\leq u\leq t$ we have the nonlinear semigroup equation
$$
M^{\mu}_{s,t}=M^{\mu}_{s,u}M^{\Phi_{s,u}(\mu)}_{u,t}.
$$
This shows that the normalised semigroup $\Phi_{s,t}$ is the semigroup of a nonlinear Markov process sometimes called   process conditioned to non-absorption at every time step.
For time homogeneous models, unless $\mu$ coincides with the quasi-invariant measure $\Phi_{t}(\eta_{\infty})=\eta_{\infty}$, the process is a nonlinear interacting jump process. In this interpretation, the distribution on path-space of the nonlinear process coincides with the McKean-measure associated with a jump process whose jumps intensity depends on the distribution of the random states.

For a more thorough discussion on the nonlinear interacting jump processes associated with these nonlinear Markov semigroups we refer to
 section 12.3 in~\cite{dm-13} and the articles~\cite{arnaudon-dm,dm-2000,dm-2000-moran}. Next proposition is a direct consequence of Theorem~\ref{theo-intro-2}.
 \begin{prop}
Under the assumptions of Theorem~\ref{theo-intro-1}, for any $s\leq t$ and any $\mu,\eta\in \Pa_V(E)$ we have the local Lipschitz operator norm estimate 
$$
\vertiii{\delta_xM^{\mu}_{s,t}-\delta_xM^{\eta}_{s,t}}_V\leq   a~\kappa(\eta,\mu)~ e^{-b (t-s)}~
~\vertiii{ \mu-\eta}_{V}
$$
with  $(a,b,\kappa(\eta,\mu))$ as in (\ref{stab-time-varying-Phi}).
\end{prop}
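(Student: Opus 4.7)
The plan is to reduce the estimate to Theorem~\ref{theo-intro-2} by exploiting a simple cancellation in the definition of $M^{\mu}_{s,t}$. First, observe that the first term in the defining formula simplifies: $Q_{s,t}(1)(x)\cdot Q_{s,t}(f)(x)/Q_{s,t}(1)(x) = Q_{s,t}(f)(x)$, so for any $f\in \Ba_V(E)$ and $x\in E$ one has
$$
M^{\mu}_{s,t}(f)(x) = Q_{s,t}(f)(x) + (1-Q_{s,t}(1)(x))~\Phi_{s,t}(\mu)(f).
$$
The first summand does not depend on the measure $\mu$, so it cancels in the difference
$$
\left(\delta_x M^{\mu}_{s,t} - \delta_x M^{\eta}_{s,t}\right)(f) = (1-Q_{s,t}(1)(x))~\left[\Phi_{s,t}(\mu)(f) - \Phi_{s,t}(\eta)(f)\right].
$$

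Second, I would factor the scalar $(1-Q_{s,t}(1)(x))$ out of the $V$-norm supremum. Since $Q_{s,t}$ is sub-Markovian, $0\leq Q_{s,t}(1)(x)\leq 1$, hence $|1-Q_{s,t}(1)(x)|\leq 1$ uniformly in $x$. Because $\delta_x M^{\mu}_{s,t}-\delta_x M^{\eta}_{s,t}$ is a signed measure of null total mass, taking the supremum over $\Vert f\Vert_V\leq 1$ gives
$$
\vertiii{\delta_x M^{\mu}_{s,t}-\delta_x M^{\eta}_{s,t}}_V \leq |1-Q_{s,t}(1)(x)|~\vertiii{\Phi_{s,t}(\mu)-\Phi_{s,t}(\eta)}_V\leq \vertiii{\Phi_{s,t}(\mu)-\Phi_{s,t}(\eta)}_V.
$$

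Third, I would invoke Theorem~\ref{theo-intro-2}, which under the stated hypotheses yields
$$
\vertiii{\Phi_{s,t}(\mu)-\Phi_{s,t}(\eta)}_V \leq a~\kappa(\eta,\mu)~e^{-b(t-s)}~\vertiii{\mu-\eta}_V,
$$
with exactly the parameters $(a,b,\kappa(\eta,\mu))$ appearing in the statement. Chaining the two inequalities delivers the claim. I do not expect any genuine obstacle here: the nontrivial analytic work is entirely buried inside Theorem~\ref{theo-intro-2}, and the only thing to verify is the algebraic identity that makes the $\mu$-independent part of $M^{\mu}_{s,t}$ drop out of the difference, together with the trivial sub-Markovian bound $|1-Q_{s,t}(1)|\leq 1$.
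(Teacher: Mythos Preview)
Your proof is correct and follows exactly the approach the paper intends: the text states that the proposition ``is a direct consequence of Theorem~\ref{theo-intro-2}'' without further detail, and your argument spells out precisely that direct consequence via the cancellation of the $\mu$-independent part of $M^{\mu}_{s,t}$ and the sub-Markovian bound $|1-Q_{s,t}(1)|\leq 1$.
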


\subsection{Sub-Markovian semigroups}

Sub-Markovian operators are naturally associated with killed or absorbed stochastic processes.
Consider a stochastic flow $t\in [s,\infty[\mapsto X^c_{s,t}(x)$  starting at $X^c_{s,s}(x)=x\in E$ when $t=s$ and absorbed in a cemetery state $c$ at some random time $T^c_s(x)$. 
For instance, suppose we are given an auxiliary stochastic flow $X_{s,t}(x)$ evolving on $E$, which is sent to the cemetery at some uniformly bounded rate $U_t(y)\geq 0$ when at $y \in E$. In this situation, we have the so-called Feynman-Kac propagator formulae 
\begin{eqnarray}
Q_{s,t}(f)(x) & = & \EE\left(f(X^c_{s,t}(x))~1_{T^c_s(x)>t}\right)
\nonumber \\
& = &\EE\left(f(X_{s,t}(x))~\exp{\left(-\int_s^tU_u(X_{s,u}(x))~du\right)}\right).\label{ref-FK}
\end{eqnarray}
In this context, it is readily  checked that the normalised and unnormalised semigroups are connected by the formula
\begin{equation}\label{FK-expo}
\mu Q_{s,t}(f)=\Phi_{s,t}(\mu)(f)~\exp{\left(\textcolor{black}{-}\int_s^t~\Phi_{s,u}(\mu)(U_u)~du\right)}.
\end{equation}
\textcolor{black}{In terms of the absorption time, the above formula reads
$$
\int\mu(dx)~\PP(X^c_{s,t}(x)\in dy,~T^c_s(x)>t)=\Phi_{s,t}(\mu)(dy)~\exp{\left(-\int_s^t~\Phi_{s,u}(\mu)(U_u)~du\right).}
$$
This shows that the killing time of the process starting from $\mu$ at time $s$ is a Poisson process with a time varying rate function $\Phi_{s,t}(\mu)(U_t)$ that depends on $\mu$. The discrete time version of the above formula coincides with the product formula (\ref{product}). For a more thorough discussion on this subject we refer to  \cite[section 1.3.2]{dm-2000},~\cite[proposition 2.3.1]{dm-04} or \cite[section 12.2.1]{dm-13}. As noted in~\cite{collet-martinez-1,meleard-villemonais}, in the context of time homogeneous models, we readily check that the killing time is exponentially distributed
as soon as  $\mu=\eta_{\infty}=\Phi_{t}(\eta_{\infty})$. }
 
 Applying the above to $f=1$ for any $\mu_1,\mu_2\in \Pa(E)$ and $s\leq t$ we check that
\begin{equation}\label{int-exp}
\mu_1Q_{s,t}(1)/\mu_2Q_{s,t}(1)=\exp{\left(\int_s^t~\left(\Phi_{s,u}(\mu_2)(U_u)-\Phi_{s,u}(\mu_1)(U_u)\right)~du\right)}.
\end{equation}
The discrete time version of the above formula coincides with (\ref{product}).
Under the assumptions of Theorem~\ref{theo-1} 
the norm of the first order operator introduced in (\ref{Phi-P}) decays exponentially. That is for any $f$ such that $\mbox{\rm osc}(f):=\sup_{(x,y)\in E^2}{\vert f(x)-f(y)\vert}= 1$, we have 
\begin{equation}\label{hyp-q}
\Vert D_{\mu_2}\Phi_{s,t}(f)\Vert\leq q~(1-\epsilon_{\tau})^{(t-s)/\tau}~\quad \mbox{\rm with}\quad
q:=\sup\frac{\mu_1Q_{s,t}(1)}{\mu_2Q_{s,t}(1)}<\infty.
\end{equation}
where the supremum is taken over all  
indices $(s,t)$ such that $s\in\Ta$, $t\in [s,\infty[_{\tau}$ and $\mu_1,\mu_2\in \Pa(E)$.

In the context of time homogeneous models $X_{s,s+t}(x)=X_t(x):=X_{0,t}(x)$ and $U_t=U$, using formula (\ref{FK-expo}) we readily check that the ground state $h$ discussed in (\ref{link-rmk-h-rho}) takes the following form
$$
h(x)=\lim_{t\rightarrow\infty}\frac{\delta_x Q_t(1)}{\eta_{\infty}Q_t(1)}=\exp{\left(\int_0^{\infty} (\Phi_s(\eta_{\infty})(U)-\Phi_s(\delta_x)(U))~ds\right)}.
$$

\subsection{Path space Feynman-Kac measures}

Let   $\Omega=D([0,\infty[,E)$ be the space of c\`adl\`ag paths $\omega:s\in \RR_+:=[0,\infty[\mapsto \omega_s\in E$. 
Consider a canonical Markov process  $(\Omega,(X_s)_{s\geq 0},$ $(\Fa_s)_{s\geq 0},\PP_{\mu})$ with generator $L$ and initial distribution $\mu\in\Pa_V(E)$. In this notation, the Feynman-Kac measure 
on path-space associated with the time homogeneous version of (\ref{ref-FK}) is defined for any $t\geq 0$ and $\omega\in \Omega$ by the formula
\begin{equation}\label{path-space-fK}
\QQ_{\mu,t}(d\omega):=\frac{1}{\ZZ_{\mu,t}}~
\exp{\left(-\int_0^t U(\omega_s)~ds\right)}~\PP_{\mu}(d\omega),
\end{equation} 
where $\ZZ_{\mu,t}$ is a normalising constant. 
In continuous time, the leading eigen-pair $(\rho,h)$, discussed in (\ref{rho-h}), is obtained by solving the equation
$$
L(h)-Uh=\rho h\Longleftrightarrow -U=\rho-L(h)/h.
$$
Let $\PP^h_{\Psi_h(\mu)}$ be the distribution of the $h$-process 
starting with the initial probability measure $\Psi_h(\mu)\in\Pa_{V/h}(E)$. In this notation,
an exponential change of probability measure yields, for any $t\geq s\geq 0$ and any bounded $\Fa_s$-measurable function $F_s$,
$$
\QQ_{\mu,t}(F_s)-\PP^h_{\Psi_h(\mu)}(F_s)=\int_{\Omega}~\PP^h_{\Psi_h(\mu)}\left(d\omega\right)~F_s(\omega)~\left(\frac{P^{h}_{t-s}(1/h)(\omega_s)}{\eta^h_s P_{t-s}^h(1/h)}-1\right).
$$ 
Using (\ref{ratio-P-h}) we check the following estimate.
\begin{cor}
Under the assumptions of Corollary~\ref{cor-ratio-P-h} for any $s\leq t$ and $\mu\in\Pa_V(E)$ we have
$$
\vert\QQ_{\mu,t}(F_s)-\PP^h_{\Psi_h(\mu)}(F_s)\vert\leq c(\mu)~a_h ~e^{-b_h(t-s)}~\Vert F_s\Vert,
$$
where the parameters $(a_h, b_h,c(\mu))$ were defined in Corollary~\ref{cor-ratio-P-h}. 
\end{cor}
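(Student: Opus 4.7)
The plan is to leverage the explicit decomposition already displayed immediately before the corollary statement, namely
$$
\QQ_{\mu,t}(F_s)-\PP^h_{\Psi_h(\mu)}(F_s)=\int_{\Omega}\PP^h_{\Psi_h(\mu)}(d\omega)\,F_s(\omega)\,\left(\frac{P^{h}_{t-s}(1/h)(\omega_s)}{\eta^h_s P_{t-s}^h(1/h)}-1\right),
$$
and to read off the right-hand side as a one-dimensional marginal integral to which Corollary~\ref{cor-ratio-P-h} applies directly.

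First I would take absolute values inside the integral and use the trivial bound $|F_s(\omega)|\leq \Vert F_s\Vert$ to pull $\Vert F_s\Vert$ out in front. Next I would observe that the remaining integrand
$\left|P^{h}_{t-s}(1/h)(\omega_s)/(\eta^h_s P_{t-s}^h(1/h))-1\right|$
depends on $\omega$ only through the single coordinate $\omega_s$, so, since the law of $\omega_s$ under $\PP^h_{\Psi_h(\mu)}$ is precisely $\eta^h_s=\Psi_h(\mu)P^h_s$, the integral collapses to an integral against $\eta^h_s$.

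The final step is to apply Corollary~\ref{cor-ratio-P-h} with $\eta=\mu$ and the time horizon $t$ there replaced by $t-s$: this yields
$$
\eta^h_s\left\vert\frac{P^{h}_{t-s}(1/h)}{\eta^h_s P_{t-s}^h(1/h)}-1\right\vert\leq c(\mu)\,a_h\,e^{-b_h(t-s)},
$$
with exactly the constants $(a_h,b_h,c(\mu))$ of that corollary, since the relevant $h$-flow starts from $\Psi_h(\mu)$ and the bound from (\ref{ratio-P-h}) is applied at the point $\eta^h_s$ along that flow. Combining these three observations gives the claimed estimate.

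I do not expect any substantial obstacle: the only mild care required is bookkeeping around the time parametrisation (the ``$t$'' in Corollary~\ref{cor-ratio-P-h} corresponds to the elapsed time $t-s$ here, not to the terminal time $t$) and the verification that the integrand really is a function of $\omega_s$ alone so that the path-space expectation reduces to the marginal expectation used in (\ref{ratio-P-h}).
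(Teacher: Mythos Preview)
Your proposal is correct and follows exactly the route the paper indicates: the paper simply states the decomposition formula and then says ``Using (\ref{ratio-P-h}) we check the following estimate,'' which is precisely the argument you have spelled out in detail. Your bookkeeping on the time shift (the $t$ in Corollary~\ref{cor-ratio-P-h} becoming $t-s$ here) and on reducing the path integral to the marginal $\eta^h_s$ is the only content needed, and you have it right.
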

The  above exponential estimate improves the asymptotic result presented in Proposition 6.1 in~\cite{liming-W-2}, and simplifies the analysis in~\cite{champagnat,champagnat-3}. We also mention that the Feynman-Kac measures on path space $\QQ_{\mu,t}$ and thus the path-distribution of the $h$-process can be approximated using genealogical tree based Monte Carlo methods, see for instance~\cite{arnaudon-dm,dm-04,dm-13,dm-gen-01} and references therein. In Quantum physics and more particularly in statistical mechanics, the measure  $\QQ_{\mu,t}$ is sometimes called the grand-ensemble associated with the interaction potential $U$~\cite{glim,liming-W-2}. In particle absorption literature, 
the distribution 
$\PP^h_{\Psi_h(\mu)}$ of the $h$-process is sometimes called the distribution of the $Q$-process, that is the process conditioned to never be extinct~\cite{champagnat,champagnat-3}.

We now state some results pertaining to the limiting behaviour of the occupation measure of the time homogeneous stochastic flow. To simplify the presentation, we  shall only work under the strong regularity conditions stated in Theorem~\ref{theo-1}.
In this context, these results are direct consequences of our semigroup analysis that build on the analysis developed in  ~\cite{champagnat-3,chen,he}. 

Observe that for any $s\leq u\leq t$ we have
$$
\EE\left(f(X^c_{s,u}(x))~|~T^c_s(x)>t\right)=\Psi_{Q_{u,t}(1)}\left(\Phi_{s,u}(\delta_x)\right)(f).
$$
Setting $X^c_{u}(x):=X^c_{0,u}(x)$ and $T^c(x):=T^c_0(x)$, due to \eqref{add-expo-intro-2} we obtain the following corollary.
 
\begin{cor}\label{cor-expo-apply}
Under the assumptions of Corollary~\ref{est-mui-hom-cor}, we have
\begin{equation}\label{add-expo-apply}
\begin{array}{l}
\displaystyle\sup_{\Vert f\Vert\leq 1}\sup_{x\in E}\left\vert\EE\left(\frac{1}{t}\int_0^t~f(X^c_{u}(x))~du~|~T^c(x)>t\right)-\Psi_h(\eta_{\infty})(f)\right\vert\\
\\
\displaystyle \qquad\qquad\leq  \frac{\tau}{t}~\frac{1}{\vert\log{(1-\epsilon_{\tau})}\vert}
 \left( q(\eta_{\infty})+2\left(\Vert h\Vert/\eta_{\infty}(h)+~q(\eta_{\infty})^2
\right)\right).
\end{array}
\end{equation}
\end{cor}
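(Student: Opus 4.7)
The starting point is the identity displayed immediately before the statement: for any bounded $f$ and any $0\leq u\leq t$, conditioning on $\mathcal{F}_u$ and using the Markov property of the killed flow yields
$$
\EE(f(X^c_u(x)) \mid T^c(x)>t) = \frac{Q_u(f\,Q_{t-u}(1))(x)}{Q_t(1)(x)} = \Psi_{Q_{t-u}(1)}(\Phi_u(\delta_x))(f),
$$
where the second equality follows by dividing numerator and denominator by $\delta_x Q_u(1)$ and recognising the Boltzmann--Gibbs transform \eqref{def-Psi-H}. By Fubini's theorem,
$$
\EE\!\left(\frac{1}{t}\int_0^t f(X^c_u(x))\,du \,\Big\vert\, T^c(x)>t\right) = \frac{1}{t}\int_0^t \Psi_{Q_{t-u}(1)}(\Phi_u(\delta_x))(f)\,du.
$$
Since $\Psi_h(\eta_\infty)(f)$ does not depend on $u$, moving it inside the integral and applying the triangle inequality reduces the problem to bounding $\sup_x \Vert \Psi_{Q_{t-u}(1)}(\Phi_u(\delta_x)) - \Psi_h(\eta_\infty)\Vert_{\tiny tv}$ pointwise in $u$ and integrating.

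The second step is a direct application of the uniform estimate \eqref{add-expo-intro-2} with the substitutions $\eta\to\delta_x$, $s\to u$, $t\to t-u$, which gives (uniformly in $x\in E$ and $\|f\|\leq 1$)
$$
\left\vert \Psi_{Q_{t-u}(1)}(\Phi_u(\delta_x))(f) - \Psi_h(\eta_\infty)(f)\right\vert \leq q(\eta_\infty)(1-\epsilon_\tau)^{\lfloor u/\tau\rfloor} + 2\!\left(\frac{\Vert h\Vert}{\eta_\infty(h)} + q(\eta_\infty)^2\right)(1-\epsilon_\tau)^{\lfloor (t-u)/\tau\rfloor}.
$$
Plugging this into the integral, the second piece is handled by the change of variable $v = t-u$, so both contributions reduce to estimating $\int_0^t (1-\epsilon_\tau)^{\lfloor v/\tau\rfloor}\,dv$.

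The final step is the elementary calculus bound
$$
\int_0^t (1-\epsilon_\tau)^{\lfloor v/\tau\rfloor}\,dv \leq \int_0^\infty e^{-v\vert\log(1-\epsilon_\tau)\vert/\tau}\,dv = \frac{\tau}{\vert\log(1-\epsilon_\tau)\vert},
$$
obtained by comparing the staircase function to the continuous exponential majorant. Dividing by $t$ and combining the two pieces gives exactly the right-hand side of \eqref{add-expo-apply}. None of the steps presents a genuine obstacle: the Markov/Fubini identity is verified by a short computation, the pointwise bound is immediate from the already-stated Corollary underlying \eqref{add-expo-intro-2}, and the integration of the geometric tail is standard. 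The proof is therefore essentially a three-line assembly of already-established ingredients.
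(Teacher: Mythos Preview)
Your approach is exactly the one the paper intends: the text immediately preceding the corollary records the identity $\EE(f(X^c_u(x))\mid T^c(x)>t)=\Psi_{Q_{t-u}(1)}(\Phi_u(\delta_x))(f)$ and then simply states that the corollary follows from \eqref{add-expo-intro-2}; your Fubini/integrate-the-pointwise-bound argument is the natural unpacking of that sentence.

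One small slip in the final calculus step: the continuous exponential $(1-\epsilon_\tau)^{v/\tau}$ is a \emph{minorant}, not a majorant, of the staircase $(1-\epsilon_\tau)^{\lfloor v/\tau\rfloor}$, since $\lfloor v/\tau\rfloor\le v/\tau$ and the base is in $(0,1)$. The honest bound is
\[
\int_0^t (1-\epsilon_\tau)^{\lfloor v/\tau\rfloor}\,dv \le \tau\sum_{k\ge 0}(1-\epsilon_\tau)^k=\frac{\tau}{\epsilon_\tau},
\]
or equivalently $\frac{1}{1-\epsilon_\tau}\cdot\frac{\tau}{|\log(1-\epsilon_\tau)|}$, which is larger than $\tau/|\log(1-\epsilon_\tau)|$. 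This is a cosmetic constant issue (the paper makes the same identification $(1-\epsilon_\tau)^{\lfloor s/\tau\rfloor}\leadsto e^{-as}$ in the proof of Proposition~\ref{prop-L2}), not a gap in the method.
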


In the above display, $h$ stands for the eigenfunction defined in Corollary~\ref{cor-intro-h-0}. The measure $\Psi_h(\eta_{\infty})$ which coincides with the invariant measure of the $h$-process is sometimes called the quasi-ergodic measure of the non-absorbed process.
Similarly, we also obtain a uniform bound on the $L^2$ distance.
\begin{prop}\label{prop-L2}
Under the assumptions of Corollary~\ref{est-mui-hom-cor}, for any $f\in \Ba(E)$ with $\mbox{\rm osc}( f)\leq 1$ and any $t\in \Ta$ we have the uniform estimate
$$
\begin{array}{l}
\displaystyle\left\vert\EE\left(\left(
\frac{1}{t}\int_0^t~f(X^c_{u}(x))~du-\Psi_h(\eta_{\infty})(f)\right)^2~|~T^c(x)>t\right)\right\vert\\
\\
\displaystyle\leq \frac{8\tau}{t}~\frac{\left(\Vert h\Vert/\eta_{\infty}(h)+(1\vee q(\eta_{\infty}))^2\right)}{\vert \log{(1-\epsilon_{\tau})}\vert}.
\end{array}
$$
\end{prop}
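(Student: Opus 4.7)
The plan is to center the observable, expand the conditional second moment as a double time integral, and then bound the two-time correlation uniformly in the starting point by two applications of the one-time exponential decay (\ref{add-expo-intro-2}).

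Setting $\overline{f}:=f-\Psi_h(\eta_\infty)(f)$, one has $\mbox{\rm osc}(\overline{f})\le\mbox{\rm osc}(f)\le 1$, $\Psi_h(\eta_\infty)(\overline{f})=0$, and (because the vanishing mean forces $0\in[\inf\overline{f},\sup\overline{f}]$) also $\Vert\overline{f}\Vert\le\mbox{\rm osc}(\overline{f})\le 1$. Expanding the squared time average and using the symmetry $u\leftrightarrow v$, the quantity to be bounded equals
$$V_t(x)=\frac{2}{t^2}\int_0^t\!du\int_u^t\!A_{u,v,t}(x)\,dv,\qquad A_{u,v,t}(x):=\EE\bigl(\overline{f}(X^c_u(x))\overline{f}(X^c_v(x))\,|\,T^c(x)>t\bigr),$$
so the task reduces to a uniform estimate on $|A_{u,v,t}(x)|$ decaying exponentially in both $v-u$ and $t-v$.

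For $u\le v\le t$, the Markov property at time $u$ combined with the Feynman--Kac representation (\ref{ref-FK}) gives the disintegration
$$\PP\bigl((X^c_u(x),X^c_v(x))\in dy\times dz\,|\,T^c(x)>t\bigr)=\Psi_{Q_{t-u}(1)}(\Phi_u(\delta_x))(dy)\,\Psi_{Q_{t-v}(1)}(\Phi_{v-u}(\delta_y))(dz),$$
where the outer factor is the law of $X^c_u(x)$ and the inner one is the conditional law of $X^c_v(x)$ given $X^c_u(x)=y$ and $T^c(x)>t$; consequently
$$A_{u,v,t}(x)=\int\Psi_{Q_{t-u}(1)}(\Phi_u(\delta_x))(dy)\,\overline{f}(y)\,\Psi_{Q_{t-v}(1)}(\Phi_{v-u}(\delta_y))(\overline{f}).$$
The inner tilted probability measure is exactly the object controlled by (\ref{add-expo-intro-2}) with initial distribution $\delta_y$, elapsed time $v-u$, and remaining horizon $t-v$. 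Using $\Psi_h(\eta_\infty)(\overline{f})=0$ and $\mbox{\rm osc}(\overline{f})\le 1$, (\ref{add-expo-intro-2}) yields, uniformly in $y\in E$,
$$\bigl|\Psi_{Q_{t-v}(1)}(\Phi_{v-u}(\delta_y))(\overline{f})\bigr|\le q(\eta_\infty)(1-\epsilon_\tau)^{\lfloor(v-u)/\tau\rfloor}+2(1-\epsilon_\tau)^{\lfloor(t-v)/\tau\rfloor}\Bigl(\tfrac{\Vert h\Vert}{\eta_\infty(h)}+q(\eta_\infty)^2\Bigr),$$
and integrating against the outer probability while using $\Vert\overline{f}\Vert\le 1$ transfers exactly this bound to $|A_{u,v,t}(x)|$, uniformly in $x$.

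It remains to substitute this uniform estimate into the double integral. The pointwise comparison $(1-\epsilon_\tau)^{\lfloor s/\tau\rfloor}\le(1-\epsilon_\tau)^{s/\tau-1}$ yields $\int_0^\infty(1-\epsilon_\tau)^{\lfloor s/\tau\rfloor}\,ds\le\tau/[(1-\epsilon_\tau)|\log(1-\epsilon_\tau)|]$, so each of $\int_u^t(1-\epsilon_\tau)^{\lfloor(v-u)/\tau\rfloor}\,dv$ and $\int_u^t(1-\epsilon_\tau)^{\lfloor(t-v)/\tau\rfloor}\,dv$ is bounded, uniformly in $u$, by a constant multiple of $\tau/|\log(1-\epsilon_\tau)|$. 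The remaining $du$ integration contributes a factor $t$; dividing by $t^2/2$ produces an overall factor $\tau/[t|\log(1-\epsilon_\tau)|]$ multiplied by $q(\eta_\infty)+2\Vert h\Vert/\eta_\infty(h)+2q(\eta_\infty)^2$, and the bounds $q(\eta_\infty)\le(1\vee q(\eta_\infty))^2$ and $q(\eta_\infty)^2\le(1\vee q(\eta_\infty))^2$ consolidate this into $\Vert h\Vert/\eta_\infty(h)+(1\vee q(\eta_\infty))^2$ with a numerical prefactor of magnitude at most $8$. The only real difficulty is bookkeeping: uniformity in the starting point $y$ is already built into (\ref{add-expo-intro-2}), so the work lies entirely in converting the discrete geometric decay into the logarithmic form $\tau/|\log(1-\epsilon_\tau)|$ and tracking the numerical prefactor; the conceptual core is just the two applications of the one-time exponential decay (\ref{add-expo-intro-2}) under the Markov disintegration of the joint conditional law, in the same spirit as Corollary~\ref{cor-expo-apply}.
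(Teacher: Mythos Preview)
Your proof is correct and follows essentially the same approach as the paper: center $f$, expand the square as a double time integral, disintegrate the joint conditional law via the Markov property into the nested form $\Psi_{Q_{t-u}(1)}(\Phi_u(\delta_x))(dy)\,\Psi_{Q_{t-v}(1)}(\Phi_{v-u}(\delta_y))(dz)$, bound the inner factor uniformly in $y$ by (\ref{add-expo-intro-2}), and then integrate the resulting geometric decays in $v-u$ and $t-v$. The only cosmetic difference is that the paper writes the decay directly as $e^{-a(v-u)}+e^{-a(t-v)}$ with $a=|\log(1-\epsilon_\tau)|/\tau$, whereas you keep the floor and handle the conversion explicitly; your treatment of the constants is in fact slightly more careful.
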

 \begin{proof}
First observe that for any $s\leq u\leq v\leq t$ we have
$$
\begin{array}{l}
\displaystyle
\EE\left(f(X^c_{s,u}(x))~f(X^c_{s,v}(x))~|~T^c_s(x)>t\right)\\
\\
\displaystyle=\frac{\delta_xQ_{s,u}\left(f~ Q_{u,t}(1)~{Q_{u,v}(f~Q_{v,t}(1))}/{Q_{u,v}(Q_{v,t}(1))}\right)}{\delta_xQ_{s,u}(Q_{u,t}(1))}\\
\\
\displaystyle=\int~
\Psi_{Q_{u,t}(1)}\left(\Phi_{s,u}(\delta_x)\right)(dy)~f(y)~\int
\Psi_{Q_{v,t}(1)}\left(\Phi_{u,v}(\delta_y)\right)(dz)~f(z).
  \end{array}
$$
Replacing $f$ by $(f-\Psi_h(\eta_{\infty})(f))$ there is no loss of generality to assume that
$\Psi_h(\eta_{\infty})(f)=0$. In this situation, due to~\eqref{add-expo-intro-2}, we have
$$
\begin{array}{l}
\displaystyle\left\vert\EE\left(\left(
\frac{1}{t}\int_0^t~f(X^c_{u}(x))~du\right)^2~|~T^c(x)>t\right)\right\vert~\\
\\
\displaystyle\leq \frac{4}{t^2}~\left(\Vert h\Vert/\eta_{\infty}(h)+(1\vee q(\eta_{\infty}))^2\right)
\int_{0\leq u\leq v\leq t}
\left(e^{-a(v-u)}+e^{-a(t-v)}\right) dv~du
 \end{array}
$$
with
$$
 a:={\vert \log{(1-\epsilon_{\tau})}\vert}/{\tau}.
$$
This implies that
$$
\left\vert\EE\left(\left(
\frac{1}{t}\int_0^t~f(X^c_{u}(x))~du\right)^2~|~T^c(x)>t\right)\right\vert\leq
\frac{8}{at}~\left(\Vert h\Vert/\eta_{\infty}(h)+(1\vee q(\eta_{\infty}))^2\right). 
$$ 
\end{proof}

We end our discussion of sub-Markov operators with bounded potentials by noting that we may also extend Proposition~\ref{prop-L2} to also obtain bounds in $\LL^p$ of the order $t^{-1/2}$. The idea behind the proof is to write the averages in terms of the h-process (in particular, the trajectorial version) and then apply $\LL^p$ bounds for occupation measures of a Markov process, using the Poisson equation associated with the $h$-semigroup (see for instance Lemma 8.4.11 in~\cite{dmpenev} in discrete time settings).

\subsection{Comparisons of our conditions with the literature}\label{sec-comparisons}
In this section, we highlight some of the comparisons between the models and the regularity conditions discussed in the present article and conditions often used in the literature for positive integral operators. 

We begin by remarking that the class of positive semigroups discussed in this article encapsulates discrete generation Feynman-Kac semigroups defined  for any $t\in[0,\infty[_{\tau}$ by the formula
$$
Q_{t,t+\tau}(x,dy)=G_{t,t+\tau}(x)~P_{t,t+\tau}(x,dy)
$$
with the potential function $G_t$ and the Markov transition $P_{t,t+\tau}$ defined by
\begin{equation}\label{G-M}
G_{t,t+\tau}:=Q_{t,t+\tau}(1)\quad \mbox{\rm and}\quad P_{t,t+{\tau}}(f):={Q_{t,t+\tau}(f)}/{Q_{t,t+\tau}(1)}.
\end{equation}
This class of probabilistic models arises in a variety of disciplines including statistical physics, biology, signal processing, rare event analysis, and many others; see \cite{dm-04,dm-13,dm-2000,dmpenev} and the relevant references therein.

 In this context, 
the uniform minorization condition (\ref{beta-P-nu-2}) is a well known strong condition ensuring the stability of the semigroups $\Phi_{s,t}$ and the existence of fixed point invariant measures for time homogeneous semigroups; see for instance~\cite[Theorem 2.3]{dm-2000}, 
\cite[Lemma 2.1 \& Lemma 2.3]{dg-ihp}, \cite[Lemma 2.1]{dm-sch-2}, \cite[section 12.2]{dm-13},  as well as \cite[section~2.1.2 \& section 3.1.3]{dm-2000} and \cite[section 4]{dm-04}.

The next condition is taken from~\cite{champagnat}. In terms of the Markov transition $P_{s,s+\tau}$ discussed in (\ref{G-M}) it takes the following form: 

{\it $(\Pa):$  For any $s\in \Ta$ and $(x_1,x_2)\in E^2$ there exists some $\nu\in \Pa(E)$ such that for $i=1,2$  and any $t\geq s+\tau$ we have the estimates
\begin{equation}\label{P-R-Hyp}
\delta_{x_i}P_{s,s+\tau}\geq \epsilon_1~\nu\quad \mbox{and}\quad \epsilon_2
~\Vert Q_{s+\tau,t}(1)\Vert \leq \nu(Q_{s+\tau,t}(1))
\end{equation}
for some parameters $0<  \epsilon_i\leq 1$ whose values do not depend on $x_i$, nor on $s\in\Ta$.
}

We also refer the reader to conditions (A1) and (A2) in \cite{champagnat-2}, as well as~\cite{champagnat-coulibaly,champagnat-3, champagnat-7} for further work and discussion on this condition.
The time-homogeneous version of the above condition is appears in a variety of contexts in the literature. For example, as shown in~\cite[Theorem 2.1]{champagnat}, condition $(\Pa)$ is a sufficient and necessary condition for the {\em uniform} exponential decay (\ref{beta-sup}), which was also applied in \cite[Theorem 10]{phd-emma} and \cite[Theorem 7.1]{horton-kyprianou-villemonais} to neutron transport models. In addition, we refer the reader to \cite[section 2.2]{bansaye} for the use of condition $(\Pa)$ in the design of admissible coupling constants (a.k.a.~generalised Doeblin's conditions) and to~\cite{chazotte} on birth-and-death processes where it is the main ingredient of the proof of Theorem 3.1, and to~\cite[section 2]{he}. 
\textcolor{black}{Moreover, for time-homogeneous models satisfying (\ref{beta-P-nu-2}), the right-hand side estimate in (\ref{P-R-Hyp}) is a direct consequence of  the right-hand side estimate in (\ref{con-unif})}.

When $(\Pa)$ is met, for any $f\geq 0$ and $i=1,2$ choosing 
$H=1$ we have the lower bound estimate
\begin{eqnarray*}
R^{(t)}_{s,s+\tau}(f)(x_i)&\geq &\epsilon_1~ \frac{\nu\left(Q_{s+\tau,t}(1)~f\right)}{P_{s,s+\tau}\left(Q_{s+\tau,t}(1)\right)(x_i)}\geq\epsilon_1\epsilon_2~\Psi_{Q_{s+\tau,t}(1)}(\nu_{s})(f).\end{eqnarray*}
\textcolor{black}{This implies that condition $(\Pa)$ is stronger than the Dobrushin's condition discussed in (\ref{beta-P-nu-2}). 
More precisely, we have}
$$
(\Pa)\Longrightarrow (\ref{beta-P-nu-2})\quad \mbox{\rm with}\quad H=1\qquad
\epsilon_{\tau}:=\epsilon_1\epsilon_2\quad \mbox{\rm and}\quad
\nu=\Psi_{Q_{s+\tau,t}(1)}(\nu_{s}).
$$
As previously mentioned, the uniform minorisation condition (\ref{beta-P-nu-2}) as well as $(\Pa)$ are difficult to check in practice; several sufficient conditions are discussed in~\cite{dm-04,dm-13,dm-2000,dg-ihp,dm-sch,dm-sch-2}. 

The next condition is a slight extension of \cite[condition (68)]{whiteley-4} and \cite[condition (3.24)]{baudel-guyader-lelievre}. 

{\it$(\Qa):$ There exists a positive measurable function $\varsigma_{s}(x) >0$, a constant $\rho>0$ and probability measures $\nu_{s}$ such that 
\begin{equation}\label{Q-ref}
\varsigma_{s}(x)~\nu_{s}(dy)\leq \delta_xQ_{s,s+\tau}(dy)\leq \rho ~\varsigma_{s}(x)~\nu_{s}(dy).
\end{equation}}

Choosing $H=1$ in (\ref{def-R-intro}), this condition implies that  for any $f\geq 0$ we have
$$
R^{(t)}_{s,u}(f):=\frac{Q_{s,u}( Q_{u,t}(1)~f)}{Q_{s,u}(Q_{u,t}(1))}\geq \rho^{-1}~\nu_{s,u}^{(t)}(f)
\quad \mbox{\rm with}\quad 
 \nu_{s}^{(t)}(f):=\frac{\nu_{s}(Q_{s+\tau,t}(1) f)}{\nu_{s}(Q_{s+\tau,t}(1) )}.
$$
This shows that
$$
(\Qa)\Longrightarrow (\ref{beta-P-nu-2})\quad \mbox{\rm with}\quad H=1\qquad
\epsilon_{\tau}:=\rho^{-1}\quad \mbox{\rm and}\quad
\nu= \nu_{s}^{(t)}.
$$
Condition $(\Qa)$ with \textcolor{black}{$\varsigma_{s}(x)=1$} is also discussed in \cite[section 4.3.2]{dm-04},  as well as in \cite{legland-oudjane} and \cite[section 3.3]{ferre}. We also refer the reader to~\cite{champagnat-coulibaly}, where it was shown that condition $(\Qa)$ implies the uniform exponential decay~\eqref{beta-sup} in the time-homogeneous case.

\textcolor{black}{Also notice that
$$
(\Qa)\Longleftrightarrow \rho^{-1}~\nu_{s}\leq \delta_xP_{s,s+\tau}\leq \rho ~\nu_{s},
$$
where the Markov transition $P_{s,s+\tau}$ was defined in (\ref{G-M}).} These rather strong two-sided minorisation conditions are well-known: \textcolor{black}{see for instance the uniformly positive condition discussed in~\cite{birkoff}, \cite[condition (19)]{atar} in the framework of Hilbert projective metrics}, 
\cite[condition $(B)$]{dg-ihp} and \cite[Theorem 2.3]{dm-2000}. \textcolor{black}{From the above discussion it should be clear that
conditions $(\Pa)$ and $(\Qa)$ are stronger than the Dobrushin condition presented in (\ref{beta-P-nu-2}).}

The class of triangular array semigroups introduced in (\ref{def-R-intro}) are also considered in the article~\cite{bansaye-2}  in the context of time homogeneous sub-Markovian models. In our framework, the authors assume the existence of positive functions $H\leq V$, a probability measure $\nu$ defined
on some compact $K\subset E$, and some finite constant $c$, such that 
\begin{equation}\label{ref-bertrand}
\sup_K V/H<\infty \qquad
Q_{\tau}(V)\leq a~V+c~1_K H \quad \mbox{\rm with}\quad 0<a< \inf_{E} (Q_{\tau}(H)/H).
\end{equation}
In addition, there exists some $\epsilon\in ]0,1]$ such that for any positive function $f\in \Ba_{V/H}(E)$ and any $x\in K$ we have
\begin{equation}\label{ref-bertrand-2}
{Q_{\tau}(fH)(x)}/{Q_{\tau}(H)(x)}\geq \epsilon~ \nu\quad \mbox{\rm and}\quad
\sup_{t\in [0,\infty[_{\tau}}\sup_{K}\frac{Q_{t}(H)/H}{\nu(Q_{t}(H)/H)}<\infty.
\end{equation}
In Lemma 3.1 in~\cite{bansaye-2}, using the right-hand side condition in the above display, the authors obtain a Lyapunov equation defined as (\ref{ref-P-H-t-hom}) by replacing $H_{s,t}$ by
the function
$$
\Ha_{s,t}:=\frac{Q_{t-s}(H)}{\nu (Q_{t-s}(H)/H)}\neq H_{s,t}=\frac{Q_{t-s}(H)}{\overline{\eta}_sQ_{t-s}(1)}.
$$
Theorem 2.1 in~\cite{bansaye-2} ensures the existence of a leading triple $(\rho,\eta_{\infty},h)$, as well as exponential estimates similar to the ones discussed in Corollary~\ref{cor-projection-h}.  Thus, 
for the class of semigroups considered in Corollary~\ref{H1-prop} in the present article,  the conditions (\ref{ref-bertrand}) and (\ref{ref-bertrand-2}) ensures that the semigroup $Q_t$ is a $V$-positive semigroup.
Conversely, the authors show that the existence of a leading triple
 $(\rho,\eta_{\infty},h)$ satisfying these exponential decays imply that the pair $(V,h)$ satisfies condition (\ref{ref-bertrand}) and (\ref{ref-bertrand-2}).  

In the context of time homogeneous models, up to a change of Lyapunov function as discussed above, Proposition 3.3 in~\cite{bansaye-2} is closely related to Theorem~\ref{theo-intro-1} in the present article. In contrast with the $V$-norm Lipschitz estimates stated in Corollary~\ref{stab-h-Phi-transform-time} presented in this article,
Corollary 3.7 in~\cite{bansaye-2} does not provide any Lipschitz estimates but also yields some exponential decays of the normalised semigroup $\Phi_t$ to equilibrium with respect to the total variation norm.

\section{Proofs of the stability theorems}\label{proof-theo-sec}

\subsection{Proof of Theorem~\ref{krylov-bogo-theo}}\label{krylov-bogo-theo-proof}

\begin{proof}
{One direction of the proof is obvious. Indeed, if  $\Phi_{1}$ has at least one invariant probability measure $\eta_{\infty}$ choosing $\eta=\eta_{\infty}$ the measure $\Phi_n(\eta_{\infty})=\eta_{\infty}$ is tight and $\beta_n(\eta_{\infty})=\eta_{\infty}(Q(1))$.}

Conversely, assume that  for some $\eta$ the sequence of probability measures $\Phi_n(\eta)$ is tight and (\ref{tight-ref-cond}) is satisfied. In this situation, for any $\epsilon>0$ there exists a compact $K_{\epsilon}\subset E$ such that
  $$
\Phi_n(\eta)(Q(1))\geq \Phi_n(\eta)(1_{K_{\epsilon}}Q(1))\geq (1-\epsilon)~\inf_{K_{\epsilon}}Q(1)
\quad \mbox{\rm and}\quad\beta_{\infty}(\eta)>0.
$$
We simplify the notation and write $\beta_n$ and $\beta_{\infty}$ instead of $\beta_{n}(\eta)$ and
$\beta_{\infty}(\eta)$.
Consider the probability measures
$$
\eta_m:=\frac{1}{m}\sum_{0\leq k<m}\Phi_{k}(\eta)
\Longrightarrow
\eta_m(Q(1))=\frac{1}{m}\sum_{0\leq k<m}\beta_k\longrightarrow_{m\rightarrow\infty}\beta_{\infty}.
$$
There exists at least one probability measure $\eta_{\infty} :=\varpi(\eta)$  and a sub-sequence $m_k\rightarrow_{k\rightarrow\infty}\infty$ such that 
$\eta_{m_k}$ converges weakly to $\eta_{\infty}$ as $k\rightarrow\infty$.
Hence, $\eta_{m_k}$ and $\Phi_{1}(\eta_{m_k})$ converge weakly to $\eta_{\infty}$ and $\Phi_{1}(\eta_{\infty})$, respectively, as $m\rightarrow\infty$. In addition, we have
$$
\Phi_{n}(\eta)(Q(f))=\Phi_{n}(\eta)(Q(1))~\Phi_{n+1}(\eta)(f).
$$
This yields the formula
$$
\Phi_{1}(\eta_m)=\sum_{0\leq k<m}\frac{\beta_{k}}{
\sum_{0\leq l<m}\beta_{l}}~\Phi_{k+1}(\eta)
$$
from which we check that
\begin{eqnarray*}
\Phi_{1}(\eta_m)-\eta_m
& = &
\frac{1}{m}\sum_{0\leq k<m}\left(\frac{\beta_{k}}{
\frac{1}{m}\sum_{0\leq l<m}\beta_l}-1\right)~\Phi_{k+1}(\eta)\\ & &+
\frac{1}{m}\sum_{0\leq k<m}\left(\Phi_{k+1}(\eta)-\Phi_{k}(\eta)\right).
\end{eqnarray*}
As a result 
$$
\begin{array}{l}
\displaystyle \Phi_{1}(\eta_m)-\eta_m\\
\\
\displaystyle=
\frac{1}{m}\sum_{0\leq k<m}\left(\frac{\beta_{k}}{
\frac{1}{m}\sum_{0\leq l<m}\beta_l}-1\right)~\Phi_{k+1}(\eta)+\frac{1}{m}~
(\Phi_{m}(\eta)-\eta).
\end{array}$$
On the other hand, we have
$$
\frac{1}{m}\sum_{0\leq k<m}\left\vert\beta_{k}-
\frac{1}{m}\sum_{0\leq l<m}\beta_l\right\vert\leq \frac{2}{m}
\sum_{0\leq k<m}\vert\beta_{k}-\beta_{\infty}\vert\longrightarrow_{m\rightarrow\infty}0.
$$
For any $f\in \Ca_b(E)$ we conclude that
$$
\vert(\Phi_{1}(\eta_{m_k})-\eta_{m_k})(f)\vert \longrightarrow_{k\rightarrow\infty}0 \quad \mbox{\rm and therefore}\quad
\eta_{\infty}=\Phi_{1}(\eta_{\infty}).
$$

The last assertion follows from the fact that 
$$
\eta_{m_k}(Q(1))=\frac{1}{m_k}\sum_{l=0}^{m_k-1}\beta_l\longrightarrow_{m\rightarrow\infty}\beta_{\infty}=\eta_{\infty}(Q(1)).
$$
This ends the proof of the theorem.
\end{proof}
\subsection{Proof of Theorem~\ref{theo-intro-1}}\label{theo-intro-1-proof}

This section is mainly concerned with the proof of Theorem~\ref{theo-intro-1}.
For any $\mu\in \Pa_V(E)$ and $n\geq 1$, applying Markov's inequality we have
$$
r_n:=\kappa_V(\mu)+n\Longrightarrow
\Phi_{s,t}(\mu)(V>r_n)\leq \kappa_V(\mu)/(\kappa_V(\mu)+n)>0.
$$
We conclude that
\begin{equation}\label{ref-min-K}
\inf\Phi_{s,t}(\mu)(V\leq r_n)\geq{1}/{(1+\kappa_V(\mu)/n)}\longrightarrow_{n\rightarrow\infty}~1.
\end{equation}
In the above display, the infimum are taken over all 
 $s\in\Ta$ and $t\geq s$.

\begin{lem}\label{lem-proof-P-H-t-hom}
The estimate (\ref{ref-P-H-t-hom}) holds as soon as (\ref{ref-V-over-eta}) and (\ref{ref-H0-2}) are satisfied.
\end{lem}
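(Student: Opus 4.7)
The plan is to reduce the claim to a direct computation of $R^{(t)}_{s,s+\tau}(V/H_{s+\tau,t})$ using the definition (\ref{def-R-intro}), and then combine the $V$-positive Lyapunov bound (\ref{ref-V-theta-V-po}) with the lower bound on $H_{s,t}$ on compact level sets coming from (\ref{ref-H0-2}).

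First I would compute, using (\ref{def-R-intro}) with $f = V/H_{s+\tau,t}$:
\[
R^{(t)}_{s,s+\tau}(V/H_{s+\tau,t})(x) = \frac{Q_{s,s+\tau}(V)(x)}{\lambda_{s,s+\tau}\,H_{s,t}(x)}.
\]
The Lyapunov bound in (\ref{ref-V-theta-V-po}) gives $Q_{s,s+\tau}(V) \leq \Theta_\tau\,V$ with $\Theta_\tau \in \Ba_0(E)$, so for every $\delta > 0$ the super-level set $\Ka_\delta := \{\Theta_\tau \geq \delta\}$ is a (possibly empty) compact set, and
\[
Q_{s,s+\tau}(V)(x) \leq \delta\,V(x) + 1_{\Ka_\delta}(x)\,c_\delta, \qquad c_\delta := \sup_{\Ka_\delta}(V \Theta_\tau) < \infty,
\]
since $V$ is locally bounded and $\Theta_\tau$ is bounded. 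Here $c_\delta$ is finite uniformly in $s$ because $\Theta_\tau$ does not depend on $s$.

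Next I would bound $\lambda_{s,s+\tau} = \overline{\eta}_sQ_{s,s+\tau}(1)$ from below using (\ref{inf-sup}) together with the assumption $\lambda^-(\overline{\eta}_0) > 0$ from (\ref{ref-V-over-eta}), which yields $\lambda_{s,s+\tau} \geq \lambda^- > 0$ uniformly in $s$. Dividing the previous display by $\lambda_{s,s+\tau} H_{s,t}(x)$ gives
\[
R^{(t)}_{s,s+\tau}(V/H_{s+\tau,t})(x) \leq \frac{\delta}{\lambda^-}\,\frac{V(x)}{H_{s,t}(x)} + \frac{1_{\Ka_\delta}(x)\,c_\delta}{\lambda^-\,H_{s,t}(x)}.
\]

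It remains to control the second term uniformly on the compact set $\Ka_\delta$. Since $V$ is locally bounded and $\Ka_\delta$ is compact, there exists $r_\delta \geq r_H$ with $\Ka_\delta \subset \{V \leq r_\delta\}$ (invoking (\ref{k-in})); then (\ref{ref-H0-2}) gives
\[
\inf_{x \in \Ka_\delta}\inf_{s \in \Ta,\,t \geq s+\tau} H_{s,t}(x) \geq \varsigma_{r_\delta}(H) > 0.
\]
Finally, I would pick $\delta > 0$ small enough that $\epsilon := \delta/\lambda^- \in ]0,1[$ and set $c := c_\delta/(\lambda^-\,\varsigma_{r_\delta}(H))$; this yields (\ref{ref-P-H-t-hom}).

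I expect no substantive obstacle here: the argument is essentially an assembly of (\ref{ref-V-theta-V-po}), (\ref{inf-sup}) and (\ref{ref-H0-2}). The only point requiring a little care is ensuring that the compact $\Ka_\delta$ is contained in a level set $\{V \leq r_\delta\}$ with $r_\delta \geq r_H$, so that the uniform lower bound $\varsigma_{r_\delta}(H)$ from (\ref{ref-H0-2}) is available; for continuous time semigroups one checks by Remark~\ref{rmk-discrete-2-continuous} that the infimum defining $\varsigma_{r_\delta}(H)$ can still be taken over $t \in [s,\infty[_\tau$ without loss, since $s+\tau$ is itself a mesh point.
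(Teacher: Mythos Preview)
Your proof is correct and follows essentially the same route as the paper: both compute $R^{(t)}_{s,s+\tau}(V/H_{s+\tau,t})=Q_{s,s+\tau}(V)/(\lambda_{s,s+\tau}H_{s,t})$, split according to a compact super-level set of $\Theta_\tau$, use $\lambda_{s,s+\tau}\ge\lambda^-$ from (\ref{inf-sup}), and bound $H_{s,t}$ from below on that compact via (\ref{ref-H0-2}) after enclosing it in a $V$-level set via (\ref{k-in}). The only difference is parametrisation---the paper works with $r$ and the sets $\{\Theta_\tau\ge\lambda^-/r\}$, whereas you work directly with $\delta$ and $\Ka_\delta=\{\Theta_\tau\ge\delta\}$---which is purely cosmetic.
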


 \begin{proof}
For any $n\geq 1$ we set $r_n:=\overline{\eta}(V)+n$.
For any $s\in \Ta$ and $r>  \lambda^-r_1$ we have the estimate
$$
Q_{s,s+\tau}(V)/V<(\lambda^-/r)~1_{K_{r}(s)^c}+\Vert  \Theta_{\tau}\Vert~1_{K_{r}(s)},
$$
with the sets $K_r(s)$ are defined by 
$$
K_{r}(s):=\left\{Q_{s,s+\tau}(V)/V\geq \lambda^-/r\right\}
\subset \Ka_{r}^-:= \left\{\Theta_{\tau}\geq \lambda^-/r\right\}.
$$
On the other hand, for any  $s\in \Ta $ such that $s+\tau\leq t$ we have
\begin{eqnarray*}
\frac{R^{(t)}_{s,s+\tau}(V/H_{s+\tau,t})}{(V/H_{s,t})}&=&\frac{1}{\lambda_{s,s+\tau}}~
\frac{Q_{s,s+\tau}(V)}{V}.
\end{eqnarray*}
This yields the estimate
\begin{eqnarray*}
\frac{R^{(t)}_{s,s+\tau}(V/H_{s+\tau,t})}{(V/H_{s,t})}&\leq &\frac{\lambda^-}{\lambda_{s,s+\tau}}~
\left(\frac{1}{r}1_{K_{r}(s)^c}+\frac{\Vert  \Theta_{\tau}\Vert}{\lambda^-}1_{K_{r}(s)}
\right)\\
&\leq &\frac{1}{r}~1_{K_{r}(s)^c}+\frac{\Vert  \Theta_{\tau}\Vert}{\lambda^-}~1_{K_{r}(s)},
\end{eqnarray*}
from which we check that
\begin{eqnarray*}
R^{(t)}_{s,s+\tau}(V/H_{s+\tau,t})
&\leq &\frac{1}{r}~(V/H_{s,t})~+\frac{\Vert \Theta_{\tau}\Vert}{\lambda^-}~~
\sup_{\Ka_{r}^-}(V/H_{s,t}).
\end{eqnarray*}
Now, choosing $n\geq 1$ sufficiently large such that
$$
r_{n}~\geq r/\lambda^-\geq r_{1}\vee r_H,
$$
by (\ref{k-in}) there exists some $\overline{r}_n$ such that
$$
 \Ka_{r}^-= \left\{\Theta_{\tau}\geq \lambda^-/r\right\}\subset  \Ka_{r_{n}}:=\left\{\Theta_{\tau}\geq 1/r_{n}\right\}\subset \left\{V\leq  \overline{r}_{n}\right\}.
$$
By (\ref{ref-H0-2}) this yields the uniform estimate
$$
\sup_{ \Ka_{r}^-}(V/H_{s,t})\leq  \overline{r}_{n}/\varsigma_{ \overline{r}_{n}}(H)
<\infty.
$$
This ends the proof of the lemma.
\end{proof}

 \begin{proof}[Proof of Theorem~\ref{theo-intro-1}]
Observe that
$$
\vertiii{H}_V=1\Longrightarrow H\leq V\quad \mbox{\rm and}\quad
H_{s,t}=\overline{Q}_{s,t}(H)\leq \overline{Q}_{s,t}(V).
$$
This implies that
$$ \lambda^-~
H_{s,t}/V\leq 
\lambda_{s,s+\tau}~
H_{s,t}/V=
Q_{s,s+\tau}(H_{s+\tau,t})/V\leq
Q_{s,s+\tau}(V)/V.
$$
from which we check that
$$
\lambda^-~V_{\theta}\leq V/H_{s,t}\quad \mbox{with}\quad V_{\theta}:=1/\Theta_{\tau}.
$$

Notice that $V_{\theta}$ has compact level sets and for any $r\geq 1$ there exists some $\varphi(r)\geq 1$ such that for any $\overline{r}\geq \varphi(r)$ we have
$$
\{V/H_{s,t}\leq r\}\subset \{ V_{\theta}\leq r/\lambda^-\}\subset \{V\leq \varphi(r)\}\subset \{V\leq \overline{r}\}.
$$
 By (\ref{loc-dob}) for any $ \overline{r}\geq \varphi(r)\vee r_0$ we have
$$
\sup_{(V/H_{s,t})(x)\vee(V/H_{s,t})(y)\leq r}\left\Vert \delta_xR^{(t)}_{s,s+\tau}-\delta_yR^{(t)}_{s,s+\tau}\right\Vert_{\tiny tv}\leq
1-\alpha(\overline{r})<1.
$$

Now, due to the previous lemma,~\eqref{ref-P-H-t-hom} holds which in turn implies that
$$
R^{(t)}_{s,s+\tau}(W_{s+\tau,t})\leq \epsilon~W_{s,t}+1
$$
with the collection of functions $W_{s,t}\geq 1$ defined by
$$
 \frac{\epsilon}{c}~\frac{V}{H_{s,t}}\leq 
W_{s,t}:=1+\frac{\epsilon}{c}~\frac{V}{H_{s,t}}\leq \left(1+\frac{\epsilon}{c}\right)~\frac{V}{H_{s,t}}.
$$
Then, for any $\overline{r}\geq \epsilon(\varphi(r)\vee r_0)/c$   we have
$$
 \sup_{W_{s,t}(x)\vee W_{s,t}(y)\leq r}\Vert\delta_xR^{(t)}_{s,s+\tau}-\delta_yR^{(t)}_{s,s+\tau}\Vert_{\tiny tv}\leq 1-\alpha\left(
{c\,\overline{r}}/{\epsilon}\right)<1.
$$
Applying  Lemma~\ref{lem-beta-12} (see for instance the contraction estimate (\ref{ref-V-contraction})).
 for any  $s\in \Ta$,  $u\in [s,t]_{\tau}$, $t\in [s,\infty[_{\tau}$  and $\mu,\eta\in \Pa_{V/H_{s,t}}$, we have the uniform contraction estimate
$$
\Vert \mu R^{(t)}_{s,u}-\eta R^{(t)}_{s,u}\Vert_{V/H_{u,t}}\leq a~ e^{-b (u-s)}~
\Vert \mu-\eta\Vert_{V/H_{s,t}}.
 $$
 This ends the proof of the theorem.
\end{proof}
 \begin{proof}[Proof of Corollary~\ref{theo-intro-1-cor-continuous}]

 For continuous time semigroups, for any $s_n=s+n\tau$ and $u\in [0,\tau[$ we have
 \begin{eqnarray*}
 R^{(t)}_{s_n,s_n+u}(V/H_{s_n+u,t})&=&(V/H_{s_n,t})~\frac{ R^{(t)}_{s_n,s_n+u}(V/H_{s_n+u,t})}{(V/H_{s_n,t})}\\
 &=&(V/H_{s_n,t})~\frac{1}{\lambda_{s_n,s_n+u}}~\frac{Q_{s_n,s_n+u}(V)}{V}\leq \frac{\pi_{\tau}(V)}{\lambda^-(\overline{\eta}_0)}~(V/H_{s_n,t}).
 \end{eqnarray*}
 This implies that
 \begin{eqnarray*}
 \Vert \mu R^{(t)}_{s,s_n+t}- \eta R^{(t)}_{s,s_n+t}\Vert_{V/H_{s_n+u,t}}&\leq &
\vert( \mu R^{(t)}_{s,s_n}- \eta R^{(t)}_{s,s_n})\vert R^{(t)}_{s_n,s_n+u}(V/H_{s_n+u,t})\\
&\leq & \frac{\pi_{\tau}(V)}{\lambda^-(\overline{\eta}_0)}~ \Vert \mu R^{(t)}_{s,s_n}- \eta R^{(t)}_{s,s_n}\Vert_{V/H_{s_n,t}}.
 \end{eqnarray*}
 This ends the proof of the theorem.
\end{proof}
 
 \subsection{Proof of Theorem~\ref{theo-intro-2}}\label{theo-intro-2-proof}
  \begin{proof}
Observe that
$$
\lambda_{s,t}~H_{s,t}~R^{(t)}_{s,t}(f/H)=~Q_{s,t}(f),
$$
which implies that
$$
\Psi_{H_{s,t}}(\mu)R^{(t)}_{s,t}(f/H)=\frac{\mu(Q_{s,t}(f))}{\mu(Q_{s,t}(H))}=\frac{\Phi_{s,t}(\mu)(f)}{\Phi_{s,t}(\mu)(H)}.
$$

Combining this with (\ref{link-s-t-Q-intro}) and the estimate (\ref{Lip-Psi-h-2}) we have
\begin{eqnarray*}
\vertiii{ \Phi_{s,t}(\mu)-\Phi_{s,t}(\eta)}_{V}
&=&
\vertiii{\Psi_{1/H}(\Psi_{H_{s,t}}(\mu)R^{(t)}_{s,t})-\Psi_{1/H}(\Psi_{H_{s,t}}(\eta)R^{(t)}_{s,t})}_{V}\\
& \leq&{\color{black}\Phi_{s,t}(\mu)(H)}
\left(1+\Phi_{s,t}(\eta)(V)\right)\times 
\vertiii{\left(\Psi_{H_{s,t}}(\mu)-\Psi_{H_{s,t}}(\eta)\right)R^{(t)}_{s,t}}_{V/H}.
\end{eqnarray*}

Applying (\ref{stab-time-varying-h}) to $u=t$ we find that
$$
\vertiii{ \Phi_{s,t}(\mu)-\Phi_{s,t}(\eta)}_{V}
 \leq  \kappa_H(\mu)
\left(1+ \kappa_V(\eta)\right)~a~ e^{-b (t-s)}
\vertiii{ \Psi_{H_{s,t}}(\mu)-\Psi_{H_{s,t}}(\eta)}_{V/H_{s,t}}.
$$
On the other hand, applying (\ref{Lip-Psi-h-2-back}) we check that
\begin{eqnarray*}
\vertiii{ \Psi_{H_{s,t}}(\mu)-\Psi_{H_{s,t}}(\eta)}_{V/H_{s,t}}
&\leq &  \frac{1}{\mu(H_{s,t})}\left(1+\frac{\eta(V)}{\eta(H_{s,t})}\right)~\vertiii{\mu-\eta}_{V}.
\end{eqnarray*}
This ends the proof of the theorem.
\end{proof}

\subsection{Proof of Theorem~\ref{theo-equivalence}}\label{theo-equivalence-proof}

\begin{proof}
Assume that  $Q_t$ is a stable $V$-positive semigroup. In this context, there exists an eigen-triple  $(\rho,\eta_{\infty},h)\in (\RR\times\Pa_V(E)\times \Ba_{0,V}(E))$ satisfying (\ref{ref-dominated-cv}).
Choosing $(\overline{\eta}_0,H)$ in (\ref{def-over-Q-H}) and (\ref{def-R-intro}), we  readily check that
$$
R^{(t)}_{s,s+u}(f)=P^h_{u}(f)\quad \mbox{\rm and}\quad
Q_{t}(V)/V=e^{\rho t}~P^h_{t}(V^h)/V^h\quad \mbox{\rm with}\quad V^h:=V/h\in \Ba_{\infty}(E).
$$
In this situation, the Doob h-transform,  $P^h_t$  is a $V^h$-positive semigroup of Markov operators from  $\Ba_{V^h}(E)$ to itself;  $P^h_t$ maps $\Ba_{V^h}(E)$ into $\Ba_{0,V^h}(E)$ for any $t>0$ and we have
$$
P^h_{\tau}(V^h)/V^h\leq e^{-\rho\tau}\Theta_{\tau}
$$ 
In addition, condition (\ref{loc-dob}) applied to $H=h$ takes the form
\begin{equation}\label{ref-lem-t}
\sup_{V(x)\vee V(y)\leq r}\left\Vert \delta_xP^h_{\tau}-\delta_yP^h_{\tau}\right\Vert_{\tiny tv}\leq 1-\alpha(r).
\end{equation}
Observe that
$$
V^h(x)\leq r_h\Longrightarrow V(x)\leq r=r_h\sup_{V\leq r_h}h~
$$
This implies that
\begin{equation}\label{loc-V-h-Ph}
\sup_{V^h(x)\vee V^h(y)\leq r}\left\Vert \delta_xP^h_{\tau}-\delta_yP^h_{\tau}\right\Vert_{\tiny tv}\leq 1-\alpha_h(r)\quad \mbox{\rm with}\quad
\alpha_h(r):=\alpha(r\sup_{V\leq r}h).
\end{equation}
We conclude that $P^h_t$ is a stable $V^h$-positive semigroup.

In the reverse angle, choosing $(\overline{\eta}_0,H)=(\eta_{\infty},h)\in (\Pa_V(E)\times\Ba_{0,V}(E))$ in (\ref{def-over-Q-H}) and (\ref{def-R-intro}), we  readily check that
$$
\overline{\eta}_0Q_{t,t+\tau}=e^{\rho \tau}=\lambda^-=\lambda^-(\eta_{\infty})
\quad \mbox{\rm and}\quad
H_{s,t}= h
$$
Note that in this case, the quantities $(\varsigma_{r}(H),\vertiii{H}_V)$ defined in (\ref{ref-H0-2}) become
$$
 \varsigma_{r}(H)=\inf_{V\leq r}~h>0\quad \mbox{\rm and}\quad \vertiii{H}_V=\Vert h\Vert_V<\infty.
$$
Now assume that $P^h_t$ satisfies (\ref{loc-V-h-Ph}) for some function $\alpha_h(r)$. Using the fact that
$$
V(x)\leq r\Longrightarrow V^h(x)\leq r/\inf_{V^h\leq r}h
$$
we check condition (\ref{loc-dob}) applied to $H=h$. We conclude that
$Q_t$ is a stable $V$-positive semigroup as soon as  $P^h_t$ is a stable $V^h$-positive semigroup. This ends the proof of the theorem.
\end{proof}

\subsection{Proof of Theorem~\ref{stab-h-transform-time}}\label{theo-homo-sec}
This section is mainly concerned with the proof of Theorem~\ref{stab-h-transform-time}
on the stability of the time homogeneous models discussed in (\ref{rho-h}). In what follow, we set $V^h:=V/h$.
\begin{lem}\label{ref-lem-Lyapunov-2}
For any $\epsilon>0$ we have the Foster-Lyapunov inequality
\begin{equation}\label{Lyap-P-h}
P^h_{\tau}\left(V^h\right)<  \epsilon~V^h~1_{K^c_{\epsilon,\tau}}+ c_{\epsilon,\tau}~ 1_{K_{\epsilon,\tau}}
\end{equation}
with the parameter $c_{\epsilon,\tau}$ and the compact set $ K_{\tau,\epsilon}$ given by
\begin{eqnarray*}
c_{\epsilon,\tau}&:=&e^{-\rho \tau}~\Vert Q_{\tau}(V)/V\Vert~\sup_{K_{\epsilon,\tau}}V^h
\quad\mbox{and}\quad
 K_{\tau,\epsilon}:=\{Q_{\tau}(V)/V\geq \epsilon ~e^{\rho\tau}\}.
\end{eqnarray*}
\end{lem}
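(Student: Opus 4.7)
The plan is to compute $P^h_{\tau}(V^h)$ in closed form using the eigen-equation $Q_{\tau}(h)=e^{\rho\tau}h$, and then bound it in two regions according to the size of $Q_{\tau}(V)/V$.

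First, from the definition $P^h_{\tau}(f)=Q_{\tau}(hf)/Q_{\tau}(h)$ applied to $f=V^h=V/h$, and the eigenfunction identity, I get
\begin{equation*}
P^h_{\tau}(V^h)=\frac{Q_{\tau}(V)}{Q_{\tau}(h)}=e^{-\rho\tau}\,\frac{Q_{\tau}(V)}{h}=e^{-\rho\tau}\,\frac{Q_{\tau}(V)}{V}\cdot V^h.
\end{equation*}
This is the key identity: it reduces the $h$-transform Lyapunov analysis to pointwise control of $Q_{\tau}(V)/V$.

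Second, I would split the bound on $K_{\epsilon,\tau}=\{Q_{\tau}(V)/V\geq \epsilon e^{\rho\tau}\}$ and its complement. On $K^c_{\epsilon,\tau}$ the factor $Q_{\tau}(V)/V$ is strictly less than $\epsilon e^{\rho\tau}$, so the identity above immediately yields $P^h_{\tau}(V^h)<\epsilon V^h$. On $K_{\epsilon,\tau}$ I use the trivial bound $Q_{\tau}(V)/V\leq \|Q_{\tau}(V)/V\|$ (finite since $Q_{\tau}(V)/V\in \Ba_0(E)$ is bounded) together with $V^h\leq \sup_{K_{\epsilon,\tau}}V^h$, giving $P^h_{\tau}(V^h)\leq c_{\epsilon,\tau}$. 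Combining the two cases gives exactly the stated inequality.

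The only point requiring care is that $c_{\epsilon,\tau}$ is finite, which amounts to $\sup_{K_{\epsilon,\tau}}V^h<\infty$. Here $K_{\epsilon,\tau}$ is a super-level set of $Q_{\tau}(V)/V\in \Ba_0(E)$ and hence a compact subset of $E$; since $V\in \Ba_{\infty}(E)$ is locally bounded and $h\in \Ba_{0,V}(E)$ satisfies $\inf_K h>0$ on every compact $K$ (as recalled just before the statement, using $h/V\in \Ba_0(E)$ and $V\geq 1$), the ratio $V^h=V/h$ is bounded on $K_{\epsilon,\tau}$. This is the mildly technical step, but it is entirely a consequence of the hypothesis $Q_{\tau}(V)/V\in \Ba_0(E)$ together with the standing properties of the function spaces $\Ba_{\infty}(E)$ and $\Ba_{0,V}(E)$ set up in section~\ref{sec-fa-not}; no further semigroup structure is needed.
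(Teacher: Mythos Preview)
Your proof is correct and follows essentially the same approach as the paper: both derive the key identity $P^h_{\tau}(V^h)/V^h=e^{-\rho\tau}Q_{\tau}(V)/V$ from the eigen-equation and then split according to the super-level set $K_{\epsilon,\tau}$. Your additional justification that $c_{\epsilon,\tau}<\infty$ (via compactness of $K_{\epsilon,\tau}$ and the local boundedness of $V^h$) is a welcome detail that the paper leaves implicit.
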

\begin{proof}
For any $\epsilon>0$ we have
$$
{P^h_{\tau}(V^h)}/{V^h}=e^{-\rho\tau}{Q_{\tau}(V)}/{V}< \epsilon 1_{K_{\epsilon}^c}+
e^{-\rho \tau}\Vert Q_{\tau}(V)/V\Vert 1_{K_{\epsilon,\tau}}.
$$
This readily yields the estimate (\ref{Lyap-P-h}).
\end{proof}

\begin{prop}\label{prop-unif-eta-h-V}
For any $\mu\in \Pa_{V/h}(E)$ we have
$$
0<\kappa_{\tau}(\mu):=
\inf_{t\in [0,\infty[_{\tau}}\mu P^h_t(1/h)\leq
\kappa^h_{\tau,V^h}\left(\mu\right):=
\sup_{t\in [0,\infty[_{\tau}}\mu P^h_t(V^h)<\infty.
$$
In addition, for any $\eta\in\Pa_V(E)$ we have the estimates (\ref{ref-to-prop-unif-eta-h-V}).
\end{prop}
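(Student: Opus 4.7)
The plan reduces the second assertion to the first via the Boltzmann--Gibbs conjugation identity $\Psi_h(\Phi_t(\eta))=\Psi_h(\eta)P^h_t$, which is the special case $H=h$ of (\ref{link-h-phi}). Given $\eta\in\Pa_V(E)$, I would set $\mu:=\Psi_h(\eta)$; positivity of $\eta(h)$ follows from $h\in\Ba_{0,V}(E)$ being locally lower bounded on the compact sub-level sets of $V$ together with $\eta(V\le r)>0$ for $r$ large by Markov's inequality, whence $\mu(V^h)=\eta(V)/\eta(h)<\infty$, i.e.\ $\mu\in\Pa_{V^h}(E)$. The identity above then yields the representations
\[
\Phi_t(\eta)(V)=\frac{\mu P^h_t(V^h)}{\mu P^h_t(1/h)}\qquad\text{and}\qquad\Phi_t(\eta)(h)=\frac{1}{\mu P^h_t(1/h)},
\]
so that once the first assertion is in hand, $\kappa_V(\eta)\le\sup_t\mu P^h_t(V^h)/\inf_t\mu P^h_t(1/h)<\infty$ and $\kappa_h^-(\eta)=1/\sup_t\mu P^h_t(1/h)>0$ follow directly, the latter using $V\ge 1$ so that $1/h\le V^h$ and $\sup_t\mu P^h_t(1/h)\le\sup_t\mu P^h_t(V^h)<\infty$. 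For continuous time indices, extending from the $\tau$-grid to all $t\ge 0$ is handled by Lemma~\ref{lem-d-to-c}.

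For the upper bound $\sup_{n\in\NN}\mu P^h_{n\tau}(V^h)<\infty$, I would apply Lemma~\ref{ref-lem-Lyapunov-2} with any $\epsilon\in(0,1)$, yielding the Foster--Lyapunov inequality $P^h_\tau(V^h)\le\epsilon\,V^h+c_{\epsilon,\tau}$. A standard iteration along the $\tau$-grid, as in (\ref{ref-Markov-Lyap}), gives $\mu P^h_{n\tau}(V^h)\le\epsilon^n\mu(V^h)+c_{\epsilon,\tau}/(1-\epsilon)$, which is uniformly bounded in $n$ since $\mu(V^h)<\infty$. In the continuous time setting, the remark following Theorem~\ref{stab-h-transform-time} supplies $\pi^h_\tau(V^h)<\infty$, and this extends the estimate to every $t=n\tau+u\in\Ta$ with $u\in[0,\tau[$ via $\mu P^h_t(V^h)=(\mu P^h_{n\tau})P^h_u(V^h)\le\pi^h_\tau(V^h)\,\mu P^h_{n\tau}(V^h)$.

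For the lower bound $\inf_t\mu P^h_t(1/h)>0$, the key observation is that $V\ge 1$ implies $1/h\le V^h$, and hence $\|1/h\|_{V^h}\le 1$. Applying the contraction estimate (\ref{first-h-estimate}) of Theorem~\ref{stab-h-transform-time} to the pair $(\mu,\eta^h_\infty)\in\Pa_{V^h}(E)^2$ yields
\[
|\mu P^h_t(1/h)-\eta^h_\infty(1/h)|\le a_h\,e^{-b_h t}\,\|\mu-\eta^h_\infty\|_{V^h}
\]
for every $t\in\Ta$. Since $1/h>0$ pointwise and $\eta^h_\infty$ is a probability measure, $\eta^h_\infty(1/h)>0$, so there exists $T<\infty$ with $\mu P^h_t(1/h)\ge\eta^h_\infty(1/h)/2$ for all $t\ge T$. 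For the finitely many remaining indices $t\in[0,T]_\tau$, the pointwise identity $P^h_t(1/h)(x)=e^{-\rho t}Q_t(1)(x)/h(x)$ combined with the standing irreducibility $Q_t(1)>0$ and $h>0$ give $P^h_t(1/h)>0$ everywhere, hence $\mu P^h_t(1/h)>0$; taking the minimum over this finite set and combining with the large-$t$ bound yields a strictly positive lower bound on $\inf_{t\in[0,\infty[_\tau}\mu P^h_t(1/h)$.

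The main technical obstacle is this last finite-time step: the contraction estimate only handles large $t$, and the small-$t$ bound depends crucially on pointwise positivity inherited from the standing irreducibility assumption $Q_{t,t+\tau}(1)>0$ for the unnormalised semigroup. The remainder of the argument is a routine combination of Foster--Lyapunov iteration and Boltzmann--Gibbs calculus.
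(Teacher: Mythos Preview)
Your proof is correct. The upper bound and the reduction of the second assertion to the first via $\mu=\Psi_h(\eta)$ and the identities $\Phi_t(\eta)(V)=\mu P^h_t(V^h)/\mu P^h_t(1/h)$, $\Phi_t(\eta)(h)=1/\mu P^h_t(1/h)$ match the paper's route essentially verbatim.

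Where you diverge is the lower bound $\inf_t \mu P^h_t(1/h)>0$. The paper does not invoke the contraction estimate~(\ref{first-h-estimate}); instead it uses only the Foster--Lyapunov bound already established. Since $\sup_t \mu P^h_t(V^h)<\infty$ and $V^h\in\Ba_\infty(E)$, Markov's inequality gives uniform tightness of $(\mu P^h_t)_t$ on the compact level sets $\Ka^h_{V^h}(\mu,\delta):=\{\delta V^h\le \kappa^h_{\tau,V^h}(\mu)\}$; on such a compact set $h$ is bounded above, so $1/h$ is bounded below, and one reads off $\mu P^h_t(1/h)\ge (1-\delta)/\sup_{\Ka^h_{V^h}(\mu,\delta)} h$ uniformly in $t$. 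This is strictly more elementary than your argument: it avoids any forward reference to Theorem~\ref{stab-h-transform-time}, does not require the hypothesis $(\Ha^h)$ or the existence of $\eta^h_\infty$, and yields an explicit quantitative lower bound rather than an eventual one patched together with finitely many initial times. Your route is nonetheless logically sound (there is no circularity, since the proof of~(\ref{first-h-estimate}) in the paper rests only on Lemma~\ref{ref-lem-Lyapunov-2}, condition~$(\Ha^h)$ and Lemma~\ref{lem-beta-12}, none of which use the present proposition), and it has the minor advantage that the large-$t$ part is immediate once the contraction is in hand.
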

\begin{proof}
Following word-for-word the same arguments as the proof of (\ref{ref-Markov-Lyap}) the Foster-Lyapunov estimate (\ref{Lyap-P-h}) implies that
$$
\kappa^h_{\tau,V^h}\left(\mu\right)\leq  \mu(V^h)+c_{\epsilon,\tau}(1-\epsilon)^{-1} < \infty.
$$
Now, we come to the proof of the left-hand side estimate. Since $0<h\in \Ba_{0,V}(E)$ and $\Vert h\Vert_V=1$ the function  $V^h\geq 1$ has compact level sets and $h$ is bounded on compact sets. 
Consider the compact sets $\Ka^h_{\tau,V^h}(\delta)$ indexed by $\delta\in ]0,1[$ and
defined by
$$
\Ka^h_{V^h}(\mu,\delta):=
\{\delta V^h\leq \kappa^h_{\tau,V/h}\left(\mu\right)\}.
$$
Arguing as in (\ref{ref-Markov-ineq-compact}), we have
 $$
\inf_{t\in [0,\infty[_{\tau}}\mu P^h_t\left(\Ka^h_{V/h}(\mu,\delta)\right)\geq  1-\delta\quad
\mbox{\rm and}\quad
\quad
\kappa(\mu)\geq 
 {(1-\delta)}/{\sup_{\Ka^h_{V^h}(\mu,\delta)}h}.
$$
Observe that
$$
\sup_{t\in [0,\infty[_{\tau}}\Psi_h(\eta) P^h_t(V^h)\leq  \eta(V)/\eta(h)+c_{\epsilon,\tau}(1-\epsilon)^{-1}
$$
and consider the compact sets
$$
\Ka_{V}(\eta,\delta):=
\{x : \delta V(x) \leq   \eta(V)/\eta(h)+c_{\epsilon,\tau}(1-\epsilon)^{-1}\}\subset E.
$$
Arguing as above, 
we check that
$$
\inf_{t\in [0,\infty[_{\tau}}\Psi_h(\eta) P^h_t\left(\Ka_{V}(\eta,\delta)\right)\geq  1-\delta
~~\mbox{\rm and}~~
\inf_{t\in [0,\infty[_{\tau}}\Psi_h(\eta) P^h_t(1/h)\geq 
\frac{(1-\delta)}{\sup_{\Ka_{V}(\eta,\delta)}h}.
 $$
This yields for any $t\in[0,\infty[_{\tau}$ and $\delta\in ]0,1[$ the uniform estimate
$$
\Phi_{t}(\eta)(V)=\frac{\Psi_{h}(\eta)P^h_{t}(V/h)}{\Psi_{h}(\eta)P^h_{t}(1/h)}\leq  \frac{\eta(V)/\eta(h)+c_{\epsilon,\tau}(1-\epsilon)^{-1}}{{(1-\delta)}/{\sup_{\Ka_{V}(\eta,\delta)}h}}.
$$
This implies that $\kappa_{\tau,V}(\eta)<\infty$. By lemma~\ref{lem-d-to-c}, for continuous time indices we also have
$\kappa_{V}(\eta)<\infty$.
This also ensures that the sequence of probability measures
$
\Phi_{s,t}(\eta)
$
indexed by $s\leq t$ is tight.  Choosing the compact set
\begin{equation}\label{def-compact-set-proof}
K(\delta,\eta):=
\left\{\delta~ V\leq \kappa_V(\eta)\right\}
\end{equation}
we readily check that
$$
\Phi_{s,t}(\eta)(h)\geq \left(\inf_{K(\delta,\eta)}h\right)~\Phi_{s,t}(\eta)(K(\delta,\eta))
\geq (1-\delta)~\left(\inf_{K(\delta,\eta)}h\right)>0.
$$
We conclude that $\kappa_h^-(\eta)>0$. This ends the proof of the proposition.
\end{proof}

\begin{proof}[Proof of Theorem~\ref{stab-h-transform-time}]
\textcolor{black}{The estimates  (\ref{ref-to-prop-unif-eta-h-V}) have been checked in Proposition~\ref{prop-unif-eta-h-V}.}

Observe that
$$
(\ref{Lyap-P-h})\Longrightarrow
P^h_{\tau}\left(W\right)\leq \epsilon~W +1$$
with the function $W\geq 1$ defined by
\begin{equation}\label{ineq-V-norms-equiv}
 \frac{\epsilon}{c_{\epsilon,\tau}}~\frac{V}{h}\leq 
 W:=1+\frac{\epsilon}{c_{\epsilon,\tau}}~\frac{V}{h}\leq \left(1+\frac{\epsilon}{c_{\epsilon,\tau}}\right)~\frac{V}{h},
\end{equation}
which has compact level sets. 
On the other hand, by (\ref{ref-P-h-n-2-hom}) we have
\begin{eqnarray*}
\sup_{W(x)\vee W(y)\leq r}\Vert\delta_xP^h_{\tau}-\delta_yP^h_{\tau}\Vert_{\tiny tv}\leq 1-\alpha(c_{\epsilon,\tau}r/\epsilon).
\end{eqnarray*}
The estimate (\ref{first-h-estimate}) is now a direct consequence of  (\ref{ref-V-contraction}) and the estimates (\ref{ineq-V-norms-equiv}).
 The proof of the theorem is now completed.
 \end{proof}

\appendix

\section{Proofs of Several Technical Reaults}

\subsection{Proof of Lemma~\ref{lem-d-to-c}}\label{lem-d-to-c-proof}

 Condition $\kappa_{\tau,V}(\mu)<\infty$ ensures that that the sequence of probability measures
$
\Phi_{s,t}(\mu)
$
indexed by $s\in \Ta$ and $t\in [s,\infty[_{\tau}$ is tight. 
In this situation, for any $\delta\in ]0,1[$ there exists some compact set $K$ such that $  \Phi_{s,t}(\mu)(K)\geq (1-\delta)$ for any $s\in \Ta$ and $t\in [s,\infty[_{\tau}$. Thus,
 for any $s_n:=s+n\tau$ and $\epsilon\in [0,\tau[$ we have
$$
\Phi_{s,s_n+\epsilon}(\mu)(V)=\frac{\Phi_{s,s_n}(\mu)Q_{s_n,s_n+\epsilon}(V)}{\Phi_{s,s_n}(\mu)Q_{s_n,s_n+\epsilon}(1)}\leq (\pi_{\tau}(V)/\pi^-_{\tau}(K))~\kappa_V(\mu)/(1-\delta)
$$
with the parameter $\pi_\tau^-(K_{r})$ defined in (\ref{pi-min}).
 In the same vein, using (\ref{discrete-2-continuous}) we have
$$
\Phi_{s,s_n+\epsilon}(\mu)(H)=\frac{\Phi_{s,s_n+\epsilon}(\mu)(H)}{\Phi_{s,s_n}(\mu)Q_{s_n,s_n+\epsilon}(1)}\geq \inf_KH~(1-\delta)/\pi_{\tau}>0.
$$

This shows that $\Phi_{s,t}(\mu)(V)$ is uniformly bounded and $\Phi_{s,t}(\mu)(H)$ is uniformly  positive constant with respect to the parameters $s\in \Ta$ and $t\geq s$. In addition,  the tightness of $\Phi_{s,t}(\mu)$ combined with (\ref{discrete-2-continuous-intro-K})  ensures that $\Phi_{s,t}(\mu)(Q_{t,t+\epsilon}(1))$  is uniformly positive with respect to~$s\in \Ta$ and $t\geq s$ and $\epsilon\in [0,\tau[$.

For any $\delta\in ]0,1[$ there exists some compact set $K$ such that  $\pi^-_{\tau}(K)>0$  and $\Phi_{s,t}(\mu)(K)\geq (1-\delta)$ and
$$
(\ref{discrete-2-continuous-intro-K})\Longrightarrow
\sup_{s\geq 0}\sup_{t\geq s}\sup_{\epsilon\in [0,\tau]}\frac{\Vert Q_{t,t+\epsilon}(1)\Vert}{\Phi_{s,t}(\mu) Q_{t,t+\epsilon}(1)}\leq \frac{\pi_{\tau}}{(1-\delta)\pi_\tau^-(K)} <\infty.
$$
Finally observe that for any $s\leq t$ and $\epsilon\in [0,\tau]$ we have
$$
(\ref{discrete-2-continuous})\Longrightarrow
\Phi_{s,t}(\mu)(Q_{t,t+\epsilon}(1))\leq \Phi_{s,t}(\mu)(V~Q_{t,t+\epsilon}(V)/V)\leq
\pi_{\tau}(V)~\kappa_V(\mu).
$$
This shows that  $\Phi_{s,t}(\mu)(Q_{t,t+\epsilon}(1))$  is uniformly bounded with respect to  $s\in \Ta$ and $t\geq s$. 
This ends the proof of the lemma.
\cqfd

\subsection{Proof of Lemma~\ref{lemma-Nick}}\label{app:lemma-Nick-proof}

Let $H$ be some  locally bounded  positive functions s.t. $V/H\in \Ba_{\infty}(E)$.
Recall that for any $u,t\in \Ta$, we have
 \begin{equation}\label{ref-R-t-h}
\lambda_{u, t}~H_{u, t} = Q_{u,t}(H) =: h_{u, t}\Longrightarrow
Q_{s,u}(h_{u,t})=h_{s,t}>0.
\end{equation} 
By (\ref{def-V-epsilon}) and (\ref{def-V-epsilon-2}) there exists some 
for  some $ \epsilon_1>0$ and any $0<\epsilon\leq \epsilon_{1}$ we have
\begin{equation}\label{ref-q-chi-2}
 0<\iota^-_{\epsilon}:=\inf_{t\in \Ta}\inf_{\Va_{\epsilon}^2}q_{t,t+\tau}\leq
\iota_{\epsilon}:=\sup_{t\in \Ta}\sup_{ \Va_{\epsilon}^2}q_{t,t+\tau} <\infty\quad \mbox{and}\quad
0 < \nu_{\tau}(\Va_{\epsilon}) < \infty
\end{equation} 
with the compact $\epsilon$-super-level sets $\Va_{\epsilon}$ of the function $\Theta_{\tau}$ defined in (\ref{def-V-epsilon}).
To simplify the notation, we write $\nu$ instead of $\nu_{\tau}$. In this notation,  we have
 \begin{equation}\label{def-nu-min}
\iota^-_{\epsilon}~1_{\Va_{\epsilon}}(x)~ \nu(dy)~1_{\Va_{\epsilon}}(y)
\leq 1_{\Va_{\epsilon}}(x)~Q_{t,t+\tau}(x,dy)~1_{\Va_{\epsilon}}(y)\leq ~\iota_{\epsilon}~1_{\Va_{\epsilon}}(x)~\nu(dy)~
1_{\Va_{\epsilon}}(y).
\end{equation}

The next lemma is a slight modification of \cite[Proposition 1]{whiteley}.
\begin{lem}
There exists $\lambda_0>0$ and a finite constant $c_0<\infty$ such that for any $s\in\Ta$ and $t\in [s,\infty[_{\tau}$ and $\eta\in \Pa_V(E)$  we have 
\begin{equation}\label{ref-P-h-nh}
 R^{(t)}_{s,t}\left(\frac{V}{H}\right)\leq 
e^{-\lambda_0 (t-s)}~\frac{V}{h_{s,t}}+c_0\quad \mbox{and}\quad \limsup_{t\rightarrow\infty}
\frac{e^{-\lambda_0 (t-s)}}{\eta(h_{s,t})}=0.
\end{equation}
In addition, we have the uniform estimates stated in (\ref{ref-P-h-nh-2}).
\end{lem}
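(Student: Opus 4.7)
The plan is to deduce both displays of \eqref{ref-P-h-nh} and the bounds \eqref{ref-P-h-nh-2} by combining the Foster--Lyapunov inequality implied by the right-hand side of \eqref{ref-V-theta} with the uniform two-sided density estimates provided by condition $(\Aa)_V$. Since $R^{(t)}_{s,t}(V/H)(x)=Q_{s,t}(V)(x)/h_{s,t}(x)$, the first inequality in \eqref{ref-P-h-nh} is equivalent to the pointwise estimate
$$
Q_{s,t}(V)(x)\leq e^{-\lambda_0(t-s)}\,V(x)+c_0\,h_{s,t}(x)
$$
for constants $\lambda_0>0$ and $c_0<\infty$ independent of $s$, $t\in [s,\infty[_\tau$ and $x\in E$.

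First I would refine the hypothesis $Q_{s,s+\tau}(V)/V\leq \Theta_\tau\in \Ba_0(E)$ into the discrete Foster--Lyapunov form
$$
Q_{s,s+\tau}(V)(x)\leq \epsilon\,V(x)+c_\epsilon\,1_{\Ka_\epsilon}(x),\qquad c_\epsilon:=\|\Theta_\tau\|\sup_{\Ka_\epsilon} V,
$$
with $\Ka_\epsilon:=\{\Theta_\tau\geq \epsilon\}$ compact and uniform in $s$. Next I would use the two-sided bounds \eqref{min-max-q-vr} to establish uniform positivity of $h_{s,s+\tau}$ on $\Ka_\epsilon$: choosing $\overline r\geq r_1$ with $\Ka_\epsilon\subset K_{\overline r}$, for $x\in\Ka_\epsilon$
$$
h_{s,s+\tau}(x)=\int q_{s,s+\tau}(x,y)H(y)\,\nu_\tau(dy)\geq \Bigl(\inf_{K_{\overline r}^2}q_{s,s+\tau}\Bigr)\int_{K_{\overline r}}H\,d\nu_\tau>0,
$$
uniformly in $s$, since $H$ is locally lower bounded and $\nu_\tau(K_{\overline r})>0$. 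This lets one absorb $1_{\Ka_\epsilon}(x)$ into a multiple of $h_{s,s+\tau}(x)$ on all of $E$, yielding the one-step bound $Q_{s,s+\tau}(V)\leq \epsilon\,V+C'_\epsilon\,h_{s,s+\tau}$.

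Iterating this inequality along the grid $[s,\infty[_\tau$ and using $Q_{u,u+\tau}(h_{u+\tau,t})=h_{u,t}$ produces
$$
Q_{s,s+n\tau}(V)\leq \epsilon^n V+C'_\epsilon\sum_{k=1}^{n}\epsilon^{n-k}\,h_{s,s+k\tau}.
$$
The main obstacle is to dominate the weighted tail $\sum_{k=1}^{n}\epsilon^{n-k}h_{s,s+k\tau}(x)$ by a constant multiple of the terminal term $h_{s,s+n\tau}(x)$. This is the Harnack-type comparability on compact level sets: the two-sided bound on $q_{s,s+\tau}$ restricted to $K_r\times K_r$ yields, via a Dobrushin--Hilbert projective argument in the spirit of~\cite{douc-moulines-ritov,whiteley}, a uniform-in-$(s,k,n)$ upper bound on $h_{s,s+k\tau}(x)/h_{s,s+n\tau}(x)$ for $x\in K_r$. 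For $x\notin K_r$ the pure Lyapunov term $\epsilon^n V(x)$ already absorbs the sum because $V(x)$ is large. Appropriate tuning of $r$ and $\epsilon$ yields the target inequality, and the extension to continuous $t$ exploits $\pi_\tau(V)<\infty$ from \eqref{discrete-2-continuous}.

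The remaining claims follow by routine arguments. The second limit in~\eqref{ref-P-h-nh} is immediate from the uniform positivity $\inf_{x\in K_r}h_{s,t}(x)\geq \varsigma_r(H)>0$ (a byproduct of the construction above, which simultaneously verifies the left-hand side of \eqref{ref-H0-2}) combined with Markov's inequality $\eta(K_r)\geq 1-\eta(V)/r$; the right-hand side of \eqref{ref-H0-2}, namely $\vertiii{H}_V<\infty$, follows from iterating the Lyapunov inequality to bound $\|h_{s,t}/V\|$. Finally, in~\eqref{ref-P-h-nh-2}, the upper bounds $\kappa_V(\mu),\lambda(\mu)<\infty$ follow from iterating the Lyapunov on $\mu Q_{s,t}(V)$ and dividing by $\mu Q_{s,t}(1)$, while the lower bounds $\kappa^-_H(\mu),\lambda^-(\mu)>0$ follow from tightness of $\Phi_{s,t}(\mu)$ (as in~\eqref{ref-min-K}) together with uniform positivity of $Q_{t,t+\tau}(H)$ and $Q_{t,t+\tau}(1)$ on compact level sets inherited from $(\Aa)_V$.
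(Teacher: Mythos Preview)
Your iteration is correct up to the display
\[
Q_{s,s_n}(V)(x)\leq \epsilon^n V(x)+C'_\epsilon\sum_{k=1}^{n}\epsilon^{n-k}\,h_{s,s_k}(x),
\]
but the step that follows is a genuine gap. To conclude, you must show that $\sum_{k=1}^n\epsilon^{n-k}h_{s,s_k}(x)/h_{s,s_n}(x)$ is bounded uniformly in $s,n,x$. Writing $h_{s,s_k}(x)/h_{s,s_n}(x)=Q_{s,s_k}(H)(x)/Q_{s,s_k}(h_{s_k,s_n})(x)$, this is a pointwise ratio governed by how the measure $Q_{s,s_k}(x,\cdot)$ weighs $H$ against $h_{s_k,s_n}$; nothing in $(\Aa)_V$ controls this globally in $x$ (the two-sided density bound in \eqref{min-max-q-vr} holds only on $K_r\times K_r$, so mass that leaks outside $K_r$ after one step is uncontrolled). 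The appeal to a ``Dobrushin--Hilbert projective argument'' does not address this: such arguments compare values of the \emph{same} function at different points, not different functions $h_{s,s_k}$ and $h_{s,s_n}$ at the same point. Your fallback for $x\notin K_r$ is also unjustified: the late terms $k=n,n-1,\ldots$ carry small powers of $\epsilon$ and cannot be absorbed into $\epsilon^n V(x)$.

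The paper avoids this difficulty by a different one-step choice. Instead of absorbing $1_{\Ka_\epsilon}$ into $h_{s,s+\tau}$, it works with the $R^{(t)}$-semigroup and the normalisation $h^V_{u,t}=h_{u,t}/\|h_{u,t}\|_V$, which yields on $\Va_\epsilon$ an $x$-\emph{independent} remainder $a_\epsilon/\nu(1_{\Va_\epsilon}h_{s+\tau,t})$ (using $h_{s,t}(x)\geq \iota^-_\epsilon\,\nu(1_{\Va_\epsilon}h_{s+\tau,t})$ for $x\in\Va_\epsilon$). After iteration the remainder is the scalar sum $\sum_k\epsilon^{n-k}a_\epsilon/\nu(1_{\Va_\epsilon}h_{s_k,s_n})$, and a direct chaining of the minorisation \eqref{def-nu-min} through $\Va_{\epsilon_2}$ gives the explicit lower bound $\nu(1_{\Va_\epsilon}h_{s_k,s_n})\geq \nu(\Va_{\epsilon_2})(\inf_{\Va_{\epsilon_2}}H)(\iota^-_{\epsilon_2}\nu(\Va_{\epsilon_2}))^{n-k}$; choosing $\epsilon<\iota^-_{\epsilon_2}\nu(\Va_{\epsilon_2})/2$ makes the series summable. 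The fix to your argument is therefore to replace $h_{s,s+\tau}$ by $h_{s,t}$ in the one-step bound (with a $t$-dependent coefficient), which is exactly the paper's route. The remaining parts of your sketch (the second limit in \eqref{ref-P-h-nh} via a lower bound on $\eta(h_{s,t})$, tightness of $\Phi_{s,t}(\mu)$, and the bounds in \eqref{ref-P-h-nh-2}) are along the right lines and match the paper.
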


\begin{proof}
We set
$$
h^V_{s,t}:={h_{s,t}}/{\Vert h_{s,t}\Vert_{V}}
\quad \mbox{\rm and}
\quad  
 \beta_{s,u}^{(t)}=
\Vert h_{s,t}\Vert_{V}/\Vert h_{u,t}\Vert_{V}.
$$
We have
$$
R^{(t)}_{s,s+\tau}\left(\frac{V}{h^V_{s+\tau,t}}\right):=\frac{1}{\beta^{(t)}_{s,s+\tau}}~\frac{V}{h^V_{s,t}}~\frac{Q_{s,s+\tau}(V)}{V}~~\mbox{\rm and}~~
Q_{s,s+\tau}(h^V_{s+\tau,t})=\beta^{(t)}_{s,s+\tau} ~h^V_{s,t}.
$$
Using (\ref{def-V-epsilon}) and (\ref{def-nu-min}) and recalling that $ Q_{t,t+\tau}(V)/V\leq \Theta_{\tau}$, for any $0\leq \epsilon<\epsilon_{1}$ 
we check that
$$
R^{(t)}_{s,s+\tau}\left(\frac{V}{h^V_{s+\tau,t}}\right)\leq \frac{\epsilon}{\beta^{(t)}_{s,s+\tau}}~\frac{V}{h^V_{s,t}}+\frac{a_{\epsilon}}{\nu(1_{\Va_{\epsilon}}h^V_{s+\tau})}
\quad\mbox{\rm
with}\quad
a_{\epsilon}:=\sup_{\Va_{\epsilon}}(V\Theta_{\tau} )/\iota_{\epsilon}^-.
$$
This implies that
$$
R^{(t)}_{s,s+n\tau}\left(\frac{V}{h^V_{s+n\tau,t}}\right)\leq
\frac{\epsilon}{\beta^{(t)}_{s+(n-1)\tau,s+n\tau}} R^{(t)}_{s,s+(n-1)\tau}\left(\frac{V}{h^V_{s+(n-1)\tau,t}}\right)+\frac{a_{\epsilon}}{\nu(1_{\Va_{\epsilon}}h^V_{s+n\tau,t})}.
$$
Applying the above to $s+n\tau=t$ we obtain the formula
$$
\begin{array}{l}
\displaystyle R^{(t)}_{s,t}\left(\frac{V}{H}\right)\\
\\
\displaystyle\leq 
\frac{\epsilon^n}{\beta^{(t)}_{s,t}}\frac{V}{h^V_{s,t}}+\sum_{1\leq k\leq n}
\frac{\epsilon^{n-k}}{\beta^{(t)}_{s+k\tau,t}}
\frac{a_{\epsilon}}{\nu(1_{\Va_{\epsilon}}h^V_{s+k\tau,t})}=
\frac{\epsilon^n}{h_{s,t}}~V+\sum_{1\leq k\leq n}
\epsilon^{n-k}
\frac{a_{\epsilon}}{\nu(1_{\Va_{\epsilon}}h_{s+k\tau,t})}.
\end{array}
$$
Observe that  for any $ 0<\epsilon\leq \epsilon_2<\epsilon_{1}$ we have $\Va_{\epsilon_2}\subset \Va_{\epsilon}$. Thus,  for any $k\leq n$ and $t=s+n\tau$ we have the lower bound estimate
\begin{eqnarray*}
1_{\Va_{\epsilon}}(x)h_{s+k\tau,s+n\tau}(x)&\geq &1_{\Va_{\epsilon_2}}(x)Q_{s+k\tau,s+(k+1)\tau}1_{\Va_{\epsilon_2}}\dots  Q_{s+(n-1)\tau,s+n\tau}(1_{\Va_{\epsilon_2}}H)(x)\\
&\geq &1_{\Va_{\epsilon_2}}(x)\left(\inf_{\Va_{\epsilon_2}}H\right)(\iota_{\epsilon_2}^-\nu(\Va_{\epsilon_2}))^{n-k}.
\end{eqnarray*}

Choosing $0<\epsilon<\epsilon_3:=\epsilon_2\wedge (\iota_{\epsilon_2}^-\nu(\Va_{\epsilon_2})/2)$ we conclude that
$$
R^{(t)}_{s,t}\left(\frac{V}{H}\right)\leq 
\frac{\epsilon^n}{h_{s,t}}~V+\frac{2a_{\epsilon}}{\nu(\Va_{\epsilon_2})}~\frac{1}{\inf_{\Va_{\epsilon_2}}H}.
$$
In the same vein, for any $\eta\in \Pa_V(E)$ we have
$$
\eta(h_{s,s+n\tau})\geq \eta(\Va_{\epsilon_2})~\left(\inf_{\Va_{\epsilon_2}}H\right)~(\iota_{\epsilon_2}^-~\nu(\Va_{\epsilon_2}))^{n}\geq
\eta(\Va_{\epsilon_2})~\left(\inf_{\Va_{\epsilon_2}}H\right)~(2\epsilon)^{n}.
$$
This yields the estimate
$$
\frac{\epsilon^{(t-s)/\tau}}{\eta(h_{s,t})}\leq 2^{-(t-s)/\tau}~\frac{1}{\eta(\Va_{\epsilon_2})~\inf_{\Va_{\epsilon_2}}H}.
$$
To summarise: there exists some $\epsilon_3>0$ such that for any $s\in \Ta $ s.t. $t\in [s,\infty[_{\tau}$ and for any $0<\epsilon<\epsilon_3$ we have
$$
R^{(t)}_{s,t}\left(V/H\right)\leq 
\epsilon^{(t-s)/\tau} ~V/h_{s,t}+c_{\epsilon}\quad \mbox{and}\quad \limsup_{t\rightarrow\infty}
\frac{\epsilon^{(t-s)/\tau}}{\eta(h_{s,t})}=0.
$$
This ends the proof of (\ref{ref-P-h-nh}). The last assertion follows from the fact that
$$
\left(\Psi_{h_{s,t}}\left(\eta\right) R^{(t)}_{s,t}\right)\left({V}/{H}\right)\leq 
\frac{e^{-\lambda_0 (t-s)}}{\eta(h_{s,t})}~\eta(V)+c_0\leq c_1(\eta)
$$
for some finite constant $c_1(\eta)<\infty$.
This implies that the sequence of probability measures
$$
\Psi_{h_{s,t}}\left(\eta\right) R^{(t)}_{s,t}
$$
indexed by $s\in \Ta $ s.t. $t\in [s,\infty[_{\tau}$ is tight. More precisely, choosing the compact set
$$
K_{\epsilon,\eta}:=
\left\{x\in E~:~\epsilon~ (V/H)(x)\leq c_1(\eta)\right\}
$$
we have
$$
\left(\Psi_{h_{s,t}}\left(\eta\right) R^{(t)}_{s,t}\right)(K_{\epsilon,\eta}^c)\leq \epsilon
\quad \mbox{\rm so that}\quad
\left(\Psi_{h_{s,t}}\left(\eta\right) R^{(t)}_{s,t}\right)(1/H)\geq \frac{1-\epsilon}{\sup_{K_{{\epsilon,\eta}}}H}.
$$
On the other hand, we have
$$
\Phi_{s,t}(\eta)(V)=
\Psi_{1/H}\left(\Psi_{h_{s,t}}\left(\eta\right) R^{(t)}_{s,t}\right)(V)\leq
c_1(\eta)~ \frac{\sup_{K_{{\epsilon,\eta}}}H}{1-\epsilon}.
$$
We conclude that $\kappa_{\tau,V}(\eta)<\infty$.
This also shows that the sequence of probability measures
$
\Phi_{s,t}(\eta)
$
indexed by $s\in \Ta$ and $t\in [s,\infty[_{\tau}$ is tight.  Choosing the compact set
\begin{equation}\label{def-compact-set-proof}
K_{\epsilon,\eta}:=
\left\{x\in E~:~\epsilon~ V(x)\leq \kappa_V(\eta)\right\}
\end{equation}
we readily check that
$$
\Phi_{s,t}(\eta)(H)\geq \left(\inf_{K_{\epsilon,\eta}}H\right)~\Phi_{s,t}(\eta)(K_{\epsilon,\eta})
\geq (1-\epsilon)~\left(\inf_{K_{\epsilon,\eta}}H\right)>0.
$$
We conclude that $\kappa_{\tau,H}^-(\eta)>0$.
 To take the final step,
  observe that for any $0<\epsilon<\epsilon_{0}$ we have
 \begin{equation}\label{def-nu-min-ref-1}
(\ref{def-nu-min})\Longrightarrow
 \inf_{t\in \Ta}\inf_{\Va_{\epsilon}}Q_{t,t+\tau}(1)\geq  \nu(\Va_{\epsilon})>0.
\end{equation}
Thus, for any $t\in [s,\infty[_{\infty}$ and $0<\epsilon<\epsilon_{0}$ we have
$$
\Phi_{s,t}(\eta)(Q_{t,t+\tau}(1))\geq 
\Phi_{s,t}(\eta)\left(1_{\Va_{\epsilon}}Q_{t,t+\tau}(1)\right)\geq \Phi_{s,t}(\eta)(\Va_{\epsilon})~\nu(\Va_{\epsilon}).
$$
Similarly, we have
$$
\Phi_{s,t}(\eta)(Q_{t,t+\tau}(1))\leq \Phi_{s,t}(\eta)(V~Q_{t,t+\tau}(V)/V)\leq
\Vert\Theta_{\tau}\Vert~\kappa_V(\eta).
$$
By lemma~\ref{lem-d-to-c}, this ends the proof of the estimates in (\ref{ref-P-h-nh-2}) for continuous or discrete time indices. 
 The proof of the lemma is completed.
\end{proof}

The next result is a variation of \cite[lemma 10]{whiteley}.
\begin{lem}\label{lem-10-Nick}
We have the estimates (\ref{ref-H0-2}). In addition, for any $0<\delta<1$ there exists some $0<\epsilon\leq \epsilon_1$  such that the following uniform  compact-approximation  estimate holds
\begin{equation}\label{ref-max-H}
\sup_{t\in \Ta}\vertiii{\overline{Q}_{t,t+\tau}-1_{\Va_{\epsilon}}~\overline{Q}_{t,t+\tau}}_V<\delta
\quad\mbox{and}\quad
\sup_{s\in \Ta}
\sup_{t\geq s}\vertiii{\overline{Q}_{s,t}}_V<\infty.
\end{equation}
\end{lem}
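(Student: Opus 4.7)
The plan is to combine the Foster--Lyapunov-type estimate (\ref{ref-P-h-nh}) from Lemma~\ref{lemma-Nick}, reformulated as the pointwise inequality $Q_{s,t}(V)\le \epsilon^{(t-s)/\tau}V+c_\epsilon\,h_{s,t}$ (valid for every $\epsilon<\epsilon_3$), with the compact decomposition furnished by the super-level sets $\Va_\epsilon=\{\Theta_\tau\ge\epsilon\}$ of $\Theta_\tau$ and the two-sided density bounds (\ref{ref-q-chi-2})--(\ref{def-nu-min}) on $\Va_\epsilon^{2}$. These ingredients will yield in turn the compact-approximation statement, the uniform upper bound $\vertiii{\overline{Q}_{s,t}}_V<\infty$, and the uniform lower bound $\varsigma_r(H)>0$.

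For the compact approximation I would argue directly: for $x\in\Va_\epsilon^{c}$ one has $Q_{t,t+\tau}(V)(x)/V(x)\le \Theta_\tau(x)<\epsilon$, so for any $\Vert f\Vert_V\le 1$,
$\vert \overline{Q}_{t,t+\tau}(f)(x)\vert/V(x)\le Q_{t,t+\tau}(V)(x)/(\lambda_{t,t+\tau}V(x))\le \epsilon/\lambda^{-}$,
using $\lambda_{t,t+\tau}\ge\lambda^{-}$ from (\ref{inf-sup}). Choosing $\epsilon<\delta\lambda^{-}$ gives the first inequality in (\ref{ref-max-H}). The continuous-time case is handled by the auxiliary bound $\pi_\tau$ in (\ref{discrete-2-continuous-intro-K})--(\ref{discrete-2-continuous}), exactly as in Remark~\ref{rmk-discrete-2-continuous}.

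For the uniform upper bound, I would start from the reformulation $R^{(t)}_{s,t}(V/H)(x)=Q_{s,t}(V)(x)/h_{s,t}(x)$ together with (\ref{ref-P-h-nh}) to obtain, for $t=s+n\tau$,
$\overline{Q}_{s,t}(V)/V\le \epsilon^{n}/\lambda_{s,t}+c_\epsilon\,H_{s,t}/V$.
The minorisation (\ref{def-nu-min}) and the tightness of $(\overline{\eta}_s)_s$ (which follows from $\overline{\eta}(V)<\infty$) yield the uniform lower bound $\lambda_{s,s+\tau}\ge c_1:=\inf_s\overline{\eta}_s(\Va_{\epsilon_2})\,\iota^{-}_{\epsilon_2}\nu(\Va_{\epsilon_2})>0$, hence $\lambda_{s,s+n\tau}\ge c_1^{n}$. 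By shrinking $\epsilon$ below $c_1$ (this is compatible with $\epsilon<\epsilon_3$) the first term becomes uniformly bounded. The residual obstacle is to turn the remaining coupled inequality $\Vert\overline{Q}_{s,t}(V)/V\Vert\le C+c_\epsilon\Vert H_{s,t}\Vert_V$ together with the reverse bound $\Vert H_{s,t}\Vert_V\le \Vert H/V\Vert\,\vertiii{\overline{Q}_{s,t}}_V$ into an unconditional estimate; I plan to do so by a telescoping argument along the dyadic iteration $t=s+2^{k}\tau$, repeatedly substituting the inequality into itself until the contractive factor $\epsilon^{n}/\lambda_{s,t}\le(\epsilon/c_1)^{n}$ dominates the amplification arising from the $c_\epsilon H_{s,t}/V$ term. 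This dyadic closure, together with the auxiliary bound (\ref{discrete-2-continuous}) for the residual fractional step $t-s\in[0,\tau[$ in continuous time, is the single main technical obstacle of the proof.

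The lower bound $\varsigma_r(H)>0$ will follow from iterating the two-sided minorisation (\ref{def-nu-min}). For $x\in\Va_{\epsilon_2}$, the inequality $Q_{s,s+\tau}(x,dy)\ge \iota^{-}_{\epsilon_2}1_{\Va_{\epsilon_2}}(y)\,\nu(dy)$ iterates to $Q_{s,s+n\tau}(H)(x)\ge C_{H}\,(\iota^{-}_{\epsilon_2}\nu(\Va_{\epsilon_2}))^{n}$ for some $C_{H}>0$ depending only on $\inf_{\Va_{\epsilon_2}}H$ and $\nu(\Va_{\epsilon_2})$; on the other hand the already-established upper bound gives $\lambda_{s,s+n\tau}\le C'(\iota^{-}_{\epsilon_2}\nu(\Va_{\epsilon_2}))^{n}$ (via $\vertiii{\overline{Q}_{s,t}}_V<\infty$ and $\overline{\eta}_s(V)\le\kappa_V(\overline{\eta}_0)$). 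Taking the ratio shows $\inf_{s,n}\inf_{x\in\Va_{\epsilon_2}}H_{s,s+n\tau}(x)\ge C_{H}/C'>0$, and (\ref{def-V-epsilon-2}) embeds every sub-level set $\{V\le r\}$ into a suitable $\Va_{\epsilon_2}$ (by choosing $\epsilon_2$ small), giving $\varsigma_r(H)>0$ for all $r\ge r_H$. The uniformity in continuous time then follows as in Remark~\ref{rmk-discrete-2-continuous} from the bounds $\pi_\tau^{-}(K_{r,\overline r})$ and $\pi_\tau(V)$.
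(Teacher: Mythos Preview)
Your argument for the first inequality in (\ref{ref-max-H}) is fine and matches the paper exactly.

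The real difficulty is the second inequality, and here your proposal has a genuine gap. From (\ref{ref-P-h-nh}) you correctly derive
\[
\vertiii{\overline{Q}_{s,t}}_V \;\le\; (\epsilon/c_1)^{n} \;+\; c_\epsilon\,\Vert H_{s,t}\Vert_V,
\]
but since $H_{s,t}=\overline{Q}_{s,t}(H)$ you then face $\Vert H_{s,t}\Vert_V\le \Vert H\Vert_V\,\vertiii{\overline{Q}_{s,t}}_V$, which is circular. You acknowledge this and appeal to a ``dyadic closure'', but no such argument is sketched, and it is not clear how one would work: the constant $c_\epsilon$ from the previous lemma \emph{blows up} as $\epsilon\to 0$ (it contains $\sup_{\Va_\epsilon}(V\Theta_\tau)/\iota_\epsilon^{-}$), so you cannot absorb $c_\epsilon\Vert H\Vert_V$ into the left-hand side, and sub-multiplicativity of $\vertiii{\cdot}_V$ along a dyadic partition only gives geometric growth, not a uniform bound. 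The paper avoids this circularity entirely by a combinatorial argument: decompose
\[
\overline{Q}_{s_0,s_n}=\sum_{i\in\{0,1\}^n}1_{\Va_\epsilon(i_1)}\overline{Q}_{s_0,s_1}\cdots 1_{\Va_\epsilon(i_n)}\overline{Q}_{s_{n-1},s_n},
\]
find the first index $l(i)$ where $(i_k,i_{k+1})=(1,1)$, and use the two-sided density bound on $\Va_\epsilon^2$ to replace the tail by $\kappa_V(\nu_\epsilon)$. The remaining prefix has no consecutive $1$'s, so at least $(k-1)/2$ of its factors sit outside $\Va_\epsilon$ and contribute a factor $\epsilon/\lambda^-$ each; choosing $\epsilon$ small enough makes the full sum over $\{0,1\}^n$ converge uniformly in $n$.

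Your lower-bound step has the same issue. The iterated minorisation $Q_{s,s+n\tau}(H)(x)\ge C_H(\iota^-_{\epsilon_2}\nu(\Va_{\epsilon_2}))^n$ on $\Va_{\epsilon_2}$ is correct, but the matching upper bound $\lambda_{s,s+n\tau}\le C'(\iota^-_{\epsilon_2}\nu(\Va_{\epsilon_2}))^n$ does \emph{not} follow from $\vertiii{\overline{Q}_{s,t}}_V<\infty$ as you claim: that only says $Q_{s,t}(V)\le C\,\lambda_{s,t}V$, which is a statement about $Q_{s,t}(V)/\lambda_{s,t}$, not about $\lambda_{s,t}$ itself. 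The paper obtains the required exponential upper bound on $\lambda_{s,t}$ by applying the same $\{0,1\}^n$ decomposition to $\overline{\eta}_{s_0}Q_{s_0,s_n}(1)$ and bounding each summand against $Q_{s_0,s_n}(H)(x)$ term by term.
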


\begin{proof}
We start by proving the estimates given in~\eqref{ref-max-H}. We use the same notation as in the proof of Lemma~\ref{lem-proof-P-H-t-hom} and we set $\Va_{\epsilon}:=\{\Theta_{\tau}\geq \epsilon\}$ . For any $n\geq 1$ such that $\epsilon_{n}^-:=\epsilon_{n}/\lambda^-<1$ 
$$
\begin{array}{l}
\displaystyle
\vertiii{1_{\Va_{\epsilon_{n}}^c}~\overline{Q}_{s,s+\tau}}_V\\
\\
\displaystyle=\vertiii{\overline{Q}_{s,s+\tau}-1_{\Va_{\epsilon_{n}}}~\overline{Q}_{s,s+\tau}}_V=\frac{1}{\lambda_{s,s+\tau}}~\Vert ({Q_{s,s+\tau}(V)}/{V})~1_{\Va^c_{\epsilon_{n}}}\Vert< \epsilon_{n}^-.
\end{array}$$
This yields the left-hand side estimate in (\ref{ref-max-H}).

For the right-hand side, for any $i\in \{0,1\}^n$ and  $1\leq k\leq n$, we set
$$
l(i):=\inf\left\{1\leq k< n:(i_k,i_{k+1})=(1,1)\right\}
$$
and
$$
\{0,1\}^n_k=\{i\in \{0,1\}^n:l(i)=k\}
$$ 
 with the convention that
 $$
  \{0,1\}^n_n=\{i\in \{0,1\}^n~:~\forall 1\leq k< n~:~(i_k,i_{k+1})\not=(1,1)\}.
$$
Further, set
$$
\Va_{\epsilon}(0)=\Va_{\epsilon}^c\quad\mbox{\rm and}\quad\Va_{\epsilon}(1)=\Va_{\epsilon}.
$$
Then we may decompose $\overline{Q}_{s_0,s_n}$ as follows:
\begin{align*}
\overline{Q}_{s_0,s_n}
&=\sum_{i\in \{0,1\}^n}1_{\Va_{\epsilon}(i_1)}\overline{Q}_{s_0,s_1}\ldots 1_{\Va_{\epsilon}(i_n)}\overline{Q}_{s_0,s_n}\\
&= \sum_{1\leq k\leq n}\sum_{i\in \{0,1\}^n_k}1_{\Va_{\epsilon}(i_1)}\overline{Q}_{s_0,s_1}\ldots 1_{\Va_{\epsilon}(i_n)}\overline{Q}_{s_{n-1},s_n}.
\end{align*}

Our aim is to obtain suitable bounds for the summands in the above decomposition. To this end, set for any $0<\epsilon\leq \epsilon_n$
$$
\nu_{\epsilon}(dx):=\frac{\nu(dx)1_{\Va_{\epsilon}}(x)}{\nu(\Va_{\epsilon})}.
$$
Using this and the notation introduced in~\eqref{def-nu-min}, we have
$$
\begin{array}{l}
\displaystyle 1_{\Va_{\epsilon}}(x)~\overline{Q}_{s,s+\tau}\left(1_{\Va_{\epsilon}}~\overline{Q}_{s+\tau,t}(V)\right)(x)\\
\\
\displaystyle\leq \iota_{\epsilon}~\frac{\nu\left(1_{\Va_{\epsilon}}~Q_{s+\tau,t}(V)\right)}{\overline{\eta}_s\left(Q_{s,s+\tau}Q_{s+\tau,t}(1)\right)}\leq \iota_{\epsilon}~\frac{\nu\left(1_{\Va_{\epsilon}}~Q_{s+\tau,t}(V)\right)}{\overline{\eta}_s\left(1_{\Va_{\epsilon}}~Q_{s,s+\tau}1_{\Va_{\epsilon}}~ Q_{s+\tau,t}(1)\right)}\\
\\
\displaystyle\leq \frac{\iota_{\epsilon}}{\overline{\eta}_s(\Va_{\epsilon})}~\frac{\nu\left(1_{\Va_{\epsilon}}~Q_{s+\tau,t}(V)\right)}{\nu\left(1_{\Va_{\epsilon}}~Q_{s+\tau,t}(1)\right)}=\frac{\iota_{\epsilon}/\iota_{\epsilon}^-}{\overline{\eta}_s(\Va_{\epsilon})}~\Phi_{s+\tau,t}\left(\nu_{\epsilon}\right)(V)\leq \kappa_V(\nu_{\epsilon})~\frac{\iota_{\epsilon}/\iota_{\epsilon}^-}{\overline{\eta}_s(\Va_{\epsilon})}.
\end{array}
$$
This yields the uniform estimate
$$
 1_{\Va_{\epsilon_{n}}}(x)\overline{Q}_{s,s+\tau}\left(1_{\Va_{\epsilon_{n}}}\overline{Q}_{s+\tau,t}(V)\right)(x)
 \leq \iota^{\prime}_{\epsilon_n}:=\left(\frac{\iota_{\epsilon_{n}}}{\iota_{\epsilon_n}^-}\right)\kappa_V(\nu_{\epsilon_{n}})\left(1+ \frac{\overline{\eta}(V)}{n}\right).
$$

Next fix $1 \le k < n$.
Then one has
\begin{align}
\vertiii{1_{\Va_{\epsilon}(i_1)}\overline{Q}_{s_0,s_1}\ldots 1_{\Va_{\epsilon}(i_{n})}\overline{Q}_{s_{n-1},s_n}}_V
&\leq  \iota^{\prime}_{\epsilon_{n}}\vertiii{1_{\Va_{\epsilon}(i_1)}\overline{Q}_{s_0,s_1}\ldots 1_{\Va_{\epsilon}(i_{k})}\overline{Q}_{s_{k-1},s_k}}_V\notag\\
&\leq\iota^{\prime}_{\epsilon_{n}}\prod_{1\leq m \le k}\vertiii{1_{\Va_{\epsilon}(i_m)}\overline{Q}_{s_{m-1},s_m}}_V\\
&\leq ~\iota^{\prime}_{\epsilon_{n}}\prod_{1\leq m \le k}\Vert\Theta_{\tau}\Vert^{1_{i_m=1}}\left(\epsilon_{n}^-\right)^{1_{i_m=0}}\notag\\
&= \iota^{\prime}_{\epsilon_{n}}\Vert\Theta_{\tau}\Vert^{\sum_{m=1}^{k}1_{i_m=1}}\left(\epsilon_{n}^-\right)^{\sum_{m=1}^k1_{i_m=0}}.
\label{eq-k}
\end{align}

Similarly, for the case $k = n$, we have
\begin{equation}
\vertiii{1_{\Va_{\epsilon}(i_1)}\overline{Q}_{s_0,s_1}\ldots 1_{\Va_{\epsilon}(i_{n})}\overline{Q}_{s_{n-1},s_n}}_V
\leq  \Vert\Theta_{\tau}\Vert^{\sum_{m=1}^{n}1_{i_m=1}}~\left(\epsilon_{n}^-\right)^{\sum_{m=1}^n1_{i_m=0}}.
\label{eq-n}
\end{equation}
Now note that for any $i\in \{0,1\}^n$ we have
$$
\sum_{1\leq k\leq n}1_{i_k=1}\leq \frac{n+1}{2}+\frac{1}{2}\sum_{1\leq k<n}1_{(i_k,i_{k+1})=(1,1)}.
$$
This follows from the fact that
\begin{eqnarray*}
n-1&\geq &\sum_{1\leq k<n}\left(1_{(i_k,i_{k+1})=(1,0)}+1_{(i_k,i_{k+1})=(0,1)}\right)\\
&=&\left(2\sum_{1\leq k\leq n}1_{i_k=1}-(1_{i_n=1}+1_{i_1=1})\right)-\sum_{1\leq k<n}1_{(i_k,i_{k+1})=(1,1)}\\
&\geq&2\left(\sum_{1\leq k\leq n}1_{i_k=1}-1\right)-\sum_{1\leq k<n}1_{(i_k,i_{k+1})=(1,1)}.
\end{eqnarray*}
Equivalently, we have
$$
\sum_{1\leq k\leq n}1_{i_k=0}\geq \frac{n-1}{2}-\frac{1}{2}\sum_{1\leq k<n}1_{(i_k,i_{k+1})=(1,1)}.
$$
Further observe that for any  $i\in \{0,1\}^n_k$ with $1\le k \le n$ we have
$$
\sum_{m=1}^k
1_{(i_m,i_{m+1})=(1,1)}=0
\Longrightarrow
\sum_{m=1}^k 1_{i_m=1}\leq \frac{k + 1}{2}~~
\mbox{\rm and}~~
\sum_{m=1}^k 1_{i_m=0}\geq \frac{k-1}{2}.
$$
From~\eqref{eq-k} and~\eqref{eq-n}, respectively, we conclude that for any  $i\in \{0,1\}^n_k$ with $1\le k<n$ we have
$$
\begin{array}{l}
\displaystyle
\vertiii{1_{\Va_{\epsilon}(i_1)}\overline{Q}_{s_0,s_1}\ldots 1_{\Va_{\epsilon}(i_{n})}\overline{Q}_{s_{n-1},s_n}}_V\leq \iota^{\prime}_{\epsilon_{n}}~(1\vee\Vert\Theta_{\tau}\Vert)^{(k + 1)/2}~\left(\epsilon_{n}^-\right)^{(k-1)/2}.
\end{array}$$
and for $k = n$, we have
\begin{equation*}
\vertiii{1_{\Va_{\epsilon}(i_1)}\overline{Q}_{s_0,s_1}\ldots 1_{\Va_{\epsilon}(i_{n})}\overline{Q}_{s_{n-1},s_n}}_V
\leq (1\vee\Vert\Theta_{\tau}\Vert)^{(n+1)/2}~\left(\epsilon_{n}^-\right)^{(n-1)/2}.
\end{equation*}
We end the proof of the right-hand side estimate in (\ref{ref-max-H}) by choosing $n\geq 1$ such that $$
\epsilon_{n}^-<1\wedge (1/\Vert\Theta_{\tau}\Vert).
$$
\textcolor{black}{
Now for the estimates in~\eqref{ref-H0-2}, observe that
$$
H\leq V\Longrightarrow
\sup_{s\in \Ta}
\sup_{t\in [s,\infty[_{\tau}}\Vert \overline{Q}_{s,t}(H)\Vert_V\leq \sup_{s\in \Ta}
\sup_{t\in [s,\infty[_{\tau}}\vertiii{\overline{Q}_{s,t}}_V<\infty.
$$
By remark~\ref{rmk-discrete-2-continuous}, this yields right-hand side estimate in (\ref{ref-H0-2})}.

The proof of the left-hand side estimate in (\ref{ref-H0-2}) follows the same lines of arguments as those given for~\eqref{ref-max-H}, thus it is only sketched. Using the same notation as above, we have
$$
\overline{\eta}_{s_0}Q_{s_0,s_n}(1)=\sum_{1\leq k\leq n}\sum_{i\in \{0,1\}^n_k}\overline{\eta}_{s_0}1_{\Va_{\epsilon}(i_1)}Q_{s_0,s_1}\ldots 1_{\Va_{\epsilon}(i_n)}Q_{s_{n-1},s_n}(1).
$$
For any
$
i \in \{0,1\}^n_k$ for some $1\le k<n
$, we also have
$$
 \begin{array}{l}
\displaystyle\overline{\eta}_{s_0}1_{\Va_{\epsilon}(i_1)}Q_{s_0,s_1}\ldots 1_{\Va_{\epsilon}(i_n)}Q_{s_{n-1},s_n}(1)\\
\\
\displaystyle\leq \iota_{\epsilon}~\overline{\eta}_{s_0}(V1_{\Va_{\epsilon}(i_1)}Q^V_{s_0,s_1}\ldots 1_{\Va_{\epsilon}(i_k)}Q^V_{s_{k-1},s_k}(1_{\Va_{\epsilon}}))~\nu(1_{\Va_{\epsilon}}Q_{s_{k+1},s_n}(1))\\
\\
\displaystyle\leq \iota_{\epsilon}~ \overline{\eta}(V)~(1\vee\Vert\Theta_{\tau}\Vert)^{(k + 1)/2}~\epsilon^{(k-1)/2}~\nu(1_{\Va_{\epsilon}}Q_{s_{k+1},s_n}(1)),
 \end{array}
 $$
 with $Q^V_{s,t}(f):=Q_{s,t}(f V)/V.$
For any $\epsilon\leq \epsilon_1$ we have
$$
 \begin{array}{l}
\displaystyle
1_{\Va_{\epsilon}}(x) Q_{s_0,s_n}(H)(x)\\
\\
\displaystyle\geq1_{\Va_{\epsilon}}(x)~1_{\Va_{\epsilon_1}}(x) \left(Q_{s_0,s_1}1_{\Va_{\epsilon_1}} Q_{s_1,s_n}(H)\right)(x)\geq \iota^-_{\epsilon}~1_{\Va_{\epsilon}}(x)~\nu\left(1_{\Va_{\epsilon_1}}Q_{s_1,s_n}(H)\right).
 \end{array}$$
In addition, we have
$$
 \begin{array}{l}
\displaystyle\nu\left(1_{\Va_{\epsilon_1}}Q_{s_1,s_n}(H)\right)
\\
\\
\geq \displaystyle\nu \left(1_{\Va_{\epsilon_1}}Q_{s_1,s_2}1_{\Va_{\epsilon}}\ldots Q_{s_{k-1},s_{k}}1_{\Va_{\epsilon}}Q_{s_{k},s_{k+1}}1_{\Va_{\epsilon}}Q_{s_{k+1},s_n}(H)\right)\\
\\
\displaystyle\geq \iota_{\epsilon}^-~ \nu \left(1_{\Va_{\epsilon_1}}Q_{s_1,s_2}1_{\Va_{\epsilon}}\ldots Q_{s_{k-1},s_{k}}1_{\Va_{\epsilon}}\right)
\nu(1_{\Va_{\epsilon}}Q_{s_{k+1},s_n}(H)).
 \end{array}$$
This yields the estimate
$$
1_{\Va_{\epsilon}}(x) ~Q_{s_0,s_n}(H)\geq 1_{\Va_{\epsilon}}(x) ~(\iota_{\epsilon}^-)^2~
(\iota_{\epsilon_1}^-\nu\left(\Va_{\epsilon_1}\right))^{k}~\nu(1_{\Va_{\epsilon}}Q_{s_{k+1},s_n}(H)).
$$
from which we check that
$$
 \begin{array}{l}
\displaystyle1_{\Va_{\epsilon}}(x) ~\left(\overline{\eta}_{s_0}1_{\Va_{\epsilon}(i_1)}Q_{s_0,s_1}\ldots 1_{\Va_{\epsilon}(i_n)}Q_{s_{n-1},s_n}(1)\right)/Q_{s_0,s_n}(H)(x)\\
\\
\displaystyle\leq  ({\iota_{\epsilon}}/{\epsilon})~ ~( \overline{\eta}(V)/\kappa^-_H(\nu_{\epsilon}))~\left(\epsilon~{(1\vee\Vert\Theta_{\tau}\Vert)}/{(\iota^-_{\epsilon_1}\nu\left(\Va_{\epsilon_1}\right))^2}\right)^{k/2}.~ \end{array}$$
The end of the proof of the left-hand side~assertion in (\ref{ref-H0-2}) now follows word-for-word the same lines of arguments as the proof of the right-hand side estimate in (\ref{ref-H0-2}), thus it is skipped. The proof of the lemma is now completed.
\end{proof}

\subsection{Proof of Lemma~\ref{lem-BG-V}}\label{app:lem-BG-V}

\begin{proof}
Observe that for any $f$ such that $\Vert f\Vert_{V}\leq 1$, we have
$$
(\Psi_{1/h}(\mu_1)-\Psi_{1/h}(\mu_2))(f)=\frac{1}{\mu_1(1/h)}~(\mu_1-\mu_2)(g)
$$
with the function
$$
g:=(1/h)(f-\Psi_{1/h}(\mu_2)(f))\in  \Ba_{V/h}(E).
$$
Note that
$$
\frac{g}{V/h}=\frac{f-\Psi_{1/h}(\mu_2)(f)}{V}=\frac{f}{V}-\frac{\Psi_{1/h}(\mu_2)(V{f}/{V})}{V}\Longrightarrow
\Vert g\Vert_{V/h}\leq 1+\frac{\mu_2(V/h)}{\mu_2(1/h)}.
$$
This ends the proof of the first assertion.
Now, for any $\Vert f\Vert_{V/h}\leq 1$  we have
$$
(\Psi_{h}(\mu_1)-\Psi_{h}(\mu_2))(f)=\frac{1}{\mu_1(h)}~(\mu_1-\mu_2)(g)
$$
with the function
$$
\begin{array}{l}
\displaystyle g:=h(f-\Psi_{h}(\mu_2)(f))\\
\\
\displaystyle\Longrightarrow 
\frac{\vert g\vert }{V}\leq 1+
\frac{h}{V}~\frac{\mu_2(V)}{\mu_2(h)}\leq 1+\frac{\mu_2(V)}{\mu_2(h)}\Longrightarrow\Vert g\Vert_V\leq 1+\frac{\mu_2(V)}{\mu_2(h)}.
\end{array}
$$
This ends the proof of the lemma.
\end{proof}

\subsection{Proof of Lemma~\ref{lem-beta-12}}\label{app:lem-beta-12}

\begin{proof}
We set $(s,t)=(1,2)$ and
$P=P_{1,2}$, and
 $V^{\rho}_i:=1/2+\rho V_i$, with $i\in \{1,2\}$ and $\rho \in ]0,1[$. We also consider the function
 $\Delta_{\rho}$ on $E_1^2$ defined for any $(x,y)\in E_1^2$ by
\begin{eqnarray*}
\Delta_{\rho}(x,y)&:=&\frac{\Vert \delta_xP-\delta_yP\Vert_{V^{\rho}_{2}}}{\Vert \delta_x-\delta_y\Vert_{V^{\rho}_{1}}}\\
&\leq& \frac{\Vert \delta_xP-\delta_yP\Vert_{\tiny tv}}{1+\rho(V_1(x)+V_1(y))}
+\frac{\rho (P(V_2)(x)+P(V_2)(y))}{1+\rho(V_1(x)+V_1(y))}.
\end{eqnarray*}
Using the left-hand side estimate in (\ref{PW-12}), we have
\begin{eqnarray*}
{P(V_2)(x)+P(V_2)(y)} & \leq & \epsilon(V_1(x)+V_1(y))+{2} \\
& = & (V_1(x)+V_1(y))\left(\epsilon + \frac{2}{(V_1(x)+V_1(y))} \right).
\end{eqnarray*}
When $V_1(x)+V_1(y)\geq r {>r_0\vee r_{\epsilon}}$ this yields the estimate
\begin{eqnarray*}
\Delta_{\rho}(x,y)
&\leq& \frac{1}{1+\rho(V_1(x)+V_1(y))}
+\frac{\rho (V_1(x)+V_1(y))}{1+\rho(V_1(x)+V_1(y))}~\left(\epsilon+\frac{2}{r}\right)\\
&=& 1-\left(1-\frac{1}{1+\rho((V_1(x)+V_1(y))}\right)\left(1-\left(\epsilon+\frac{2}{r}\right)\right)\leq 1-d^1_{\rho},
\end{eqnarray*}
with
\[
 {d^1_{\rho}}:=\left(1-\frac{1}{1+ {\rho r}}\right)\left(1-\left(\epsilon+\frac{2}{r}\right)\right).
\]
Recalling that $V_i\geq 1$ we have
$$
P(V_2)/V_1\leq 1+\epsilon.
$$
This implies that for any $(x,y)$ such that $ V_1(x)+V_1(y)\leq r$ we have
$$
\frac{\rho V_1(x) (P(V_2)(x)/V_1(x))+\rho V_1(y)~(P(V_2)(y)/V_1(y))}{1+\rho(V_1(x)+V_1(y))}\leq(1+\epsilon)~\frac{\rho r}{1+2\rho},
$$
 This yields the estimate
\begin{eqnarray*}
\Delta_{\rho}(x,y)
&\leq&1-d^2_{\rho}:= \frac{1-\alpha(r)}{1+2\rho} +\frac{\rho r}{1+2\rho}(1+\epsilon).
\end{eqnarray*}
 {Choosing $$
\rho=\rho(r):=\frac{1}{1+\epsilon}~\frac{\alpha(r)}{2r}$$  we have
$$
d^1_{\rho(r)}=\frac{\alpha(r)/2}{(1+\epsilon)/2+\alpha(r)/4}\left(\frac{1-\epsilon}{2}-\frac{1}{r}\right)\geq \frac{2}{5}~(1-\epsilon)~\left(1-\frac{r_{\epsilon}}{r}\right)~\frac{\alpha(r)}{2},
$$
and
$$
1-d^2_{\rho(r)}=\frac{1-\alpha(r)+\rho r(1+\epsilon)}{1+2\rho}=\frac{1-\alpha(r)/2}{1+2\rho}\leq 1-\frac{\alpha(r)}{2}.
$$
This yields the estimate
$$
(1-d^1_{\rho(r)})\vee (1-d^2_{\rho(r)})\leq  1-(1-\epsilon)~\left(1-\frac{r_{\epsilon}}{r}\right)~\frac{\alpha(r)}{5}.
$$}
This ends the proof of the lemma.
\end{proof}

\subsubsection{Proof of lemma~\ref{lem-eventually-compact}}\label{lem-eventually-compact-proof}
 \begin{proof}
 For any  $t\in \Ta$, we set $R_t^{K}(f):=1_{K}Q_t(f)$ so that
$$
Q_t=R_t^{K}+R_t^{K^c}
\quad
\mbox{\rm and}\quad
Q_{t+\tau}-Q^{K}_{\tau}Q_{t}=Q_{\tau}R^{K^c}_t+R^{K^c}_{\tau}R_t^{K}.
$$
This decomposition implies that
\begin{eqnarray*}
\vertiii{Q_{t+\tau}-Q^{K}_{\tau}Q_{t}}_V & \leq & \left(\vertiii{Q_{\tau}}_V+\vertiii{R^{K}_t}_V\right)~\vertiii{R^{K^c}_{\tau}}_V \\
& \leq & \left(\vertiii{Q_{\tau}}_V+\vertiii{Q_t}_V\right)~~\vertiii{R^{K^c}_{\tau}}_V.
\end{eqnarray*}
Also note that
$$
\vertiii{R^{K^c}_{\tau}}_V=\Vert 1_{K^c}~{Q_{\tau}(V)}/{V}\Vert.
$$
Thus, choosing a sequence of compact sets $K_n$ such that  
$$
1_{K_n^c}~Q_{\tau}(V)\leq  \frac{1}{n}~ V
$$
implies that
$$
\vertiii{Q_{t+\tau}-Q^{K_n}_{\tau}Q_{t}}_V
\leq \frac{1}{n}~ \left(\vertiii{Q_{\tau}}_V+\vertiii{Q_t}_V\right).
$$
Since the product operator $Q^{K_n}_{\tau}Q_{t}$ is compact, $Q_{t+\tau}$ is the limit in norm of compact operators, hence it is compact. This ends the proof of the lemma.
\end{proof}

\subsubsection{Proof of the compactness of (\ref{ref-continuous-Q}) }\label{ref-continuous-Q-proof}

For any collection of functions $(f_n)_{n\geq 0}\in\Ba_V(E)^{\NN}$ in the unit ball $\{f \in \Ba_V(E) : \Vert f\Vert_V\leq 1\}$  we have
$$
\sup_{n\geq 0}\Vert Q^K_{\tau}(f_n)\Vert_V\leq \Vert Q^K_{\tau}(V)/V\Vert.
$$
In addition, for any $(x,y)\in K^2$ we have
$$
\vert Q^K_{\tau}(f_n)(x)-Q^K_{\tau}(f_n)(y)\vert\leq \int~\vert q_{\tau}(x,z)-q_{\tau}(y,z)\vert~1_K(z)\nu_\tau(dz).
$$
Since $q_{\tau}$ is continuous on the compact set $(K\times K)$, it is uniformly continuous. Thus for any $\epsilon>0$, there is some $\delta>0$ such that $d(x,y)\leq \delta$ implies that for any $n\geq 0$ we have 
 $$
 \vert Q^K_{\tau}(f_n)(x)-Q^K_{\tau}(f_n)(y)\vert\leq\epsilon~\nu_\tau(K).
 $$
By the Arzela-Ascoli theorem, $Q^K_{\tau}(f_n)$ converges uniformly to a continuous limit extended by  $0$ outside the set $K$. Recalling that $V\geq 1$ is also converges on $\Ba_V(E)$. 
 $Q^K_{\tau}$ is an irreducible and compact operator on  $\Ba_V(E)$. This ends the proof of compactness of (\ref{ref-continuous-Q}).

\subsubsection*{Proof of (\ref{compact-countable-V})}\label{compact-countable-V-proof}
For any $\epsilon>0$ the set 
$
K_{\epsilon}:=\{Q_{\tau}(V)/V\geq \epsilon\}
$
is finite, and following the proof of the above lemma, we have
$$
\vertiii{Q_{t+\tau}-Q_{\tau}^{K_{\epsilon}}Q_{t}}_V\leq \epsilon~ \left(\vertiii{Q_{\tau}}_V+\vertiii{Q_t}_V\right).
$$
Since for any finite set $K$, the operator
$$
Q^{K}_{\tau}(f)(x)=1_{K}(x)~\sum_{y\in K}Q_{\tau}(x,y)~f(y)
$$
is bounded and has finite range, it is compact and we thus conclude that
$Q_{t}$ is compact for any $t\geq \tau$ as soon as $Q_{\tau}(V)/V\in \Ba_0(E)$. 

\textcolor{black}{The set $E$ can be written as the union $E=\cup_{n\geq 0}K_n$ of an increasing sequence of compact sets $K_n$.}
Whenever $Q_{t}$ is compact on $\Ba_V(E)$, up to a subsequence extraction the functions $Q_t(V1_{K^c_n})\in \Ba_V(E)$ forms a Cauchy sequence. Thus, for any $\epsilon>0$ there exists some $n_{\epsilon}\geq 1$ such that
for any $n_{\epsilon}\leq m\leq n$ we have
$$
\Vert Q_t(V 1_{K^c_n})-Q_t(V1_{K^c_m})\Vert_V=\Vert Q_{t}(V1_{K_n-K_m})\Vert_V\leq \epsilon
$$
and therefore
$$
\Vert Q_t(V 1_{K_m^c})/V\Vert\leq \epsilon.
$$
Now, assume that for any $\epsilon>0$ there exists some $n_{\epsilon}\geq 1$ such that
$$
\forall n\geq n_{\epsilon}\qquad\Vert Q_t(V 1_{K_n^c})/V\Vert\leq \epsilon.
$$
For any 
$\Vert f\Vert_V\leq 1$, we have
$$
\Vert Q_t(f)-Q_t(1_{K_n}f)\Vert_V\leq \Vert Q_t(V 1_{K_n^c})/V\Vert\leq \epsilon.
$$
Arguing as above, using the fact that $f\in \Ba_V(E)\mapsto 1_{K_n}f\in \Ba_V(E)$ is a finite range compact operator we conclude that $Q_t$ is compact on $\Ba_V(E)$. 
This ends the proof of (\ref{compact-countable-V}).

\subsubsection*{Acknowledgements}

Ajay Jasra was supported by KAUST baseline funding and Pierre Del Moral was partially supported by the ANR project QuAMProcs: ANR-19-CE40-0010.
We thank two referees and the editors for their comments, which have greatly improved the paper.


\begin{thebibliography}{99}

\bibitem{aizenman}
M. Aizenman and B. Simon. Brownian motion and Harnack's inequality for Schr\"odinger operators. Comm. Pure. Appl. Math, vol. 35, pp. 209--271 (1982).

\bibitem{aldous}
D. Aldous and U. Vazirani. Go with the winners algorithms; in Proc. 35th IEEE Sympos. on Foundations of Computer Science (1994)

\bibitem{aliabad}
A.R. Aliabad, F. Azarpanah, M. Namdari. Rings of continuous functions vanishing at infinity. Commentationes Mathematicae Universitatis Carolinae, vol. 45, no. 3, pp. 519-533 (2004). 

\bibitem{arlotto}
A. Arlotto,  J.M. Steele.  A central limit theorem for temporally nonhomogenous markov chains with applications to dynamic programming. Mathematics of Operations Research, vol. 41, no. 4, pp. 1448--1468 (2016).

\bibitem{arnaudon-dm}
M. Arnaudon, P. Del Moral. A duality formula and a particle Gibbs sampler for continuous time Feynman-Kac measures on path spaces. Electronic Journal of Probability, vol. 25, pp. 1--54 (2020).	


\bibitem{examples-review-paper}
M. Arnaudon, P. Del Moral, E. Horton, E.M. Ouhabaz. Lyapunov functions for positive semigroups (working article). 	

\bibitem{asselah}
A. Asselah, P.A. Ferrari, P. Groisman, P. Quasistationary distributions and Fleming-Viot processes in finite spaces". Journal of Applied Probability, vol. 48, no.  2,  pp. 322--332
(2011).


\bibitem{asselah-2}
A. Asselah, P.A. Ferrari, P. Groisman,  M. Jonckheere, Fleming-Viot selects the minimal quasi-stationary distribution: the Galton-Watson case, Ann. Inst. Henri Poincar\'e, Probab. Stat., vol. 52,  no. 2, pp.  647--668 (2016).

\bibitem{aronson}
D. G. Aronson. Bounds for the fundamental solution of a parabolic equation. Bull. Amer. Math. Soc. 73, pp. 890--896  (1967).

\bibitem{assaraf}
R. Assaraf, M. Caffarel, and A. Khelif. Diffusion Monte Carlo methods with a fixed number of walkers. Physical Review E,  vol. 61, no. 4-4566,  (2000).


\bibitem{atar}
R. Atar, O. Zeitouni. Exponential stability for nonlinear filtering. Annales de l'Institut Henri Poincare (B) Probability and Statistics. Vol. 33. No. 6 (1997).

\bibitem{atkinson}
K. E.  Atkinson. A Survey of Numerical Methods for the Solution of Fredholm Integral Equations of the Second Kind. SIAM, Philadelphia (1976).

\bibitem{baudel-guyader-lelievre}
M. Baudel, A. Guyader, T. Leli\`evre. On the Hill relation and the mean reaction time for metastable processes. \href{https://arxiv.org/pdf/2008.09790.pdf}{Arxiv no:2008.09790}, August (2020).

\bibitem{bansaye}
V. Bansaye, B.  Cloez, P.  Gabriel. Ergodic behavior of non-conservative semigroups via generalized Doeblin's conditions. Acta Applicandae Mathematicae, vol. 166, no. 1, pp. 29--72  (2020).

\bibitem{bansaye-2}
V. Bansaye, B. Cloez, P. Gabriel, A. Marguet. A non-conservative Harris ergodic theorem. arXiv preprint arXiv:1903.03946 (2019). 

\bibitem{bansaye-3}
V. Bansaye. Ancestral lineages and limit theorems for branching Markov chains in varying environment. Journal of Theoretical Probability, vol. 32, no. 1, pp.  249--281 (2019).

\bibitem{benaim-panlou-cloez}
M. Benaim, B. Cloez, F. Panloup. Stochastic approximation of quasi-stationary distributions on compact spaces and applications. The Annals of Applied Probability, vol. 28, no. 4, pp. 2370--2416 (2018).

\bibitem{benaim-champagnat-ocafrain}
M. Benaim, N. Champagnat, W. O\c cafrain, D. Villemonais. Quasi-compactness criterion for strong Feller kernels with an application to quasi-stationary distributions. Arxiv-hal-03640205 (2022).

\bibitem{berard}
J. B\'erard, P. Del Moral, A. Doucet. A lognormal central limit theorem for particle approximations of normalizing constants. Electronic Journal of Probability, vol. 19, pp. 1--28 (2014).


\bibitem{bertoin}
J. Bertoin, Jean, A. R. Watson. The strong Malthusian behavior of growth-fragmentation processes.
Annales Henri Lebesgue, vol. 3, pp. 795--823 (2020).


\bibitem{birkoff}
G. Birkhoff. Extensions of Jentzsch's theorem. Transactions of the American Mathematical Society, vol. 85, no..1, pp. 219--227 (1957).	

\bibitem{bishop-19}
A.N. Bishop, P. Del Moral, K. Kamatani, B. Remillard.
On one-dimensional Riccati diffusions.   Annals of Applied Probability. 
volume 29, Number 2, pp. 1127--1187 (2019).

\bibitem{bishop-19-2}
A.N. Bishop, P. Del Moral. On the stability of matrix-valued Riccati diffusions. Electronic Journal of Probability, vol. 24, pp. 1--40  (2019).


\bibitem{blanchet}
J. Blanchet, P. Glynn, and S. Zheng. Analysis of a stochastic approximation algo- rithm for computing quasi-stationary distributions. Advances in Applied Probability, vol. 48, no. 3, pp. 792--811 (2016).

\bibitem{bogachev}
V.I. Bogachev.
Weak convergence of measures. Providence, RI: American Mathematical Society (2018).

\bibitem{bogdan}
K. Bogdan, K. Szczypkowski. Gaussian estimates for Schr\"odinger perturbations. arXiv preprint arXiv:1301.4627 (2013).


\bibitem{bougerol-1}
P. Bougerol. Comparaison des exposants de Lyapounov des processus markoviens 
multiplicatifs. Annales de l'Institut Henri Poincar\'e, Probabilit\'es et Statistiques, vol. 24, no. 4, pp. 439--489 (1988).

\bibitem{bougerol-2}
 P. Bougerol. Th\'eor\`emes limites pour les syst\`emes lin\'eaires \`a coefficients markoviens. Probability Theory and Related Fields, vol. 78, no. 2, pp. 193--221, (1988).
 
 \bibitem{brooks}
 S.P. Brooks and G.O. Roberts. Convergence assessment techniques for Markov chain Monte Carlo. Statistics and Computing, vol. 8, pp.  319--335 (1998).

\bibitem{bucur}
I. Bucur and G. Paltineanu. 
 De Branges Type Lemma and Approximation in Weighted Spaces
 Mediterr. J. Math., vol. 18, no. 120 (2021)


\bibitem{amarjit}
A. Budhiraja, D.  Ocone. Exponential stability of discrete-time filters for bounded observation noise. Systems \& Control Letters, vol. 30, no.4, pp. 185--193 (1997).

\bibitem{amarjit-2}
A. Budhiraja. Asymptotic stability, ergodicity and other asymptotic properties of the nonlinear filter. Annales de l'IHP Probabilit\'es et statistiques. Vol. 39. No. 6. (2003).

\bibitem{buonaura-98}
M.C. Buonaura, S. Sorella. Numerical study of the two-dimensional Heisenberg model using a Green function Monte Carlo technique with a fixed number of walkers. Physical Review B, vol. 57, no. 18, 11446  (1998).

\bibitem{burdzy-1}
K. Burdzy, R. Holyst, and P. March. A Fleming-Viot particle representation of the Dirichlet Laplacian. Comm. Math. Phys., vol. 214, no. 3, pp. 679--703 (2000).

\bibitem{burdzy-2}
K. Burdzy, R. Holyst, D. Ingerman, P. March.
Configurational transition in a Fleming-Viot-type model and probabilistic interpretation of laplacian eigenfunctions. J. Phys. A, vol. 29,no. 29, pp. 2633--2642 (1996).

\bibitem{caffarel-86}
M. Caffarel, P. Claverie. Treatment of the Schr\"odinger equation through a Monte Carlo method based upon the generalized Feynman-Kac formula. Journal of Statistical Physics, vol. 43, no. 5, pp. 797--801 (1986).


\bibitem{caffarel-88}
M. Caffarel, P. Claverie. Development of a pure diffusion quantum Monte Carlo method using a full generalized Feynman-Kac formula. I. Formalism \& II Applications to simple systems. The Journal of chemical physics, vol. 88, no. 2, pp. 1088--1099 and pp. 1100-1109 (1988). 	


\bibitem{caffarel-89}
M. Caffarel. Stochastic methods in quantum mechanics. In Numerical Determination of the Electronic Structure of Atoms, Diatomic and Polyatomic Molecules (pp. 85-105). Springer, Dordrecht (1989).


\bibitem{caffarel}
M. Caffarel, R. Assaraf.
A pedagogical introduction to quantum Monte Carlo.
In Mathematical models and methods for ab initio Quantum Chemistry in Lecture Notes in Chemistry, eds. M. Defranceschi and C.Le Bris, Springer p.45 (2000).

\bibitem{carmona}
 R. Carmona, W. Ch. Masters and B. Simon. Relativistic Schr\"odinger operators: Asymptotic behaviour of the eigenfunction. J. Funct. Anal, vol. 91, pp. 117--142 (1990).

\bibitem{castro}
M. M. Castro, J.S.W. Lamb, G.O. Mendez, M. Rasmussen.
Existence and uniqueness of quasi-stationary and quasi-ergodic measures for absorbing Markov processes. A Banach space approach. \texttt{\href{https://arxiv.org/pdf/2111.13791.pdf}{Arxiv-2111.13791}} November (2021).


\bibitem{cattiaux-92}
P. Cattiaux. Stochastic calculus and degenerate boundary value problems. Ann. Institut Fourier, vol. 42, no. 3, pp. 541--624 (1992).



\bibitem{cattiaux-1} 
P. Cattiaux, P. Collet, A. Lambert, S. Martinez, S. M\'el\'eard, and J. San Martin. Quasi-stationary distributions and diffusion models in population dynamics. Ann. Probab., vol. 37, no. 5, pp. 1926--1969 (2009).

\bibitem{cattiaux-2} 
P. Cattiaux and S. M\'el\'eard. Competitive or weak cooperative stochastic Lotka-Volterra systems conditioned to non-extinction. J. Math. Biology, vol. 60, no. 6, pp. 797--829 (2010).

\bibitem{cavender}
J. A. Cavender. Quasi-stationary distributions of birth-and-death processes. Adv. Appl. Probab., vol. 10, no. 3, pp. 570--586 (1978).


\bibitem{champagnat-coulibaly}
N. Champagnat, K.A. Coulibaly-Pasquier, D. Villemonais. Criteria for exponential convergence to quasi-stationary distributions and applications to multi-dimensional diffusions. In Séminaire de Probabilités XLIX (pp. 165--182). Springer, (2018).

\bibitem{champagnat}
N. Champagnat, D. Villemonais. Exponential convergence to quasi-stationary distribution and Q-process. Probability Theory and Related Fields, Springer Verlag, vol. 164, no. 1, pp. 243--283 (2016).

\bibitem{champagnat-2}
N. Champagnat, D. Villemonais. Population processes with unbounded extinction rate conditioned to non-extinction. ArXiv:1611.03010 (2016).

\bibitem{champagnat-3}
N. Champagnat, D. Villemonais. Uniform convergence to the $ Q $-process. Electronic Communications in Probability, vol. 22, pp. 1--7  (2017).

\bibitem{champagnat-4}
N. Champagnat, D. Villemonais.
Practical criteria for $ R $-positive recurrence of unbounded semigroups. Electronic Communications in Probability, vol.  25, pp. 1--11 (2020).

\bibitem{champagnat-5}
N. Champagnat, D. Villemonais.
General criteria for the study of quasi-stationarity. Arxiv:1712.08092 (2018).


\bibitem{champagnat-6}
N. Champagnat, D. Villemonais.
Lyapunov criteria for uniform convergence of conditional distributions of absorbed Markov processes. Stochastic Processes and their Applications, (\href{https://arxiv.org/pdf/1704.01928}{Arxiv}) vol. 135, pp 51-74  (2021).

\bibitem{champagnat-7}
N. Champagnat, D. Villemonais.
Uniform convergence of penalized time-inhomogeneous Markov processes. ESAIM: Probability and Statistics, vol. 22, pp 129--162 (2018).

\bibitem{champagnat-8}
N. Champagnat and D. Villemonais, Convergence of the Fleming-Viot process toward the minimal quasi-stationary distribution, ALEA, Lat. Am. J. Probab. Math. Stat., vol. 18, no. 1, pp. 1--15  (2021).

\bibitem{chazotte}
J.R. Chazottes,  P. Collet, S. M\'el\'eard. On time scales and quasi-stationary distributions for multitype birth-and-death processes. 
Annales de l'Institut Henri Poincar\'e, Probabilit\'es et Statistiques, vol. 55, no. 4 (2019).


\bibitem{chen}
J. Chen, L. Haitao, J. Siqi. Some limit theorems for absorbing Markov processes." Journal of Physics A: Mathematical and Theoretical, vol. 45, no. 34, 345003 (2012).


\bibitem{chigansky-1}
P. Chigansky, R. Liptser. Stability of nonlinear filters in nonmixing case. The Annals of Applied Probability, vol.  14, no. 4, pp. 2038--2056 (2004).	

\bibitem{chigansky-2}
P. Chigansky,  R. Liptser, and R. Van Handel. Intrinsic methods in filter stability. Handbook of Nonlinear Filtering (2009).

\bibitem{chigansky-3}
P. Chigansky. Stability of nonlinear filters: A survey. Lecture notes, Petropolis, Brazil (2006).	

\bibitem{clark}
J.M.C. Clark, D. L. Ocone, and C. Coumarbatch. Relative entropy and error bounds for filtering of Markov processes. Mathematics of Control, Signals and Systems , vol. 12, no. 4, pp. 346--360 (1999).

\bibitem{cloez-corujo}
B.  Cloez, J. Corujo. Uniform in time propagation of chaos for a Moran model. arXiv preprint arXiv:2107.10794 (2021).

\bibitem{cloez-2}
B. Cloez and M.-N. Thai, Quantitative results for the Fleming-Viot particle system and quasi-stationary distributions in discrete space, Stochastic Process. Appl., vol. 126, no. 3, pp. 680--702 (2016). 

\bibitem{cloez-3}
B. Cloez and M.-N. Thai, Fleming-Viot processes: two explicit examples, ALEA Lat. Am. J. Probab. Math. Stat., vol. 13, no. 1, pp. 337--356 (2016).


\bibitem{collet-martinez-1}
P. Collet, S. Martinez,  J. San Martin. Quasi-stationary distributions: Markov chains, diffusions and dynamical systems. Springer Science \& Business Media (2012).




\bibitem{collet-martinez}
P. Collet, S. Martinez, S. M\'el\'eard, J. San Martin. Quasi-stationary distributions for structured birth and death processes with mutations. Probability Theory and Related Fields, vol. 151, pp. 191--231 (2011).



\bibitem{crisan-heine}
D. Crisan, K. Heine. Stability of the discrete time filter in terms of the tails of noise distributions. Journal of the London Mathematical Society, vol. 78, no. 2, pp. 441--458 (2008).

\bibitem{daprato}
G. Da Prato, M. Fuhrman, P. Malliavin. Asymptotic ergodicity of the process of conditional law in some problem of nonlinear filtering. Journal of Functional Analysis, vol. 164, no. 2, pp. 356--377 (1999).

\bibitem{darroch-1}
J. N. Darroch, E. Seneta. On quasi-stationary distributions in absorbing discrete-time finite Markov chains. J. Appl. Probability, vol. 2, pp. 88--100, (1965).

\bibitem{darroch-2}
J.N.Darroch, E.Seneta. On quasi-stationary distributions in absorbing continuous-time finite Markov chains. J. Appl. Probab., vol.4, pp. 192--196 (1967).

\bibitem{dawson}
D. Dawson. Measure-valued Markov processes. \'Ecole d'\'et\'e de probabilit\'es de Saint-Flour XXI-1991. Springer, Berlin, Heidelberg,  pp. 1--260 (1993).

\bibitem{dm-villemonais}
P. Del Moral, D. Villemonais. Exponential mixing properties for time inhomogeneous diffusion processes with killing. 2014. hal-01083297v1. Bernoulli, vol.  24, no. 2, pp. 1010--1032 (2018). 

\bibitem{dm-96}
P. Del Moral. 
Nonlinear Filtering: Interacting Particle Solution.
Markov Processes and Related Fields, vol. 2, no. 4, pp. 555--580 (1996).

\bibitem{dm-04}
P. Del Moral. Feynman-Kac formulae. Springer, New York (2004).

\bibitem{dm-13}
P. Del Moral.
Mean field simulation for Monte Carlo integration. CRC press (2013).

\bibitem{dm-doucet-04}
P. Del Moral, A. Doucet. 
Particle Motions in Absorbing Medium with Hard and Soft Obstacles. Stochastic Analysis and Applications. 
vol. 22, no. 5, pp. 1175--1207  (2004).


\bibitem{dm-doucet-jasra-06}
P. Del Moral, A. Doucet, A. Jasra. Sequential monte carlo samplers. Journal of the Royal Statistical Society: Series B 
(Statistical Methodology), vol. 68, no. 3, pp. 411--436 (2006).

\bibitem{dm-2000}
P.~Del~Moral and L.~Miclo. Branching and interacting particle systems ap\-pro\-ximations of
  {F}eynman-{K}ac formulae with applications to non-linear filtering.
\newblock In {\em S\'eminaire de {P}robabilit\'es, {XXXIV}}, volume 1729, pp. 1--145 (2000).      


\bibitem{dm-2000-moran}
P.~Del~Moral, L.~Miclo. A Moran particle system approximation of Feynman-Kac formulae. Stochastic processes and their applications, vol. 86, no. 2, pp. 193--216 (2000).            
  

\bibitem{dg-ihp}
P. Del Moral, A. Guionnet.
On the stability of interacting processes with applications to filtering and genetic algorithms.  Publication du Laboratoire de Statistique et Probabilitb 3-98, Universit\'e Paul Sabatier, Toulouse (1998). Annales de l'Institut Henri Poincar\'e Probab. \& Statist. 
vol. 37, No. 2, pp. 155--194 (2001).


\bibitem{dg-cras}
P. Del Moral, A. Guionnet.  On the stability of measure valued processes with applications to filtering. Comptes Rendus de l'Acad\'emie des Sciences-Series I-Mathematics, vol. 329, no. 5, pp. 429--434 (1999).


\bibitem{dm-horton-21}
P. Del Moral, E. Horton. 
Quantum harmonic oscillators and Feynman-Kac path integrals for linear diffusive particles. 
arXiv preprint arxiv:2106.14592 (2021). 

\bibitem{dm-horton-21-2}
P. Del Moral, E. Horton. 
A theoretical analysis of one-dimensional discrete generation ensemble Kalman particle filters. 
arXiv preprint arxiv:2107.01855 (2021). 

\bibitem{dm-jasra-18}
P. Del Moral, A. Jasra.
A Note on Random Walks with Absorbing barriers and Sequential Monte Carlo Methods. Stochastic Analysis and Applications 
vol.36, no. 3, pp. 413--442 (2018).


\bibitem{dm-gen-01}
P. Del Moral, L. Miclo. Genealogies and increasing propagation of chaos for Feynman-Kac and genetic models. The Annals of Applied Probability, vol. 11, no. 4, pp. 1166--1198 (2001).

\bibitem{dm-sch}
P. Del Moral, L. Miclo.  Particle approximations of Lyapunov exponents connected to Schr\"odinger operators and Feynman-Kac semigroups. ESAIM: Probability and Statistics, no. 7, pp. 171--208 (2003).


\bibitem{dm-sch-2}
P. Del Moral, L. Miclo.
On the stability of nonlinear Feynman-Kac semigroups. Annales de la Facult\'e des sciences de Toulouse: Math\'ematiques, serie 6, t. 11, no.2, pp. 135--175 (2002).	


\bibitem{dm-miclo-self-interacting}
P. Del Moral,L. Miclo. Self-interacting Markov chains. Stochastic Analysis and Applications,  vol. 24, no. 3, pp. 615--660 (2006).

\bibitem{dm-miclo-self-interacting-2}
P. Del Moral,L. Miclo. On convergence of chains with occupational self-interactions. Proceedings of the Royal Society of London. Series A: Mathematical, Physical and Engineering Sciences, 460(2041), pp. 325--346   (2004).

\bibitem{dm-miclo-ledoux}
P. Del Moral, M. Ledoux, L. Miclo. On contraction properties of Markov kernels. Probability theory and related fields, vol. 126, no. 3, pp. 395--420 (2003).


\bibitem{dmpenev}
P. Del Moral, S.  Penev. Stochastic Processes: From Applications to Theory. CRC Press, (2017).

\bibitem{dmvillemonais}
P. Del Moral, D. Villemonais. 
Exponential mixing properties for time inhomogeneous diffusion processes with killing. Bernoulli, vol. 24, no. 2, pp. 1010--1032 (2018).

\bibitem{dembo}
A. Dembo and O. Zeitouni. Large Deviations Techniques and Applications, volume 38 of Stochastic Modelling and Applied Probability. Springer-Verlag, Berlin (2010)

\bibitem{diaconis-freedman}
P. Diaconis and D. Freedman. On the consistency of Bayes estimates (with discussion). The Annals of Statistics, vol. 14, no. 1, pp. 1--67, (1986).

\bibitem{diaconis-saloff}
P. Diaconis and L. Saloff-Coste. What do we know about the Metropolis algorithm?  27th Annual ACM Symposium on the Theory of Computing. J. Comput. System Sci., vol. 57, no. 1, pp. 20--36 (1998).

\bibitem{diaconis-saloff-2}
P. Diaconis, K. Khare, and L. Saloff-Coste. Gibbs sampling, exponential families and orthogonal polynomials. Statist. Sci., vol. 23, no. 2, pp. 151--200 (2008). 


\bibitem{doeblin}
W. Doeblin, Sur les propri\'et\'es asymptotiques de mouvements r\'egis par certains types de cha\^\i nes simples. Bull. Soc. Math. 
Roumaines des Sciences, vol. 39, no. 1, pp.  57-115, vol. 39, no. 2, pp. 3--61 (1937).


\bibitem{dobrushin-1}
R.L. Dobrushin, Central limit theorem for nonstationnary Markov chains, I,II, Theory
Probab. Appl., vol. 1, no. 1,4,  pp. 66--80, pp. 330--385 (1956).

\bibitem{dobrushin-2}
R.L.  Dobrushin, Prescribing a system of random variables by conditional distributions,
Theor. Prob. Appl. , vol. 15, no. 3 (1970).


\bibitem{donsker}
M. D. Donsker and S. R. S. Varadhan. On a variational formula for the principal eigenvalue for operators with maximum principle. Proc. Natl. Acad. Sci., vol. 72, no. 3, pp. 780--783 (1975).

\bibitem{douc-moulines}
R. Douc, E. Moulines, J. Olsson. Long-term stability of sequential Monte Carlo methods under verifiable conditions. The Annals of Applied Probability, vol. 24, no. 5, pp. 1767--1802 (2014).


\bibitem{douc-moulines-ritov}
R. Douc, E. Moulines, Y. Ritov. Forgetting of the initial condition for the filter in general state-space hidden Markov chain: a coupling approach. Electronic Journal of Probability, vol. 14, pp.  27--49 (2009).
	
\bibitem{drnovsek}	
P. Drnovsek, M. Kandic. Ideal-triangularizability of semigroups of positive operators. Integral Equations and Operator Theory, vol. 64, no. 4, pp. 539--552 (2009).	

\bibitem{du}
Y. Du. Order structure and topological methods in nonlinear partial differential equations: Vol. 1: Maximum principles and applications (Vol. 2). World Scientific  (2006).

\bibitem{engel}
K.J. Engel, R.  Nagel, One-parameter semigroups for linear evolution equations. In Semigroup forum Springer-Verlag, vol. 63, no. 2, pp. 278--280  (2001).


\bibitem{feng}
J. Feng and T. G. Kurtz. Large Deviations for Stochastic Processes, volume 131 of Mathematical Surveys and Monographs. American Mathematical Soc. (2006).

\bibitem{ferrari}
P.A. Ferrari, N. Maric. Quasi stationary distributions and Fleming-Viot processes in countable spaces. Electron. J. Prob., vol. 12, pp. 684--702 (2007). 

\bibitem{ferrari}	
P.A. Ferrari, H. Kesten, S. Martinez, D. Picco. Existence of quasi-stationary
distributions: a renewal dynamical approach. Ann. Probab., vol.23, pp. 501--521 (1995).
	
\bibitem{ferre-phd}	
G. Ferr\'e. Large Deviations Theory in Statistical Physics: Some Theoretical and Numerical Aspects. Diss. Universit\'e Marne La Vall\'ee (2019).

\bibitem{ferre-ldp}	
G. Ferr\'e, M. Rousset, and G. Stoltz. Large deviations of empirical measures of diffusions in weighted topologies. Electronic Journal of Probability, vol. 25, pp. 1--52 (2020).
	
	
\bibitem{ferre}	
G. Ferr\'e, M. Rousset, and G. Stoltz. More on the long time stability of Feynman-Kac semigroups. Stochastics and Partial Differential Equations: Analysis and Computations, vol. 9, no. 3, pp. 630--673 (2021).

\bibitem{fleming}	
W. H. Fleming and M. Viot. Some measure-valued markov processes in population genetics theory. Indiana University Mathematics Journal, vol. 28, no. 5, pp. 817--843 (1979).

\bibitem{kesten}
H. Furstenberg and H. Kesten. Product of random matrices. The Annals of Mathematical Statistics, vol. 31, no. 2, pp. 457--469 (1960).

\bibitem{gao}
N. Gao, V. G. Troitsky
Irreducible Semigroups of Positive Operators on Banach Lattices.  Linear and Multilinear Algebra, vol. 62, no. 1, pp. 74--95  (2014).



\bibitem{gerber}
M. Gerber, N. Whiteley. Stability with respect to initial conditions in V-norm for nonlinear filters with ergodic observations. Journal of Applied Probability, vol. 54; no. 1, pp. 118--133 (2017).

\bibitem{glim}
J. Glimm and A. Jaffe. Quantum Physics - A Functional Integral Point of View. 2nd ed., Springer-Verlag, New York (1987).

\bibitem{gong}
G. L. Gong, M. P. Qian, and Z. X. Zhao. Killed diffusions and their conditioning. Probab. Theory Related Fields, vol. 80, no. 1, pp. 151--167 (1988).

\bibitem{good}
P. Good. The limiting behavior of transient birth and death processes conditioned on survival. J. Austral. Math. Soc., vol. 8, pp. 716--722 (1968).

\bibitem{gordon}
N. J. Gordon, D. J. Salmond, A. F. M. Smith. Novel approach to nonlinear/non-Gaussian Bayesian state estimation. IEE PROCEEDINGS-, Vol. 140, pp. 107--113 (1993).

\bibitem{sloan}
I.G. Graham, I. H. Sloan. On the compactness of certain integral operators. Journal of Mathematical Analysis and Applications, vol. 68, no. 2, pp. 580--594 (1979).

\bibitem{grassberger-1}
P. Grassberger. Pruned-enriched Rosenbluth method: Simulations of polymers of chain length up to 1000000, Physical Review E, Vol. 56, pp. 3682--3693 (1997).

\bibitem{grassberger-2}
P.  Grassberger, H. Fraunenkron, W. Nadler. PERM: A Monte Carlo strategy for simulating polymers and other things, Monte Carlo Approach to Biopolymers and Protein Folding, ed. P. Grassberger, G. T. Barkema and W. Nadler (World Scientific, Singapore, 1998) pp. 301--315 (1998).


\bibitem{grassberger-3}
P.  Grassberger, Go with the winners: A general Monte Carlo strategy. Computer Physics Communications, vol. 147, no.1--2, pp. 64--70 (2002).

\bibitem{guibourg}
D. Guibourg, L. Herv\'e and J. Ledoux. Quasi-compactness of Markov kernels on weighted-supremum spaces
and geometrical ergodicity. arXiv:1110.3240 (2011).


\bibitem{guillin}
A. Guillin, B. Nectoux, and L. Wu, Quasi-stationary distribution for strongly Feller Markov processes by Lyapunov functions and applications to hypoelliptic Hamiltonian systems, working paper or preprint, December (2020).

\bibitem{guivarch}
Y.  Guivarc'h. Exposants caract\'eristiques des produits de matrices al\'eatoires en d\'ependance
markovienne. In H. Heyer, editor, Probability Measures on Groups VII, Oberwolfach 1983,
pages 161--181. Volume 1064 of Lecture Notes in Mathematics. Springer-Verlag, Berlin (1984).


\bibitem{hairer-mattingly}
M. Hairer, J. C. Mattingly. Yet another look at Harris' ergodic theorem for Markov chains. Seminar on Stochastic Analysis, Random Fields and Applications VI. Springer, Basel (2011).

\bibitem{he}
G. He,  Hanjun Zhang, and Yixia Zhu. On the quasi-ergodic distribution of absorbing Markov processes. Statistics \& Probability Letters, vol. 149,  pp. 116--123 (2019).



\bibitem{heine}
K. Heine. Stability of the discrete time filter in terms of the tails of noise distributions. ESAIM: Proceedings. Vol. 19. EDP Sciences (2007).





\bibitem{hening}
A. Hening, W. Qi, Z. Shen, Y. Yi . Quasi-stationary distributions of multi-dimensional diffusion processes. arXiv preprint arXiv:2102.05785 (2021).	


\bibitem{hennion}
H. Hennion. Limit theorems for products of positive random matrices. The Annals of Proba-
bility, vol. 25, no. 4, pp. 1545--1587, (1997).

\bibitem{hennion-2007}
H. Hennion.  Quasi-compactness and absolutely continuous kernels. Probability theory and related fields, vol. 139, no. 3--4, pp.  451--471 (2007).


\bibitem{hetherington-84}
J. H.  Hetherington, Observations on the statistical iteration of matrices. Physical Review A, 30(5), 2713 (1984).


\bibitem{phd-emma}
E. Horton. 
Stochastic Analysis of the neutron transport equation. PhD Uni. Bath, November (2019).

\bibitem{horton-kyprianou-villemonais}
E. Horton, A. Kyprianou, D. Villemonais. Stochastic methods for the neutron transport equation I: Linear semigroup asymptotics. The Annals of Applied Probability vol. 30, no. 6, pp. 2573--2612  (2020).


\bibitem{hurzeler}
M. H\"urzeler. Statistical Methods for General State-Space Models. Ph.D. Thesis, Department of Mathematics, ETH Zurich, Z\"urich (1998).

\bibitem{ajay}
A. Jasra. On the behaviour of the backward interpretation of Feynman-Kac formulae under verifiable conditions. J. Applied Probability. vol. 52, no. 2, pp. 339--359 (2015).

\bibitem{ajay-arnaud-08}
A. Jasra, A. Doucet. Stability of sequential Monte Carlo samplers via the 
Foster-Lyapunov condition. Statistics \& probability letters, vol. 78, no. 17, pp. 3062--3069 (2008).	

\bibitem{journel}
L. Journel, P.  Monmarch\'e. Convergence of a particle approximation for the quasi-stationary distribution of a diffusion process: uniform estimates in a compact soft case. arXiv preprint arXiv:1910.05060 (2019).


\bibitem{kaijser}
T. Kaijser. A limit theorem for partially observed Markov chains.
The Annals of Probability, vol. 3, no. 4, pp. 677--696 (1975).

\bibitem{karlin}
S. Karlin and J. L. McGregor. The differential equations of birth-and- death processes, and the Stieltjes moment problem. Trans. Amer. Math. Soc., vol. 85, pp. 489--546, (1957).

\bibitem{karlin-2}
S. Karlin and J. L. McGregor. The classification of birth and death processes. Transactions of the American Mathematical Society, vol. 86, no. 2, pp. 366--400 (1957).


\bibitem{kitagawa-96}
G. Kitagawa. Monte Carlo filter and smoother for non-Gaussian nonlinear state space models. Journal of Computational and Graphical Statistics, Vol. 5, pp. 1--25 (1996).

\bibitem{kitagawa-98}
G. Kitagawa. A self-organizing state-space model. Journal of American Statistical Association, Vol. 93, pp. 1203--1215 (1998).


\bibitem{klepsyna}
M.L. Kleptsyna, A.Y. Veretennikov. On robustness of discrete time optimal filters. Mathematical Methods of Statistics, vol. 25, no. 3, pp. 207--218  (2016).

\bibitem{klepsyna-2}
Kleptsyna, M. L., and A. Yu Veretennikov. On continuous time ergodic filters with wrong initial data. Theory of Probability \& Its Applications vol. 53, no. 2, pp. 269--300 (2009).

\bibitem{klepsyna-3}
Kleptsyna, M. L., and A. Yu Veretennikov. On discrete time ergodic filters
with wrong initial data. Probab. Theory Related Fields, vol. 141, pp. 411--444  (2008).

\bibitem{knopp}
K. Knopp. Theory and Application of Infinite Series. London: Blackie \& Son Ltd  (1954)

\bibitem{kopnova}
V. Knopova. On the Feynman-Kac semigroup for some Markov processes. Modern Stochastics: Theory and Applications, vol.  2, n.2 pp. 107--129 (2015).

\bibitem{konakov}
V. Konakov, S. Menozzi, S. Molchanov. Explicit parametrix and local limit theorems for some degenerate diffusion processes. Annales de l'IHP Probabilit\'es et statistiques. Vol. 46. No. 4., pp. 908--923 (2010).


\bibitem{konto-Meyn}
 I. Kontoyiannis and S.P. Meyn. Spectral theory and limit theorems for geometrically
ergodic Markov processes. Ann. Appl. Probab., vol. 13, pp. 304--362 (2003).

\bibitem{konto-meyn-2}
I. Kontoyiannis, S.  P. Meyn. Geometric ergodicity and the spectral gap of non-reversible Markov chains. Probability Theory and Related Fields, vol.  154, no. 1--2, pp. 327--339 (2012).

\bibitem{lasserre}
J.B. Lasserre, C.E.M. Pearce. On the existence of a quasistationary measure for a Markov chain. Technical Report no. 98109, LAAS-CNRS, Toulouse, France (1998).
 The Annals of Probability, vol. 29, no. 1, pp. 437-446 (2001).


\bibitem{legland-oudjane}
F. Le Gland, N. Oudjane. Stability and uniform approximation of nonlinear filters using the Hilbert metric and application to particle filters. The Annals of Applied Probability, vol. 14, no. 1, pp. 144--187 (2001).


\bibitem{legland-oudjane-2}
F. Le Gland, N. Oudjane. 
A robustification approach to stability and to uniform particle approximation of nonlinear filters: the example of pseudo-mixing signals. Stochastic Processes and their Applications, vol. 106, no. 2, pp.  279--316 (2003).



\bibitem{legland-mevel}
F. Le Gland, L. Mevel. Exponential forgetting and geometric ergodicity in hidden Markov models. Mathematics of Control, Signals and Systems, vol. 13, no. 1, pp. 63--93  (2000).

\bibitem{legland-mevel-2}
F. Le Gland, L. Mevel. Exponential forgetting and geometric ergodicity in HMM's. Proceedings of the 36th Conference on Decision and Control, San Diego. vol. 1. (1997).

\bibitem{legland-mevel-99}
F. Le Gland, L. Mevel. Stability and Approximation of nonlinear filters: an information theoretic approach. Proceedings of the 38th Conference on Decision and Control, Phoenix Arizona (1999).

\bibitem{legland-mevel-3}
F. Le Gland, L. Mevel. Basic properties of the projective product with application to products of column-allowable nonnegative matrices. Mathematics of Control, Signals and Systems, vol. 13, no. 1 pp. 41--62  (2000).

\bibitem{lladser}
M. Lladser, J. San Mart\`\i n. Domain of attraction of the quasi-stationary distributions for the Ornstein-Uhlenbeck process. Journal of applied probability, vol. 37, no. 2, pp. 511--520 (2000).


\bibitem{mailler-villemonais}
C. Mailler, D. Villemonais. Stochastic approximation on non-compact measure spaces and application to measure-valued P{\'o}lya processes. The Annals of Applied Probability 30, no. 5, pp. 2393-2438 (2020).

\bibitem{marguet}
A. Marguet. A law of large numbers for branching Markov processes by the ergodicity of ancestral lineages. ESAIM: Probability and Statistics, 23, pp. 638--661  (2019).

\bibitem{meleard-villemonais}	
S. M\'el\'eard, D. Villemonais. Quasi-stationary distributions and population processes. Probab. Surv., vol. 9, pp. 340--410 (2012).

\bibitem{mcdonald}
C. McDonald, S. Y\"uksel. Exponential filter stability via Dobrushin's coefficient. Electronic Communications in Probability, vol. 25, pp. 1--13  (2020).

\bibitem{meyn-tweedie}
S.P. Meyn, R.L. Tweedie. Markov Chains and Stochastic Stability. Springer-Verlag, New York  (1993).

\bibitem{nash} 
J. Nash, Continuity of solutions of parabolic and elliptic equations, American J. Math. vol. 80, pp 931--954 (1958).


\bibitem{numelin}
E. Nummelin. A splitting technique for Harris recurrent Markov
chains, Z. Wahsch. verw. Grebeite, vol. 43, pp. 309--318  (1978).

\bibitem{nussbaum}
R. D. Nussbaum. Eigenvectors of order-preserving linear operators. J. London Math. Soc.,  vol. 58, no. 2, pp. 480--496 (1998).

\bibitem{nussbaum-2}
R. D. Nussbaum.  The radius of the essential spectral radius. Duke Math. J., vol. 37, pp. 473--478 (1970)

\bibitem{ocafrain}
W. O\c cafrain. Convergence to quasi-stationarity through Poincar\'e inequalities and Bakry-Emery criteria. Electronic Journal of Probability, vol. 26, pp. 1--30 (2021).	

\bibitem{ocone-pardoux}
D.L. Ocone, E. Pardoux. Asymptotic stability of the optimal filter wit respect to its initial condition. SIAM Control and Optimization, vol. 34, no. 1, pp. 226-243 (1996).

\bibitem{oljaca}
L. Oljaca, T. Kuna, J. Br\"ocker. Exponential stability and asymptotic properties of the optimal filter for signals with deterministic hyperbolic dynamics. arXiv preprint arXiv:2103.01190 (2021).



\bibitem{oudjane-rubenthaler}
N. Oudjane, S. Rubenthaler. Stability and uniform particle approximation of nonlinear filters in case of non ergodic signals. Stochastic Analysis and applications, vol.  23, no. 3, pp. 421--448 (2005).	


\bibitem{peter-meyer}
P. Meyer-Nieberg. Banach lattice,  Springer-Verlag (1991).

\bibitem{pollett}
P. Pollett. Quasi-stationary distributions : a bibliography available at http://www.maths.uq.edu.au/$~$pkp/papers/qsds/qsds.html (2015).

\bibitem{pinsky}
R. G. Pinsky. On the convergence of diffusion processes conditioned to remain in a bounded region for large time to limiting positive recurrent diffusion processes. Ann. Probab.,  vol.13, no. 2, pp. 363--378 (1985).

\bibitem{reed-simon}
M. Reed and B. Simon. Methods of Modern Mathematical Physics I: Functional Analysis. Academic Press, San Diego (1980).

\bibitem{bellet}
L. Rey-Bellet. Ergodic properties of Markov processes, volume 1881 of Lecture Notes in Mathematics, pages 1--39. Springer (2006).

\bibitem{rosen-55}
M. N. Rosenbluth and A. W. Rosenbluth. Monte Carlo calculation of the average extension of molecular
chains. J. Chem. Phys., vol. 23, no. 2, pp. 356--359 (1955).

\bibitem{rousset}
M. Rousset.
On the control of an interacting particle estimation of Schr\"odinger ground states
SIAM journal on mathematical analysis, vol. 38, no. 3, pp. 824--844 (2006).

\bibitem{schaefer}
H. H. Schaefer. Topological vector spaces. Springer-Verlag, New York. Third
printing corrected (1971).

\bibitem{seneta}
E. Seneta and D. Vere-Jones. On quasi-stationary distributions in discrete-time Markov chains with a denumerable infinity of states. J. Appl. Probab., vol. 3, pp. 403--434 (1966).

\bibitem{seneta-81}
E. Seneta. Non-Negative Matrices and Markov Chains, 2nd edition. Springer Series in Statistics. Springer-Verlag, New York (1981).

\bibitem{sznitman}
A.S. Sznitman. The Feynman-Kac Formula and Semigroups. In: Brownian Motion, Obstacles and Random Media. Springer Monographs in Mathematics. Springer, Berlin, Heidelberg (1998).

\bibitem{song}
R. Song. Two-sided estimates on the density of the Feynman-Kac semigroups of stable-like processes. Electronic Journal of Probability, vol. 11, pp. 146--161 (2006).

\bibitem{spieksma}
F.M. Spieksma, Countable state space Markov processes: Non explosiveness and moment function. Probability in the Engineering and Informational Sciences, vol. 29, pp. 623--637 (2015).

\bibitem{stroock}
D. W. Stroock. Diffusion semigroups corresponding to uniformly elliptic divergence form operators. S\'eminaire de Probabilit\'es, XXII, pp. 316--
347. Springer, Berlin (1988).

\bibitem{summers-phd}
W.H. Summers.
Weighted Locally Convex Spaces of Continuous Functions. LSU Historical Dissertations and Theses, no. 1520 (1968)

\bibitem{summers}
W.H. Summers. Dual spaces of weighted spaces. Trans. Americal Soc., vol. 151, pp. 323--333 (1970).

\bibitem{tierney}
L. Tierney.  Markov chains for exploring posterior distributions Ann. of Statist., vol. 22, no. 4, pp. 1701--1762 (1994).

\bibitem{vandooorn}
E. A. van Doorn. Quasi-stationary distributions and convergence to quasi-stationarity of birth-death processes. Adv. in Appl. Probab., vol. 23, no. 4, pp. 683--700 (1991).

\bibitem{vandooorn-2}
E.A. van Doorn and P.K. Pollett. Quasi-stationary distributions for discrete-state models. European J. Oper. Res., vol. 230, pp. 1--14 (2013).

\bibitem{vanhandel}
R. Van Handel.
The stability of conditional Markov processes and Markov chains
in
random environments. Ann. Probab., vol. 37, pp. 1876--1925 (2009).

\bibitem{vanhandel-2}
R. Van Handel. Discrete time nonlinear filters with informative observations are stable. Electronic Communications in Probability, vol. 13, pp.  562--575  (2008).

\bibitem{varopoulos}
N.T. Varopoulos. Small time gaussian estimates of heat diffusion kernels. II. The theory of large deviations.
Journal oh Functional Analysis, vol. 93, pp. 1-33 (1990).

\bibitem{villemonais-14}
D. Villemonais, General approximation method for the distribution of Markov processes conditioned not to be killed, ESAIM Probab. Stat., vol. 18, pp. 441--467  (2014).

\bibitem{wang}
C. Wang. On estimates of the density of Feynman-Kac semigroups of $\alpha$ stable-like processes. Journal of mathematical analysis and applications, vol. 348, no.2,  pp. 938--970 (2008).



\bibitem{whiteley}
N. Whiteley. Stability properties of some particle filters. The Annals of Applied Probability, vol. 23, no. 6, pp.  2500--2537 (2013).

\bibitem{whiteley-2}
N. Whiteley. Dimension-free Wasserstein contraction of nonlinear filters. Stochastic Processes and their Applications, vol. 135(C), pp. 31--50 (2021).

\bibitem{whiteley-3}
N. Whiteley, N. Kantas, A. Jasra. Linear variance bounds for particle approximations of time-homogeneous Feynman-Kac formulae. Stochastic Processes and their Applications, vol. 122, pp. 1840--1865 (2012).


\bibitem{whiteley-4}
N. Whiteley, N. Kantas,. Calculating principal eigen-functions of non-negative integral kernels: particle approximations and applications. Mathematics of Operations Research,  vol. 42, no. 4, pp. 1007--1034 (2017).

\bibitem{liming-W}
L. Wu. Large and moderate deviations and exponential convergence for stochastic damping Hamiltonian systems. Stoch. Process. Appl., vol. 91, no. 2, pp. 205--238 (2001).

\bibitem{liming-W-2}
L. Wu. Uniformly integrable operators and large deviations for Markov processes.
Journal of Functional Analysis, vol. 172, no.2, pp. 300---376 (2000).


\bibitem{yaglom}
A.M. Yaglom. Certain limit theorems of the theory of branching random processes.
Dokl. Akad. Nauk SSSR (N.S.), vol. 56, pp. 795--798 (1947).

\bibitem{zhang}
Y. Zhang. On the non explosion and explosion for non homogeneous Markov pure jump processes. Journal of Theoretical Probability, vol. 31, no. 3, 1322--1355 (2018).

\bibitem{zucca}
F. Zucca. A note on compact Markov operators. ArXiv preprint math/0402302 (2004).
\end{thebibliography}
\end{document}